\tikzset{->-/.style={decoration={
  markings,
  mark=at position #1 with {\arrow{>}}},postaction={decorate}}}
\tikzset{->-/.default=0.5}
\pgfplotsset{compat=1.10}
\def\hB{\hfill$\blacksquare$}
\title{Coronas and Callias type operators in coarse geometry}
\author{
Ulrich Bunke\thanks{Fakult{\"a}t f{\"u}r Mathematik,
Universit{\"a}t Regensburg
\newline
ulrich.bunke@mathematik.uni-regensburg.de} \:and 
Matthias Ludewig \thanks{Institut f{\"u}r Mathematik und Informatik,
Universit{\"a}t Greifswald\newline
matthias.ludewig@uni-greifswald.de}
}
\numberwithin{equation}{section}
\newtheorem{theorem}{Theorem}[section] 
\newtheorem{prop}[theorem]{Proposition}
\newtheorem{lem}[theorem]{Lemma}
\newtheorem{ddd}[theorem]{Definition}
\newtheorem{kor}[theorem]{Corollary}
\newtheorem{ass}[theorem]{Assumption}
\newtheorem{construction}[theorem]{Construction}
\theoremstyle{remark}
\theoremstyle{definition}
\newtheorem{ex}[theorem]{Example}
\newtheorem{rem}[theorem]{Remark}
\newcommand{\Comod}{\mathbf{Comod}}
\newcommand{\ee}{\mathrm{e}}
\newcommand{\gee}{\mathrm{e}^{\mathrm{gr}}}
\newcommand{\EE}{\mathrm{E}}
\newcommand{\gEE}{\mathrm{E}^{\mathrm{gr}}}
\newcommand{\sepa}{\mathrm{sep}}
\newcommand{\an}{\mathrm{an}}
\newcommand{\nCalg}{C^{*}\mathbf{Alg}^{\mathrm{nu}}}
\newcommand{\gCalg}{(C^{*}\mathbf{Alg}^{\mathrm{nu}})^{\mathrm{gr}}}
\newcommand{\UBC}{\mathbf{UBC}}
\newcommand{\Yo}{\mathrm{Yo}}
\newcommand{\ho}{\mathrm{ho}}
\newcommand{\Res}{\mathrm{Res}}
\newcommand{\TB}{\mathbf{TB}}
\newcommand{\BC}{\mathbf{BC}}
\newcommand{\Cofib}{\mathrm{Cofib}}
\newcommand{\Fib}{{\mathrm{Fib}}}
\newcommand{\incl}{\mathrm{incl}}
\newcommand{\cP}{\mathcal{P}}
\newcommand{\CAlg}{{\mathbf{CAlg}}}
\newcommand{\cI}{{\mathcal{I}}}
\newcommand{\cZ}{{\mathcal{Z}}}
\newcommand{\cW}{{\mathcal{W}}}
\newcommand{\sfin}{\mathrm{sfin}}
\newcommand{\triv}{\mathrm{triv}}
\newcommand{\spec}{{\mathtt{spec}}}
\newcommand{\cO}{{\mathcal{O}}}
\newcommand{\cU}{{\mathcal{U}}}
\newcommand{\cY}{{\mathcal{Y}}}
\newcommand{\Var}{\mathrm{Var}}
\newcommand{\lf}{\mathrm{lf}}
\newcommand{\lc}{{lc}}
\renewcommand{\Dirac}{\slashed{D}}
\newcommand{\Spc}{\mathbf{Spc}}
\newcommand{\simp}{\mathrm{simp}}
\newcommand{\op}{\mathrm{op}}
\newcommand{\KK}{\mathrm{KK}}
\newcommand{\cone}{\mathrm{cone}}
\newcommand{\nCcat}{C^{*}\mathbf{Cat}^{\mathrm{nu}}}
\newcommand{\gCcat}{(C^{*}\mathbf{Cat}^{\mathrm{nu}})^{\mathrm{gr}}}
\renewcommand{\Spc}{\mathbf{Spc}}
\newcommand{\exa}{\mathrm{ex}}
\newcommand{\K}{\mathrm{K}}
\newcommand{\odd}{\mathrm{odd}}
 \newcommand{\gr}{\mathrm{gr}}
 \newcommand{\gK}{K^{\gr}}
	 \renewcommand{\ind}{\mathrm{ind}}
\renewcommand{\cX}{\mathscr{X}}
 \DeclareMathOperator{\stimes}{\hat{\otimes}}
 \newcommand{\TildeDirac}{\tilde{\smash{\slashed{D}}\vphantom{D}}}
\begin{document}  \maketitle 
\begin{abstract}
We interpret the coarse symbol and index class of a Callias type Dirac operator 
$\Dirac+\Psi$ on a manifold $M$ as a pairing between the coarse   symbol and index classes 
associated to $\Dirac$ and K-theory classes of the coarse corona of $M$ or $M$ itself determined by $\Psi$.
Local positivity of $\Dirac$ and local invertibility of $\Psi$ are incorporated in terms of support conditions  on the $K$-theoretic level.
  \end{abstract}

  \tableofcontents
  
  \section{Introduction}
  
The purpose of this paper  is to embed
 the index theory of  Callias type  Dirac operators  on   complete Riemannian manifolds  into the coarse homotopy theory  developed in \cite{buen, ass}.  Our main  innovation  is \cref{wegokwpergerfwefrwefwf} which expresses  the   index and symbol of Callias type operators
in terms of the coarse  corona and symbol pairings introduced  \cref{gojkwpergerwferfewrfwrefw} and
  \cref{grjweopgergweffefwef}.
  
 One of the achievements of the present paper  is to provide a $K$-theoretic interpretation of  Callias type Dirac operators beyond the Fredholm index case. 
 As an application of the theory we
discuss a version of the partitioned manifold index theorem \cite{zbMATH04109704}  and its multipartitioned generalization \cite{Siegel:2012aa, MR3704253, MR3825192}.  
New aspects are that we allow multipartitions along non-compact   submanifolds, work with the refined symbol classes in local $K$-homology,  and that we  take the support conditions into account.

 In the present paper we work in the setup of coarse homotopy theory developed in \cite{buen, ass}
 which is based on the categories $\BC$ and $\UBC$ of    bornological coarse  and uniform bornological coarse  spaces and the $KU$-module-valued coarse $K$-homology functor 
 \[
 K\cX:\BC\to \Mod(KU)
 \]
 from \cite{coarsek}.
 In order to keep this paper selfcontained, we review the relevant definitions and results of this theory.
The wish to present the setup  in a coherent way and to give at least complete definitions
accounts for most of  the length of this paper.


\paragraph{Coarse index theory with supports.}
 As observed by J. Roe \cite{roe_coarse_cohomology} the basic global $K$-theoretic invariant of a generalized Dirac operator $\Dirac$ on a complete Riemannian mani\-fold $M$
 is its coarse index class $\ind\cX(\Dirac)$ in the coarse $K$-homology $K\cX(M)$ (see \cref{grwgjrogwregwre} and \cref{gjkregopregwergfwrefrfwrf}). This index class detects the invertibility of $\Dirac$
 on the natural $L^{2}$-space, but also retains some coarsely local information, e.g., on which parts of $M$ the zero order term in the Weizenboeck formula for $\Dirac$ is uniformly positive \cite{Roe:2012fk}.  

Classically, the local information on $\Dirac$ is  homotopy theoretically encoded by the analytic  symbol class
 $\sigma^{\an}(\Dirac)$ in  the locally finite analytic $K$-homology $ K^{\an}(M)\simeq \KK(C_{0}(M),\C)$ of $M$ (see \cref{kogpwegreffrefrfw} and \cref{jiorgwpegwerfrefrfw}, but note that in the present paper we prefer to work with $E$-theory).  
 The analytic symbol class  can, e.g.,  be paired with compactly supported $K$-theory classes of $M$. 
 If one realizes {$KK$}-theory classes by Kasparov modules \cite{kasparovinvent}, then 
  the results of these pairings can be interpreted
 in terms of indices of Fredholm operators constructed from $\Dirac$, see \cref{kgopwrerfwrfrfw}. But (somewhat surprisingly) one can also recover the coarse index $\ind\cX(\Dirac)$  
 from the symbol class $\sigma^{\an}(\Dirac)$, see \cref{jiogpgweergerfwef}.

In order to capture the symbol of $\Dirac$ within the coarse $K$-homology calculus,
  in  \cite{Bunke:2018aa, Bunke:2017aa} we proposed to 
consider a refined symbol class $\sigma(\Dirac)$ in the local $K$-homology $K^{\cX}(M)$ (see \cref{kogpwergrefrefwrfw} and \cref{lphegeetrhetrgtgteg}). There is a natural transformation $a:K^{\cX}\to K\cX$  (see \eqref{fqweqwfqewdq}) of local homology theories called the index map
 such that \begin{equation}\label{hertopetrgvertg}a_{M}(\sigma (\Dirac))=\ind\cX(\Dirac)\ .
\end{equation}  The classical symbol class in analytic $K$-homology can be recovered in terms of the  natural Paschke duality  transformation of local homology theories
 $p:K^{\cX}\to K^{\an}$ (see \eqref{r3rgrgsfegeg})  by $$p_{M}(\sigma(\Dirac))=\sigma^{\an}(\Dirac)\ .$$

The advantage of considering the  symbol $\sigma(\Dirac)$ instead of $\sigma^{\an}(\Dirac)$ is that one can incorporate  local positivity of $\Dirac$
geometrically in terms of additional support conditions 
  \cite{Bunke:2018aa,Bunke:2017aa}.   
    In coarse geometry, the right notion of a subset is a big family $\cY$,  i.e., a filtered family  of subsets which is closed under forming coarse thickenings. 
    Given such a big family $\cY $ on $M$ one can define the local $K$-homology $K^{\cX}_{\cY}(M)$ (see \cref{jgkgopekgregfwrefrwf})
 with support on $\cY$ and the coarse $K$-homology $K\cX(\cY):=\colim_{Y\in \cY} K\cX(Y)$ of $\cY$.
 The index map {then} refines to a map 
 \[
 a_{M,\cY}:K^{\cX}_{\cY}(M)\to K\cX(\cY),
 \]
 see \eqref{fwerqfdwedqewdqewdq}.

 Assume now that there exists  a member   $Y$  of $\cY$   such that $\Dirac^{2}$ is {positive} on $M\setminus Y$ in the sense that the  
  quadratic form determined by $\smash{\Dirac^{2}}$ on smooth sections which are compactly supported on $M\setminus \bar Y$  is   positive.    
  In  \cref{lphegeetrhetrgtgteg}, 
  we  define a refined symbol class $\sigma_{\cY}(\Dirac) $ in $K^{\cX}_{\cY}(M)$ such that 
  \[
  {
  a_{M,\cY}(\sigma_{\cY}(\Dirac)) = \ind\cX(\Dirac,\mathrm{on}\,\cY) }
  \]
   in $K\cX(\cY)$. 
 
 For example,  if $\Dirac$ is positive on all of $M$, then $\Dirac$ is invertible and $\ind\cX(\Dirac)=0$. 
  {Taking $\cY={\{\emptyset\}}$}, the class
  $\smash{\sigma_{{\{\emptyset\}}}}(\Dirac)$ in $\smash{K^{\cX}_{{\{\emptyset\}}}(M)}$  (called the $\rho$-invariant) captures the  reason for  the equality $ \ind\cX(\Dirac)=0$ in a homotopy theoretic manner.

  The   local {$K$-}homology $K^{\cX}$ (with supports) and the index map $a$  are instances of general constructions
  which can be performed with   arbitrary coarse homology theories, see   \cref{owkgpergwrefwerfw} and \cref{rtkohprethergetetg}. In the present paper we  just applied them to the coarse $K$-homology theory $K\cX$. These constructions are functorial with respect to natural transformations of coarse homology theories, an observation which waits to be exploited in the future. Furthermore, the index map is a natural transformation of local homology theories (say on the spectrum level) and therefore automatically compatible with Mayer-Vietoris boundary maps.
    In contrast, the  construction of the classical analytic index map \[
    a_{M}^{\an}:K_{*}^{\an}(M)\to K\cX_{*}(M)
    \]
     (see, e.g., \cite[Sec. 5 \& 6]{hr}) is based on complicated  analytic arguments using specific models of $K_{*}^{\an}(M)$ and a group-level Paschke duality  isomorphism
  which is not obviously liftable to the spectrum level in a functorial manner.

 \paragraph{Callias type operators.}
  
 By \cref{kopththretg} a Callias type operator is a generalized Dirac operator of the form $\Dirac+\Psi$ obtained from
 a generalized Dirac operator $\Dirac$ by adding a potential $\Psi$  such that $\Dirac\Psi+\Psi\Dirac$ is zero order (see \cref{8wrtlhkerhterhertrtgtrgetg}).
 The relevant index theoretic information on $\Dirac+\Psi$ is concentrated on the subset of $M$ on which the potential is not  positive in the sense that $\Psi^{2}$ dominates $\Dirac\Psi+\Psi\Dirac$ (see \cref{tlhpetrgggegrt}).  
 
 The classical condition is that the potential is positive outside of a compact subset of $M$ which turns the Callias type operator into a Fredholm operator. Partial positivity as considered in the present paper {was} already studied in      \cite{Guo_2022}.

 We consider a big family    $\cY $ such that the potential is positive    away from $\cY$.
 Then $\Dirac+\Psi$ is {positive} away from $\cY$.
 Since this positivity is caused by the  zero order term of $\Dirac+\Psi$  in \cref{kogpwergwerffwref} we can define
 a refined symbol class with support $\sigma(\Dirac+\Psi,\mathrm{on}\,\cY)$  in $K^{\cX}(\cY):=\colim_{Y\in \cY}K^{\cX}(Y)$.
 
  If $\Dirac$ was already {positive} away from a big family $\cZ$ 
 and $\Psi$ is asymptotically constant away from $\cZ$, then $\Dirac+\Psi$ is {positive} away from $\cY\cap \cZ$ with index $\ind\cX(\Dirac+\Psi,\mathrm{on}\,\cY\cap \cZ)$ in $K\cX(\cY\cap \cZ)$, and 
 we  define a refined symbol class
 $\sigma_{\cZ}(\Dirac+\Psi,\mathrm{on}\,\cY)$ in $K^{\cX}_{\cZ}(\cY)$.
  
  We want to  interpret the symbol class $\sigma_{\cZ}(\Dirac+\Psi,\mathrm{on}\,\cY)$  and the coarse  index class  
  \[
  \ind\cX(\Dirac+\Psi,\mathrm{on}\,\cY\cap \cZ)=a_{M,{\cZ}}(\sigma_{\cZ}(\Dirac+\Psi,\mathrm{on}\,\cY))
  \]
    in terms of 
pairings  between the   symbol  class $\sigma_{\cZ}(\Dirac)$   or the index class $\ind\cX(\Dirac,\mathrm{on}\,\cZ)$  with suitable $K$-theory classes  determined by  the potential $\Psi$.

{The} big family $\cY$   gives rise to the  uniform  $\cY$-corona $\partial_{u}^{\cY}M$, a compact topological space with the potential to capture the behaviour of $\Psi$ at $\infty$ {(see \cref{zkophergrtgtrgetrg})}. 
As part of the coarse $K$-homology calculus we  introduce the coarse corona pairing \cref{gojkwpergerwferfewrfwrefw}
\[
-\cap^{\cX}-:{K(\partial^{\cY} M)}\times  K\cX(\cZ)\to  \Sigma K\cX(\cY\cap \cZ)
\]

and 
the coarse symbol pairing \cref{grjweopgergweffefwef}
\[
-\cap^{\cX\sigma}-:K(\cY)\times K_{\cZ}^{\cX}(M)\to K^{\cX}_{\cZ}(\cY)\ ,
\]
involving the topological $K$-theory of  the coarse corona $\partial^{\cY}X$  (\cref{rkopthtrhegertg}) and  of $\cY$.
 Both pairings  are components of extra-natural transformations, see \cref{RemarkExtranatural}.

 Under the assumptions made above, the  potential $\Psi$ gives naturally rise to a class $[\Psi]$ in $K(\cY)$. Our  main result is \cref{wegokwpergerfwefrwefwf}, {which states that under suitable conditions on $\Dirac$ and $\Psi$, one has}
$$\sigma_{\cZ}(\Dirac+\Psi, \mathrm{on}\,\cY)=[\Psi]\cap^{\cX\sigma} \sigma_{\cZ}(\Dirac)$$ in $K^{\cX}_{\cZ}(\cY)$.
It provides the interpretation of the symbol class of the Callias type operator in terms of 
the coarse $K$-homology calculus.

We now consider the boundary map
\[
\partial:K(\partial_{u}^{\cY}M)
\to \Sigma K(\cY)
\]
 from  \eqref{fwerferfervwerv}.
We   assume that there exists a class $p$ in $K(\partial_{u}^{\cY}M)$ with $\partial p =[\Psi]$.
Then by the compatibility of $\partial$ with the index map shown in  \cref{erokgpwergrwefwerfwerfwer}
we get \begin{equation}
\label{gerwegwergwrgwergre}
\ind\cX(\Dirac+\Psi,\mathrm{on}\,\cY\cap \cZ)= c^{*} p \cap^{\cX} \ind\cX(\Dirac, \mathrm{on}\,\cZ)\end{equation}
in $K\cX(\cY\cap \cZ)$, where $c$ is the comparison map \eqref{ComparisonMap}.
As a consequence of \eqref{gerwegwergwrgwergre},  the assumption $[\Psi]=\partial p$  implies the non-obvious fact that the coarse index  
$\ind\cX(\Dirac+\Psi,\mathrm{on}\,\cY\cap \cZ)$ only depends on $\Dirac$ via its coarse index $\ind\cX(\Dirac, \mathrm{on}\,\cZ)$, instead of the finer information contained it its symbol class $\sigma_{\cZ}(\Dirac)$.

\begin{ex} \label{kgopwrerfwrfrfw}We assume that 
  $M$ is connected and that $\cY=\cB$ is the family of bounded subsets. Then we have
$K\cX(\cB)\simeq KU$ 
and under this identification $\ind\cX(\Dirac+\Psi,\mathrm{on}\,\cB)$ is the Fredholm index
of $\Dirac+\Psi$.  The class $[\Psi]$ in $K_{\cB}(M)$ is a compactly supported $K$-theory class of $M$ and the Fredholm index
of $\Dirac+\Psi$ is the result of the pairing of $\sigma^{\an}(\Dirac)$ with the class $[\Psi]$.

 In this case
$\partial^{\cB}M$ is the Higson corona $\partial_{h}M$ of $M$.  If $[\Psi]=\partial p$ for some class $p$ in $K(\partial_{h}M)$, then  the interpretation of the Fredholm  index of  $\Dirac+\Psi$ in terms of a pairing between the $K$-theory $K(\partial_{h}M)$ of 
the Higson corona and symbol $\sigma^{\an}(\Dirac)$  is known from 
\cite{MR1348799}.  
 \hB\end{ex}

 \paragraph{Structure of the paper.}

In the first  sections we will recall the main definitions and constructions
 of the coarse homotopy calculus, $K$-theory of $C^{*}$-algebras and categories, and coarse index theory of Dirac operators. 
 Prerequisites for reading this paper are basic homotopy theoretic language (e.g., {elements of the theory of} stable $\infty$-categories), basic $C^{*}$-algebra theory (continuous function calculus, maximal tensor product, Gelfand duality) and some global analysis (Riemannian manifold and generalized Dirac operators).


 In \cref{rfiohqeiorfqfewdqddq} we recall from \cite{buen, ass} the definitions of the categories $\BC$ of bornological coarse spaces and $\UBC$ of uniform bornological coarse spaces and the cone functor $\cO^{\infty}:\UBC\to \BC$. 
 
 In \cref{gojkperfrefrwfwrefw2} we recall  from \cite{buen} the notion of a coarse homology theory in general and some basic calculations.

In   \cref{kogpewgwergergwergwerfw} we recall from \cite{ass} and \cite{buen} the notions of a local and locally finite homology theory. 
We  explain the construction of the local homology theory associated to a coarse homology theory and the index map. 
We further introduce the local homology theory with supports associated to a coarse homology theory. 
  
  In \cref{gkpoegsergreg} we give a descriptive presentation  $E$- and $K$-theory via universal properties 
  for  (graded) $C^{*}$-algebras and categories in the style of \cite{Bunke:2023aa, budu}.
 
 \cref{gojkperfrefrwfwrefw} is devoted to the construction of the coarse $K$-homology functor $K\cX:\BC\to \Mod(KU)$ in terms of the  $C^{*}$-categories of controlled Hilbert spaces  taken from \cite{buen} and \cite{coarsek}.

 In \cref{ojgpwegwregrfrwef} and \cref{lhpkgerhretgrge} we introduce various $C^{*}$-algebras of functions associated to
 a big family by putting support conditions on the functions themselves or their   variation.
 We introduce the corona  associated to a pair of big families  and show a crucial technical commutator estimate in \cref{kohpkertphokgpertge}.

 In \cref{gkopegrergweferfwfr} we define the analytic locally finite $K$-homology functor $K^{\an}(-):=\EE(C_{0}(-),\C)$ 
 and verify that it is a locally finite local homology theory. 
 We then recall from  \cite{quro, bel-paschke}
 the Paschke transformation $p_{M}:K^{\cX}(M)\to K^{\an}(M)$ and  provide conditions on $M$ ensuring that  $p_{M}$ it is an equivalence. We also explain the factorization of the index map over a map $K^{\an}(M)\to K\cX(M)$
 which allows to recover the coarse index of $\Dirac$ from its  analytic symbol in \cref{jiogpgweergerfwef}.
 
  In  Sections \ref{regijowergefrfwrfrf} and \ref{regijowergefrfwrfrf2}, we construct the coarse corona pairing  and the coarse symbol pairing and state and verify their formal functorial properties.
 
 In \cref{rijeogegergwergf} we recall from \cite{Bunke:2017aa} the details of the construction of coarse index $\ind\cX(\Dirac,\mathrm{on}\,\cY)$ in   $K\cX(\cY)$   of a Dirac operator  which is  {positive}  away from a big family  $\cY$. We state the suspension theorem (identifying the index operators on products $\R\otimes M$) and the coarse  relative index theorem expressing the locality of the coarse index. 
  
  In \cref{gkopwerfefrefwfref} we first recall from \cite{Bunke:2018aa} how to capture symbols of  Dirac operators  which are {positive} away from $\cY$ as classes $\sigma_{\cY}(\Dirac)$  in $K^{\cX}_{\cY}(M)$.
  
  The main result of the present paper is   \cref{wegokwpergerfwefrwefwf} stated in  \cref{ewrokgpwergwerffwef} which identifies the symbol of the Callias type operator $\Dirac+\Psi$ with support on $\cY$ as the coarse symbol paring $[\Psi]\cap^{ \cX\sigma} \sigma(\Dirac)$. 
 
 In \cref{kgpwergrewfwwrfwref} we discuss the coarse version of the Dirac-goes-to-Dirac principle. 

 This principle
 is then  used in \cref{koregperegfrefrfrefw} in the discussion of the relation between our pairings  and  Mayer-Vietoris boundary maps leading to a version of the partitioned manifold theorem. 

In \cref{ewgkorpegrefwrf} we  derive  a version of the  multipartitioned manifold index theorem  by an interated application of the partitioned manifold theorem. 

Finally, in \cref{tlkhgwteggfregwgwergwregwregw}
we argue that the slant products  considered in \cite{Engel_2022} can be derived by specializing our pairings, and  that their basic properties  
can be derived  from general facts about these pairings.

 Most of the material discussed in this paper has an immediate generalization to the equivariant case for discrete group actions. 
 We decided not to formulate everything in this generality in order to keep the complexity of the paper reasonable.  
 
   {\em Acknowledgement: The authors were supported by the SFB 1085 (Higher Invariants) funded by the Deutsche Forschungsgemeinschaft (DFG).}

  \section{Coarse cohomology theories}
   
  \subsection{Uniform bornological coarse spaces and cones}\label{rfiohqeiorfqfewdqddq}
  
  In this section we recall the notions of  bornological coarse spaces, uniform bornological coarse spaces, topological bornological spaces and the cone construction  \cite{buen, equicoarse}. We just mention that coarse geometry has been invented by J. Roe \cite{roe_exotic_cohomology, roe_lectures_coarse_geometry} and  refer to  the references above  for further links to the literature on coarse geometry and attributions. 
 
  Let $X$ be a set. 
  \begin{ddd}\mbox{}
     \begin{enumerate}
  \item A bornology on $X$ is a subset  $\cB$  of the power set of $X$ which covers $X$ and is closed under forming
  subsets and finite unions.
  \item A coarse structure on $X$ is a subset $\cC$   of the power set of $X\times X$ which is closed under forming subsets, finite unions, compositions $(U,U')\mapsto U\circ U$ (in the sense of relations), the flip of $X\times X$, and which  contains the  diagonal $\diag(X)$ of $X$.
  \item A uniform structure on $X$  is a subset $\cU$   of the power set of $X\times X$ of subsets  $U$ with $\diag(X)\subseteq U$    which is closed under forming supersets, finite intersections, compositions, the flip, and which has the property  that for any $U$ in $\cU$ there exists a $V$ in $\cU$ with $V\circ V\subseteq U$.
   \end{enumerate}\end{ddd}
For a subset $Z$ of $X$ and a subset $U$ (called an entourage) of $X\times X$ we let 
\begin{equation}\label{fewrihjikj}
U[Z]:=\{x\in X\mid \exists z\in Z : (x,z)\in U\}
\end{equation}   
denote the $U$-thickening of $Z$.

Let $X$ be a set with a coarse structure $\cC$. 
The correct  notion of a subset in coarse geometry
is the notion of a big family. 

\begin{ddd}\label{jieogewrgwref}
A big family is a {nonempty} 
subset $\cY$ of the power set of $X$ which is closed under taking subsets, finite unions and $U$-thickenings for all $U$ in $\cC$.
\end{ddd}

We consider a big family $\cY$  as a  partially ordered set with respect to the inclusion relation. 
{Because $\cY$ is closed under taking finite unions, this partially ordered set is filtered}.
 
\begin{rem}
In the literature on bornological coarse spaces with the basic reference \cite{buen}, a  big family on $X$  is a family of subsets $(Y_{i})_{i\in I}$  indexed by a filtered poset {$I$} such that $i\le j$ implies $Y_{i}\subseteq Y_{j}$ and such that for every $i$ in $I$ and $U$ in $\cC$ there exists $i'$ in $I$ with $U[Y_{i}]\subseteq Y_{i'}$.  
In this convention the notion introduced in \cref{jieogewrgwref} is the special case of a selfindexing big family
which is in addition complete (this is the closedness under taking subsets).

Every big family $(Y_{i})_{i\in I}$ in the sense of  \cite{buen}  generates a big family $\cY:=\{Y\subseteq X\mid \exists i\in I : Y\subseteq Y_{i}\}$ as defined in \cref{jieogewrgwref}.
The canonical map $I\to \cY$, $i\mapsto Y_{i}$ of partially ordered sets  is cofinal.

One can define an equivalence relation on the collection of big families in the sense of \cite{buen}
such that $(Y_{i})_{i\in I}\sim (Y'_{i'})_{i'\in I'}$ if and only if the associated big families in the sense of \cref{jieogewrgwref} satisfy $\cY=\cY'$.

Since the constructions  with big families occuring in the present paper  only depend on
 the equivalence classes
   it suffices to work with complete selfindexing big families
as introduced in \cref{jieogewrgwref}.\hB
\end{rem}

 
 \begin{ex}\label{BigFamilyConstr}
 If $Y$ is a subset of $X$, then 
\begin{equation}\label{ExampleGeneratedBigFamily}
  \{Y\}:=\{Z\subseteq X \mid \exists U\in \cC : Z\subseteq U[Y]\} .
\end{equation}
is called the big family generated by $Y$.
If $\cY$ and $\cZ$ are big families in $X$, then we define the big families
\begin{equation}\label{ExampleCapBigFamilies}
\cY\cap\cZ := \{Y\cap  Z \mid Y\in \cY ~ \& ~ Z\in \cZ \}
\end{equation}
and
   \begin{equation}\label{ExampleCupBigFamilies}\cY\cup \cZ:=\{W\subseteq X\mid \exists Y\in \cY\& Z\in \cZ:W\subseteq Y\cup Z\}
\end{equation} 
in $X$.

If $\cY$ is a big family in $X$ and $\cY'$ is a big family in $X'$, then we define the big family
\begin{equation}\label{ExampleTimesBigFamilies}
  \cY \times \cY' := \{Z \subseteq X\times X' \mid \exists Y \in \cY, ~ Y' \in \cY :Z\subseteq Y\times  Y'\}
\end{equation} 
in $X \times X'$.
In order to    simplify the notation we will set $X\times \cY':=\{X\}\times \cY'$. 

If $\cY$  is a big family in $X$ and $Z \subseteq X$ is a subset equipped with the induced coarse structure, then we define the induced big family \begin{equation}\label{koprhregertgerg}
   Z \cap \cY := \{ Z \cap Y \mid Y \in \cY\}
 \end{equation} 	
   in $Z$. \hB
\end{ex}

A big family on a topological space $X$  is a family of subsets $\cY$
which is closed under taking subsets and finite unions and has property that for every $Y$ in $\cY$ there exists $Y'$ in $\cY$ which contains an open neighbourhood of $\bar Y$.

We consider the following compatibility relations between bornologies, coarse structures and uniform structures.
\begin{ddd}\mbox{}
\begin{enumerate}
\item A uniform structure $\cU$  is compatible with a coarse structure $\cC$ if $\cC\cap \cU\not=\emptyset$.
\item A bornology is $\cB$ compatible with a coarse structure $\cC$ if it is a big family.
\item A bornology $\cB$ is compatible with a topology on $X$  if it is a big family (in the topological sense).
\end{enumerate}
\end{ddd}

Note that a compatible bornology (in the coarse or topological sense) is simply a big family which covers the whole space.

We consider the following conditions on maps $f:X\to Y$  between sets equipped with (some) of the above structures.

\begin{ddd}\mbox{}
\begin{enumerate}
\item  $f$ is proper if $f^{-1}(\cB_{Y})\subseteq \cB_{X}$.
\item $f$ is bornological, if $f(\cB_{X})\subseteq \cB_{Y}$.
\item   $f$ is controlled if $f(\cC_{X})\subseteq \cC_{Y}$.
\item $f$ is uniform if $f^{-1}(\cU_{Y})\subseteq \cU_{X}$.
\end{enumerate}
\end{ddd}

\begin{ex}
Let $X$ und $X'$ be sets with coarse structures and $f:X'\to X$ be a controlled map. 
If $\cY$ is a big family on $X$, then generalizing \eqref{koprhregertgerg} we define  the  induced  big family
\[
f^{-1}(\cY):= \{ Z'\subseteq X' \mid \exists Y\in \cY : Z'\subseteq f^{-1}(Y)\} 
\] 
 {in} $X'$. 
For a subset $Y$ of $X$ we have $\{f^{-1}(Y)\}\subseteq f^{-1}(\{Y\})$, but this inclusion is in general not an equality.
\hB
\end{ex}

  We will consider the following categories:
  \begin{ddd}\mbox{}
  \begin{enumerate}
  \item $\BC$ (bornological coarse spaces): Sets with coarse structures and compatible bornologies  and controlled and proper maps.
  \item $\UBC$ (uniform bornological coarse spaces): Sets with coarse structures, compatible bornologies  and uniform structures  and controlled, uniform and proper maps. 
  \item $\TB$ (topological bornological spaces): Topological spaces with compatible bornological structures and proper continuous maps.
  \end{enumerate}
  \end{ddd}
Recall that a uniform structure $\cU$ on a set $X$ induces a topology on $X$ generated by the subsets $U[x]$
for all $x$ in $X$ and $U$ in $\cU$. 
We have  obvious forgetful functors \[\iota:\UBC\to \BC\ , \quad \tau:\UBC\to \TB\] which we will often omit from the notation.
  
\begin{ex}
\label{epokpbbwerg}
A (generalized, i.e., we allow $d(x,y)=\infty$) metric space $(X,d)$ presents a uniform bornological coarse space with the following structures:
\begin{enumerate}
\item $\cB$ consists of the subsets which can be covered by finitely many metric balls.
\item $\cC$ is consists of all subsets which are contained a metric entourage \begin{equation}\label{rwgrefrefew}U_{r}:=\{(x,y)\in X\times X\mid d(x,y)\le r  \}
\end{equation}  for some $r$ in $[0,\infty)$.
\item $\cU$ consists of subsets containing a metric entourage $U_{r}$ for some $r\in (0,\infty)$ (note that $0$ is excluded here).
\end{enumerate}
A Lipschitz  continuous map between metric spaces is uniform and controlled. \hB 
\end{ex}

 \begin{ex}
 Let $\Top^{\lc}$ denote the category of locally compact topological spaces and proper continuous  maps.
 We have a fully faithful functor $\Top^{\lc}\to \TB$ which equips a locally compact space with the bornology of relatively compact subsets. 
 
 We have a functor $\Top_{\mathrm{mc}}\to \UBC$ from  metrizable compact   topological spaces to uniform bornological coarse spaces which equips a metrizable compact topological space with the maximal coarse and bornological structures and
 the canonical uniform structure (induced by any choice of metric on $X$). 
  \hB
  \end{ex}

The category $\BC$ has a symmetric monoidal structure $\otimes$   such that
$ X\otimes Y$ has the following description:
\begin{enumerate}
\item The underlying set  of $X\otimes Y$ is $X\times Y$.
\item An entourage of $X\times Y$ is coarse for $X\otimes Y$ if it is contained a product of entourages of $X$ and $Y$.
\item A subset of $X\times Y$ is bounded if it is contained in a product of bounded subsets of $X$ and $Y$.
\end{enumerate}
Similarly the category   $\UBC$ has a symmetric monoidal structure $\otimes$  such that the underlying bornological coarse space of  $X\otimes Y$ is as above and the uniform structure consists of entourages
which contain products of uniform entourages of $X$ and $Y$.
Finally, also $\TB$ has a symmetric monoidal structure given by the cartesian structure on the underlying topological spaces and the product bornology as above.

\begin{ddd} \label{kophehehtre}
The cone functor 
\begin{equation}\label{jkfklejlwddqed}
\cO^{\infty}:\UBC\to \BC
\end{equation}
is defined as follows:
\label{jkigowergerwfwfwrefw}
\mbox{}
\begin{enumerate}
\item objects: Let $X$ be in $\UBC$:
\begin{enumerate}
\item The underlying set of $\cO^{\infty}(X)$ is $\R\times X$.
\item The underlying bornology of $\cO^{\infty}(X)$ is the one of $\R\otimes X$ (where $\R$ is considered as a metric space).
\item\label{kogpwerokgweprfwerfrefwre} The coarse structure of $\cO^{\infty}(X)$ consists of those entourages $U$ of {the} coarse product $\R\otimes X$ which have that property that for every uniform entourage $V$ of $X$   there exists  $r$ in $\R$  such that
$((x,t),(x',t'))\in U$ with $\min(t,t')\ge r$ implies  $(x,x')\in V$.
\end{enumerate}
\item morphisms:
  A map $f:X\to Y$ in $\UBC$ induces a map
$\cO^{\infty}(f):\cO^{\infty}(X)\to \cO^{\infty}(Y)$ given by $(t,x)\mapsto (t,f(x))$.
\end{enumerate}
\end{ddd}

The cone boundary   
\begin{equation}\label{fewrfrwfrweffwrf}\partial^{\cone}:\cO^{\infty}(X)\to  \iota(\R\otimes X)
\end{equation}  
is given by the identity of underlying sets.  
Observe that in general the product  uniform structure on $\R\otimes X$ is not compatible with the coarse structure  of $\cO^{\infty}(X)$.

 The following notions will be relevant in the discussion of Mayer-Vietoris sequences for coarse or local homology theories.
Let $X$ be a bornological coarse space and $Y,Z$ be two subsets. 

\begin{ddd} The pair $(Y,Z)$ is said to be coarsely excisive if for every coarse entourage $U$ if $X$ there exists a coarse entourage $V$ of $X$ such that
$$U[Y]\cap U[Z]\subseteq V[Y\cap Z]\ .$$ Equivalenty, the pair $(Y,Z)$ is coarsely  excisive iff   $\{Y\}\cap \{Z\}$ {is equivalent to}  $\{Y\cap Z\}$.
\end{ddd}
Assume now that $X$ is a uniform space with uniform structure $\cU$. 

\begin{ddd} We say that $(Y,Z)$ is uniformly excisive   \cite[Def.\ 3.3]{ass} if there exists
  $U$ in $\cU$ and a function $\kappa: \{V\subseteq X\times X\mid  V\subseteq U\}\to \cP(X\times X)$ (the power set of $X\times X$) such that 
  \begin{enumerate}
  \item	
$V\subseteq V'\subseteq U$ implies $\kappa(V)\subseteq \kappa(V')$, \item for every $V$ in $\cU$
there exists $W$ in $\cU$ with $W\subseteq U$ and  $\kappa(W)\subseteq V$, \item 
for every $W$ in $\cU$ with $W\subseteq U$ we have  
\[W[Y]\cap W[Z]\subseteq \kappa(W)[Y\cap Z]\ .\]
  \end{enumerate}  \end{ddd}

%

If $X$ is a uniform bornological coarse space, then 
by \cite[Lem.\ 9.26]{equicoarse} the cone functor sends pairs $(Y,Z)$ of subsets which a coarsely and uniformly 
excisive to coarsely excisive pairs.

\begin{ex}\label{kogrpergwefwerfwrefw}
A typical example of coarsely (or uniformly) excisive decomposition is the decomposition $((-\infty,0]\times X,[0,\infty)\times X)$ of $\R\otimes X$ for a coarse (or uniform) space $X$.

If $X$ is presented by a path metric space, then any closed decomposition  $(Y,Z)$ is coarsely and uniformly excisive.

In contrast, the picture below shows a subset of the plane together with a non-coarsely excisive pair of subsets.
\begin{center}
\begin{tikzpicture}
\draw[name path=A] (2,0) --(2,2)--(3,2)--(3,4);
\draw[white, name path =B] (6,0) -- (6,4);
\draw[name path=C] (1,4) --(1,2)--(2,2)--(2, 0);
\draw[white, name path =D] (-2,0) -- (-2,4);
\tikzfillbetween[of=A and B]{blue, opacity=0.1};
\tikzfillbetween[of=C and D]{red, opacity=0.1};
\node at (0,1) {$Y$};
\node at (4,1) {$Z$};
\end{tikzpicture}
\end{center}
Note that this subset is not  a path metric space with the restricted metric.
\hB
\end{ex}

 \subsection{Coarse homology theories}\label{gojkperfrefrwfwrefw2}

In this section we recall   from  \cite{buen, equicoarse} the notion of a  coarse homology theory with values in a cocomplete stable $\infty$-category $\bD$ (e.g., the category of spectra $\Sp$ or the category of $KU$-modules $\Mod(KU)$).

Before we can state the axioms we need to recall some further notions from coarse geometry and introduce some notation.

Let $X,Y$ be   coarse spaces and $f,g:X\to Y$ be two controlled maps. 

\begin{ddd} We say that $f$ and $g$ are close  if the map
$$f\sqcup g: \{0,1\}\otimes X\to Y\ , \quad (0,x)\mapsto f(x)\ , \quad (1,x)\mapsto g(x)$$
is controlled. \end{ddd} Here $\{0,1\}$ has the maximal coarse   structure.

 \begin{ddd}A bornological coarse space $X$ is called flasque if it admits a 
 selfmap $f:X\to X$ (called a witness of flasqueness) with the following properties:
 \begin{enumerate}
 \item  $f$ is close to $\id_{X}$
 \item $f$ is non-expanding in the sense that    for every coarse enourage $U$ of $X$ also $\bigcup_{n\in \nat} (f^{n}\times f^{n})(U)$ is a coarse entourage
 \item \label{kojgpwerwrfwerfwf}$f$ is shifting  in the sense that  for every  bounded subset $B$ there exists $n$ in $\nat$ with $B\cap f^{n}(X)=\emptyset$.
  \end{enumerate}
\end{ddd}
 
\begin{ex}\label{rgjkokpwerwerfwref}
If $X$ is a bornological coarse space and $[0,\infty)$ has the bornological coarse structure induced by the metric, then
$[0,\infty)\otimes X$ is flasque with witness of flasqueness $f:X\to X$ given by  $f(t,x):=(t+1,x)$. 
   \hB
\end{ex}

 We consider a functor
 $$E:\BC\to \bD\ .$$
 If $\cY$ is a big family on $X$ in $\BC$ (see \cref{rfiohqeiorfqfewdqddq}), then we define 
 \begin{equation}
 \label{fqwedeqwed}
 E(\cY):= \colim_{Y\in \cY}E(Y)\ ,
\end{equation}
where we equip the members of $\cY$ with the bornological coarse structures induced from $X$.


 \begin{ddd}
 \label{mkeoggwger9} 
 The functor $E:\BC\to \bD$ is called a coarse homology theory if it has 
 the  following properties:
\begin{enumerate}
\item \label{wjoigwegfwerfw}Coarse invariance: 
 The projection $\{0,1\}\otimes X\to X$ induces an equivalence $E(\{0,1\}\otimes X)\to E(X)$ for every $X$ in $\BC$.
\item Excision: For every  $X$ in $\BC$ each and each pair $(\cY,Z)$ of a big family $\cY$ and a subset $Z$ of $X$ such that $X=Y\cup Z$ for some $Y$ in $\cY$ (i.e., a complementary pair), we have a push-out square 
$$
\xymatrix{E(\cY\cap Z)\ar[r]\ar[d] & E(Z) \ar[d] \\ E(\cY) \ar[r] & E(X) }\ . 
$$
 \item Vanishing on flasques: $E(X)\simeq 0$ if $X$ is flasque.
  \item $U$-continuity: For every  $X$ in $\BC$ the canonical morphism $$\colim_{U\in \cC} E(X_{U})\to E(X)$$ is an equivalence, where $X_{U}$ is the bornological coarse space derived from $X$ by replacing the coarse structure by the smaller coarse structure generated by $U$.
\end{enumerate}
\end{ddd}

  \begin{rem}
  If $Y$ is a subset of $X$, then we have a map $E(Y)\to E(\{Y\})$. 
  If $E$ is coarsely invariant, then this map is an equivalence.
  \hB
\end{rem}

\begin{rem}
Coarse invariance {of $E$} is equivalent to the condition that for every two maps $f,g:X\to Y$  in $\BC$ which are close to each other we have $E(f)\simeq E(g)$. \hB
\end{rem}

\begin{rem}
For a more symmetric version of excision let $\cZ$ be a big family which is complementary to $\cY$ in the sense that there exists a member $Z$ of $\cZ$ such that $(\cY,Z)$ is a complementary pair.  If $E$ is a coarse homology theory, then we also have a push-out square \begin{equation}\label{gwerojopewferferwf}\xymatrix{E(\cY\cap \cZ)\ar[r]\ar[d] & E(\cZ) \ar[d] \\ E(\cY) \ar[r] & E(X) }\ . 
\end{equation}

If $(Y,Z)$ is a coarsely excisive decomposition of $X$, then by \cite[Lem. 3.41]{buen} have a push-out square
 \[
  \label{gwerojopewferferwf2}
 \begin{tikzcd}
 	E(Y\cap Z)\ar[r]\ar[d] & E(Z) \ar[d] \\ E(Y) \ar[r] & E(X) \ .
 \end{tikzcd}
\]
\hB
\end{rem}

\begin{rem}\label{werkogpergwerfwf}
In \cite{buen} we have constructed a universal
coarse homology theory  \begin{equation}\label{oiujvodbsdfb}
\Yo^{s}:\BC\to \Sp\cX
\end{equation}
to a certain presentable stable infinity category $\Sp\cX$ such that for every cocomplete stable $\infty$-category $\bD$ 
the restriction 
\[
(\Yo^{s})^{*}:\Fun^{\colim}(\Sp\cX,\bD)\to \Fun(\BC,\bD)
\]
is  an equivalence onto the full subcategory of $ \Fun(\BC,\bD)$ of coarse homology theories.
In particular, every coarse homology theory $E:\BC\to \bD$ has an essentially unique colimit-preserving factorization
$$\xymatrix{\BC\ar[dr]_{\Yo^{s}}\ar[rr]^{E}&&\bD\\&\Sp\cX\ar@{..>}[ur]_{E}&}$$ indicated by the dotted
arrow and denoted by the same symbol. 
Since the  $\Sp\cX$ plays a role analogous to the category of spectra in classical homotopy theory it  is  called the category of coarse spectra.
\hB\end{rem}

\begin{ex}
A basic consequence of the axioms is a canonical  equivalence  
\begin{equation}\label{fqwedwedewdqwdwd}\Sigma E(-)\simeq E(\R\otimes -):\BC\to \bD\ .
\end{equation}
To this end we consider the  coarsely excisive decomposition of $\R\otimes X$ from \cref{kogrpergwefwerfwrefw} and get by specializing \eqref{gwerojopewferferwf2} a push-out square
 $$\xymatrix{E(X)\ar[r]\ar[d] &E([0,\infty)\otimes X) \ar[d] \\ E((-\infty,0]\otimes X)\ar[r] &E(\R\otimes X )} \ .$$
 We now use that $[0,\infty)\otimes X$ and$(-\infty,0] \otimes X$  are flasque  by \cref{rgjkokpwerwerfwref} in order to see that the lower left and upper right corners of this square vanish. This provides an equivalence of the lower right corner of the square with the suspension of the upper left corner. \hB
\end{ex}

A coarse homology theory can have the following additional properties:
\begin{enumerate}
\item Continuity: For every  $X$ in $\BC$ the canonical map 
\[
\colim_{Z\in \mathrm{LF}(X)} E(Z)\to  E(X)
\] 
is an equivalence, where $ \mathrm{LF}(X)$ is the poset of locally finite subsets $Z$ of $X$.
{Here} $Z$ is locally finite if the induced bornology of $Z$ consists of the  finite subsets.
\item Additivity:  For any family $(X_{i})_{i\in I}$ in $\BC$ the natural map $E(\bigsqcup^{\mathrm{free}}_{i\in I}X_{i})\to \prod_{i\in I} E(X_{i})$ (defined using excision and assuming that the product in $\bD$ exists) is an equivalence, where
the free union $\bigsqcup^{\mathrm{free}}_{i\in I}X_{i}$ is the bornological coarse space with underlying set $\bigsqcup_{i\in I}X_{i}$ with  the coarse structure   generated by entourages $\bigcup_{i\in I}U_{i}$ for families
$(U_{i})_{i\in I}$ in $\prod_{i\in I} \cC_{X_{i}}$, and whose bounded subsets are subsets $B$  such that
$B\cap X_{i}$ is  bounded for all $i$, and non-empty for at most finitely many $i$ in $ I$.
\item Strongness: $E(X)\simeq 0$ for weakly flasque bornological coarse spaces, where $X$ is called weakly flasque if it admits a non-expanding and shifting (see \cref{werkogpergwerfwf}.\ref{kojgpwerwrfwerfwf}) selfmap $f:X\to X$   such that
$\Yo^{s}(f)\simeq \Yo^{s}(\id_{X})$, see \eqref{oiujvodbsdfb}.
 
\end{enumerate}

\subsection{Local and locally finite homology theories}
\label{kogpewgwergergwergwerfw}

In this section we recall the notions of a local and a locally finite homology theory from \cite{ass, buen}.
We further recall the construction  via the cone functor  of the local homology theory associated to a coarse homology theory  and 
the implementation of additional support conditions.

We start with the notion of local finiteness which applies to functors defined on  spaces equipped with a bornology.
We consider a functor
$E:\UBC\to \bD$ or $E:\TB\to \bD$ and assume that $\bD$ is stable, complete and cocomplete. \begin{ddd}
 We say that $E$ is locally finite if the canonical map
 \begin{equation}\label{sdfvsdfvsfdvv}E(X)\to \lim_{B\in \cB} \Cofib(E(X\setminus B)\to E(X))
\end{equation} is an equivalence for every $X$ with bornology $\cB$.\end{ddd}
Equivalently, $E$ is locally finite if $$\lim_{B\in \cB} E(X\setminus B)\simeq 0$$ for every  $X$. The inclusions of locally finite functors into all functors
 are the right-adjoints of left  Bousfield localizations
 \begin{align} &(-)^{\lf}:\Fun(\UBC,\bD)\leftrightarrows \Fun^{\lf}(\UBC,\bD):\incl\ ,\nonumber\\ & (-)^{\lf}:\Fun(\TB,\bD)\leftrightarrows \Fun^{\lf}(\TB,\bD):\incl \label{ijgowrgrfwferfref}\end{align} whose units $E\to E^{\lf}$ have the components \eqref{sdfvsdfvsfdvv}, see \cite[Sec. 7.1.2]{buen}.

\begin{rem}
In  {this} remark we recall the  characterization of homology theories on the category $\Top$ of topological spaces.
We let $\ell:\Top\to \Spc$ be the Dwyer-Kan localization at the weak homotopy equivalences.   This is one of the presentations of the category $\Spc$ of spaces (anima).  Let $E:\Top \to \bD$ be a functor to a cocomplete stable $\infty$-category. 
The functor $E$ is a  homology theory on $\Top$  if it  sends weak  {homotopy} equivalences to equivalences and has the property  that the induced factorization $\Spc\to \bD$  
$$\xymatrix{\Top\ar[rr]^{E}\ar[dr]^{\ell}&&\bD\\&\Spc\ar@{..>}[ur]&}$$preserves colimits.

By the universal property of $\Spc$ the evaluation at $*$ is an equivalence $$\Fun^{\colim}(\Spc,\bD)\stackrel{\ev_{*}}{\simeq} \bD$$ so that the category of  $\bD$-valued homology theories is equivalent to $\bD$ itself.
For $D$ in $\bD$ we write $D(-)$ for the corresponding homology theory $X\mapsto \ell(X)\otimes D$
(written using the tensor structure of $\bD$ over $\Spc$).

A homology theory on $\Top$ in particular has the following properties:
\begin{enumerate}
\item homotopy invariance: The map $E([0,1]\times X)\to E(X)$ is an equivalence for every $X$ in $\Top$.
\item 
{excisiveness}: For every  pair of big families $(\cY,\cZ)$ of $X$ (in the topological sense) covering $X$  (i.e.,  there exists members $Y$ and $Z$ with $Y\cup Z=X$) we have a push-out square
$$\xymatrix{E(\cY\cap \cZ)\ar[r]\ar[d] &E(\cY) \ar[d] \\ E(\cZ) \ar[r] &E(X) }\ . $$
\end{enumerate}
Note that these two  properties do not suffice to characterize homology theories.
 The formulation of excision with big families avoids to talk about the usual additional point-set theoretical assumptions on decompositions of $X$ into two subsets. But if, e.g., $(Y,Z)$ is a closed decomposition of a metric space $X$ such that  the inclusions $Y\to U_{r}[Y]$
and $Z\to U_{r}[Z]$ are homotopy equivalences for all sufficiently small $r$, then we have a push-out 
$$\xymatrix{E(Y\cap Z)\ar[r]\ar[d] &E(Y) \ar[d] \\ E(Z) \ar[r] &E(X) }$$ 
provided $E$ is homotopy invariant and excisive.
  \hB
\end{rem}

Let $E:\TB\to \bD$ be a functor to a cocomplete and complete stable $\infty$-category.
 
\begin{ddd}[{\cite[7.27]{buen}}] 
\label{DefinitionLocallyFiniteHomologyTheory}
The functor $E$ is a  locally finite homology theory  if it is
\begin{enumerate}
\item homotopy invariant
\item excisive
\item locally finite.
\end{enumerate}\end{ddd}

\begin{rem}
 By \cite[7.1.4]{buen} the composition $$\Fun(\Top,\bD) \stackrel{ }{\to}  \Fun(\TB,\bD) \stackrel{(-)^{\lf}}{\to} \Fun^{\lf}(\TB,\bD)$$ of the restriction along the forgetful functor $\TB\to \Top$ and  the left-adjoint in \eqref{ijgowrgrfwferfref}
sends homology theories $E  $ to locally finite homology theories $E^{\lf} $. 
The original homology theory  can be recovered from $E^{\lf}$ as the homology theory represented by the spectrum $E^{\lf}(*)$.

On the other hand, if $F:\TB\to \bD$ is a  locally finite homology theory, then the spectrum $F(*)$ represents a cohomology theory $F(*)(-)$ and we have a natural transformation of functors $F(*)^{\lf}(-) \to F(-) : \TB \to \bD$, which attempts to recover $F$ from its value  on the point. 
By  \cite[Thm. 7.43]{buen}
 the map $F(*)^{\lf}(X)\to F(X)$ is an equivalence for a large class of topological bornological  spaces, e.g, for locally finite simplicial complexes with the topology and bornology induced by the spherical path metric \cite[Ex. 7.42]{buen}. \mbox{}
 \hB
  \end{rem}

\begin{rem}
We say that  $E:\TB\to \bD$ is open (or closed) excisive  {if} 
$E$ sends open (or closed) decompositions to push-out squares.
A locally finite and  open  (or closed) excisive functor $E:\TB\to \bD$ has an additional contravariant  functoriality for   topological bornological  inclusions $Y\to X$ of open (closed) subsets. Indeed,  by open (or closed) excision for every closed (or open) $B$ in $\cB\cap Y$ we have
\[
\Cofib(E(Y\setminus B)\to E(Y)) \stackrel{\simeq}{\longrightarrow} \Cofib(E(X\setminus B)\to E(X))\ .
\]
In  the limit in \eqref{sdfvsdfvsfdvv} we can restrict to closed (open) bounded subsets contained in $Y$.
By local finiteness get we a map
\begin{equation*}
\begin{aligned}
E(X) &\simeq  \lim_{B\in \cB} \Cofib(E(X\setminus B)\to E(X)) \to  \lim_{B\in \cB\cap Y}\Cofib(E(X\setminus B)\to E(X))\\&\simeq   \lim_{B\in \cB\cap Y} \Cofib(E(Y\setminus B)\to E(Y)) \simeq E(Y)\ .
\end{aligned}
\tag*{$\blacksquare$}
\end{equation*}
\end{rem}

We now consider functors $E:\UBC\to \bD$ to a stable cocomplete $\infty$-category $\bD$. Homotopy invariance  is defined as above for $\TB$. We will consider the following additional properties:
 \begin{enumerate}
 \item {excisiveness}: For every coarsely and uniformly excisive decomposition $(Y,Z)$ of $X$
 we have a push-out square $$\xymatrix{E(Y\cap Z)\ar[r]\ar[d] &E(Y) \ar[d] \\ E(Z) \ar[r] &E(X) }\ . $$
 \item vanishing on flasques: $E([0,\infty)\otimes X)\simeq 0$ for all $X$ in $\UBC$.
 \item $u$-continuity: $\colim_{U\in \cC\cap \cU} E(X_{U})\stackrel{\simeq}{\to} E(X)$, where
 $X_{U}$ is obtained from $X$ by replacing the coarse structure by the smaller coarse structure generated by $U$.
 \end{enumerate}

\begin{ddd}
\label{DefinitionLocalHomologyTheory}
The functor $E $ is called a local homology theory if it has the following properties:
\begin{enumerate}
\item homotopy invariance
\item excisiveness
\item vanishing on flasques
\item $u$-continuity.
\end{enumerate}
\end{ddd}

We have then following constructions of local homology theories:
\begin{enumerate}
\item If $E:\UBC\to \bD$ is a local homology theory which is coarsening invariant \cite[Def. 11.8]{ass} and $\bD$ is complete, then $E^{\lf}$ is again a (coarsening invariant, see \cite[Lem. 11.10]{ass}) local homology theory which is in addition locally finite.
The assumption on coarsening invariance is needed in order to ensure that $E^{\lf}$ is $u$-continuous. Without this assumption we would have the problem of  commuting  the limit defining $E^{\lf}$ with the colimit appearing in the $u$-continuity condition.
\item If $E:\TB\to \bD$ is a locally finite homology theory, then $E  \tau:\UBC\to \TB$ is a local homology theory \cite[Lem. 3.16]{ass}.
\item If $E:\BC\to \bD$ is a coarse homology theory, then $E  \iota:\UBC\to \bD$ is a local homology theory \cite[Lem 3.13]{ass}.
\item If  $ E:\BC\to \bD$ is a coarse homology theory which is in addition strong, then
$E\cO^{\infty}:=E\circ \cO^{\infty}:\UBC\to \bD$ is a local homology theory \cite[Lem. 9.6]{ass}.
It is coarsening invariant by \cite[Ex. 11.9]{ass}.
\end{enumerate}

 \begin{ex}\label{rtlphhrerhgtrge}
 The cone boundary \eqref{fewrfrwfrweffwrf} together with \eqref{fqwedwedewdqwdwd} induces a natural transformation of local homology theories
 \begin{equation}\label{bdgpbkdpgbdpgbdfg}a:\Sigma^{-1}E\cO^{\infty}\to  E\iota:\UBC\to \bD\ .
\end{equation}
which we call the index map. \hB
 \end{ex}

 Note that a local homology theory   $E:\UBC\to \bD$ with complete target is  not necessarily locally finite. 
By \cite[Sec. 11]{ass} we have comparison transformations 
\begin{equation}\label{gerfwerfreferfvwef}E(*)^{\lf}(-)\to  E^{\lf} \leftarrow E:\UBC\to \bD\ ,
\end{equation} 
where $E(*)^{\lf}(-)$ denotes the locally finite homology theory represented by the spectrum $E(*)$.

 \begin{ex}\label{wkorgpwertgwrewrefwerf} Let $E:\BC\to \bD$ be a  strong and additive coarse homology theory with complete target. If  $X$ in $\UBC$ is presented by a locally finite finite-dimensional simplicial complex with the spherical path metric, then  by \cite[Prop. 11.23]{ass} transformations from \eqref{gerfwerfreferfvwef} induce equivalences
 $$\Sigma E(*)^{\lf}(X)\stackrel{\simeq}{\to} (E\cO^{\infty})^{\lf}(X)\stackrel{\simeq}{\leftarrow} E\cO^{\infty}(X) \ .$$
Here we used the equivalence   $E\cO^{\infty}(*)\simeq \Sigma E(*)$. 
  \hB
 \end{ex}

 \begin{ex}
 \label{kofpwerfeggrwrg}
Let $X$ in $\UBC$ be a uniform bornological coarse space whose bornology is the bornology of relatively compact subsets of the underlying topological space.  Note that $X$ is then necessarily locally compact.
 
 We assume that $E:\BC\to \bD$  is a strong coarse homology theory with complete target $\bD$. 
By the coarsening invariance of $E\cO^{\infty}$ \cite[Prop. 9.33]{equicoarse}, \cite[Ex. 11.9]{ass} we know that
$E\cO^{\infty}(X)$, and therefore also $(E\cO^{\infty})^{\lf}(X)$ do not depend on the choice of $\cC$. 
Since the restrictions  the uniform structures as above to a compact  subset of $X$ is determined by the topology of $X$ alone we conclude that $(E\cO^{\infty})^{\lf}(X)$ does not depend on the uniform structure as well. 
So $(E\cO^{\infty})^{\lf}(X)$ is an invariant of the locally compact topological space $X$. 
By \cref{wkorgpwertgwrewrefwerf}, if $X$ is a locally finite finite-dimensional simplicial complex and $E$ is in addition additive, then
$(E\cO^{\infty})^{\lf}(X)\simeq \Sigma E(*)^{\lf}(X)$ where the right-hand side clearly  only depends on the locally compact space $X$. 

Assume now that $X$ in $\UBC$ has {the} additional structure of a finite-dimensional, locally finite  simplicial complex whose simplices are uniformly equicontinuous  and coarsely bounded.
Let $X_{\simp}$ be $X$ equipped with the uniform and coarse structure induced by the spherical path metric
of the simplices. In both cases we assume that the bornologies consist  of relatively compact subsets.
Then the identity of the underlying set  is a morphism $X_{\simp}\to X$ in $\UBC$.
We  have a commutative diagram
$$\xymatrix{E\cO^{\infty}(X_{\simp})\ar[r]^{\simeq}\ar[d] &(E \cO^{\infty})^{\lf}(X_{\simp})\ar[d]^-{\simeq} &\ar[l]_-{\simeq}\Sigma E(*)^{\lf}(X_{\simp})\ar[d]^{\simeq}\\E\cO^{\infty}(X)\ar[r] & (E\cO^{\infty})^{\lf}(X)&\Sigma E(*)^{\lf}(X)\ar[l]_-
{\simeq}}$$
which presents $\Sigma E(*)^{\lf}(X)$ as a retract of $E\cO^{\infty} (X)$.
\hB \end{ex}

\begin{rem}
The functor   $$ \Sigma E(*)^{\lf}(-): \UBC \to \bD$$ has the additional contravariant functoriality for inclusions of open  subsets.  We do not know whether
 $E\cO^{\infty} (-)$ has such a contravariant functoriality. \hB
 \end{rem}

\begin{ex}
\label{gkgoerpgwergrefwrefw} 
The consideration from \cref{kofpwerfeggrwrg} {applies} in particular to a complete  finite-dimensional Riemannian manifold $M$ presenting an object of $\UBC$. 
 Assume that $E$ is strong and additive with complete target $\bD$.
Then
$\Sigma E(*)^{\lf}(M)$ is  a retract of $E\cO^{\infty}(M)$.
We get a factorization of the index map over a locally finite version $a_{M}^{\lf}$
$$\xymatrix{\Sigma^{-1}E\cO^{\infty}(M)\ar[r]\ar@/^1cm/[rr]^{\mbox{}}\ar[dr]^{a_{M}}&\ar@{..>}[d]^{a^{\lf}_{M_{\simp}}}E(*)^{\lf}(M) & \ar[l]_-{\simeq}\Sigma^{-1}E\cO^{\infty}(M_{\simp})\ar[dl]_{a_{M_{\simp}}}\\&E(M)&}$$
of the index map.
\hB \end{ex}

We now take advantage of the geometric construction of local homology theories via the cone functor in order to add support conditions.
Let $X$ be in $\UBC$ and $\cY$ and $\cZ$ be two big families  in $X$. 
  
    \begin{ddd}
    \label{fewdqwedqewdqewd}
We define the big  families
  \[
  \cO^{\infty}(\cY) := \R \times \cY, \qquad \cO^{-}(\cY):= \{{\R^-}\} \times \cY
  \]
   and 
 {
  \[
  \cO^{\infty}_{\cZ}(X):= \{\R^-\} \times \cZ \cup \{\R^+\} \times X , \qquad {\cO^\infty_{\cZ}(\cY) :=} \cO^{\infty}_{\cZ}(X)\cap \cO^{\infty}(\cY)
  \]
  }
  in $\cO^{\infty}(X)$. 
  \end{ddd}
 
{ 
Recall   \cref{BigFamilyConstr} for the notations in the above definition.
}
  
  Note that all members of $\cO^{-}(\cY)$ are flasque.
  
\begin{center}
\begin{tikzpicture}
\draw (-5,0.7) -- (-5,-0.7);
\draw (-5.3,0.7) -- (-5.3,-0.7);
\draw (-5,0.7) --(-5.3,0.7);
\draw[dashed, name path=A] (-5,0.2) --(-5.3,0.2);
\draw[dashed, name path=B] (-5,-0.5) --(-5.3,-0.5);
\draw[dotted, name path=C] (-5,0.5) --(-5.3,0.5);
\draw[dotted, name path=D] (-5,-0.1) --(-5.3,-0.1);
\draw (-5,-0.7) --(-5.3,-0.7);
\tikzfillbetween[of=A and B]{blue, opacity=0.1};
\tikzfillbetween[of=C and D]{red, opacity=0.1};
\node at (-5.6,0.23) {$\cZ$};
\node at (-4.7,-0.2) {$\cY$};
\node at (-5.2,-1.1) {$X$};
\draw (-3,0.7) --(0,0.7);
\draw[dashed, name path=A] (-3,0.2) --(0,0.2);
\draw[dashed, name path=B] (-3,-0.5) --(0,-0.5);
\draw[dotted] (-3,0.5) --(0,0.5);
\draw[dotted] (-3,-0.1) --(0,-0.1);
\draw (-3,-0.7) --(0,-0.7);
\draw (0, 0.7) -- (0, -0.7);
\draw (0, 0.7) -- (3, 2.1);
\draw (0, -0.7) -- (3, -2.1);
\draw[dashed, name path=C] (0, 0.2) -- (3, 0.8);
\draw[dashed, name path=D] (0, -0.5) -- (3, -1.3);
\tikzfillbetween[of=A and B]{blue, opacity=0.1};
\tikzfillbetween[of=C and D]{blue, opacity=0.1};
\node at (-0.4,-1.3) {$\cO^\infty(\cY)$};
\node at (-0.8,1.5) {$\cO^-(\cY)$};
\draw[->, color=gray] (-1,1.3) .. controls (-1.6,0.8) .. (-2,0);
\end{tikzpicture}
\\
\begin{tikzpicture}
\draw (-3,0.7) --(0,0.7);
\draw[dotted, name path=A] (-3,0.5) --(0,0.5);
\draw[dotted, name path=B] (-3,-0.1) --(0,-0.1);
\draw[dashed] (-3,0.2) --(0,0.2);
\draw[dashed] (-3,-0.5) --(0,-0.5);
\draw (-3,-0.7) --(0,-0.7);
\draw (0, 0.7) -- (0, -0.7);
\draw[name path=C] (0, 0.7) -- (3, 2.1);
\draw[name path=D] (0, -0.7) -- (3, -2.1);
\tikzfillbetween[of=A and B]{red, opacity=0.1};
\tikzfillbetween[of=C and D]{red, opacity=0.1};
\node at (-0.4,-1.3) {$\cO^\infty_{\cZ}(X)$};
\end{tikzpicture}
~~~~~ 
\begin{tikzpicture}
\draw (-3,0.7) --(0,0.7);
\draw[dotted] (-3,0.5) --(0,0.5);
\draw[dashed, name path=A] (-3,0.2) --(0,0.2);
\draw[dotted, name path=B] (-3,-0.1) --(0,-0.1);
\draw[dashed] (-3,-0.5) --(0,-0.5);
\draw (-3,-0.7) --(0,-0.7);
\draw (0, 0.7) -- (0, -0.7);
\draw (0, 0.7) -- (3, 2.1);
\draw (0, -0.7) -- (3, -2.1);
\draw[dashed, name path=C] (0, 0.2) -- (3, 0.8);
\draw[dashed, name path=D] (0, -0.5) -- (3, -1.3);
\tikzfillbetween[of=A and B]{red, opacity=0.1};
\tikzfillbetween[of=C and D]{red, opacity=0.1};
\tikzfillbetween[of=A and B]{blue, opacity=0.1};
\tikzfillbetween[of=C and D]{blue, opacity=0.1};
\node at (-0.4,-1.3) {$\cO^\infty_{\cZ}(\cY)$};
\node at (-0.8,1.5) {$\cO^-(\cY\cap \cZ)$};
\draw[->, color=gray] (-1,1.2) .. controls (-1.5,0.9) .. (-2,0);
\end{tikzpicture}
\end{center}

 We let $\UBC^{(2)}$ be the category of pairs $(X,\cZ)$ of $X$ in $\UBC$ and a big family $\cZ$. A morphism
 $f:(X,\cZ)\to (X',\cZ')$ is a morphism $f:X\to X'$ in $\UBC $ such that $f(\cZ)\subseteq \cZ'$. 
 We define the category $\BC^{(2)}$ of pairs $(X,\cZ)$ {of} bornological coarse spaces with a big family similarly.
 We have a forgetful functor $\iota:\UBC^{(2)}\to \BC^{(2)}$ and the functor
 \[
 \cO^{\infty}_{-}: \UBC^{(2)}\to \BC^{(2)}\ , \quad (X,\cZ)\mapsto (\cO^{\infty}(X),\cO^{\infty}_{\cZ}(X))\ .
 \]
 
 If $E:\BC\to \bD$ is a functor to a cocomplete stable $\infty$-category, then we get the functor
 \begin{equation}\label{vfdsovjsfdovsdfvsfdvsfv}E\cO^{\infty}_{-}:\UBC^{(2)}\to \bD\ , \quad  (X,\cZ)\mapsto E(\cO^{\infty}_{\cZ}(X))\ .
\end{equation}
 It comes with a natural transformation $E\cO^{\infty}_{-}\to E\cO^{\infty} $ induced by the maps
 $\cO_{\cZ}^{\infty}(X)\to \cO^{\infty}(X)$ for every $(X,\cZ)$ in $\UBC^{(2)}$, where we consider $E\cO^{\infty} $ as a functor on $\UBC^{(2)}$ using the forgetful functor $(X,\cZ)\mapsto X$.
 
 We now assume that $E:\BC\to \bD$ is a strong coarse homology theory.
 {As discussed above, the induced functor $E\cO^\infty : \UBC \to \bD$ is a local homology theory.}

 \begin{ddd}\label{owkgpergwrefwerfw}
 We call the functor $E\cO^{\infty}_{-}:\UBC^{(2)}\to \bD$ from \eqref{vfdsovjsfdovsdfvsfdvsfv} the associated local homology theory with support.
 \end{ddd}
 
 \begin{rem}
 Let $\cZ$  be a big family in $X$.
 For a subset $Y$ or a big family $\cY$ on $X$, in order to simplify the notation,  we will write
 $E\cO^{\infty}_{\cZ} (Y):=E\cO^{\infty}_{Y\cap \cZ} (Y)$ or $E\cO^{\infty}_{\cZ} (\cY):=\colim_{Y\in \cY} E\cO^{\infty}_{\cZ\cap Y} (Y)$.
This   notation is consistent since
$E\cO^{\infty}_{\cZ} (\cY)\simeq E(\cO^{\infty}_{\cZ}(\cY))$, where $\cO^{\infty}_{\cZ}(\cY)$ is defined in  \cref{fewdqwedqewdqewd}.
%
%
%
%
  \hB
 \end{rem}

 \begin{lem}
We have a functorial fibre sequence
\begin{equation}\label{gerwfwerfrewfwr}E\iota(\cZ) \to E\cO^{{\infty}}_{{{\{\emptyset\}}}}(X) \to E\cO^{\infty}_{\cZ}(X)\ .
\end{equation} 
  \end{lem}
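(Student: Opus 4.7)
The plan is to establish the fibre sequence by a Mayer--Vietoris type argument applied to the decomposition of $\cO^{\infty}_{\cZ}(X)$ as a union of big families, combined with a flasqueness collapse. Unpacking \cref{fewdqwedqewdqewd}: since $\{\R^-\}\times \{\emptyset\} = \{\emptyset\}$, we have $\cO^{\infty}_{\{\emptyset\}}(X) = \{\R^+\}\times X$ and $\cO^{\infty}_{\cZ}(X) = \cO^-(\cZ) \cup \cO^{\infty}_{\{\emptyset\}}(X)$. A cofinal system of members of the latter is given by $W_{Z, c} := V_1 \cup V_2$, with $V_1 = (-\infty, c]\times Z \in \cO^-(\cZ)$ and $V_2 = [-c, \infty)\times X \in \{\R^+\}\times X$, indexed by $Z\in \cZ$ and $c\ge 0$.

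For each such $W_{Z, c}$, I would apply the excision axiom from \cref{mkeoggwger9} to the complementary pair $(\{V_2\}_{W_{Z,c}}, V_1)$, producing a pushout square relating $E(\{V_2\}_{W_{Z,c}}\cap V_1)$, $E(V_1)$, $E(\{V_2\}_{W_{Z,c}})$, and $E(W_{Z,c})$. Two key reductions apply here. First, the restriction of the coarse structure of $\cO^{\infty}(X)$ to $V_1 = (-\infty, c]\times Z$ coincides with the product coarse structure, because the control-at-$+\infty$ condition from \cref{jkigowergerwfwfwrefw} is vacuous on this half-line; consequently $(t,x)\mapsto (t-1, x)$ is a witness of flasqueness for $V_1$, and hence $E(V_1)\simeq 0$. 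Second, by coarse invariance the inclusion $V_2 \hookrightarrow V'$ is a coarse equivalence for any $V'\in \{V_2\}_{W_{Z,c}}$, so $E(\{V_2\}_{W_{Z,c}})\simeq E(V_2)$.

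Passing to the filtered colimit over $(Z, c)$, and using that pushouts commute with filtered colimits in the stable $\infty$-category $\bD$, the bottom-left corner $\colim_{Z,c} E(V_1) = E(\cO^-(\cZ))$ vanishes, and the remaining pushout supplies the desired fibre sequence $E(\cO^-(\cZ)\cap \cO^{\infty}_{\{\emptyset\}}(X)) \to E\cO^{\infty}_{\{\emptyset\}}(X) \to E\cO^{\infty}_{\cZ}(X)$. It then remains to identify the leftmost term with $E\iota(\cZ)$: members of $\cO^-(\cZ)\cap \cO^{\infty}_{\{\emptyset\}}(X)$ sit inside $[-c, c']\times Z$, a bounded $t$-band over some $Z\in \cZ$, and projection onto $Z$ is a coarse equivalence, so $E([-c, c']\times Z)\simeq E(Z)$; taking the colimit over $Z\in \cZ$ yields $E\iota(\cZ)$. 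Functoriality in $(X,\cZ)\in \UBC^{(2)}$ is inherited from the functoriality of the pointwise excision pushouts.

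The main obstacle will be the cofinality bookkeeping needed to pass from the pointwise pushouts on each $W_{Z,c}$ to a pushout of big families, in particular verifying that the intersection big family $\cO^-(\cZ)\cap \cO^{\infty}_{\{\emptyset\}}(X)$ has the claimed colimit, together with the verification that flasqueness of $V_1$ genuinely holds for the coarse structure inherited from $\cO^{\infty}(X)$, which is the crux of the collapse.
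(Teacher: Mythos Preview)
Your proof is correct and follows exactly the paper's approach: the paper's one-sentence proof invokes the Mayer--Vietoris sequence for the decomposition $(\{\R^-\}\times\cZ,\{\R^+\}\times X)$ of $\cO^\infty_\cZ(X)$ together with flasqueness of the members of $\{\R^-\}\times\cZ$, and you have simply unpacked this by working pointwise on cofinal members and passing to the colimit. The only difference is the level of detail---your verification that the coarse structure on $(-\infty,c]\times Z$ inherited from the cone is the product structure (so that the shift witnesses flasqueness) and your identification of the intersection term with $E\iota(\cZ)$ are exactly the points the paper leaves implicit.
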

\begin{proof}
This is the Mayer-Vietoris sequence for the decomposition
$(\{\R^{-}\}\times \cZ,\{\R^{+}\}\times X)$ of $\cO_{\cZ}^{\infty}(X)$ together with the flasqueness of the members of
$\{\R^{-}\}\times \cZ$.
\end{proof}

 The following lemma says that $E\cO^{\infty}_{-}(-)$ has  the analogues   for pairs  of the properties of a local homology theory.
 \begin{lem}\label{keopgegwerferfw} The functor $E\cO^{\infty}_{-}(-) :\UBC^{(2)}\to \bD$ has the  following properties:
 \begin{enumerate}
 \item homotopy invariant:  For $(X,\cZ)$ in $\UBC$ the projection
 $([0,1]\otimes X,[0,1]\times \cZ)\to (X,\cZ)$ induces an equivalence $E\cO^{\infty}_{[0,1]\times \cZ}([0,1]\otimes X)\to E\cO^{\infty}_{\cZ}(X)$. 
  \item\label{werkjgokwegergwerg9}  {excisiveness}: If $(X,\cZ)$ is in $\UBC^{(2)}$ and $(A,B)$ is a 
  coarsely and uniformly  excisive decomposition of $X$, then the square
  $$\xymatrix{E\cO^{\infty}_{\cZ}(A\cap B)\ar[r]\ar[d] &E\cO^{\infty}_{\cZ}(B) \ar[d] \\E\cO^{\infty}_{\cZ}(A) \ar[r] &E\cO^{\infty}_{\cZ}(X) }  $$ is a push-out square.
 \item 
 vanishing on flasques: For every $(X,\cZ)$ in $\UBC^{(2)}$ we have $E\cO^{\infty}_{\R\times\cZ}(\R\otimes X)\simeq 0$.
 \item $u$-{continuity}: For every $(X,\cZ)$ in $\UBC^{(2)}$ we have $\colim_{U\in \cC} E\cO^{\infty}_{\cZ}(X_{U})\simeq  E\cO^{\infty}_{\cZ}(X)\ .$
 \end{enumerate}
\end{lem}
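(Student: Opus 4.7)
The plan is to establish each of the four properties by reducing to the corresponding property of $E$ on $\BC$ (respectively of the local homology theory $E\cO^{\infty}$ on $\UBC$) via the functorial fibre sequence \eqref{gerwfwerfrewfwr}
\[
E\iota(\cZ) \to E\cO^{\infty}_{\{\emptyset\}}(X) \to E\cO^{\infty}_{\cZ}(X),
\]
and then applying two-out-of-three in cofibre sequences. The left term $E\iota(\cZ) = \colim_{Z \in \cZ} E(Z)$ is a filtered colimit of values of the coarse homology theory $E$ on members of the big family and inherits its behaviour directly from $E$. The middle term $E\cO^{\infty}_{\{\emptyset\}}(X) = E(\{\R^{+}\} \times X)$ is controlled by the already established properties of the local homology theory $E\cO^{\infty}$, applied to a big family whose members are cofinally $[-r,\infty) \times X$ inside $\cO^{\infty}(X)$.

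For homotopy invariance, the projection $[0,1] \otimes X \to X$ induces a morphism of fibre sequences. The map $E\iota([0,1] \times \cZ) \to E\iota(\cZ)$ is an equivalence because each component $E([0,1] \otimes Z) \to E(Z)$ for $Z \in \cZ$ is an equivalence by coarse invariance of $E$ and filtered colimits preserve equivalences. The middle map is an equivalence by homotopy invariance of $E\cO^{\infty}$ (or again by coarse invariance of $E$ applied to the cofinal members $[-r,\infty) \times [0,1] \times X$). Two-out-of-three then yields the equivalence on the right.

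For excisiveness, a coarsely and uniformly excisive decomposition $(A,B)$ of $X$ lifts by \cite[Lem.~9.26]{equicoarse} to a coarsely excisive decomposition of $\cO^{\infty}(X)$ compatible with the big-family structure $\cO^{\infty}_{\cZ}$. Applying the fibre sequence at each of the four corners of the putative push-out square, one checks that the $E\iota$-column is a push-out by big-family excision for the coarse homology theory $E$ (after intersecting $\cZ$ with the decomposition and taking filtered colimits), while the $E\cO^{\infty}$-column is a push-out by excisiveness of the local homology theory $E\cO^{\infty}$; consequently the right column is a push-out as well. For vanishing on flasques, one applies the fibre sequence to the flasque pair: the middle term vanishes because its cofinal members inside the cone are flasque via the shift $t \mapsto t+1$ in the cone direction (whose cone condition is trivially satisfied since the corresponding $x$-coordinate is unchanged), and the left term vanishes by exhibiting a shift witness of flasqueness coming from the flasque factor of the pair. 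Finally, for $u$-continuity, the filtered colimit $\colim_{U \in \cC}$ commutes with the colimits defining each term of the fibre sequence and with the formation of cofibres, so $u$-continuity of $E$ on $\BC$ applied member-by-member to $\cO^{\infty}_{\cZ}(X)$ gives the claim.

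The main anticipated obstacle is the excisiveness property: although the cone functor is known by \cite[Lem.~9.26]{equicoarse} to send coarsely and uniformly excisive decompositions to coarsely excisive ones, one still needs to verify that intersecting such a decomposition of $X$ with the big family $\cZ$ interacts correctly with the filtered colimits defining $E$ on big families. This reduces to a careful colimit interchange together with the big-family version of coarse excision for $E$, but the bookkeeping at the level of big families is the one step that requires genuine care.
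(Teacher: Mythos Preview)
Your approach is exactly the second alternative the paper sketches: use the fibre sequence \eqref{gerwfwerfrewfwr} together with the known local homology theory properties of $E\iota$ and $E\cO^{\infty}$, and conclude by two-out-of-three. So the strategy matches.

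There is, however, one genuine technical slip in your treatment of vanishing on flasques. You claim that the cofinal members $[-r,\infty)\times Y$ of $\{\R^{+}\}\times Y$ inside $\cO^{\infty}(Y)$ (with $Y=[0,\infty)\otimes X$) are flasque via the cone-direction shift $t\mapsto t+1$. This shift is indeed close to the identity and shifting, but it is \emph{not} non-expanding for the cone coarse structure: given a cone entourage $U$, the union $\bigcup_{n}(f^{n}\times f^{n})(U)$ contains pairs coming from arbitrarily small cone parameter, so the cone condition (uniform smallness at infinity) is lost. Hence these subspaces are not flasque in the ordinary sense by this argument, and you cannot conclude $E\cO^{\infty}_{\{\emptyset\}}([0,\infty)\otimes X)\simeq 0$ this way.

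The repair is immediate once you follow the paper's hint more literally. Specialising the fibre sequence \eqref{gerwfwerfrewfwr} to $\cZ=\{X\}$ gives
\[
E\iota(X)\;\to\; E\cO^{\infty}_{\{\emptyset\}}(X)\;\to\; E\cO^{\infty}(X),
\]
and both outer functors are already known to be local homology theories (\cite[Lem.~3.13]{ass} and \cite[Lem.~9.6]{ass}, the latter using strongness of $E$). Two-out-of-three therefore shows that $E\cO^{\infty}_{\{\emptyset\}}$ is itself a local homology theory, in particular it vanishes on $[0,\infty)\otimes X$. With this in hand, your fibre-sequence argument for general $\cZ$ goes through unchanged. (Note also that the statement as printed has $\R\otimes X$; it should be read as $[0,\infty)\otimes X$, as you tacitly did.)
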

\begin{proof}
One option is to redo the argument of  \cite[Lem. 9.6]{ass} with the additional support condition.
Alternatively one could employ the fibre sequence \eqref{gerwfwerfrewfwr} and the fact that $E\iota$ and $E\cO$ are known to be local homology theories (see \cite[Lem 3.13]{ass} for $E\iota$ and \cite[Lem. 9.6]{ass} for $E\cO$).
 \end{proof}

 We now assume that $E:\BC\to \bD$ is a  strong  coarse homology theory.  We  incorporate the support conditions into
 the index map \eqref{bdgpbkdpgbdpgbdfg}   as follows. We define the functor $$E\iota':\UBC^{(2)}\to \bD\ , \quad (X,\cZ)\mapsto E\iota (\cZ)\ .$$ 
 We added the superscript ${}'$ in order to make clear that this functor takes the big family instead of the space.
 
\begin{ddd}
\label{rtkohprethergetetg}  
The index map $$a:\Sigma^{-1}E\cO_{-}^{\infty}\to E\iota':\UBC^{(2)}\to \bD$$  with support conditions is the natural  transformation of functors $       \UBC^{(2)}\to \bD$ with components  
\begin{equation}
\label{bdgpbkdpgbdpgbdfg11}
 a_{X,\cZ}:\Sigma^{-1}E\cO^{\infty}_{\cZ}(X)\to E\iota(\cZ)  \end{equation}
  given by
\[
\Sigma^{-1}E\cO^{\infty}_{\cZ}(X)\stackrel{\partial^{\cone}}{\longrightarrow} \Sigma^{-1}E(\{\R^{-}\}\otimes \cZ\cup\{\R^{+}\}\otimes {X})\stackrel{\partial^{MV}}{\longrightarrow}
 E(\cZ)\ .
 \]
 \end{ddd}
 
 Here we consider  $\{\R^{-}\}\otimes \cZ\cup\{\R^{+}\}\otimes {X}$ as a  big family of ${\R}\otimes X$,   $\partial^{MV}$ is the Mayer-Vietoris boundary associated to the decomposition into $(\{\R^{-}\}\otimes \cZ,\{\R^{+}\}\otimes {X})$, and we use that 
 {
 the inclusion $X \to \R \times X$, $x \mapsto (0, x)$ induces an equivalence $E(\cZ) \simeq E(\{0\} \times \cZ)$.
}

 \begin{rem}
 \label{IndexMapAsBoundary}
  The index map is the boundary map of the fibre sequence \eqref{gerwfwerfrewfwr}.
 In particular, we have a fibre sequence
 \begin{equation}
 \label{FibreSequenceIndexMap}
 \Sigma^{-1} E\cO^{\infty}_{{\{\emptyset\}}}(X) \to \Sigma^{-1} E\cO^{\infty}_{\cZ}(X)	\stackrel{a_{X, \cZ}}{\to} E(\cZ).
 \end{equation}
 \hB
\end{rem}

\subsection{$C^{*}$-algebras and categories and their $K$-theory}\label{gkpoegsergreg}

In this section we recall some basic constructions from non-commutative homotopy theory. 
In particular we recall the  $E$- and $K$-theory for graded $C^{*}$-algebra.  
Instead of providing explicit cycle-by-relation constructions  we will give a complete description 
in terms of universal properties. 
All needed classes will be derived from explicit  homomorphisms
between suitable $C^{*}$-algebras (see \cref{jiogowergwefrfwrf} and  \cref{okfpqfqewddedq}), and the   boundary operators in long exact sequences  
 come from general homotopy-theoretic principles (see \cref{egkopergwrefw}).

Let $G$ be a countable discrete group. By  $G\nCalg$ we denote the category of {not necessarily unital} $C^{*}$-algebras with a $G$-action
by automorphisms. The left action of $G$ on itself turns the Hilbert space $L^{2}(G)$ into a $G$-Hilbert space.
We let $K_{G}$ denote the algebra of compact operators on $L^{2}(G)\otimes \ell^{2}$ with the induced $G$-action. In the present paper we will need the cases of a trivial group and {the two-element group} $G=C_{2}$.
The category $G\nCalg$ has a symmetric monoidal structure $\otimes$ given by the maximal tensor product.  

We will consider the following properties of a functor $F:G\nCalg\to \bD$ to some cocomplete stable $\infty$-category $\bD$:
\begin{enumerate}
\item homotopy {invariance}: For every $A$ in $\nCalg$ the inclusion $A\to A\otimes C([0,1])$ (induced by the inclusion $\C\to C([0,1])$ as constant functions) induces an equivalence $F(A)\to F(A\otimes C([0,1]))$.
\item $K_{G}$-{stablility}: $F$ sends $K_{G}$-equivalences to an equivalences. Thereby   
$f:A\to B$ is a $K_{G}$-equivalence if $f\otimes \id_{K_{G}}:A\otimes K_{G}\to B\otimes K_{G}$ is a homotopy equivalence, or equivalently, is sent to an equivalence by every homotopy invariant functor. 
\item {exactness}: $F(0)\simeq 0$ and $F$ sends every exact sequence $0\to A\to B\to C\to 0$ in 
$G\nCalg$ to a fibre sequence $F(A)\to F(B)\to F(C)$ in $\bD$.
\item s-finitary: For every $A$ in $G\nCalg$ the natural morphism induces an equivalence $\colim_{A'\subseteq_{\sepa}A} F(A')\simeq F(A)$ , where $A'\subseteq_{\sepa}A$ is the poset of $G$-invariant separable subalgebras of $A$.
   \item sum-preserving: For any family $(A_{i})_{i\in I}$ in $G\nCalg$ the canonical map induced by the inclusions $A_{j}\to \bigoplus_{i\in I}A_{i} $ for all $j$ in $I$ induces an equivalence
   $\bigoplus_{i\in I} F(A_{i})\simeq F(\bigoplus_{i\in I}A_{i})$.
\end{enumerate}

\begin{rem}\label{egkopergwrefw}
Note that exactness is a property the functor can have or not. If $F$ is exact, then the boundary map
$\partial:F(C)\to \Sigma F(A)$ is implicitly encoded into the functor. 

Consider for example the $K$-theory functor $K:\nCalg\to \Mod(KU)$ described further below. This functor
turns out to be exact and therefore sends an exact sequence of $C^{*}$-algebras to a fibre sequence of $KU$-modules. Upon taking homotopy groups we get  a long exact sequence
$$K_{*}(A)\to K_{*}(B) \to K_{*}(X)\stackrel{\partial}{\to} K_{*-1}(A)$$ of abelian groups.
Classically one considers the graded group-valued functor  $K_{*}$ only. In order to even state the exactness of the latter one must provide the boundary as an additional datum. In the cycle-by-relation picture
it is given by an explicit formula involving unitaries and projections, and this approach requires various verifications of well-definedness, exactness, and naturality.  
Here different authors use different formulas which have to be compared.  Eventually, all these constructions,
at least up to sign, provide the boundary map  coming from homotopy theory. We refer to the discussion 
of this question in \cite{Bunke:2023ab}. 
\hB
\end{rem}

Following \cite{Bunke:2023aa} (for $G=e$) and \cite{budu} (for general $G$)  we adopt the following definition:

\begin{ddd}[{\cite{budu}}]\label{wgregwrefwrfw}
The equivariant $E$-theory functor
\[
\ee^{G}:G\nCalg\to \EE^{G}\]
 is  an initial     homotopy invariant, $K_{G}$-stable, exact,  sum-preserving  and $s$-finitary functor to a cocomplete stable $\infty$-category.
 \end{ddd}
 
In other words, the equivariant $E$-theory functor is uniquely characterized by the universal property that $\EE^{G}$ is cocomplete and stable, and that 
$$(\ee^{G})^{*}: \Fun^{\colim}(\EE^{G}
,\bD) \to \Fun^{h,K_{G},\exa,\sfin,\oplus}(G\nCalg,\bD)$$
is an equivalence for any cocomplete stable $\infty$-category $\bD$, where  the target   is the full subcategory of $\Fun(G\nCalg,\bD)$ of functors having the list of properties indicated by the superscripts.
In the case of $G=\{e\}$ we will omit the superscript $G$.

Since for any $A$ in $G\nCalg$ precomposition
with $A\otimes -:G\nCalg\to G\nCalg$ preserves   homotopy invariant, $K_{G}$-stable, exact,  sum-preserving  and $s$-finitary functors the category  $\EE^{G}$ has furthermore a uniquely determined symmetric monoidal structure such that $\ee^{G}$ extends to a symmetric monoidal functor.
It turns out that $\EE^{G}$ is a presentably symmetric monoidal $\aleph_{1}$-presentable stable $\infty$-category \cite{budu}.
 
  The tensor unit of $\EE$ is $\ee(\C)$. One can check  \cite{Bunke:2023aa} that  the commutative ring spectrum  $$KU:=\map_{\EE}(\ee(\C),\ee(\C))$$ in $\CAlg(\Sp)$ is equivalent to the usual complex $K$-theory spectrum. 
  The symmetric monoidal functor $\Res_{G}:\nCalg\to G\nCalg$ equipping $C^{*}
$-algebras with the trivial $G$-action descends to a symmetric monoidal functor  $\Res_{G}:\EE\to \EE^{G}$.   Consequently,  $\EE^{G}$ is enriched over  the category $\Mod(KU)$ of $KU$-modules. 
  For $A,B$ in $G\nCalg$ we will write
  $$\EE^{G}(A,B):=\map_{\EE^{G}}(\ee^{G}(A),\ee^{G}(B))$$
  for the bivariant $\EE^{G}$-theory  $KU$-module of $A,B$.  Considering the mapping spectra in $\EE^{G}$ as $KU$-modules encodes Bott-periodicity in a natural way.

\begin{rem}
\label{kojgwpergwrefwefrwef}As shown in  \cite{budu}, for separable $A,B$ in $G\nCalg$  there is a canonical isomorphism  of groups  
between $\pi_{0}\EE^{G}(A,B)$ and the classical equivariant $E$-theory groups
as defined, e.g., in \cite{Guentner_2000} using homotopy classes of   asymptotic morphisms. Furthermore, the restriction of $\ee^{G}$ to   separable $G$-$C^{*}$-algebras yields a
functor $G\nCalg_{\sepa}\to (\EE^{G})^{\aleph_{1}} $, where  $ (\EE^{G})^{\aleph_{1}}$ denotes   the full subcategory of $\aleph_{1}$-compact objects in $\EE^{G}$. Upon going to the homotopy category this functor is equivalent to the equivariant $E$-theory functor 
  constructed  in \cite{Guentner_2000}.  
This justifies to consider the
functor characterized in \cref{wgregwrefwrfw} as the correct $\infty$-categorical enhancement of classical $E$-theory. \hB
\end{rem}

We next 
 describe the extension of $E$-theory to graded $C^{*}$-algebras. Our approach   is an $\infty$-categorical version of \cite{MR1694805}.  The category   $\gCalg$ of graded $C^{*}$-algebras is the same as the category $C_{2}\nCalg$ of $C_{2}$-$C^{*}$-algebras, but we equip it with  the graded version $\stimes$ of the maximal tensor product which involves Koszul sign rules. 
 One again checks that for any $A$ in $ C_{2}\nCalg$  precomposition by the functor $A\stimes -:C_{2}\nCalg\to C_{2}\nCalg$ preserves    homotopy invariant, $K_{{C_2}}$-stable, exact,  sum-preserving  and $s$-finitary functors. Consequently, 
the category $\EE^{C_{2}}$ has an essentially unique  symmetric monoidal structure $\stimes$ such that $\ee^{C_{2}}$ has an essentially unique symmetric monoidal refinement with respect to the graded tensor product $\stimes$ on $C_{2}\nCalg = \gCalg$. 

We now consider the graded $C^{*}$-algebra $\cS:=C_{0}(\R)$ with the {grading involution} sending $f(x)$ to $f(-x)$.
If $A$ is a  graded $C^{*}$-algebra and $\Phi$ is a selfadjoint odd unbounded  multiplier of $A$ such that $(i+\Psi)^{-1}\in A$, then using function calculus we can define a morphism of graded $C^{*}$-algebras 
\begin{equation}
\label{fqewfewsdfvv}
\Psi_*:\cS\to A\ , \quad  
f\mapsto f(\Psi)\ .
\end{equation}
We can recover $\Psi$ by $\Psi=\Psi_{*}(X)$, where $X$ is the multiplier of $\cS$ generating the identity morphism {(given by $(Xf)(x) := xf(x)$)}, and where we implicitly used the extension of $\Psi_{*}$ to some unbounded multipliers.
The algebra $\cS$ has a structure of a coalgebra in $\gCalg$ with counit $\epsilon:\cS\to \C$ given by $f\mapsto f(0)$ (induced by the multiplier $0$ on $\C$) and the coproduct 
\begin{equation}
\label{Coproduct}
	\Delta:\cS\to \cS\stimes \cS,
\end{equation}
induced by the multiplier
$X\stimes 1+1\stimes  X$ (see, e.g., \cite{zbMATH02068495} for a nice description).
Since $\ee^{C_{2}}$ is symmetric monoidal,    $\ee^{C_{2}}(\cS)$ becomes a  coalgebra in $\EE^{C_{2}}$. 


 \begin{ddd}We denote by
  \[
  \gEE:= \Comod_{\EE^{C_{2}}}(\ee^{C_{2}}(\cS))
  \]
  the presentably symmetric monoidal   stable $\infty$-category  
  of comodules over the coalgebra $\ee^{C_{2}}(\cS)$ in $\EE^{C_{2}}$.
  We further define the lax symmetric monoidal functor
  $$\gee:= \ee^{C_{2}}(\cS)\stimes \ee^{C_{2}}(-):\gCalg\to \gEE\ .$$ \end{ddd}  Thus by  definition, the functor $\gee$
  sends  a graded $C^{*}$-algebra $A$ to the cofree comodule $\ee^{C_{2}}(\cS)\stimes \ee^{C_{2}}(A)$ over  {$\ee^{C_2}(\cS)$}.     
  The functor $\gee$ is homotopy invariant, $K_{{C_2}}$-stable, exact, sum-preserving and  s-finitary. 
  {We do not know whether this functor satisfies a universal property.}
   
     \begin{rem} In order to see that our definition reproduces the classical $E$-theory groups  for   graded $C^{*}$-algebras $A,B$    we observe (using the universal property of the cofree comodule functor) that 
  \begin{equation}\label{gweroighjweoirfwerfw}  \gEE(A,B)\simeq \EE^{C_{2}}(\cS\stimes A,B)\ .
\end{equation} 
 
   If $A$ and $B$ are separable,  then it  follows from   \cite{MR1694805} and the fact \cite{budu} that the group $\pi_{0}\EE^{C_{2}}(\cS\stimes A,B)$  is the classical $C_{2}$-equivariant $E$-theory group  that 
  $\pi_{0}\EE^{\gr}(A,B)$   is classical graded $E$-theory group of $A$ and $B$. \hB\end{rem}

      Let  $\triv:\nCalg\to \gCalg$ be the functor (the same as $\Res_{C_{2}}$) which equips a $C^{*}$-algebra with the trivial 
grading.  Then we have a commutative diagram 
 \begin{equation}\label{werfwefwvsdsf1}\xymatrix{\nCalg\ar[d]^{\triv}\ar[r]^{\ee}&\EE\ar[d]^-{\ee^{C_{2}}(\cS) \stimes \triv(-)}\\ \gCalg\ar[r]^-{\gee}&\gEE} \ .
\end{equation}  
We argue that the right vertical arrow   is fully faithful.  In other words,  
  the $E$-theory for graded $C^{*}$-algebras extends the $E$-theory for ungraded ones. To this end, for $A,B$ in $\EE$, we consider the   diagram 
\[
\begin{tikzcd}
&[0cm] &[-1cm]
\map_{\EE^{\gr}}(\ee^{C_{2}}(\cS)\stimes \triv(A),\ee^{C_{2}}(\cS)\stimes \triv(B))
\ar[d, "\epsilon_{*}"', "\simeq{,} \eqref{gweroighjweoirfwerfw}"]
\\
\map_{\EE}(A,B)
\ar[urr, dashed, "\simeq", bend left=9]
\ar[r, "\Res_{C_{2}}"']
\ar[d, equal]
&
	\map_{\EE^{C_{2}}}(\Res_{C_{2}}(A),\Res_{C_{2}}(B))
	\ar[r, "\epsilon^{*}"]
	\ar[d, "\simeq"]
	&
		\map_{\EE^{C_{2}}}(\ee^{C_{2}}(\cS)\stimes \triv(A),\triv(B))		
		\ar[d, "\simeq"]
 \\
\map_{\EE}(A,B)
\ar[rr, dotted, "\simeq", bend right=20]  \ar[r, "\pi^{*}"]
	&
		\map_{\EE}((\ee^{C_{2}}(\C)\rtimes C_{2})\otimes A,B)\ar[r, "(\epsilon\rtimes C_{2})^{*}"]
		&
			\map_{\EE}((\ee^{C_{2}}(\cS)\rtimes C_{2})\otimes A,B)
 \ ,
\end{tikzcd}
 \]
 where the two unnamed vertical equivalences are given by the dual Green-Julg adjunction 
 (see e.g. \cite[Prop. 3.61.2]{budu}).
One can further check using the explicit description of the unit of the dual Green-Julg  adjunction   that the map maked by $\pi^{*}$
is induced by the homomorphism $\pi:\C\rtimes C_{2}\to \C$ induced by the trivial representation of $C_{2}$. 
The dotted arrow is an equivalence since by \cite[Lem. 7.3]{Bunke:2017aa} 
the composition $$ \ee^{C_{2}}(\cS)\rtimes C_{2}\stackrel{\epsilon\rtimes C_{2}}{\to}
\ee^{C_{2}}(\C)\rtimes C_{2}\stackrel{\pi}{\to} \ee(\C)$$
is an equivalence.
We can conclude that the dashed arrow
is an equivalence as desired.

  We use the graded $E$-theory in order to define the $K$-theory functor  for graded $C^{*}$-algebras. \begin{ddd}\label{garefawefeawfawf} We define the lax symmetric monoidal $K$-theory functor for graded $C^{*}$-algebras  as the composition $$\gK: \gCalg \xrightarrow{\gee}\gEE \xrightarrow{\map_{\gEE}(\gee(\C),-)} \Mod(KU)\ .$$ We further define the  lax symmetric monoidal  $K$-theory functor for $C^{*}$-algebras by
 $$K:\nCalg\xrightarrow{\triv} \gCalg\xrightarrow{\gK} \Mod(KU)\ .$$ 
 \end{ddd}
  Since the right vertical arrow in \eqref{werfwefwvsdsf1} is fully faithful  
  we have  $K(-)\simeq \EE(\C,-)$ which shows that our definition reproduces the classical definition of the $K$-theory functor for $C^{*}$-algebras.
    
    \begin{rem}\label{jiogowergwefrfwrf} 
    In this remark we recall the model-free description of the $K$-theory classes associated to a projection or a unitary in an ungraded algebra.
   
   Let  $p$ be  a self-adjoint projection in a $C^{*}$-algebra $Q$.
    It gives rise to a homomorphism $\pi:\C\to {Q}$ sending $1$ to $p$.
   The morphism $\ee(\pi):\ee(\C)\to \ee(Q)$  in $\pi_{0}\EE(\C,{Q})\cong K_{0}(Q)$ is the $K$-theory class $[p]$ associated to $p$.
   
   Let $I$ be a $C^{*}$-algebra and $I^{+}$ denote its unitalization.
   A normalized unitary $u$ in $I$ is a unitary $u$ in $I^{+}$ which is sent to $1$ by the canonical homomorphism $I^{+}\to \C$. If $u$ is a normalized unitary  in $I$, then   
by function calculus it gives rise to a homomorphism
   \[
   \mu:C(S^{1},\{1\})\to I\ , \quad f\mapsto f(u)\ ,
   \]
    where $C(S^{1},\{1\}):=\ker(C(S^{1})\xrightarrow{f\mapsto f(1)}\C)$. The composition
   $\Omega \ee(\C)\simeq \ee({C}(S^{1},\{1\}))\xrightarrow{{e(\mu)}}\ee(I)$ represents the class
   $[u]$ in $\pi_{0}\Sigma \EE(\C,I)\cong K_{-1}(I)$ associated to $u$.

Assume that 
  \begin{equation}\label{gwergwpeofpwerf}0\to I\to A\to Q\to 0
\end{equation}  
is an exact sequence of $C^{*}$-algebras such that $A\to Q$  is a unit-preserving
  morphism between unital algebras. Let $p$ be a projection in $Q$. Then we can find a lift
  $\tilde p$ in $A$ which is selfadjoint and has spectrum in the unit interval $[0,1]$. We 
  get a normalized unitary $u:=e^{2\pi i\tilde p}$ in $I$ where we identify $I^{+}$ with the subalgebra $I+1_{A}\C$ of $A$.
  Using the  relation of the triangulated structure of $\ho\EE$ with Puppe sequences one can check that
  \begin{equation}\label{gwegrwerfwerfwef}
  \partial [p]=[u]\ ,
\end{equation}
where $\partial$ is the boundary operator
$K_{0}(Q)\to K_{-1}(I)$ associated to the exact sequence \eqref{gwergwpeofpwerf}.
    \hB    
\end{rem}

\begin{rem}
\label{okfpqfqewddedq}
Recall that  
\[
\gK(A)\simeq \gEE(\C,A)\simeq \EE^{C_{2}}(\cS,A)\ .
\]
  In particular, every homomorphism $\cS\to A$ of graded $C^{*}$-algebras (e.g., the one in \eqref{fqewfewsdfvv}) represents a class in $\gK_{0}(A)$. 
If this homomorphism is {$\Psi_{*}:\cS\to A$, $\Psi_{*} (f)=f(\Psi)$, for an unbounded multiplier $\Psi$ of $A$},  then we write $[\Psi]$ for the corresponding class in $\gK_{0}(A)$.
  
  {We will use the following variation on this construction:}
  Let $a$ be in $(0,\infty)$. 
  Then pull-back along the map 
  \begin{equation}\label{bsdfpokbspdvsdfvsfdv}
 {\kappa}: (-a,a)\to \R\ , \quad t\mapsto \frac{t}{\sqrt{a-|t|^{2}}}\end{equation}
  induces an isomorphism ${\kappa^*} : \cS\to C_{0}((-a,a))$ of graded $C^{*}$-algebras. 
It is a homotpy inverse of the extension by zero map $\epsilon:C_{0}((-a,a))\to \cS$.
 
 Assume that ${j} :B\to A$ is the inclusion of a graded subalgebra and $\Psi$ is an unbounded multiplier of $A$ as above such that
 $\epsilon(f)(\Psi)\in B$ for all $f$ in $C_{0}((-a,a))$. 
 Then  we get a class $[\Psi_{B}]\in \gK_{0}(B)$ represented by the homomorphism 
 \[
   \Psi_B(f) = (\kappa^*f)(\Psi):\cS\to  B\ .
 \]
It satisfies ${j} _{*}[\Psi_{B}]=[\Psi]$.
   \hB \end{rem}

    \begin{rem}\label{wgklopergwrefwef} In this remark we provide an explicit formula for the cup product of $K$-theory classes.
Let  $A, B$ be graded $C^{*}$-algebras, 
  $[a]$ be a class in $\gK_{0}(A)$ represented by $a:\cS\to A$, and $[b]$ be a class in $\gK_{0}(B)$ represented by 
  $b:\cS\to B$.  Then the  product 
  $[a]\cup [b]$ in $ \gK_{0}(A\stimes B)$ is   by definition the image of $([a],[b])$ under the structure map 
  \[
  \gK(A)\times \gK(B)\to \gK(A\stimes B)
  \]
 of the symmetric monoidal structure of $\gK$.
  Explicitly, $[a]\cup [b]$ is represented by the composition
  \[
  \cS\xrightarrow{\Delta}  \cS\stimes \cS\xrightarrow{a\stimes b}A\stimes B\ ,
  \]
{where $\Delta$ is the coproduct \eqref{Coproduct} of $\cS$.
}
 \hB
\end{rem}

\begin{ex}\label{ekohperhetrgertg}
If $V$ is a finite-dimensional graded Hilbert space with an action of $\Cl^{n}$ 
such that the generators act by anti-selfajoint operators, then 
$\End_{\Cl^{n}}(V)$ is a graded $C^{*}$-algebra.
The evaluation gives an isomorphism of $\Cl^{n}$-modules
\begin{equation}\label{qewfdqwedqq}  \Cl^{n}\stimes \Hom_{\Cl^{n}}(\Cl^{n},V)\stackrel{\cong}{\to}V  \ ,\end{equation} 
where $\Cl^{n}$ acts by right-multiplication in itself. It induces
  an isomorphism of graded $C^{*}$-algebras
\begin{equation}\label{qewfdqwedq}  \Cl^{n}\stimes \End_{\C} (\Hom_{\Cl^{n}}(\Cl^{n},V))\cong \End_{\Cl^{n}}(V) \ ,
\end{equation} 
where the first factor $\Cl^{n}$ in \eqref{qewfdqwedq} acts on  the first factor $\Cl^{n}$ in \eqref{qewfdqwedqq} by left multiplication. 
Since $\ee^{\gr}( \End_{\C} (\Hom_{\Cl^{n}}(\Cl^{n},V)))\simeq \ee^{\gr}(\C)$
by Morita invariance we therefore have an equivalence
$\ee^{\gr}(\End_{\Cl^{n}}(V))\simeq \ee^{\gr}(\Cl^{n})$.

Let $X:\R^{n}\to \Cl^{n}$ be the canonical linear map which we consider as an unbounded multiplier
on $C_{0}(\R^{n}, \Cl^{n})$. Then   
$(iX)_{*}:\cS\to C_{0}(\R^{n}, \Cl^{n} )$ represents an invertible class (Kasparov's Bott element) $\beta$  in
$\EE^{\gr}(\C,C_{0}(\R^{n}, \Cl^{n} ))$  and hence an equivalence
$$
\ee^{\gr}(\C)\stackrel{\beta}{\simeq}\ee^{\gr}(C_{0}(\R^{n}, \Cl^{n} ))\simeq \Omega^{n}\ee^{\gr}( \Cl^{n} )\simeq 
\Omega^{n}\ee^{\gr}(\End_{\Cl^{n}}(V))\ .$$
Upon tensoring with $\ee^{\gr}(A)$ and applying $\K^{\gr}$ its $n$-fold desuspension  $\Sigma^{n}\ee^{\gr}(\C)\stackrel{\simeq}{\to}\ee^{\gr}(\End_{\Cl^{n}}(V))$ induces an equivalence 
 \begin{equation}
 \label{gertgetrge}
\Sigma^{n} K^{\gr}(A) \simeq   K^{\gr}(A\stimes \End_{\Cl^{n}}(V)) 
\end{equation} 
for any graded $C^{*}$-algebra $A$.
In particular, 
\begin{equation}\label{gkwoperkgopwergfwf}
 K_{*}^{\gr}(A\stimes \End_{\Cl^{n}}(V))\cong K^{\gr}_{*-n}(A)\ . 
 \end{equation}
\hB
\end{ex}

\begin{ex}
We consider the exact sequence \eqref{gwergwpeofpwerf} and a 
 normalized unitary $u=e^{2\pi i \tilde p}$ in $I$ for some selfadjoint $\tilde p$ in $A$ with spectrum in $[0,1]$
 whose image in $Q$ is a projection.
 Taking $V=\Cl^{1}$ as a right $\Cl^{1}$-module we have $\End_{\Cl^{1}}(\Cl^{1})\cong \Cl^{1}$ and
\eqref{gkwoperkgopwergfwf} gives an isomorphism 
\begin{equation}\label{broptkbopdfgbdfgb}K_{-1}(I)\cong K_{-1}^{\gr}(I)\cong K_{0}^{\gr}(I\stimes \Cl^{1})\ .
\end{equation} 
 One can check using the explicit description of Kasparov's Bott element $\beta$ and \eqref{bsdfpokbspdvsdfvsfdv}
that the image of $[u]$ in $K_{0}^{\gr}(I\stimes \Cl^{1})$ under \eqref{broptkbopdfgbdfgb} is represented by
the map
\begin{equation}\label{jgiowjeroijfoweferfwef}
\cS\to I\stimes \Cl^{1}\ , \quad   f\mapsto f\left(\frac{2\tilde p-1}{\sqrt{1-(2\tilde  p-1)^{2}}}\stimes i\sigma\right)
\ .\end{equation} \hB
\end{ex}

We now extend the  $E$-theory and $K$-theory functors from $C^{*}$-algebras to $C^{*}$-categories. We start with the characterization of $C^{*}$-categories following  \cite{crosscat}.
A $\C$-linear  $*$-category $\bC$ is a category enriched in complex vector spaces    
with an involution $*:\bC^{\op}\to \bC$ which fixes objects and acts complex anti-linearly
on morphism vector spaces. 
A morphism between   {$\C$-linear $*$-categories} is a functor  which is compatible with the enrichment in $\C$-vector spaces and the involutions.

A $C^{*}$-algebra can be considered as a $\C$-linear $*$-category with a single object. We consider a  morphism $f$ in $\bC$. We define  the maximal norm  of $f$ by  $\|f\|_{\max}:=\sup_{\rho} \|\rho(f) \|$,
where $\rho$ runs over all functors $\rho:\bC\to A$ of $\C$-linear $*$-categories  from $\bC$ to $C^{*}$-algebras $A$. We say that $\bC$ is a pre-$C^{*}$-category if $\|f\|_{\max}<\infty$ for every morphism $f$ in $\bC$. Finally, $\bC$ is a $C^{*}$-category if it is a pre-$C^{*}$-category such that
its morphism spaces are complete in the norm $\|-\|_{\max}$.
A morphism between $C^{*}$-categories is just a morphism of $\C$-linear $*$-categories. 

\begin{rem}\label{joihorthrthgerg}
A morphism $\bC\to \bD$ of $C^{*}$-categories is an ideal inclusion if it induces a bijection on objects and
 inclusions  on the level of morphisms so that the image satisfies the obvious generalization of
the conditions for an ideal to categories. For an ideal inclusion we can form the quotient $\bD/\bC$
which has the same objects as $\bD$ and whose morphism spaces are the quotients of the morphism spaces of $\bD$
by the images of the corresponding morphism spaces of $\bC$. 
\hB
\end{rem}

A $G$-$C^{*}$-category is a $C^{*}$-category with an action of $G$ by automorpisms.
We let $G\nCcat$ denote the category of $G$-$C^{*}$-categories and equivariant morphisms.
 By  \cite{joachimcat} we have an adjunction
\begin{equation}\label{rfwrefwerfwervvsfvsfdvsvsdfv}A^{f}:G\nCcat\leftrightarrows G\nCalg:\incl
\end{equation} where the inclusion views a $G$-$C^{*}$-algebra as a $G$-$C^{*}$-category with a single object. 

 A graded $C^{*}$-category is a $C^{*}$-category with a strict $C_{2}$-action fixing objects.
  We let $\gCcat$ be the full subcategory of $C_{2}\nCcat$ of graded $C^{*}$-categories. 
  The adjunction 
  \eqref{rfwrefwerfwervvsfvsfdvsvsdfv} restricts to an adjunction 
  $$A^{f,\gr}:\gCcat\leftrightarrows \gCalg:\incl\ .$$
   We use this functor in order to extend the $E$-theory to graded $C^{*}$-categories.
   
   \begin{ddd}
We define the  $E$-theory of graded $C^{*}$-categories as the composition
  $$\gee:\gCcat\xrightarrow{A^{f,\gr}} \gCalg \xrightarrow{\gee} \gEE\ .$$
We further define  the $K$-theory of graded $C^{*}$-categories by 
$$\gK: \gCcat \xrightarrow{\gee}\gEE \xrightarrow{\map_{\gEE}(\gee(\C),-)} \Mod(KU)\ .$$
\end{ddd}
The compositions
\begin{equation}\label{efqwfewfqefe}\ee:\nCcat\xrightarrow{\triv} \gCcat\xrightarrow{\ee} \gEE
\end{equation} and 
\begin{equation}\label{efweqfdqewdewdwed}
K:\nCcat\xrightarrow{\triv} \gCcat\xrightarrow{\gK} \Mod(KU)
\end{equation}

are  the usual $E$- and $K$-theory functors for $C^{*}$-categories.
%

One can check  that $\gee:\gCcat\to \gEE$  again has a symmetric monoidal refinement if we equip $\gCcat$ with the maximal tensor product. We will use that the $E$-theory functor   for $C^{*}$-categories from \eqref{efqwfewfqefe}  is a finitary homological functor in the sense of \cite[Def. 3.24]{coarsek}. In particular, it preserves filtered colimits, sends unitary equivalences to equivalences, sends exact sequences to fibre sequences, and annihilates flasque $C^{*}$-categories \cite[Def. 11.3]{cank}. 
The arguments are the same as for the case of $KK$-theory in \cite[Sec. 6 \& 7]{KKG}.

\subsection{$X$-controlled Hilbert spaces and coarse $K$-homology}\label{gojkperfrefrwfwrefw}

For the present paper the main example of a coarse homology theory is coarse $K$-theory $$K\cX:\BC\to \Mod(KU)$$  
with values in the stable $\infty$-category of $KU$-modules which  we describe in the following.

Let $X$ be a set. 
An $X$-controlled Hilbert space is a pair $(H,\chi)$ of a Hilbert space and finitely additive projection valued measure  $\chi:\cP(X)\to B(H)$. 
We say that $(H,\chi)$ is determined on points if $\bigoplus_{x\in X} \chi(\{x\})H\cong H$.
If $X$ has a bornology $\cB$, then we say that $(H,\chi)$ is locally finite if $\chi(B)$ is finite-dimensional for all $B$ in $\cB$.


Let $(H,\chi)$ and $(H',\chi')$ be two $X$-controlled Hilbert spaces. If $U$ is an entourage of $X$, then a bounded operator $A:H\to H'$ is $U$-controlled  if $\chi'(Z')A\chi(Z)=0$ for all subsets $Z,Z'$ of $X$ with $Z'\cap U[Z]=\emptyset$.
If $X$ has a coarse structure $\cC$, then a  controlled operator is an operator which is $U$-controlled from some $U$ in $\cC$.
 
  If $f$ in $\ell^{\infty}(X)$ is a bounded function on $X$, then we can consider it as a $\diag(X)$-controlled operator $\chi(f):H\to H$  acting by $f(x)$ on the summand $\chi(\{x\})H$ of $H$.

We first construct a functor
$$\bC:\BC\to \nCcat$$
which associates to every bornological coarse space the Roe category $\bC(X)$.
\begin{ddd}\mbox{}\begin{enumerate}
\item Objects of $\bC(X)$: The objects of $\bC(X)$ are  the locally finite $X$-controlled Hilbert spaces $(H,\chi)$ which are determined on points.
\item Morphisms of  $\bC(X)$: The morphisms $A:(H,\chi)\to (H',\chi')$  are bounded operators
which can be approximated in norm  by controlled operators.
\item Involution: The involution on $\bC(X)$ takes the adjoint operator.
 \item $\bC(f)$: For a morphism $X\to Y$ of bornological coarse spaces the  functor  $\bC(f):\bC(X)\to \bC(Y)$ sends
 $(H,\chi)$ to $(H,f_{*}\chi)$. It acts by the identity on morphisms.
\end{enumerate}
\end{ddd}
In the following definition we use the $K$-theory functor for $C^{*}$-categories from \eqref{efweqfdqewdewdwed}. 
 
 \begin{ddd}
\label{grwgjrogwregwre} 
We define the coarse $K$-homology functor as the composition  
$$K\cX:\BC\xrightarrow{\bC}\nCcat\xrightarrow{K}\Mod(KU)\ .$$
\end{ddd}
\begin{theorem}[\cite{buen},\cite{coarsek},\cite{Bunke:2019aa}]\label{gpkopwergwrfwferfwf}
$K\cX$ is a coarse homology theory which is in addition continuous, additive and strong and has a lax symmetric monoidal refinement.
\end{theorem}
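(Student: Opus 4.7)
The plan is to verify each of the listed properties by working first at the level of the functor $\bC : \BC \to \nCcat$ and then transporting along the $K$-theory functor $K : \nCcat \to \Mod(KU)$. The key leverage is that $K$ is a finitary homological functor: it preserves filtered colimits, sends unitary equivalences to equivalences, sends exact sequences of $C^{*}$-categories to fibre sequences, and annihilates flasque $C^{*}$-categories. Consequently, each property for $K\cX$ reduces to a categorical statement about the Roe categories $\bC(X)$.

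The easier axioms come first. For $u$-continuity, every morphism in $\bC(X)$ is by construction a norm-limit of $U$-controlled operators for some $U \in \cC$, giving $\bC(X) \simeq \colim_{U \in \cC} \bC(X_{U})$ as a filtered colimit in $\nCcat$, which is preserved by $K$. Continuity is analogous: a locally finite $X$-controlled Hilbert space determined on points is supported on a locally finite subset of $X$, so $\bC(X) \simeq \colim_{Z \in \mathrm{LF}(X)} \bC(Z)$. Coarse invariance reduces to producing, for close maps $f,g : X \to Y$, a unitary equivalence $\bC(f) \simeq \bC(g)$ of functors, which is immediate because both functors differ only by a controlled relabelling of the base points of the projection-valued measure. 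Additivity corresponds to the fact that free unions $\bigsqcup^{\mathrm{free}}_{i \in I} X_{i}$ go to the suitable product of Roe categories, compatibly with the sum-preservation of $K$.

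For vanishing on flasques, if $f : X \to X$ witnesses flasqueness, we construct an endofunctor $S$ of $\bC(X)$ by the Eilenberg swindle $S(H,\chi) := \bigoplus_{n \in \IN} (H, f^{n}_{*}\chi)$. The shifting property ensures local finiteness of $S(H,\chi)$ and the non-expanding property ensures its control, so $S$ lands in $\bC(X)$. Closeness of $f$ and $\id_{X}$ gives $\bC(f) \simeq \id_{\bC(X)}$, whence the natural isomorphism $S \simeq \id_{\bC(X)} \oplus S$, and then $K\cX(X) \simeq 0$ upon applying $K$. Strongness is handled by feeding the weaker hypothesis $\Yo^{s}(f) \simeq \Yo^{s}(\id_{X})$ into the same swindle but at the motivic level. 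The lax symmetric monoidal refinement comes from the external tensor product of controlled Hilbert spaces $(H,\chi) \boxtimes (H',\chi') := (H \otimes H', \chi \otimes \chi')$, yielding a lax symmetric monoidal enhancement of $\bC : \BC \to \nCcat$, which one composes with the lax symmetric monoidal $K$-theory functor on $\nCcat$.

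The main obstacle is excision. Given a complementary pair $(\cY, Z)$ with $X = Y \cup Z$ for some $Y \in \cY$, one identifies $K\cX(\cY)$ with the $K$-theory of an ideal subcategory $\bC(X)_{\cY} \subseteq \bC(X)$ consisting of operators supported $\cY$-close, and matches the quotient $\bC(X)/\bC(X)_{\cY}$ with $\bC(Z)$ via the restriction functor induced by $\chi \mapsto \chi(- \cap Z)$. The delicate step is showing this quotient becomes an equivalence after applying $K$: this is an ``ample enough'' statement, namely that any $Z$-controlled Hilbert space embeds as a $Z$-supported summand of some $X$-controlled one and that one may stabilise to arrange comparisons up to unitary equivalence. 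Once in hand, excision follows from applying $K$ to the ideal exact sequence $\bC(X)_{\cY} \to \bC(X) \to \bC(Z)$ and rewriting the resulting fibre sequence as the desired pushout square.
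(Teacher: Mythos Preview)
The paper does not prove this theorem; it cites external references, and the subsequent remark indicates only that the result is deduced from \cite[Thm.~7.3]{coarsek} together with the fact that $K:\nCcat\to\Mod(KU)$ is a finitary homological functor. Your overall strategy---establish properties of $\bC$ at the $C^{*}$-categorical level and then push them through a finitary homological $K$---is exactly the right one and matches the paper's indication.

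However, your excision argument contains a genuine error. You claim that the quotient $\bC(X)/\bC(\cY\subseteq X)$ is identified (after $K$) with $\bC(Z)$, yielding a fibre sequence $K\cX(\cY)\to K\cX(X)\to K\cX(Z)$. This is false. Take $X=\Z$ with its metric coarse structure, $\cY$ the big family generated by $\Z_{\le 0}$, and $Z=\Z_{\ge 0}$. Then $K\cX(\cY)\simeq 0$ and $K\cX(Z)\simeq 0$ (both half-lines are flasque), while $K\cX(\Z)\simeq\Sigma KU$, so $0\to\Sigma KU\to 0$ is not a fibre sequence. The excision square has \emph{four} corners, and the missing ingredient is $K\cX(\cY\cap Z)$, which in the example is $K\cX(*)\simeq KU$. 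The correct mechanism is to compare the two quotients $\bC(Z)/\bC(\cY\cap Z\subseteq Z)\to \bC(X)/\bC(\cY\subseteq X)$ (induced by the inclusion $Z\hookrightarrow X$, not a restriction in the other direction) and show this map is an equivalence in $K$-theory; this is what makes the square a pushout. Your ``restriction functor $\chi\mapsto\chi(-\cap Z)$'' does not define a functor $\bC(X)\to\bC(Z)$ at all, since controlled operators on $X$ do not restrict to controlled operators on $Z$.

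A smaller issue: your additivity sketch invokes ``sum-preservation of $K$'', but additivity is about \emph{products}, and $\bC$ of a free union is neither the product nor the coproduct of the $\bC(X_i)$; the argument in \cite{coarsek} is more delicate.
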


\begin{rem}Coarse $K$-homology as a $\Z$-graded group-valued functor for proper metric spaces has been introduced again by Roe, see \cite{higson_roe} for a text-book account. Upon taking homotopy groups the above  construction extends the classical one to general bornological coarse spaces.  
Note that $KU\simeq K\cX(*)$.

The idea of using $C^{*}$-categories of $X$-controlled Hilbert spaces appeared already in \cite{zbMATH02196178}. 
But note that the details are different. The $C^{*}$-category $\bC(X)$ from \cref{grwgjrogwregwre}  is defined for arbitrary bornological coarse spaces and its objects are locally finite, and the control is implemented by a  finitely additive projection-valued measure. The 
$X$-controlled Hilbert spaces in \cite{zbMATH02196178} are defined only for proper metric spaces and usually are not locally finite, and the control
is  implemented by an action of the algebra  $C_{0}(X)$  of continuous functions  vanishing at $\infty$.
\hB\end{rem}

\begin{rem}\label{wegkowpergrwfrewfrefw}
  \cref{gpkopwergwrfwferfwf} is deduced from \cite[Thm. 7.3]{coarsek} and the fact that
$K:\nCcat\to \Mod(KU)$ is a finitary homological functor in the sense of  \cite[Def. 3.23]{coarsek}.
Note that the $E$-theory functor $\ee:\nCcat\to \EE$ itself is a  finitary homological functor with values in the stable $\infty$-category  $\EE$. Therefore the composition
\[
\ee\cX:\BC\xrightarrow{\bC}\nCcat
\xrightarrow{\ee} \EE
\]
is an $\EE$-valued coarse homology theory.  We will in particular use coarse invariance, 
excision, vanishing on flasques and the version of \eqref{fqwedwedewdqwdwd} for $\ee\cX$. \hB
 \end{rem}

  Let $X$ be a bornological coarse space, $Y$ be a subset,  and $\cY$ be a big family on $X$.
  
    \begin{ddd}\label{jkohpertgtege} We let $\bC(Y\subseteq X)$ be the wide subcategory of the Roe category $\bC(X)$  given by operators of the form $\chi'(Y)A\chi(Y)$ for $A$ in $\bC(X)$.
   We further define $\bC(\cY\subseteq X)$ as the ideal in $\bC(X)$ generated by the wide subcategories $ \bC(Y\subseteq X)$ for all 
 members $Y$ of $\cY$.
   \end{ddd}
   
\begin{rem}\label{korphrethertgertg}
Note that $\bC(\cY\subseteq X)$ has the same set of objects as $\bC(X)$
and the inclusion $\bC(\cY\subseteq X)\to \bC(X)$ is an ideal inclusion in the sense explained in  \cref{joihorthrthgerg}
allowing to form the quotient $\frac{ \bC(X)}{\bC(\cY\subseteq X)}$. In contrast, the canonical morphism
$\bC(\cY)\to \bC(X)$ is not an ideal and we can not form the corresponding quotient.
But this morphism factorizes over a morphism
$ \bC(\cY)\to \bC(\cY\subseteq X)$ which by \cref{wroekgpwergwrfgwrf} below induces an equivalence in $K$-theory. \hB
\end{rem}

 Let $\cY$ be a big family in  a bornological coarse space $X$. \begin{lem}\label{wroekgpwergwrfgwrf}
We have a canonical equivalence  
 \begin{equation}\label{wregjeoigwegwre}
 K\cX(\cY)\simeq K(\bC(\cY\subseteq X)) \ .
\end{equation}
\end{lem}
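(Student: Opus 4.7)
The plan leverages two ingredients: (i) the functor $K:\nCcat\to \Mod(KU)$ is finitary and in particular preserves filtered colimits (see \cref{wegkowpergrwfrewfrefw}); and (ii) for each single $Y\in\cY$, the canonical inclusion $\bC(Y)\hookrightarrow \bC(Y\subseteq X)$ is an equivalence on $K$-theory. Combining these with the filtered structure of $\cY$ will yield the claim, since by \eqref{fqwedeqwed} the left-hand side is by definition $K\cX(\cY)=\colim_{Y\in\cY}K(\bC(Y))$.

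For (ii), I would consider the natural $*$-functor $\iota_Y:\bC(Y)\to \bC(Y\subseteq X)$ that sends $(H,\chi_Y)$ to the $X$-controlled Hilbert space $(H,\chi)$ with $\chi(Z):=\chi_Y(Y\cap Z)$. It is fully faithful, since morphisms in $\bC(Y\subseteq X)$ are exactly operators of the form $\chi'(Y)A\chi(Y)$. Moreover, every object $(H,\chi)$ of $\bC(Y\subseteq X)$ decomposes under the complementary projections $\chi(Y)$ and $\chi(X\setminus Y)$ into two orthogonal summands; since every morphism in $\bC(Y\subseteq X)$ factors through $\chi(Y)$ on both sides, the second summand is a zero object. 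Hence $\iota_Y$ is a unitary equivalence of $C^*$-categories up to zero summands, and therefore induces an equivalence on $K$-theory.

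To pass from individual $Y$ to the big family, I would identify $\bC(\cY\subseteq X)$ with the filtered colimit $\colim_{Y\in\cY}\bC(Y\subseteq X)$. The delicate containment is from the ideal to the filtered union: a generating operator has the form $B\chi'(Y)A\chi(Y)C$ with $A,B,C\in\bC(X)$, and if these are $V$-controlled for some entourage $V$, then the left and right supports of the product are contained in $V[Y]$ and $V^{-1}[Y]$ respectively, so the product lies in $\bC(Z\subseteq X)$ with $Z:=V[Y]\cup V^{-1}[Y]\in\cY$ by closure of $\cY$ under coarse thickenings and finite unions; norm-closures then stay inside the filtered union. Applying the finitary $K$ and assembling the $\iota_Y$ into a natural equivalence of filtered diagrams over $\cY$, one obtains
\[
K(\bC(\cY\subseteq X))\simeq \colim_{Y\in\cY}K(\bC(Y\subseteq X))\simeq \colim_{Y\in\cY}K(\bC(Y))=K\cX(\cY).
\]

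The main technical obstacle is the last identification of the ideal $\bC(\cY\subseteq X)$ with the filtered union $\bigcup_{Y\in\cY}\bC(Y\subseteq X)$. It hinges precisely on the big-family property of $\cY$: without closure under coarse thickenings, composing a generator with arbitrary controlled operators of $\bC(X)$ could produce operators whose support strictly exceeds every member of $\cY$, and the ideal would outgrow the union.
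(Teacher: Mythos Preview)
Your proposal is correct and follows essentially the same approach as the paper's proof: both reduce to the pointwise unitary equivalence $\bC(Y)\simeq\bC(Y\subseteq X)$ (which the paper cites from \cite[Lem.~6.10]{coarsek} while you sketch it directly), the identification $\bC(\cY\subseteq X)\cong\colim_{Y\in\cY}\bC(Y\subseteq X)$, and the fact that $K$ preserves filtered colimits. Your extra care in verifying that the ideal generated agrees with the filtered union is a nice elaboration of what the paper simply declares ``by definition''.
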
 
\begin{proof}
 By definition  $$K\cX(\cY)\simeq \colim_{Y\in \cY} K\cX(Y)\simeq  \colim_{Y\in \cY} K( \bC(Y))\ .$$ 
By \cite[Lem. 6.10]{coarsek}
for every $Y$ in $\cY$ we have a unitary  equivalence
$\bC(Y)\to \bC(Y\subseteq X)$. Since $K$ sends unitary equivalences  of $C^{*}$-categories to equivalences
we get
$$\colim_{Y\in \cY} K(\bC(Y)\simeq \colim_{Y\in \cY}K( \bC(Y \subseteq X))\ .$$
Since $ \colim_{Y\in \cY} \bC(Y\subseteq X)\cong \bC(\cY\subseteq X)$ by definition
and $K$ preserves filtered colimits we get
$$ \colim_{Y\in \cY}K( \bC(Y\subseteq X))\simeq K(\bC(\cY\subseteq X))\ .$$ Combining   these equivalence 
we get the equivalence  \eqref{wregjeoigwegwre}. \end{proof}

\begin{rem}\label{kowprgwertgwerfw} In the argument for \cref{wroekgpwergwrfgwrf}
 we can replace $K$ by $\ee$.  We then  get a canonical equivalence
\[
 \ee\cX(\cY)\simeq \ee(\bC(\cY\subseteq \cX))\ .
\tag*{$\blacksquare$} 
\]
\end{rem}

\section{Coarse coronas}

\subsection{Coarse coronas and a commutator estimate}\label{ojgpwegwregrfrwef}

Let $X$ be a set, $U$ be an entourage on $X$, and $W$ be a subset of $X$. For  a function $f:X\to \C$  we define the $U$-variation of $f$ on $W$ by 
\[
\Var_{U}(f,W):=\sup_{(x,y)\in U\cap (W\times W)} |f(x)-f(y)|\ .
\]

For a set $X$ we let $\ell^{\infty}(X)$ denote the $C^{*}$-algebra of all bounded functions $X\to \C$
with the supremum norm $\|f\|:=\sup_{x\in X} |f(x)|$.

Let $\cY$ be a filtered family of subsets in $X$.
 
 \begin{ddd}
 	
\label{gjsoepgergseffs} \mbox{}
\begin{enumerate}
\item	
The $C^{*}$-algebra $\ell^{\infty}(\cY)$ of functions  {vanishing} away from $\cY$ is defined as the sub-$C^*$-algebra of $\ell^{\infty}(X)$ of functions $f$ satisfying 
 $$\lim_{Y\in \cY} \|f_{M\setminus Y}\|=0\ .$$ 
 \item
 \label{wkotpgegfergweg} 
For a coarse space $X$ with coarse structure $\cC$ we define the algebra of bounded functions with vanishing variation away from $\cY$  as 
\[
\ell^{\infty}_{\cY}(X):=\{f\in \ell^{\infty}(X)\mid  { \forall U\in \cC : {\lim_{Y\in \cY}}\Var_{U}(f,X\setminus Y ) =0 }\}\ .
\]
\end{enumerate}
 \end{ddd}
 
From now one we assume that $X$ is a coarse space.
We  have an exact sequence of $C^{*}$-algebras
\begin{equation}
\label{gerwrgfrqlll}
0\to \ell^\infty(\cY)\to \ell^\infty_{\cY}(X)\to C(\partial^{\cY} X)\to 0 \end{equation} 
defining the unital quotient $C^{*}$-algebra  $C(\partial^{\cY} X)$.

\begin{ddd}\label{rkopthtrhegertg}
The coarse $\cY$-corona of $X$ is the compact topological space $\partial^{\cY} X$ defined as the Gelfand dual of $C(\partial^{\cY}X)$.
\end{ddd}

\begin{rem}
 We have the following cartoon picture of the {coarse} $\cY$-corona:
\begin{equation*}
\begin{tikzpicture}
\filldraw[color=gray!70,opacity=0.3] (0,0) circle (1.4cm);
\draw[ultra thick, blue] (1.2,0.7) arc (30:270:1.4);
\draw[line width=1.5mm, name path=B] (1.2,0.7) arc (30:-90:1.4);
\draw[dotted, name path=A] (1.2,0.7) .. controls (-0.2,0.2) and (-0.2,0.3) .. (0,-1.4);
\node at (0.5,-0.4) {$\cY$};
\node at (-1.88,-0.7) {$\partial^{\mathcal{Y}} X$};
\tikzfillbetween[of=A and B]{red, opacity=0.1};
\end{tikzpicture}	
\end{equation*}
The grey interior is the space $X$, with the boundary depicting its $\cY$-completion, i.e., the Gelfand dual $\smash{\overline{X}^\cY}$ of the algebra $\ell^\infty_{\cY}(X)$.
The boundary $\smash{\overline{X}^\cY} \setminus X$ of this compactification consists of two parts: The boundary $\partial^{\cY} X$ (depicted blue in the picture), and {its} complement {in $\smash{\overline{X}^\cY} \setminus X$}, which comes from the {Gelfand dual of $\ell^\infty(X)$}. 
\hB
\end{rem}

 \begin{ex}
 Let $W$ be a subset of $X$. 
 The coarse boundary of $W$ is defined as the  big family $\partial W$ of $X$ consisting of the subsets $Y$ of $X$ such that there exists a coarse entourage $U$ of $X$ with $Y\subseteq {U[W] \cap U[X\setminus W]}$. 
 \begin{equation*}
\begin{tikzpicture}
\filldraw[color=gray!70,opacity=0.3] (0,0) circle (1.4cm);
\draw[ultra thick, blue] (0,0)++(100:1.4) arc (100:240:1.4);
\draw[line width=1.5mm] (0,0)++(240:1.4) arc (240:250:1.4);
\draw[ultra thick, blue, name path=A] (0,0)++(250:1.4) arc (250:450:1.4);
\draw[line width=1.5mm] (0,0)++(450:1.4) arc (450:460:1.4);
\draw[dotted, name path=C] (0,0)++(95:1.4) .. controls (-0.4,0.2) and (-0.4,-0.2) .. (-0.6,-1.3);
\draw[dotted, name path=B] (0,0)++(95:1.4) .. controls (0.7,0.3) and (0.7,-0.3) .. (-0.6,-1.3);
\node at (0.05,0) {$\partial W$};
\node at (0.8,-0.3) {$W$};
\node at (1.3,1.4) {$\partial^{\partial W} X$};
\node at (2.3,0) {$f\equiv -1$};
\node at (-2.1,0) {$f\equiv 1$};
\tikzfillbetween[of=B and C]{red, opacity=0.1};
\end{tikzpicture}	
\end{equation*}
Denote by $f$ the function such that $f|_W = 1$ and $f|_{X \setminus W} = -1$.
Then $f \in \ell^\infty_{\partial W}(X)$.
Its class $[f] $ in $C(\partial^{\partial W} X)$ defines a map 
$\partial^{\partial W} X \to \{-1,1\}$ which decomposes this corona into two disjoint components.
  \hB   
\end{ex}

\begin{rem}[A continuous description of the coarse corona]
\label{RemarkContinuousVersion} 
Assume that $X$ is a paracompact topological space and that the coarse structure on $X$ is compatible with the topology in the sense that there exists an open coarse entourage.  For example, the coarse and topological structures could be both induced by a metric.
We then define the $C^*$-algebra 
\[
C_{\cY}(X) := C(X) \cap \ell^\infty_{\cY}(X)
\]
 of continuous functions with bounded variation away from $\cY$, as well as the algebra 
\begin{equation}\label{wetopkgpwtgerfrwefwerfrwef9}
  C(\cY) := \colim_{Y \in \cY} \ker( C_b(X) \to C_b(X \setminus Y)) 
\end{equation}
of continuous functions that vanish away from $\cY$.

\begin{lem}\label{lpkrtherthergetr9}
We have  
$C(\cY)= \ell^\infty(\cY) \cap C(X)$ and the canonical  inclusion is an isomorphism
  $$\frac{C_{\cY}(X)}{C(\cY)} \stackrel{\cong}{\longrightarrow} \frac{\ell^\infty_{\cY}(X)}{\ell^{\infty}(\cY)}\ .$$
\end{lem}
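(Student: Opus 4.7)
The plan is to prove the two assertions separately, both by approximation arguments. For the equality $C(\cY) = \ell^\infty(\cY)\cap C(X)$, I first unpack the filtered colimit in the category of $C^{*}$-algebras: since the kernels $\ker(C_b(X)\to C_b(X\setminus Y))$ form a directed system of closed ideals under inclusion, $C(\cY)$ is the norm closure in $C_b(X)$ of the union $\bigcup_{Y\in\cY}\{h\in C_b(X):h|_{X\setminus Y}=0\}$. The inclusion $C(\cY)\subseteq \ell^\infty(\cY)\cap C(X)$ is then immediate from the closedness of the right-hand side. For the reverse, given $f\in \ell^\infty(\cY)\cap C(X)$ and $\epsilon>0$, I pick $Y\in \cY$ with $\|f|_{X\setminus Y}\|<\epsilon$ and produce a cutoff $\phi\colon X\to[0,1]$ with $\phi|_Y=1$ and $\phi|_{X\setminus Y'}=0$ for some $Y'\in \cY$, so that $\phi f\in C(\cY)$ and $\|f-\phi f\|\le \|f|_{X\setminus Y}\|<\epsilon$.

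The construction of $\phi$ uses the hypothesis that $X$ admits an open coarse entourage $U$, which we may assume to be symmetric. Setting $Y_1:=U[Y]$ and $Y':=U[Y_1]$, both open members of $\cY$, the openness and symmetry of $U$ imply $\overline{Y_1}\subseteq Y'$: any $x\in\overline{Y_1}$ lies in the open neighborhood $U[\{x\}]$, which must meet $Y_1$ at some $y$, and then symmetry gives $x\in U[\{y\}]\subseteq Y'$. Since a paracompact Hausdorff space is normal, Urysohn's lemma furnishes a continuous $\phi\colon X\to[0,1]$ that is equal to $1$ on $\overline{Y_1}\supseteq Y$ and to $0$ outside $Y'$, completing the construction.

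For the isomorphism of quotients, injectivity is immediate from the first part: if $f\in C_\cY(X)$ maps to $0$ in $\ell^\infty_\cY(X)/\ell^\infty(\cY)$, then $f\in \ell^\infty(\cY)\cap C(X)=C(\cY)$. For surjectivity, given $g\in \ell^\infty_\cY(X)$ I build a continuous representative of its class by a partition-of-unity average. Using paracompactness, fix a locally finite open refinement $\{V_\alpha\}$ of $\{U[\{x\}]\}_{x\in X}$ for an open symmetric coarse entourage $U$, a subordinate partition of unity $\{\phi_\alpha\}$, and base points $x_\alpha\in V_\alpha$. Set $V:=U\circ U^{-1}$, which is a symmetric coarse entourage satisfying $V_\alpha\times V_\alpha\subseteq V$, and define
\[
f(x):=\sum_\alpha \phi_\alpha(x)\,g(x_\alpha).
\]
This sum is locally finite, so $f$ is continuous and bounded by $\|g\|_\infty$. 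The key estimate is that for $Y\in\cY$ and $x\notin V[Y]$, the symmetry of $V$ forces every $x_\alpha$ with $\phi_\alpha(x)\ne 0$ to satisfy $x_\alpha\notin Y$, so that
\[
|g(x)-f(x)|\le \sum_\alpha \phi_\alpha(x)\,|g(x)-g(x_\alpha)|\le \Var_V(g,\,X\setminus Y).
\]
Since $g\in \ell^\infty_\cY(X)$ the right-hand side tends to $0$ as $Y$ runs through $\cY$, yielding $g-f\in \ell^\infty(\cY)$; consequently $f=g-(g-f)$ lies in $\ell^\infty_\cY(X)\cap C(X)=C_\cY(X)$, providing the desired lift.

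The main obstacle will be this last surjectivity step. While the idea is simply to average $g$ against a sufficiently fine partition of unity, one must combine paracompactness, the open coarse entourage, and a symmetric-thickening argument to guarantee that the partition is fine enough in the coarse sense for the variation bound above to force the residue into $\ell^\infty(\cY)$; the preceding cutoff construction for the first claim is a more standard application of Urysohn.
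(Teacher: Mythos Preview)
Your proof is correct and follows essentially the same approach as the paper: a cutoff/bump function argument for the first assertion and the partition-of-unity averaging $\tilde f(x)=\sum_\alpha \phi_\alpha(x)\,g(x_\alpha)$ for surjectivity in the second. The only differences are cosmetic---you invoke Urysohn directly (after a thickening argument) where the paper uses a two-set partition of unity, and your introduction of $V=U\circ U^{-1}$ makes the entourage bookkeeping in the variation estimate slightly cleaner than the paper's version.
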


\begin{proof}
It is clear from the definitions that $C(\cY)\subseteq \ell^\infty(\cY) \cap C_{\cY}(X)$. We 
show the converse inclusion. By assumption we can choose  an open coarse entourage  $U$     on $X$.
For every $Y$ in $ \cY$, we let $\rho_Y$ be a continuous function on $X$ taking values in $[0, 1]$, which is supported on $U[Y]$ and constant equal to one on $Y$. 
Here we use that by paracompactness of $X$, the open covering $(U[Y] ,X\setminus \bar Y)$ admits a subordinated partition of unity.

Then we have the equality
 $f = \lim_{Y \in \mathcal{Y}} \rho_Y  f$ and $\rho_Y   f\in \ker(C_{b}(X)\to C_{b}(X\setminus U[Y]))$. Since
 $\cY$ is a big family we have $U[Y]\in \cY$ for every $U$ in $\cC$ and we can conclude from \eqref{wetopkgpwtgerfrwefwerfrwef9} that
 $f\in C(\cY)$.

For the second assertion, we have to prove that for any function $f$ in $\ell^\infty_{\cY}(X)$ there exists $\tilde{f}$ in $C_{\cY}(X)$ such that $f - \tilde{f} \in \ell^\infty(\cY)$. 
We start with the open covering $(U[\{x\}])_{x\in X}$ of $X$. 
Since the topology of $X$ is paracompact, it admits  a locally finite open refinement $(O_j)_{j \in J}$ 
and a subordinate
 partition of unity $(\chi_j)_{j \in J}$.  Note that $O_{j}$ is $U$-bounded for every $j$ in $J$.

For each  $j \in J$ we pick a point $x_j$ in $O_j$.
Given a function $f$ in $\ell^\infty_{\cY}(X)$, we define the continuous bounded function
	\begin{equation}
	\label{DefinitionTildef}
	\tilde{f}(y) := \sum_{j \in J} f(x_j) \chi_j(y).
	\end{equation}
We claim that $f - \tilde{f} \in \ell^{\infty}(\cY)$.
	Let $\epsilon$ be in $(0, \infty)$.
	Then by the variation condition on $f$, there exists a member $Y$ of $\cY$ such that $\Var_{U}(f, X \setminus Y) \leq \epsilon$. %
This implies {for} every $y$ in $X \setminus U[Y]$ that
	\begin{equation*}
	|f(y) - \tilde{f}(y)| \leq \sum_{{j \in J}} |f(y) - f(x_{j})|{\chi_j}(y) 
	\leq \epsilon \sum_{{j \in J}}{\chi_j}(y) = \epsilon\ ,
	\end{equation*}
where we used that the sum is actually taken only over those $j$ in $J$ such that ${y\in  U[x_{j}]\neq \emptyset}$ which implies that $x_{j},y\in X\setminus Y$.

Since $f\in \ell^\infty_{\cY}(X)$ and $f-\tilde f\in \ell^{\infty}(\cY)$ we see that $\tilde f\in \ell^\infty_{\cY}(X)$. By the first
assertion, $\tilde f\in C_{\cY}(X)$ as desired.
\end{proof}

Recall that the coarse  $\cY$-corona of $X$ is by \cref{rkopthtrhegertg} given by 
\[
 \partial^{\cY}X{:=}\spec\left(\frac{\ell^{\infty}_{\cY}(X)}{\ell^{\infty}(\cY)}\right) \ . 
 \]
 As a consequence of the second assertion of \cref{lpkrtherthergetr9}, if $X$ is a paracompact topological space such that the coarse structure contains an open entourage, then we get the alternative description
\begin{equation}
\label{HomeoCorona}
  \partial^{\cY} X \cong \spec\left(\frac{C_{\cY}(X)}{C(\cY)}\right)
\end{equation}
in terms of continuous functions on $X$.

In the case that $X$ is a proper metric space and that $\cY = \cB$ is the collection of bounded subsets of $X$, the
right-hand side of \eqref{HomeoCorona}  was considered in \cite[\S5.1]{roe_coarse_cohomology} and is known as the Higson corona $\partial_{h}X$.
\hB
\end{rem}

  \begin{ex}\label{okgpewgwefrewferfw}
  Let $\cY ,\cY',\cZ,\cZ'$ be big families on a coarse space $X$.
  Then we have a
  homomorphism
 \begin{equation}
 \label{WeDoNeedThis!}	
 \ell^{\infty}_{\cY}(\cZ)\otimes \ell^{\infty}_{\cY'}(\cZ')\to \ell^{\infty}_{\cY \cup \cY'}(\cZ\cap \cZ' )\ , \quad (f\otimes g)\mapsto  fg\ .
 \end{equation}
 For $\cZ=\cZ' = \{X\}$ this induces a well-defined homomorphism
  \[
  C(\partial^{\cY}X)\otimes C(\partial^{\cY'}X)\to C(\partial^{\cY {\cup} \cY'}X)\ ,
  \]
  which corresponds by Gelfand duality  to a map
  \[
  \partial^{\cY{\cup}  \cY'}X\to\partial^{\cY}X\times \partial^{\cY'}X \ .
\] 
\hB
  \end{ex}
  
  \begin{rem}
  \label{RemFunctorialityCorona}
  Let $\varphi: X \to X'$ be a morphism of coarse spaces.
  Then for $f$ in $\ell^\infty(X')$, a subset $Y'$ of $X'$ and a coarse entourage $U$ of $X$, we have 
  \[
  \Var_U(\varphi^{*}f, X\setminus \varphi^{-1}(Y')) \le \Var_{(\varphi \times \varphi)(U)}(f, X'\setminus Y')\ .
  \]
  Therefore, if $\cY$, $\cY'$ are big families on $X$, respectively $X'$ such $\varphi^{-1} (\cY^\prime) \subseteq \cY$, we get a $*$-homomorphism $\varphi^* : \ell^\infty_{\cY'}(X') \to \ell^\infty_{\cY}(X)$.
  Clearly, this homomorphism also sends $\ell^\infty(\cY')$ to $\ell^\infty(\cY)$.
  Therefore, by Gelfand, duality, $\varphi$ extends to a continuous map
  \[
     \varphi : \partial^{\cY} X \to \partial^{\cY'} X'.
  \]
  {In particular, if $X$ is a coarse space with two big families $\cY$, $\cY'$ such that $\cY' \subseteq \cY$, then we may apply the above observation to the identity map of $X$, which yields a continuous surjective map 
$\partial^{\cY} X \to \partial^{\cY'} X$.
}
  \hB
  \end{rem}

  Let $X$ be a coarse space and {$\cY$} be a big family on $X$. 
Let $(H,\chi)$, $(H',\chi')$ be $X$-controlled Hilbert spaces which are determined on points and 
  $A:H\to H'$ be a bounded operator.   
   The argument  for the following commutator estimate is taken from \cite{quro}.

  \begin{lem}
  \label{kohpkertphokgpertge} 
  If $f$  is in $\ell^{\infty}_{\cY}(X)$ and $A$ is $U$-controlled for some coarse entourage $U$, then 
 \[
 {\lim_{Y \in \cY} \|\chi'(X\setminus Y)(\chi'(f)A-A\chi(f))\chi(X\setminus Y)\|=0\ .}
 \]
  \end{lem}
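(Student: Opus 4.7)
The idea is to rewrite the sandwiched expression as an approximate commutator of $f$ with a compression of $A$, and then to apply a quasi-local commutator estimate.

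Set $W := X \setminus Y$. Since the projection $\chi'(W)$ lies in the image of the projection-valued measure $\chi'$, it commutes with $\chi'(f)$; and symmetrically $\chi(W)$ commutes with $\chi(f)$. With $g := f \cdot 1_W$ (so $g$ vanishes on $Y$) and $A_W := \chi'(W) A \chi(W)$, one then checks
\[
 \chi'(W)\bigl(\chi'(f) A - A \chi(f)\bigr)\chi(W) \;=\; \chi'(g) A_W \,-\, A_W \chi(g).
\]
The operator $A_W$ remains $U$-controlled, satisfies $\|A_W\| \leq \|A\|$, and its blocks $\chi'(\{x\}) A_W \chi(\{y\})$ vanish unless both $x, y \in W$.

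Next I invoke the standard quasi-local commutator bound (cf.\ \cite{quro}): for any bounded $U$-controlled operator $S$ between $X$-controlled Hilbert spaces determined on points, and any bounded function $h$ on $X$,
\[
 \|\chi'(h) S - S \chi(h)\| \;\leq\; C_U \cdot \|S\| \cdot \sup_{(x,y) \in U,\; \chi'(\{x\}) S \chi(\{y\}) \neq 0} |h(x)-h(y)|,
\]
with a constant $C_U$ depending only on $U$. Applied to $S = A_W$ and $h = g$, the relevant supremum is bounded by $\Var_U(f, X \setminus Y)$, since only pairs in $W \times W$ contribute, and $g$ agrees with $f$ on $W$. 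By the definition of $\ell^\infty_\cY(X)$ in \cref{gjsoepgergseffs}, $\Var_U(f, X \setminus Y) \to 0$ as $Y$ grows in $\cY$, and the conclusion follows.

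The main obstacle is the commutator bound itself: the naive expansion $\chi'(g) A_W - A_W \chi(g) = \sum_{(x,y) \in U \cap W \times W} (g(x) - g(y))\,\chi'(\{x\}) A \chi(\{y\})$ together with a term-by-term triangle inequality destroys the $U$-control and does not yield operator-norm control. Preserving it requires a Schur-type argument, decomposing $U$ into pieces that are graphs of partial bijections so that each piece contributes a bounded operator of norm at most $\|A\|$, and summing a controlled number of them.
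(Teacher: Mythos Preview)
Your reduction to a commutator $\chi'(g)\,A_W - A_W\,\chi(g)$ with $A_W$ still $U$-controlled is fine, and correctly isolates the heart of the matter. The gap is in the commutator bound itself: the Schur-type argument you sketch, decomposing $U$ into graphs of partial bijections and summing a ``controlled number'' of pieces, requires a bounded-geometry hypothesis on $X$ that is not assumed here. In a general bornological coarse space, a coarse entourage $U$ need not admit any finite decomposition into partial bijections; indeed $U[\{x\}]$ can be infinite for every $x$. So there is no finite $C_U$ coming from such a decomposition, and the argument as written does not close.

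The paper's proof (following \cite{quro}) avoids decomposing $U$ altogether and instead discretizes the \emph{function}. Given $\eta>0$ with $\Var_U(f,X\setminus Y)\le\eta$, partition $X\setminus Y$ into the level strips $S_k=\{x:(k-1)\eta\le f(x)<k\eta\}$; only finitely many are nonempty since $f$ is bounded. The variation bound forces $\chi'(S_k)A\chi(S_l)=0$ whenever $|k-l|\ge 2$, because any $U$-related pair in $(X\setminus Y)^2$ has $|f(x)-f(y)|\le\eta$. One then replaces $f$ by the step function $\tilde f=\eta\sum_k k\cdot 1_{S_k}$, which costs at most $\eta$ in sup norm, and the commutator $[\tilde f,A]$ over $X\setminus Y$ telescopes to $\sum_k\bigl(\chi'(S_k)A\chi(S_{k-1})-\chi'(S_k)A\chi(S_{k+1})\bigr)$, whose operator norm is bounded by $2\|A\|$ using only the orthogonality of the projections $\chi(S_k)$ and $\chi'(S_k)$---no combinatorics of $U$ enters. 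This yields a bound with a universal constant (independent even of $U$), and in particular works for arbitrary coarse spaces.
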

  \begin{proof} 
Let $\epsilon$ in $(0,\infty)$ be given and set $\eta := \epsilon /4 \|A\|$. 
We then choose $Y$ in $\cY$ such that $\Var_{U}(f, X\setminus Y')\leq \eta$ for each $Y'$ in $\cY$ with $Y \subseteq Y'$. 
We define the partition $(S_{k})_{k\in \Z}$  of $X \setminus Y$ by\ \begin{equation*}
  S_k := \{ x \in X \setminus Y \mid (k-1) \eta \leq f(x) < k\eta\}\ .
\end{equation*}
Since $f$ is bounded, only finitely many of these sets are non-empty.
If $k,l$ are in $\Z$, then  $x\in S_k$ and $y \in S_l$ implies $|f(x) - f(y)| \geq (|k-l|-1)\eta$.
Since  the $U$-variation of $f$ on   ${X \setminus} Y =\bigcup_{k\in \Z}S_{k}$ is bounded by $\eta$, the condition $|k-l|\geq 2$ implies that $S_k \cap U[S_l] = U[S_k] \cap S_l = \emptyset$.
Since $A$ is $U$-controlled we can conclude that $\chi'(S_k) A \chi(S_l) = 0$

We set
\begin{equation*}
 \tilde{f} := Y\cdot f + \eta \sum_{k \in \Z} k \cdot S_k,	
\end{equation*}
where for notational simplicity, we identify subsets of $X$ with the corresponding indicator function.
Then by construction, $\|\tilde{f} - f\ \| \leq \eta$ and hence
\begin{equation}
\label{FirstComparison}
  \|(\chi^\prime(f) A - A \chi(f)) - (\chi^\prime(\tilde{f}) A - A \chi(\tilde{f})) \| \leq 2\eta\|A\|	= \frac{\epsilon}{2}\ .
\end{equation}
Since $A$ is $U$-controlled, we have
\begin{equation}
\label{okwgpergrefwf}	
\begin{aligned}
&\chi'(X\setminus U[Y])(\chi'(\tilde{f})A-A\chi(\tilde{f}))\chi(X\setminus U[Y]) \\
&\qquad\qquad\qquad  = \eta\sum_{k \in\Z} k \cdot \chi'(X\setminus U[Y])(\chi^\prime(S_k) A - A \chi(S_k)) \chi(X \setminus U[Y]) \ .
\end{aligned} 
\end{equation}
Inserting the identities $\chi(X \setminus Y) = \sum_{k \in \Z} \chi(S_k)$ and $\chi'(X \setminus Y) = \sum_{k \in \Z} \chi'(S_k)$ and   using that $\chi'(S_k) A \chi(S_l) = 0$ whenever $|k-l| \geq 2$, we get
\begin{equation*}
  \sum_{k \in \Z} k\chi'(X \setminus Y)(\chi^\prime(S_k) A - A \chi(S_k))\chi(X \setminus Y) = \sum_{k \in \Z} (\chi^\prime(S_k) A \chi(S_{k-1}) - \chi^\prime(S_k) A \chi(S_{k+1}))\ .
\end{equation*}
The right-hand side is an operator  with norm bounded by $2\|A\|$.
Using $\chi(X\setminus U[Y])=\chi(X\setminus U[Y])\chi(X\setminus Y)$ and plugging the above equality into  \eqref{okwgpergrefwf}, we get
\begin{equation*}
\|\chi'(X\setminus U[Y])(\chi'(\tilde{f})A-A\chi(\tilde{f}))\chi(X\setminus U[Y])\| \leq 2\eta\|A\|= \frac{\epsilon}{2}.
\end{equation*}
Combining this with \eqref{FirstComparison}, we see that  
\begin{equation*}
\|\chi'(X\setminus Y')(\chi'(f)A-A\chi(f))\chi(X\setminus Y')\| \leq \varepsilon
\end{equation*} 
for all $Y'$ in $\cY$ with $U[Y] \subseteq Y'$.
   \end{proof}

Recall that   $X$ is a  coarse space with a big family ${\cY}$ and that $(H,\chi)$ and $(H',\chi')$ are  $X$-controlled Hilbert spaces which are determined on points. 
Let now $A:H\to H'$ be  a bounded operator which can be approximated in norm by controlled bounded operators, and  $f$ be in $\ell^{\infty}_{\cY}(X)$.

   \begin{kor}\label{vfjfvjpfvosdfvs}
    We have $\lim_{{Y \in \cY}}\| \chi'(X\setminus Y)(\chi'(f)A-A\chi(f))\chi(X\setminus Y)\|=0\ .$
    \end{kor}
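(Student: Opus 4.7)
The strategy is a standard three-epsilon approximation argument: reduce to the controlled case already handled in \cref{kohpkertphokgpertge}. Given $\epsilon$ in $(0,\infty)$, I would first use the hypothesis that $A$ is a morphism in the Roe category to choose a controlled operator $A_{0}:H\to H'$ (so $A_{0}$ is $U$-controlled for some coarse entourage $U$ of $X$) satisfying $\|A-A_{0}\|\le \epsilon/(3(\|f\|+1))$.

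Next, I would split the commutator as
\[
\chi'(f)A-A\chi(f) = \bigl(\chi'(f)A_{0}-A_{0}\chi(f)\bigr) + \bigl(\chi'(f)(A-A_{0})-(A-A_{0})\chi(f)\bigr)
\]
and estimate the two pieces separately after cutting down with $\chi'(X\setminus Y)$ on the left and $\chi(X\setminus Y)$ on the right. The second (remainder) piece is uniformly bounded, since the projections $\chi'(X\setminus Y)$ and $\chi(X\setminus Y)$ have norm at most $1$, so its norm is at most $2\|f\|\,\|A-A_{0}\|\le 2\epsilon/3$ for every $Y$ in $\cY$.

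For the first piece I would invoke \cref{kohpkertphokgpertge} applied to the $U$-controlled operator $A_{0}$ and the function $f\in \ell^{\infty}_{\cY}(X)$, which yields
\[
\lim_{Y\in\cY}\bigl\|\chi'(X\setminus Y)(\chi'(f)A_{0}-A_{0}\chi(f))\chi(X\setminus Y)\bigr\|=0\ .
\]
Hence there exists $Y_{0}$ in $\cY$ such that this norm is at most $\epsilon/3$ for every $Y\in\cY$ with $Y_{0}\subseteq Y$. Combining the two estimates by the triangle inequality yields
\[
\bigl\|\chi'(X\setminus Y)(\chi'(f)A-A\chi(f))\chi(X\setminus Y)\bigr\|\le \epsilon
\]
for all such $Y$, which proves the claim. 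No step is particularly subtle here; the content of the corollary is essentially packaged in the commutator estimate of \cref{kohpkertphokgpertge}, and the only thing to check is that norm approximation interacts well with the cut-down on both sides, which it does because both $\chi'(X\setminus Y)$ and $\chi(X\setminus Y)$ are contractions.
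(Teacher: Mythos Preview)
Your argument is correct and is exactly the approximation argument the paper has in mind; the corollary is stated without proof precisely because it follows from \cref{kohpkertphokgpertge} by this standard $\epsilon/3$ reasoning. One minor wording point: the hypothesis in the corollary is only that $A$ is norm-approximable by controlled operators, not that it is a morphism in the Roe category (the Hilbert spaces here are not assumed locally finite), but this does not affect your proof.
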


Let $X$ be a bornological coarse space, $\cY$ be a big family in $X$ and recall the  \cref{jkohpertgtege}
of the ideal $\bC(\cY\subseteq X)$.

   \begin{kor}
   \label{gokpergergsgre}
For a morphism $A:(H,\chi)\to (H',\chi')$ in $\bC(X)$  and  a function $ f$ in $\ell^{\infty}_{\cY}(X)$ 
 we have
$\chi'(f)A-A\chi(f)\in \bC(\cY\subseteq X)$.  
\end{kor}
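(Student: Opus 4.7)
The strategy is to reduce the statement directly to \cref{vfjfvjpfvosdfvs} by means of a routine algebraic decomposition of the commutator against the projections $\chi(Y)$, $\chi'(Y)$, with $Y$ ranging over the filtered family $\cY$.

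First I would observe that $T := \chi'(f)A - A\chi(f)$ is itself a morphism in $\bC(X)$: the multiplication operators $\chi(f)$ and $\chi'(f)$ are $\diag(X)$-controlled and bounded, and $\bC(X)$ is a $\C$-linear $\ast$-category that contains all controlled bounded operators. Moreover, for every $Y\in\cY$, the projections $\chi(Y)$ and $\chi'(Y)$ belong to the wide subcategory $\bC(Y\subseteq X)$ from \cref{jkohpertgtege} (take $A=\mathrm{id}$ in the defining formula), and hence to the ideal $\bC(\cY\subseteq X)$.

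Next I would decompose $T$ for each $Y\in\cY$ by inserting $\mathrm{id}=\chi(Y)+\chi(X\setminus Y)$ on the right and $\mathrm{id}=\chi'(Y)+\chi'(X\setminus Y)$ on the left, grouping terms as
\[
T \;=\; \chi'(Y)\,T \;+\; \chi'(X\setminus Y)\,T\,\chi(Y) \;+\; \chi'(X\setminus Y)\,T\,\chi(X\setminus Y).
\]
The first two summands are compositions of morphisms of $\bC(X)$ with generators of $\bC(\cY\subseteq X)$, and therefore lie in the ideal by its very definition (closedness under pre- and post-composition). For the third summand, \cref{vfjfvjpfvosdfvs} asserts that
\[
\lim_{Y\in\cY}\,\bigl\|\chi'(X\setminus Y)\,T\,\chi(X\setminus Y)\bigr\|=0,
\]
so that $T$ is approximated in norm by elements of $\bC(\cY\subseteq X)$. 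Since an ideal in a $C^{\ast}$-category is norm-closed, this forces $T\in\bC(\cY\subseteq X)$.

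No serious obstacle is anticipated: all of the analytic content is already contained in the commutator estimate of \cref{kohpkertphokgpertge}, and the remaining step is a purely formal bookkeeping argument using the definition of the ideal $\bC(\cY\subseteq X)$ and the fact that the diagonal-multiplication operators $\chi(Y)$, $\chi'(Y)$ are themselves recognised as morphisms of the generating wide subcategories.
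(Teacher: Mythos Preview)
Your argument is correct and is precisely the intended deduction: the paper states \cref{gokpergergsgre} as a corollary without proof, and your decomposition $T=\chi'(Y)T+\chi'(X\setminus Y)T\chi(Y)+\chi'(X\setminus Y)T\chi(X\setminus Y)$ together with \cref{vfjfvjpfvosdfvs} and norm-closedness of the ideal is exactly how one fills in the details.
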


\subsection{Uniform coronas and the comparison map}\label{lhpkgerhretgrge}

In this section, we define the uniform version of the $\cY$-corona introduced in the previous section.
We consider a uniform space   $X$ with uniform structure $\cU$. 
{Recall that} a function $f:X\to \C$ is called uniformly continuous if 
\[
\lim_{U\in \cU^{\op}} \Var_{U}(f,X)=0\ .
\]  
  We let $C_{u}(X)$ denote the $C^{*}$-algebra of  uniformly continuous and  bounded functions on $X$ with the supremum norm. 
Note that $C_{u}(X)$ is a closed subalgebra of the $C^{*}$-algebra $C_{b}(X)$ of bounded continuous functions.

Let {$\cY$} be a filtered family of subsets of $X$. 

 \begin{ddd}
 \label{regkowpergerwfwfwrefw}
We define the $C^{*}$-algebra  
 $$C_{u}(\cY):=\colim_{{Y \in \cY}}  \ker(C_{u}(X) \to C_{u}(X\setminus Y))\ ,$$
 where the colimit is taken in $C^{*}$-algebras.
 \end{ddd}

  \begin{rem}  
  \label{RemarkOnCuY}
  $C_{u}(\cY)$ is a subalgebra of $C_u(X)$ in a canonical way and consists of uniformly continuous  functions which are asymptotically small away from   $\cY$. 
  We have an inclusion $C_{u}(\cY)\subseteq \ell^{\infty}(\cY)\cap C_{u}(X)$. 
  {This inclusion is an equality if $\cU$ is induced by a metric on $X$.}
{However,} for  general, non-metric, uniform  spaces  $X$ we do not expect   that this inclusion is an equality{:}
If $f$ is in $ \ell^{\infty}(\cY)\cap C_{u}(X)$, then it is not clear
how to approximate it by uniformly continuous functions supported on {a} suitable member  $Y$  of $\cY$.
\hB\end{rem}

 \begin{ex}
 \label{ExampleC0CuB}
 If $X$ is a proper metric space and $\cB$ is the bornology generated by bounded subsets {of $X$}, then  we have an equality $C_{0}(X)=C_{u}(\cB)=  \ell^{\infty}(\cB)\cap C_{u}(X)$. \hB  
 \end{ex}

Let $X$ be a set with compatible coarse and uniform structures $\cC$ and $\cU$.
Let $\cY$ be a big family on $X$.
Recall the \cref{gjsoepgergseffs}.\ref{wkotpgegfergweg} of $\ell^{\infty}_{\cY}(X)$.

\begin{ddd} 
\label{okpgwerfwerferwfw}
We define  the uniform corona algebra of $X$ by
   \[
   C_{u,\cY}(X):=\ell^{\infty}_{\cY}(X)\cap C_{u}(X)\ .
   \]
   \end{ddd}

We  have an exact sequence of $C^{*}$-algebras
\begin{equation}
\label{gerwrgfrq}
0\to C_{u}(\cY)\to C_{u,\cY}(X)\to C(\partial_{u}^{\cY} X)\to 0 \end{equation} 
defining the unital quotient $C^{*}$-algebra  $C(\partial_{u}^{\cY} X)$.

\begin{ddd}
\label{zkophergrtgtrgetrg} 
The  uniform  $\cY$-corona of $X$ is the compact topological space $\partial_{u}^{\cY} X$  defined as the Gelfand dual of  $C(\partial_{u}^{\cY}X)$. 
\end{ddd}

\begin{ex}
     If $X$ comes from a proper metric space (as in \cref{epokpbbwerg}) and $\cB$ is the big family of bounded subsets of $X$, then any continuous function with bounded variation away from $\cB$ is automatically uniformly continuous.
In view of \cref{ExampleC0CuB} and \cref{RemarkContinuousVersion} the uniform and the coarse corona coincide, 
\begin{equation*}
\partial_u^{\cB} X = \partial^{\cB} X, 	
\end{equation*}
and both agree with the Higson corona of $X$.
     \hB
\end{ex}

\begin{rem} 
If $\cY, \cY'$ are big families such that $\cY\subseteq \cY'$
, then
\[
C_{u,\cY}(X)\subseteq C_{u,\cY'}(X)\ .
\] 
The algebra $C_{u,\emptyset}(X)$ consists of functions which are constant on coarse components of $X$. 
Furthermore $C_{u,\{X\}}(X)=C_{u}(X)$ and $\smash{\partial^{\{X\}}_{u}}X$ contains $X$ as a subspace.
 \hB
\end{rem}
 
\begin{rem}
\label{RemarkUniformOnCone}
Consider the cone $\cO^\infty(X)$ of $X$ from \cref{jkigowergerwfwfwrefw}.
One of the main observations that will be exploited below is that pullback along the canonical projection $\pr : \cO^\infty(X) \to X$ yields a homomorphism
\[
  \pr^*  : C_u(X) \to \ell^\infty_{\cO^-(X)} (\cO^\infty(X)) \ ,
\]
which follows directly from the definition of the coarse structure of $\cO^-(X)$.
More generally, if $\cZ$ and $\cY$ are two big families on $X$, then the above homomorphism  restricts to a map
\[
  C_{u, \cZ}(\cY) := C_{u, \cZ}(X) \cap C_u(\cY) \to \ell^\infty_{\cO_{\cZ\cap \cY}^-(X)}(\cO^\infty(\cY)) \ ,
\]
using the big families in $\cO^\infty(X)$ defined in \eqref{fewdqwedqewdqewd}. 
\hB
\end{rem}

By \cref{RemarkOnCuY}, the canonical inclusion $C_{u, \cY}(X) \hookrightarrow \ell^\infty_{\cY}(X)$ sends $C_u(\cY)$ to $\ell^\infty(\cY)$.
Hence by Gelfand duality, it induces a comparison map
\begin{equation}
\label{ComparisonMap}
	c : \partial^{\cY} X \to \partial^{\cY}_u X \ .
\end{equation}

This map is a homeomorphism in many cases, as \cref{kpohrheh9} below shows.
We consider the following version of a bounded geometry condition on a   set $X$ with compatible uniform and coarse structures.

\begin{ass}\label{opthertherge}
We assume that $X$ admits a partition of unity $(\chi_{j})_{j\in J}$ such that:
\begin{enumerate}
\item \label{kohperhertgerg}The family $(\chi_{j})_{j\in J}$ is jointly uniformly continuous in the sense that for every $\epsilon$ in $(0,\infty)$ there exists a uniform entourage $V$ of $X$ such that for all $(x,y)$ in $ V$ and $j$ in $J$ we have $|\chi_{j}(x)-\chi_{j}(y)|<\epsilon$.
\item \label{okehpertgertge} The covering $(\supp(\chi_{j}))_{j\in J}$ is uniformly locally finite, i.e, 
$\sup_{x\in X}  |\{j\in J\mid  x\in \supp \chi_{j}\}|<\infty$
\item \label{okehpertgertg} The family $(\supp (\chi_{j}))_{j\in J}$ of subsets is coarsely bounded, i.e., there exists a coarse entourage $U$ such that $\supp (\chi_{j})\times \supp( \chi_{j})\subseteq U$ of $X$  for all $j$ in $J$.
\hB
\end{enumerate}
\end{ass}

\begin{ex}  
If $(X,d)$ is a metric space of bounded geometry, then it satisfies \cref{opthertherge}.
Indeed,   we can find a suitable  family
$(x_{j})_{j\in J}$  of points in $X$ such that $(U_{2}[\{x_{j}\}])_{j\in J}$ is   uniformly locally finite  and
$(U_{1}[\{x_{j}\}])_{j\in J}$ is an open covering of $X$.
In order to define the required partition of unity we set
	\[
	 \tilde{\chi}_j(x) := \begin{cases} 1 - d(x, U_{1}[\{x_{j}\}])
	 & d(x, U_{1}[\{x_{j}\}])\le 1  \\
 	0 & \text{otherwise}
 \end{cases} \ ,
 \qquad
 \chi_j(x) := \frac{\tilde{\chi}_j(x)}{\sum_{k \in J} \tilde{\chi}_k(x)}
 \tag*{$\blacksquare$}
	\]
\end{ex}

\begin{prop}\label{kpohrheh9}
If $X$ is a set with compatible uniform and coarse structures
  satisfying \cref{opthertherge}, then \eqref{ComparisonMap} is a homeomorphism.
  
\end{prop}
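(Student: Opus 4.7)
The plan is to prove the proposition by showing that the canonical inclusion $C_{u,\cY}(X) \hookrightarrow \ell^{\infty}_{\cY}(X)$ sends $C_u(\cY)$ into $\ell^{\infty}(\cY)$ and induces an \emph{isomorphism} of quotient $C^{*}$-algebras $C_{u,\cY}(X)/C_u(\cY) \to \ell^{\infty}_{\cY}(X)/\ell^{\infty}(\cY)$. By Gelfand duality applied to the exact sequences \eqref{gerwrgfrqlll} and \eqref{gerwrgfrq}, this will imply that the comparison map $c$ is a homeomorphism. Throughout the argument, I will use the partition of unity $(\chi_j)_{j \in J}$ provided by \cref{opthertherge}, a choice of points $x_j \in \mathrm{supp}(\chi_j)$, and the fixed coarse entourage $U_0$ from property 3.

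For surjectivity of the induced map on quotients, given $f \in \ell^{\infty}_{\cY}(X)$ I would set $\tilde f(y) := \sum_{j\in J} f(x_j)\chi_j(y)$, as in the proof of \cref{lpkrtherthergetr9}. Boundedness of $\tilde f$ is immediate, and uniform continuity follows by combining the joint uniform continuity of the partition with the uniform local finiteness, as sketched below. Using that $\sum_j \chi_j(x) = 1$, one has the identity
\[
(f - \tilde f)(x) = \sum_{j \in J} \bigl(f(x) - f(x_j)\bigr)\chi_j(x).
\]
If $\chi_j(x) > 0$ then $x \in \mathrm{supp}(\chi_j)$, hence $(x,x_j) \in U_0$ by property 3. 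For $x \in X\setminus U_0[Y]$ this forces $x_j \notin Y$, so $|(f-\tilde f)(x)| \leq \Var_{U_0}(f, X\setminus Y)$. Since $f \in \ell^{\infty}_{\cY}(X)$ has vanishing $U_0$-variation away from $\cY$, this shows $f - \tilde f \in \ell^{\infty}(\cY)$, so that $\tilde f \in C_{u,\cY}(X)$ represents the same class as $f$ modulo $\ell^{\infty}(\cY)$.

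For injectivity, I would show $C_{u,\cY}(X) \cap \ell^{\infty}(\cY) \subseteq C_u(\cY)$. Given $f$ in this intersection and $Y \in \cY$, define the cutoff $\rho_Y := \sum_{j : \mathrm{supp}(\chi_j) \cap Y \neq \emptyset} \chi_j$. A partition-of-unity argument analogous to the one for $\tilde f$ shows that $\rho_Y$ is uniformly continuous. It takes value $1$ on $Y$ and, by property 3, is supported in $U_0[Y] \in \cY$. Hence $\rho_Y f \in C_u(X)$ is supported in a member of $\cY$ and therefore lies in $C_u(\cY)$, while $\|f - \rho_Y f\|_\infty \leq \|f_{X\setminus Y}\|_\infty$ vanishes in the limit along $\cY$ because $f \in \ell^{\infty}(\cY)$. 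This exhibits $f$ as a norm-limit of elements of $C_u(\cY)$.

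The main technical obstacle will be the uniform continuity of $\tilde f$ and $\rho_Y$. Joint uniform continuity (property 1) alone controls each summand $\chi_j$ individually, but without an a priori bound on the number of summands that contribute to the difference at a pair of close points the total error could in principle accumulate. The uniform local finiteness (property 2) supplies exactly such a bound: at every point at most $N$ of the $\chi_j$ are nonzero, so only at most $2N$ summands contribute to any $\tilde f(x) - \tilde f(y)$ or $\rho_Y(x) - \rho_Y(y)$, and a choice of $V \in \cU$ for $\epsilon/(2N\|f\|_\infty)$ in the joint continuity property yields the desired uniform estimate.
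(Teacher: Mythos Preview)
Your proof is correct and, for the surjectivity direction, matches the paper's own argument essentially verbatim: both construct $\tilde f(x)=\sum_j f(x_j)\chi_j(x)$ and use property~3 for $f-\tilde f\in\ell^\infty(\cY)$ together with properties~1 and~2 (joint uniform continuity plus the bound $2N$ on contributing summands) for the uniform continuity of $\tilde f$.

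Your proof goes further than the paper's in one respect: you explicitly verify injectivity of the induced map on quotients, i.e.\ $C_u(X)\cap\ell^\infty(\cY)\subseteq C_u(\cY)$, by building uniformly continuous cut-offs $\rho_Y=\sum_{\mathrm{supp}(\chi_j)\cap Y\neq\emptyset}\chi_j$ from the given partition of unity. The paper's proof, as written, only establishes surjectivity of $c^*$, which by Gelfand duality gives that $c$ is an injective closed map but not yet that it is onto; the injectivity of $c^*$ is genuinely needed (Remark~\ref{RemarkOnCuY} notes that $C_u(\cY)=\ell^\infty(\cY)\cap C_u(X)$ fails for general uniform spaces). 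Your $\rho_Y$-construction is the natural uniform analogue of the first half of Lemma~\ref{lpkrtherthergetr9}, transported from the paracompact/continuous setting to the present setting via Assumption~\ref{opthertherge}, and it closes this gap cleanly.
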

\begin{proof}
Without loss of generality we can assume that 
{$\chi_j \neq 0$} for all $j$ in $J$ and pick a point $x_{j}$ in $\supp (\chi_{j})$.

	
Let $f$ be in $\ell^\infty_{\cY}(X)$ and set 
\begin{equation*}
\tilde{f}(x) := \sum_{j \in J} f(x_j) \chi_j(x).
\end{equation*}
As  in \cref{RemarkContinuousVersion} and using \cref{opthertherge}.\ref{okehpertgertg} we see that $f$ is continuous, has vanishing variation away from $\cY$, and satisfies $f - \tilde{f} \in \ell^\infty(\cY)$. 
It remains to show that $\tilde{f}$ is uniformly continuous.
By  \cref{opthertherge}.\ref{okehpertgertge} the number $N:=\sup_{x\in X}  |\{j\in J\mid  x\in \supp \chi_{j}\}|$ is finite.
Let $\epsilon$ be in $(0,\infty)$ and choose by \cref{opthertherge}.\ref{kohperhertgerg} a uniform entourage $V $
such that $|\chi_{j}(x)-\chi_{j}(y)|\le \frac{\epsilon}{2N\|f\|_{\infty}}$ for all $(x,y)$ in $V$ and $j$ in $J$.
Then for all $(x,y)$ in $V$   we have
\[
\begin{aligned}
|\tilde{f}(x) - \tilde{f}(y)|
\leq \sum_{j \in J} |f(x_j)| |\chi_j(x) - \chi_j(y)|
\leq \|f\|_\infty \!\!\!\!\!\! \sum_{\substack{j \in J \\ x\in \supp( \chi_{j}) ~\text{or}~ y\in \supp (\chi_{j})  }} \!\!\!\!\!\! \frac{\epsilon}{2N\|f\|_{\infty}}    \leq \epsilon \ .
\end{aligned}
\]
 This shows that $\tilde f$
  uniformly continuous.
\end{proof}

Recall that $X$ is a set with compatible uniform and coarse structures and that $\cY$ is a big family in $X$.

\begin{ddd}
\label{fwerferfervwerv1}
We define the $K$-theory of $X$ with support in $\cY$ by 
\[
K(\cY):=K(C_{u}(\cY)) \ . 
\]
\end{ddd}

 \begin{rem}
 \label{InducedMapYsuppFnctns}
   Let $X$ and $X'$ be uniform spaces with filtered families $\cY$ and $\cY'$ of subsets of $X$, respectively $X'$. 
   If $f: X \to X'$ is a uniformly continuous map such that $f^{-1}(\cY') \subseteq \cY$, then the pullback homomorphism $f^* : C_u(X') \to C_u(X)$ sends $C_u(\cY')$ to $C_u(\cY)$, hence $f$ induces a map
   \[
     f^* : K(\cY') \to K(\cY).	
     \tag*{$\blacksquare$}
   \]
 \end{rem}

The exact sequence \eqref{gerwrgfrq} induces a boundary map
\begin{equation}
\label{fwerferfervwerv}
\partial:K(\partial_{u}^{\cY}X)\to \Sigma K(\cY)
\end{equation}
and the comparison map \eqref{ComparisonMap} induces a pullback map
\[
  	c^* : K(\partial_u^{\cY} X) \to K(\partial^{\cY} X)  \ .
\]
These maps involve the topological $K$-theory of the uniform and coarse $\cY$-corona of $X$, which is defined as the $K$-theory of the $C^{*}$-algebras of continuous functions on these spaces.




\subsection{Analytic locally finite $K$-homology and  Paschke duality}
 \label{gkopegrergweferfwfr}

 Recall  \cref{grwgjrogwregwre}  of  the strong coarse $K$-homology theory   $K\cX:\BC\to \Mod(KU)$  (see also \cref{gpkopwergwrfwferfwf}) and the cone functor  $\cO^{\infty}:\UBC\to \BC$ introduced in \cref{kophehehtre}.
 As explaind in \cref{kogpewgwergergwergwerfw}, using the cone functor, we can associate  to any strong coarse homology theory  a  local homology theory.
 
   \begin{ddd}
   \label{kogpwergrefrefwrfw}  
   We define the  local $K$-homology functor for uniform bornological coarse spaces as the composition
  $$K^{\cX}=\Sigma^{-1}K\cX\cO^{\infty}:\UBC \xrightarrow{\cO^{\infty}} \BC\xrightarrow{\Sigma^{-1}K\cX} \Mod(KU)\ .$$
      \end{ddd}

 By specializing \eqref{bdgpbkdpgbdpgbdfg}
  we get  the index map, a natural transformation of functors
 \begin{equation}\label{fqweqwfqewdq}a:K^{\cX}\to  K\cX\circ \iota :\UBC  \to \Mod(KU)\ .
\end{equation}  
Note that if we  write the evaluation of $K\cX {\circ \iota}$ on objects of $\UBC$, then we will omit the symbol $\iota$.

 We now specialize the construction from  \eqref{vfdsovjsfdovsdfvsfdvsfv}.
\begin{ddd}\label{jgkgopekgregfwrefrwf}
We define the  local $K$-homology   with support by 
$$K^{\cX}_{-}:\UBC^{(2)}\to \Mod(KU)\ , \quad (X,\cZ)\mapsto K^{\cX}_{\cZ}(X):=\Sigma^{-1}K\cX(\cO^{\infty}_{\cZ}(X))\ .$$
\end{ddd}

The index map \eqref{bdgpbkdpgbdpgbdfg11} specializes to a natural transformation \begin{equation}\label{fwerqfdwedqewdqewdq}a_{X,\cZ}:K^{\cX}_{\cZ}(X)\to K\cX(\cZ)\ .
\end{equation}

\begin{rem}\label{kopgwergregfrfwf}
Note that $K^{\cX}_{\{X\}}(X)\simeq K^{\cX}(X)$  and  that we have a fibre sequence
\begin{equation}\label{vffdwerfgsefgdf}K^{\cX}_{{\{\emptyset\}}}(X)\to K^{\cX}(X)\stackrel{a_{X}}{\longrightarrow} K\cX
(X)\ ,
\end{equation} 
which is a special case of \eqref{FibreSequenceIndexMap}.
Thus the spectrum $\smash{K^{\cX}_{{\{\emptyset\}}}(X)}$ captures the secondary invariants of the reasons for the vanishing of the index of classes in $K^{\cX}(X)$. 

The fibre sequence \eqref{vffdwerfgsefgdf} is our analogue of the Higson-Roe surgery sequence
\cite[Def.\ 1.5]{zbMATH02196177} and $\smash{K^{\cX}_{{\{\emptyset\}}}}(X)$ is the analogue of the 
analytic structure set. It follows from \cref{keopgegwerferfw} that $\smash{K^{\cX}_{{\{\emptyset\}}}(-)}:\UBC\to \Mod(KU)$
is a local homology theory, so it is in particular excisive and homotopy invariant. The analogue of excisiveness for the analytic structure set of Higson-Roe has been
shown in  \cite{Siegel:2012aa}.

 Our order to compare our constructions with  \cite{zbMATH02196177}  precisely one would have to work with the equivariant generalization
 \begin{equation}\label{ojgpwergwrfwf} K^{G,\cX}_{{\{\emptyset\}}}(X)\to K^{G,\cX}(X)\stackrel{a^{G}_{X}}{\longrightarrow} K\cX^{G}
(X)
\end{equation} 
 for $G$-uniform bornological coarse spaces $X$. If $X$ is homotopy equivalent in $G\UBC$ to a  finite-dimensional
 locally finite simplicial complex with a free proper action of $G$, then it follows from the equivariant Paschke duality equivalence
 considered in \cite{bel-paschke} that  {the long exact sequence obtained from} \eqref{ojgpwergwrfwf} {by taking homotopy groups} is equivalent to the sequence defined in \cite[Def. 1.5]{zbMATH02196177}. 
 \hB
\end{rem}

 We consider a uniform bornological coarse space $X$
 with bornology $\cB$ and recall the \cref{regkowpergerwfwfwrefw} of $C_{u}(\cB)$. 
 In this section it is useful to change notation and consider  the functor
 \[
 C_{0}:\UBC\to (\nCalg)^{\op}\ , \quad  X\mapsto C_0(X) := C_{u}(\cB)
 \]
 which on morphisms is given by the pull-back.
 
 \begin{rem}
 If $X$ is the uniform bornological coarse space associated to a proper metric space, then
 $C_{0}(X)$ is the usual algebra of continuous functions on $X$ vanishing at $\infty$. 
 This justifies the notation.
  \hB \end{rem}  
  \begin{ddd} \label{kogpwegreffrefrfw}
  We define the analytic locally finite $K$-homology as the composition
 $$K^{\an}:\UBC\xrightarrow{C_{0}}(\nCalg)^{\op}\xrightarrow{\EE(C_{0}(-),\C)}\Mod(KU)\ .$$ 
 \end{ddd}
 The following justifies the name, compare  \cref{DefinitionLocallyFiniteHomologyTheory}  and \cref{DefinitionLocalHomologyTheory}.
 
 \begin{prop}
 The functor $K^{\an}$ is a locally finite local homology theory.
 \end{prop}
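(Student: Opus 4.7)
The plan is to verify each of the four axioms from \cref{DefinitionLocalHomologyTheory} together with local finiteness by translating them, through the functor $\EE(-,\C)$, into properties of the contravariant functor $C_0 : \UBC \to (\nCalg)^{\op}$. Both $u$-continuity and homotopy invariance are essentially formal. The former is immediate because $C_0(X) = C_u(\cB)$ depends only on the uniform and bornological structure of $X$ and not on the coarse structure, so refining the coarse structure has no effect on $K^{\an}$. For homotopy invariance, the identification $C_0([0,1] \otimes X) \cong C([0,1]) \otimes C_0(X)$ together with the homotopy invariance of $\EE(-,\C)$ shows that the projection $[0,1] \otimes X \to X$ induces an equivalence on $K^{\an}$.

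For vanishing on flasques, I would argue by an Eilenberg swindle. The morphism $(t,x) \mapsto (t+1,x)$ of $[0,\infty) \otimes X$ in $\UBC$ pulls back to a $*$-endomorphism $\tau$ of $C_0([0,\infty) \otimes X)$ whose iterates tend to zero in norm; combined with the sum-preserving and $s$-finitary properties built into $\ee$, this forces $\id \simeq 0$ in $\EE$-endomorphisms of $C_0([0,\infty)\otimes X)$, and hence $K^{\an}([0,\infty)\otimes X) \simeq 0$. For excisiveness, a coarsely and uniformly excisive decomposition $(A,B)$ of $X$ yields a pull-back square
\[
\xymatrix@R=15pt{
C_0(X) \ar[r]\ar[d] & C_0(A) \ar[d] \\
C_0(B) \ar[r] & C_0(A \cap B)
}
\]
in $\nCalg$. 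The central content of this step is that any compatible pair of functions on $A$ and $B$ actually glues to a uniformly continuous function on $X$ with bornological support, and this is where both hypotheses enter: uniform excisiveness (via the control function $\kappa$) supplies the uniform continuity of the glued function, while coarse excisiveness ensures its bornological decay. Applying the contravariant exact functor $\EE(-,\C)$ then converts this pull-back into the required pushout.

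Local finiteness follows from the presentation $C_0(X) = \colim_{B \in \cB} I_B$ with $I_B := \ker(C_u(X) \to C_u(X \setminus B))$. Since $\ee$ is $s$-finitary and preserves filtered colimits, and since $\EE(-, \C)$ converts colimits to limits, this yields $K^{\an}(X) \simeq \lim_B \EE(I_B, \C)$. The short exact sequence
\[
0 \to I_B \to C_0(X) \to C_0(X \setminus B) \to 0,
\]
upon application of the exact functor $\EE(-, \C)$, identifies $\EE(I_B, \C)$ with $\Cofib(K^{\an}(X \setminus B) \to K^{\an}(X))$, completing the verification. The main obstacle I expect is the excisiveness step: verifying that a matching pair of uniformly continuous $C_0$-functions on $A$ and $B$ actually assembles into an element of $C_0(X)$ requires making quantitative use of the uniform excision data, whereas the remaining axioms reduce to formal $C^*$-algebraic manipulations combined with properties of $\EE$ already recalled in the paper.
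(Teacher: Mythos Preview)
Your outline agrees with the paper for homotopy invariance, $u$-continuity and local finiteness, but there are two points where it diverges in substance.

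\textbf{Vanishing on flasques.} The Eilenberg-swindle sketch does not work as stated. Having $\tau^n(f)\to 0$ for every $f$, together with the sum-preserving and $s$-finitary axioms for $\ee$, does not force $\id\simeq 0$ on $\ee(C_0([0,\infty)\otimes X))$: those axioms concern direct sums and colimits over separable subalgebras and say nothing about asymptotic vanishing of an endomorphism. (Indeed $\ee(\tau)=\ee(\id)$ for all iterates, so no information is gained.) The paper instead shows directly that the algebra $C_0([0,\infty)\otimes X)$ is contractible via an explicit one-parameter family of $*$-homomorphisms, which is the standard and much shorter route. If you want to rescue your shift idea, reparametrise it into a contracting homotopy $u\mapsto \tau_{u/(1-u)}$ rather than invoking a swindle.

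\textbf{Excision.} The paper does not prove a pullback square of $C^*$-algebras; it compares the \emph{kernels} $A=\ker(C_0(X)\to C_0(Y))$ and $B=\ker(C_0(Z)\to C_0(Z\cap Y))$ and shows that the induced map $A\to B$ is an isomorphism. This is strictly easier than your gluing statement, since one only has to extend a function on $Z$ that already vanishes on $Y\cap Z$ by zero across $Y$, rather than gluing an arbitrary compatible pair. Moreover, the paper's argument invokes only uniform excisiveness (to guarantee uniform continuity of the extension by zero) and an approximate unit in $C_0(Z)$ for the bornological support; coarse excisiveness is not used in the way you describe. Your attribution of ``bornological decay'' to coarse excisiveness is therefore off --- the decay comes for free once the function is approximated by one supported on a bounded subset of $Z$.
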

 \begin{proof} For any $X$ in $\UBC$ the 
  map $C_{0}(X)\to C_{0}([0,1]\otimes X)$ is a homotopy equivalence of $C^{*}$-algebras. By homotopy invariance 
  of the $E$-theory functor $\ee:\nCalg\to \EE$ we see that $K^{\an}([0,1]\otimes X)\to K^{\an}(X)$ is an equivalence. This shows homotopy invariance of $\K^{\an}$.
  
  Assume that $(Y,Z)$ is a uniformly and coarsely excisive  decomposition of $X$ in $\UBC$. Then  we have a map of exact sequences of commutative $C^{*}$-algebras
  $$
\xymatrix{
0\ar[r]&A\ar[d]^{\cong}\ar[r]&C_{0}(X)\ar[r]\ar[d]&C_{0}(Y)\ar[r]\ar[d]&0\\
0\ar[r]&B\ar[r]&C_{0}(Z)\ar[r]&C_{0}(Z\cap Y)\ar[r]&0}\ ,
$$
where the maps in the right square are the restrictions along the inclusions of subsets and $A,B$ are defined as the kernels. We show that the left vertical map is an isomorphism as indicated. Assume that $f$ is in $A$ and sent to zero in $B$. Then $f_{|Z}=0$ and $f_{|Y}=0$ so that $f=0$ since $Y\cup Z=X$. Assume now that $f$ is in $B$. 
Then $f_{|Z\cap Y}=0$.  
 {As the $C^*$-algebra $C_{0}(Z)$ admits an approximate unit,} there exists a function $h$ in $C_{0}(Z)$ such that $\|fh-f\| \le \epsilon$. By definition of $C_{0}(Z)$ we can  in addition assume that there exists a bounded subset
$B$ of $Z$ such that $h_{|Y\setminus B}=0$.  
Let $\smash{\tilde f}$ be the extension by zero  of $fh$ to $X$.  
Then $\smash{\tilde f_{|X\setminus B}=0}$   and $\smash{\tilde f}$ is uniformly continuous since $(Y,Z)$ is uniformly excisive.  
Let $\hat f$ be the extension by zero of $f$ to $X$. Then $\|\smash{\hat f-\tilde f}\|\le \epsilon$. Since $\epsilon$ was arbitrary
we conclude that $\hat f\in A$  and that it is a preimage of $f$. 
   
We now use the exactness of the $E$-theory functor $\EE(-,\C)$  in order to get a map of fibre sequences
$$
\xymatrix{
 K^{\an}(Z\cap Y)\ar[r]\ar[d]&K^{\an}(Z)\ar[r]\ar[d]&\EE(B,\C)\ar[d]^{\simeq}\\
K^{\an}(Y)\ar[r]&K^{\an}(X)\ar[r]& \EE(A,\C)}\ .
$$
Since the right vertical map is an equivalence we conclude that the left square is a push-out.
This shows that $K^{\an}$ is excisive.

The algebra $C_{0}([0,\infty)\otimes X)$ is contractible by the homotopy
$C_{0}([0,\infty)\otimes X)\to C_{0}([0,1]\times [0,\infty)\otimes X)$ sending $f(t,x)$ to 
$(s,t,x)\mapsto f(st,x)$. This implies $K^{\an}([0,\infty)\otimes X)\simeq 0$ and therefore $K^{\an}$ vanishes on flasques.

Since $K^{\an}(X)$ does not depend on the coarse structure of $X$ at all the functor $K^{\an}$ is {trivially} $u$-continuous.

This completes the verification that $K^{\an}$ is a local homology theory. We now show that $K^{\an}$ is locally finite. 
For $B$ in $\cB$ we consider the exact sequence
$$0\to {C_B(X)} \to C_{0}(X)\to C_{0}(X\setminus B)\to 0\ ,$$
where ${C_B(X)}$ is defined as the kernel of the restriction map along $X\setminus B\to X$.
By the exactness of the $E$-theory functor $\EE(-,\C)$ we conclude that
\[\Cofib(K^{\an}(X\setminus B)\to K^{\an}(X))\simeq \EE({C_B(X)},\C)\ .\]
Since the $E$-theory functor preserves filtered colimits we get 
\begin{align*}
(K^{\an})^{\lf}(X)\simeq &\lim_{B\in \cB^{\op}} \Cofib(K^{\an}(X\setminus B)\to K^{\an}(X)) \\
&\simeq \lim_{B\in \cB^{\op}}  \EE({C_B(X)},\C)
\simeq
\EE(\colim_{B\in \cB} {C_B(X)},\C)\ .	
\end{align*}
Under this equivalence the canonical map $K^{\an}(X)\to  (K^{\an})^{\lf}(X)$ is  induced by the map of $C^{*}$-algebras \begin{equation}\label{vsdfvsfqvrsfvs}\colim_{B\in \cB} {C_B(X)}\to C_{0}(X)\ .
\end{equation}
In order to show that $K^{\an}$ is locally finite it therefore suffices to show that \eqref{vsdfvsfqvrsfvs}
   is an isomorphism of $C^{*}$-algebras. 
By \cref{regkowpergerwfwfwrefw}, the domain of this map is given by  
   \[
   \colim_{B\in \cB} \ker\Big(  \colim_{B'\in \cB} \ker\big(C_{u}(X)\to C_{u}(X\setminus B')\big)\to   \colim_{B'\in \cB} \ker\big(C_{u}(X\setminus B)\to C_{u}(X\setminus (B\cup B')\big)\Big) \ . 
   \]
   Since $\cB$ is filtered and $\ker$ is a finite limit we can interchange  $\colim_{B'\in \cB} $ with  $\ker$ and get  
   $$
   \colim_{(B', B) \in \cB\times \cB} \ker\Big( \ker\big(C_{u}(X)\to C_{u}(X\setminus B')\big) \to    \ker\big(C_{u}(X\setminus B)\to C_{u}(X\setminus (B\cup B')\big) \Big)\ . $$ 
   If $B'\subseteq B$, then the {map that the outer kernel is taken of is the zero map, so the argument of the colimit is just}  $\ker(C_{u}(X)\to
   C_{u}(X\setminus B')$. 
   Since the diagonal $\cB\to \cB\times \cB$ is cofinal, the domain of   \eqref{vsdfvsfqvrsfvs} is isomorphic to   $\colim_{B\in \cB} \ker(C_{u}(X)\to C_{u}(X\setminus B))$ which is precisely the definition of the codomain.
   \end{proof}

There is  a Paschke duality transformation 
\begin{equation}\label{r3rgrgsfegeg}p:K^{\cX}  \to K^{\an} 
\end{equation} 
whose components we will describe  following \cite{bel-paschke}. The  essential idea  for the construction  is taken from \cite{quro}.

  Recall that the bornology $\cB$ of $X$ is a big family. 
 
 In the following we use the maximal tensor product of $C^{*}$-categories in the special case that
 the first factor is a $C^{*}$-algebra. Then the set of objects of the tensor product is the set of objects of the right factor $ \bC(\cO^{\infty}(X)  )$, and the morphisms of the tensor product are generated by tensor products of algebra elements and morphisms. 
 Further note that the set of objects of the quotient on the right-hand side of \eqref{htrsoprhss}
 is also canonically identified with the set of objects of $\bC(\cO^{\infty}(X)  )$, see \cref{korphrethertgertg}.
 
 \begin{lem} 
 \label{htrsoprhss}
We have a morphism \begin{equation}\label{sfgrgwerggs} \mu:C_{0}(X)\otimes \bC(\cO^{\infty}(X)  )\to  \frac{\bC(\cO^{\infty}(\cB)\subseteq \cO^{\infty}(X))}{\bC(\cO^{-}(\cB)\subseteq \cO^{\infty}(X))}
\end{equation}
  given as follows:
 \begin{enumerate}
 \item objects: It  {is} the identity on objects. 
 \item morphisms: It sends the morphism $f\otimes A:(H,\chi)\to (H',\chi')$ to  $[\chi'(\pr^{*}f)A]:(H,\chi)\to (H',\chi')$, 
 where $\pr:\R\times X\to X$ is the projection.
 \end{enumerate}
 \end{lem}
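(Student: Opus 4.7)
The plan is to verify four points: (i) the image $\chi'(\pr^{*}f)A$ lies in the ideal $\bC(\cO^{\infty}(\cB)\subseteq\cO^{\infty}(X))$, so that $\mu$ lands in the prescribed quotient; (ii) the assignment is compatible with composition modulo $\bC(\cO^{-}(\cB)\subseteq\cO^{\infty}(X))$; (iii) it is compatible with the involution modulo the same ideal; and (iv) it extends from the algebraic generators $f\otimes A$ to the maximal tensor product $C_{0}(X)\otimes\bC(\cO^{\infty}(X))$.

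The central geometric input is the assertion that for every $f$ in $C_{0}(X)=C_{u}(\cB)$ the pullback $\pr^{*}f$ lies in $\ell^{\infty}_{\cO^{-}(\cB)}(\cO^{\infty}(X))$. Given a coarse entourage $U$ of $\cO^{\infty}(X)$ and $\epsilon>0$, I would choose a symmetric coarse entourage $V$ of $X$ containing the $X$-projection of $U$, a bounded subset $B'$ with $\|f_{X\setminus B'}\|<\epsilon/4$, and a uniform entourage $W$ of $X$ with $\Var_{W}(f,X)<\epsilon/2$. By the defining property of the cone coarse structure (\cref{jkigowergerwfwfwrefw}.\ref{kogpwerokgweprfwerfrefwre}) there exists $c\in\R$ such that every $U$-related pair with both $t$-coordinates exceeding $c$ has $X$-components in $W$. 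Setting $Y:=(-\infty,c]\times V[B']\in\cO^{-}(\cB)$, a case analysis on $U$-related pairs outside $Y$ (either both $t_{i}>c$, yielding $|f(x_{1})-f(x_{2})|<\epsilon/2$ by uniform continuity; or some $t_{i}\le c$, so that $x_{i}\notin V[B']$, and $V$-symmetry then forces both $x_{j}\notin B'$, giving $|f(x_{j})|<\epsilon/4$) shows $\Var_{U}(\pr^{*}f,\cO^{\infty}(X)\setminus Y)<\epsilon$.

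With this established, (i) follows because $\pr^{*}f$ also belongs to $\ell^{\infty}(\cO^{\infty}(\cB))$ (as $f\in\ell^{\infty}(\cB)$), and approximating $\pr^{*}f$ by functions supported on members of $\cO^{\infty}(\cB)$ places $\chi'(\pr^{*}f)A$ in $\bC(\cO^{\infty}(\cB)\subseteq\cO^{\infty}(X))$. For (ii), given composable $A_{1}:(H_{1},\chi_{1})\to(H_{2},\chi_{2})$ and $A_{2}:(H_{0},\chi_{0})\to(H_{1},\chi_{1})$ in $\bC(\cO^{\infty}(X))$, one computes
\[
\chi_{2}(\pr^{*}f_{1})A_{1}\chi_{1}(\pr^{*}f_{2})A_{2}-\chi_{2}(\pr^{*}(f_{1}f_{2}))A_{1}A_{2}=\chi_{2}(\pr^{*}f_{1})\bigl(A_{1}\chi_{1}(\pr^{*}f_{2})-\chi_{2}(\pr^{*}f_{2})A_{1}\bigr)A_{2}\ ,
\]
and the bracketed commutator lies in $\bC(\cO^{-}(\cB)\subseteq\cO^{\infty}(X))$ by \cref{gokpergergsgre} applied on the coarse space $\cO^{\infty}(X)$ with big family $\cO^{-}(\cB)$ and function $\pr^{*}f_{2}$, using the previous paragraph. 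Since this is an ideal, sandwiching between bounded operators keeps the result in the ideal. The check of (iii) is parallel: $(\chi'(\pr^{*}f)A)^{*}-\chi(\pr^{*}\bar f)A^{*}=A^{*}\chi'(\pr^{*}\bar f)-\chi(\pr^{*}\bar f)A^{*}$ is again a commutator of the same type.

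Finally, for (iv), the proven relations (i)--(iii) show that $f\mapsto[\chi(\pr^{*}f)]$ defines a $*$-homomorphism from $C_{0}(X)$ into the endomorphisms of every object of the quotient category which commutes (in the quotient) with the canonical image of $\bC(\cO^{\infty}(X))$. By the universal property of the maximal tensor product of a $C^{*}$-algebra with a $C^{*}$-category, this pair of commuting representations yields the desired $*$-functor $\mu$ on $C_{0}(X)\otimes\bC(\cO^{\infty}(X))$. The main obstacle is the geometric Step (i), which requires balancing the asymptotic uniform-continuity control on $\R^{+}\times X$ built into the cone against the decay of $f\in C_{0}(X)$ on the $X$-factor; once it is in hand, (ii)--(iv) are essentially formal consequences of the commutator estimate \cref{gokpergergsgre} and the universal property of the maximal tensor product.
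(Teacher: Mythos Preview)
Your proof is correct and follows essentially the same strategy as the paper, which simply invokes the more general \cref{iogjweoigwefewrfwerfwf} (with $\cY=\cB$ and $\cZ=\{X\}$); the substance of that lemma's proof matches your steps (i), (ii), (iv) almost verbatim. The one minor technical difference is in the key geometric estimate: the paper shows only $\pr^{*}f\in\ell^{\infty}_{\cO^{-}(X)}(\cO^{\infty}(X))$ from uniform continuity alone, and then \emph{intersects} the resulting commutator support $\cO^{-}(X)$ with the support $\cO^{\infty}(\cB)$ of the individual terms to land in $\cO^{-}(\cB)$; you instead prove the sharper statement $\pr^{*}f\in\ell^{\infty}_{\cO^{-}(\cB)}(\cO^{\infty}(X))$ directly by combining uniform continuity with the decay of $f\in C_{u}(\cB)$, which costs you the case analysis but saves the intersection step.
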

 
 \begin{proof}
 This is a special case of the more general \cref{iogjweoigwefewrfwerfwf} below, which reduces to \cref{htrsoprhss} upon setting $\cY = \cB$ and $\cZ = \{X\}$.
 \end{proof}

   The Paschke morphism \begin{equation}\label{frefwrfregwreg}p_{X}:K^{\cX}(X)\to K^{\an}(X)
\end{equation}  
is defined as the composition
\begin{align*} 
K^{\cX}(X)&\simeq \Sigma^{-1} K(\bC(\cO^{\infty}(X)))\simeq  \Sigma^{-1} \EE( \C,\bC(\cO^{\infty}(X)))\\ 
&\xrightarrow{C_{0}(X)\otimes-}\Sigma^{-1}  \EE( C_{0}(X),C_{0}(X)\otimes \bC(\cO^{\infty}(X))) \\
  &\xrightarrow{\,\,~~~\mu_{*}~~~\,} \Sigma^{-1} \EE\Big(C_{0}(X),\frac{\bC(\cO^{\infty}(\cB)\subseteq \cO^{\infty}(X))}{\bC(\cO^{-}(\cB)\subseteq \cO^{\infty}(X))}\Big) \\&
  \stackrel{!}{\simeq} \Sigma^{-1} \EE(C_{0}(X), \bC(\cO^{\infty}(\cB)\subseteq \cO^{\infty}(X)))\\& \stackrel{!!}{\to} \Sigma^{-1} \EE(C_{0}(X), \bC(\cO^{\infty}(*)))  \stackrel{!!!}{\simeq}   \EE(C_{0}(X), \C)\simeq K^{\an}(X)\ ,
  \end{align*}
  where for $!$ we use that the projection  
  \[
  \ee(\bC(\cO^{\infty}(\cB)\subseteq \cO^{\infty}(X)))\to \ee\Big(\frac{\bC(\cO^{\infty}(\cB)\subseteq \cO^{\infty}(X))}{\bC(\cO^{-}(\cB)\subseteq \cO^{\infty}(X))}\Big)
  \]
  induces an equivalence   since the functor $\ee$ is exact and we have $\ee(\bC(\cO^{-}(\cB)\subseteq \cO^{\infty}(X)))\simeq 0$.
  For the latter equivalence,
   we use (see \cref{wroekgpwergwrfgwrf} and \cref{kowprgwertgwerfw})  that 
  \[
  \ee(\bC(\cO^{-}(\cB)\subseteq \cO^{\infty}(X)))\simeq \colim_{n\in \nat,B\in \cB}\ee(\bC((-\infty,n]\times B) )
  \ ,
  \] 
  where $(-\infty,n]\times B$ has the bornological coarse  structure induced from $\cO^{\infty}(X)$. 
   These subspaces are flasque and therefore $\bC((-\infty,n]\times B)$ is flasque as a $C^{*}$-category and hence annihilated by $\ee$.
The morphism $!!$ is induced by the projections $B\to *$  for the members $B$ of $\cB$,
  and $!!!$ employs the equivalence 
  \[
  \ee(\bC(\cO^{\infty}(*)))
  \stackrel{\ee(\bC(\partial^{\cone}))}{\simeq}
  \ee(\bC(\R))\stackrel{ \substack{\eqref{fqwedwedewdqwdwd},\\ \text{\cref{wegkowpergrwfrewfrefw}}}}{\simeq} 
   \Sigma \ee(\bC(*))\simeq \Sigma \ee( \C)\ .
   \]
We refer to \cite{bel-paschke} for more details and a better description of the Paschke morphism which exhibits $p_{X}$ as a component of a natural transformation.   In \cite{bel-paschke}  it is shown that the Paschke morphism $p_{X}$ is an equivalence
  under very general conditions on $X$.
  
  \begin{theorem}[\cite{bel-paschke}]\label{zhpoergtrghrrzhrzrjz}
 If $X$ is in $\UBC$ homotopy equivalent to   a countable, finite-dimensional,  locally
finite simplicial complex with the spherical path metric, then $p_{X}$ in \eqref{frefwrfregwreg} is an equivalence.
\end{theorem}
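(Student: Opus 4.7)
The plan is to identify both sides with the locally finite $KU$-homology of $X$, thereby reducing the claim to a compatibility check at a point.

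\emph{Step 1: Reduction to the simplicial case.} Both $K^{\cX}$ and $K^{\an}$ are homotopy invariant on $\UBC$ ($K^{\cX}$ by construction as a local homology theory, $K^{\an}$ by the preceding proposition), and the Paschke morphism $p$ is natural in $X$. I may therefore assume that $X$ itself is a countable, finite-dimensional, locally finite simplicial complex equipped with the spherical path metric.

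\emph{Step 2: The analytic side.} The preceding proposition shows that $K^{\an}$ is a locally finite local homology theory on $\UBC$, and $K^{\an}(*) \simeq \EE(\IC, \IC) \simeq KU$. For the given class of $X$, the canonical comparison transformation from the locally finite homology theory represented by the spectrum $KU$ to $K^{\an}$ is an equivalence, yielding a natural equivalence $K^{\an}(X) \simeq KU^{\lf}(X)$.

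\emph{Step 3: The coarse side.} By \cref{gpkopwergwrfwferfwf} the functor $K\cX$ is a strong additive coarse homology theory with complete target $\Mod(KU)$ and $K\cX(*) \simeq KU$. Applying \cref{wkorgpwertgwrewrefwerf} to $E := K\cX$ supplies a chain of natural equivalences
\[
 \Sigma KU^{\lf}(X) \;\simeq\; \Sigma K\cX(*)^{\lf}(X) \;\simeq\; (K\cX\cO^{\infty})^{\lf}(X) \;\simeq\; K\cX\cO^{\infty}(X),
\]
and desuspending gives $K^{\cX}(X) = \Sigma^{-1} K\cX\cO^{\infty}(X) \simeq KU^{\lf}(X)$.

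\emph{Step 4: Compatibility at the point.} It remains to show that under the equivalences of Steps 2 and 3, the Paschke morphism $p_X$ is identified with the identity of $KU^{\lf}(X)$. By the naturality of $p$, of the local-finite replacement, and of the comparison maps entering Steps 2--3, this reduces to the case $X = *$, where one has to verify that $p_{*} : K^{\cX}(*) \to K^{\an}(*)$ is a self-equivalence of $KU$. This is a direct computation unwinding the definition of $p_{X}$: trace a generator of $K(\bC(\cO^{\infty}(*))) \simeq K(\bC(\R)) \simeq \Sigma KU$ through the tensor with $C_{0}(*) = \IC$, through the morphism $\mu$ of \cref{htrsoprhss}, through the equivalence $\ee(\bC(\cO^{-}(\cB) \subseteq \cO^{\infty}(X))) \simeq 0$ used to invert the quotient in the definition of $p_{X}$, and finally through the cone-boundary equivalence $\ee(\bC(\cO^{\infty}(*))) \simeq \Sigma\ee(\IC)$, which matches with Bott periodicity in $KU$.

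The main obstacle is Step 4: the point-wise check itself is routine, but globalizing it is delicate because $p$ is not manifestly a morphism of locally finite homology theories, so one must arrange matters so that naturality of $p$ interacts coherently with Mayer--Vietoris and with the $u$-continuity used in the passage between $K\cX\cO^{\infty}$ and its locally finite version. The clean way to execute this, carried out in \cite{bel-paschke}, is to upgrade $p$ to a morphism of local homology theories at the presheaf-of-spectra level and then invoke the general principle that a morphism of locally finite homology theories on finite-dimensional locally finite simplicial complexes which is an equivalence on a point is an equivalence everywhere.
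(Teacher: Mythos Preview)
Your Steps 2 and 3 are correct observations, but they are a detour: knowing that $K^{\cX}(X)$ and $K^{\an}(X)$ are each abstractly equivalent to $KU^{\lf}(X)$ does not by itself show that $p_X$ realises this equivalence. All the content lies in your Step 4, which you essentially do not carry out. The ``general principle'' you invoke at the end is misstated in a way that matters: you speak of a morphism of \emph{locally finite} homology theories, but the paper explicitly notes (\cref{trkohprthhetrh99}) that $K^{\cX}$ is not expected to be locally finite, so that principle is unavailable. Moreover, reducing to $X=*$ alone is not enough; one must handle arbitrary countable \emph{discrete} spaces as the base of the induction, and passing from the point to infinite discrete spaces requires an argument (additivity on one side, local finiteness on the other).

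The paper's proof avoids your detour entirely. It uses only that $p$ is a natural transformation between two \emph{local} homology theories (excisive and homotopy invariant), then runs an induction on simplicial dimension: the attaching of top-dimensional simplices to the lower skeleton is a coarsely and uniformly excisive decomposition, and the disjoint union of simplices is homotopy equivalent to a discrete space. The base case is discrete $X$, where $K^{\cX}(X)\simeq\prod_{x\in X}KU$ by additivity of $K\cX$ and $K^{\an}(X)\simeq\prod_{x\in X}KU$ by local finiteness, and under these identifications $p_X$ is the identity. This is the ``general principle'' you are reaching for, but stated and proved correctly for the functors at hand.
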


\begin{proof} 
In order to convince the reader that the statement is true  we sketch the proof and refer to  \cite{bel-paschke} for  more details.
We use (an import from \cite{bel-paschke}) the non-trivial fact  that $p_{X}$ is the component of a natural transformation $p:K^{\cX}\to K^{\an}$. 

By the homotopy invariance of $K^{\cX}$ and $K^{\an}$ we can then assume that $X$ itself is a locally finite finite-dimensional simplicial complex with the spherical path metric. 

We then argue by induction on the dimension. For the induction step we use that {gluing} the disjoint union of the $n$-simplices into the $n-1$-skeleton 
provides a coarsely and uniformly excisive decomposition, and that the disjoint union of  $n$-simplices 
is homotopy equivalent to a discrete space. 
In these two steps it is important that the simplices are equicontinuously parameterized, a fact guaranteed by the choice of the metric. 
We then use that both functors $K^{\cX}$ and $K^{\an}$ are excisive and homotopy invariant.

In order to start the induction we must show the assertion for discrete spaces. If $X$ is discrete, then
$K^{\cX}(X)=\prod_{x\in X} KU$ since $K\cX$ is additive, and also $K^{\an}(X)\cong \prod_{x\in X}KU$ since
$K^{\an}$ is locally finite. Under these identifications the map $p_{X}$ becomes the identity.
\end{proof}

\begin{rem}\label{trkohprthhetrh99}
We note that the Paschke morphism $p_{X}$ in \eqref{frefwrfregwreg} is similar but not the same
as the Paschke isomorphism as discussed, e.g., in \cite{higson_roe}. The maps have different domains.

The Paschke morphism from  \cite{higson_roe} is an equivalence for any locally compact metric space.
We do not know wether we can remove the additional assumption on $X$ in \cref{zhpoergtrghrrzhrzrjz}
for the Paschke morphism with domain $K^{\cX}(X)$. In fact, we do not expect that the functor $K^{\cX}$ is locally finite.
\hB\end{rem}

\subsection{The coarse corona pairing}\label{regijowergefrfwrfrf}

 In this and the next section we construct the coarse corona pairing $\cap^{\cX}$ and the coarse symbol pairing $\cap^{\cX\sigma}$   mentioned in the introduction. 
 The principal ideas going into their constructions
are  surely well known and variants in different setups have be considered previously, see, e.g., \cite{quro, zbMATH07364362}.

  We consider a   bornological coarse space $X$ with two big families $\cY$ and $\cZ$.  
  
In order to understand the following assertion note that the sets of objects of the $C^{*}$-categories in the domain and target of the functor in \eqref{iogjweoigwefewrfwerfwf11}
are canonically identified. 
   
 \begin{lem} 
 \label{ijgiowergrefreffw}    
 We have a morphism of $C^{*}$-categories
\begin{equation}
\label{iogjweoigwefewrfwerfwf11}
\nu : C(\partial^{\cY} X) \otimes \bC(\cZ\subseteq X)\to \frac{\bC(\cZ\subseteq X)}{\bC(\cY\cap \cZ\subseteq X)}\ .
\end{equation}
   given as follows:
 \begin{enumerate}
 \item objects:   It  acts as identity on objects.
 \item morphisms: It sends the morphism $[f]{\otimes} A:(H,\chi)\to (H',\chi')$ to $[\chi'(f)A]:(H,\chi)\to (H',\chi')$. 
 \end{enumerate}    
 \end{lem}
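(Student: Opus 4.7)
The plan is to verify that the formula defines a well-defined $*$-functor, which reduces to three checks. First, I would show that $\chi'(f)A \in \bC(\cZ\subseteq X)$ for every $f\in \ell^{\infty}_{\cY}(X)$ and every morphism $A$ in $\bC(\cZ\subseteq X)$, so that the target makes sense. Second, I need well-definedness on the quotients, i.e., $\chi'(f)A \in \bC(\cY\cap\cZ\subseteq X)$ whenever $f\in \ell^{\infty}(\cY)$. Third, I must check compatibility with composition and involution so that $\nu$ extends to a $*$-functor on the maximal tensor product of $C^{*}$-categories. The first check is immediate from the ideal property: $\chi'(f)$ is supported on the diagonal, hence belongs to $\bC(X)$, and $\bC(\cZ\subseteq X)$ is an ideal there.

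The main technical step, which also drives the other two checks, is a strengthening of the commutator estimate: for $f\in \ell^{\infty}_{\cY}(X)$ and $A\in\bC(\cZ\subseteq X)$ one has $\chi'(f)A-A\chi(f)\in\bC(\cY\cap\cZ\subseteq X)$, not merely in $\bC(\cY\subseteq X)$. By density and continuity I would reduce to a generator $A=\chi'(Z)B\chi(Z)$ with $Z\in\cZ$ and $B\in\bC(X)$. Then the commutator factors as $\chi'(Z)[\chi'(f)B-B\chi(f)]\chi(Z)$, and by \cref{gokpergergsgre} the inner bracket lies in $\bC(\cY\subseteq X)$, so is approximable by $\chi'(Y)E\chi(Y)$ with $Y\in\cY$. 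The sandwiched operator $\chi'(Y\cap Z)E\chi(Y\cap Z)$ visibly lies in $\bC(\cY\cap\cZ\subseteq X)$, and the claim follows by passing to the norm limit. A parallel approximation handles the second check: for $f\in \ell^{\infty}(\cY)$ one has $\|\chi'(f)-\chi'(Y)\chi'(f)\|\le \|f|_{X\setminus Y}\|_{\infty}\to 0$ as $Y$ ranges over $\cY$, and $\chi'(Y)\chi'(f)A=\chi'(Y)[\chi'(f)A]$ belongs to $\bC(\cY\cap\cZ\subseteq X)$ by the same sandwich calculation applied to $\chi'(f)A\in\bC(\cZ\subseteq X)$.

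Given the strengthened estimate, the third check is formal. For composability,
\[
\nu([f']\otimes A')\,\nu([f]\otimes A)=[\chi''(f')A'\chi'(f)A]\equiv [\chi''(f'f)A'A]
\]
modulo $\bC(\cY\cap\cZ\subseteq X)$, because the estimate lets one replace $A'\chi'(f)$ by $\chi''(f)A'$ up to the ideal, which is stable under left multiplication by $\chi''(f')$ and right multiplication by $A$. Compatibility with involution is handled identically. Bilinearity is immediate, and the bound $\|\chi'(f)A\|\le \|f\|_{\infty}\|A\|$ on representatives of $[f]$ makes $\nu$ bounded on algebraic tensors, so it extends to the maximal tensor product. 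I expect the main obstacle to be establishing the strengthened commutator estimate cleanly; once that is in hand, the rest is ideal-theoretic bookkeeping inside $\bC(X)$.
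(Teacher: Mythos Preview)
Your proof is correct and follows essentially the same route as the paper's. The one minor difference is in how you obtain the strengthened commutator estimate: the paper simply observes that $\chi'(f)A-A\chi(f)$ lies in $\bC(\cY\subseteq X)$ by \cref{gokpergergsgre} and in $\bC(\cZ\subseteq X)$ since both terms do, then invokes $\bC(\cY\subseteq X)\cap\bC(\cZ\subseteq X)=\bC(\cY\cap\cZ\subseteq X)$; you instead reduce to generators $\chi'(Z)B\chi(Z)$ and sandwich explicitly, which amounts to proving that intersection identity by hand. For the extension to the maximal tensor product, note that the norm bound you mention is not what does the work---rather, once $\nu$ is a $*$-functor on the algebraic tensor product with values in a $C^{*}$-category, the universal property of the maximal tensor product gives the extension automatically.
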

 
 \begin{proof}
  Let $f$ be in $\ell^{\infty}_{\cY}(X)$ and $A$ be in $\bC(\cZ\subseteq X)$.
 First note that if in addition  $f\in \ell^\infty(\cY)$, then $\chi^\prime(f)A\in \bC(\cY\cap \cZ \subseteq X)$.
 Hence the class $[\chi^\prime(f)A]$ only depends on the class  $[f]$ in  $\ell^{\infty}_{\cY}(X)/\ell^{\infty}(\cY) = C(\partial^{\cY} X)$,  {so} the map $\nu$ is well-defined on elementary tensors. 
 Furthermore, by  \cref{gokpergergsgre} we have $\chi'(f)A-A\chi(f)\in  \bC(\cY\subseteq X)$. 
 Since the individual terms belong to 
 $\bC(\cZ\subseteq X)$ we conclude that actually  $\chi'(f)A-A\chi(f)$ belongs to $\bC(\cY\cap \cZ\subseteq X)$. 
  
The arguments above show that $\nu$ is a well-defined functor (see the proof of \cref{iogjweoigwefewrfwerfwf} for an analogous argument with more details) defined on the algebraic tensor product  in the domain. 
Since its target is a $C^{*}$-category, $\nu$  uniquely  extends to a functor defined on the maximal tensor product, by the universal property of the latter.
 \end{proof}

 Upon applying  $K$-theory to   $\nu$ from \cref{ijgiowergrefreffw}  and using the symmetric monoidal structure of $K$  and \cref{wroekgpwergwrfgwrf} we get a pairing
  \begin{equation}
  \label{fwerfw2ffrwfreferferfw}
  K({\partial^{\cY} X})\times K\cX(\cZ)\to K\cX(\cZ,\cY\cap \cZ)\ .
\end{equation}  

\begin{ddd} 
\label{gojkwpergerwferfewrfwrefw}
  We define  the coarse corona pairing 
  \begin{equation}
  \label{fwerfewrfewrfwerf}
  -\cap^{\cX}-:K({\partial^{\cY} X}) \times K\cX(\cZ) \to \Sigma K\cX(\cY\cap \cZ)
\end{equation}   
as the composition 
 \begin{equation}\label{kortphertherther}
 K({\partial^{\cY} X})\times K\cX(\cZ) \stackrel{\nu}{\to} K\cX(\cZ,\cY\cap \cZ) \stackrel{\partial}{\to} \Sigma K\cX(\cY\cap \cZ)\ .
\end{equation}
of \eqref{fwerfw2ffrwfreferferfw} with the boundary map for the pair $(\cZ, \cY \cap \cZ)$.
\end{ddd}

\begin{rem}
\label{RemarkExtranatural}
  We now explain the naturality properties of the coarse corona pairing.
  To begin with, we consider the category $\BC^{(3)}$, whose objects are triples $(X, \cY, \cZ)$ of a space $X$ together with two big families $\cY$, $\cZ$ on $X$, and whose morphisms $f : (X, \cY, \cZ) \to (X', \cY', \cZ')$ are morphisms in $\BC$ such that $f(\cY) \subseteq \cY'$ and $f(\cZ) \subseteq \cZ'$.
  We let $\smash{\BC^{(3)}_{\mathrm{tw}}}$ be the following variation on the twisted arrow category of $\BC^{(3)}$:
  Its objects consist of an object $(X, \cY, \cZ)$ of $\BC^{(3)}$, a pair $(\tilde{X}, \tilde{\cY})$ of a 
 coarse space $\tilde{X}$ and a big family $\tilde{\cY}$ on $X$, as well as a controlled map $f: X \to \tilde{X}$ such that $f^{-1}(\tilde{\cY}) \subseteq \cY$.
  Morphisms of $\smash{\BC^{(3)}_{\mathrm{tw}}}$ are diagrams
  \begin{equation}
  \label{TypicalMorphismTw}
  \begin{tikzcd}
  	(\tilde{X}, \tilde{\cY})
  		&
  			(X, \cY, \cZ) 
  				\ar[l, "f"']
  				\ar[d, "h"]
  			\\
  	(\tilde{X}', \tilde{\cY}')
  		\ar[u, "g"]
  		&
  			(X', \cY', \cZ') 
  				\ar[l, "f^{\prime}"']
  \end{tikzcd}
  \end{equation}
  such that the underlying maps commute, where $h$ is a morphism in $\BC^{(3)}$, and where $g$ is a controlled map such that $g^{-1}(\tilde{\cY}) \subseteq \tilde{\cY}'$.
  Here the morphism direction is downward.
  
  There are two functors
  \[
    L, R : \BC^{(3)}_{\mathrm{tw}} \longrightarrow \Mod(KU)
  \]
  whose action on objects is given by
  \[
  \begin{aligned}
    L((\tilde{X}, \tilde{\cY}) \stackrel{f}{\leftarrow} (X, \cY, \cZ)) &= K(\partial^{\tilde{\cY}} \tilde{X}) \times K\cX(\cZ) \\
    R((\tilde{X}, \tilde{\cY}) \stackrel{f}{\leftarrow} (X, \cY, \cZ)) &= K\cX(\cY \cap \cZ) \ .
  \end{aligned}
  \]
Naturality of the coarse corona pairing is the assertion that there is a 
  natural transformation $L \to R$ whose component at  $f : (X, \cY, \cZ) \to (\tilde X, \tilde\cY)$ is the composition  
  \[
 \cap^{\cX} \circ (f^* \times \id): K(\partial^{\tilde \cY}\tilde{X}) \times K\cX(\cZ)\to \Sigma K\cX(\cY\cap \cZ)\ .
  \]
  This natural transformation arises from applying the $K$-theory functor to a similar (1-categorical) natural transformation at the level of $C^*$-categories, involving the morphism $\nu$ from \cref{ijgiowergrefreffw}.
  
In particular, the natural transformation 
 assigns to the morphism \eqref{TypicalMorphismTw} in $\smash{\BC^{(3)}_{\mathrm{tw}}}$ the commutative diagram
  \begin{equation}
  \label{ExtranaturalityDiagram}
  	\begin{tikzcd}
  		K(\partial^{\tilde{\cY}}\tilde{X}) \times K\cX(\cZ)
  			\ar[r, "f^* \times \id"]
  			\ar[d, "g^* \times h_*"']
  			&
  				K(\partial^{\cY}X) \times K\cX(\cZ)
  					\ar[r, "\cap^{\cX}"]
  					&
  						 \Sigma K\cX(\cY \cap \cZ)
  							\ar[d, "h_*"]
  							\\
  		K(\partial^{\tilde{\cY}'}\tilde{X}') \times K\cX(\cZ')
  			\ar[r, "f'^* \times \id"]
  			&
  				K(\partial^{\cY'}X') \times K\cX(\cZ')
  					\ar[r, "\cap^{\cX}"]
  					&
  						 \Sigma K\cX(\cY' \cap \cZ') \ .
  	\end{tikzcd}
  \end{equation}
  We summarize this by saying that $\cap^{\cX}$ is an extra-natural transformation.
  \hB
\end{rem}

\begin{rem}
\label{qewoifjqeiowdedwqdqdqer43r3r343}  
particular case of \cref{RemarkExtranatural} is the following:
If $f:(X,\cY, \cZ)\to (X',\cY', \cZ')$ is a morphism in ${\BC^{(3)}}$ satisfying the additional condition that $f^{-1}(\cY') \subseteq \cY$, then we have the commutative diagram
\[
\label{ExtranaturalityDiagramSpecial}
\begin{tikzcd}
K(\partial^{\cY}X)\times K\cX(\cZ)
	\ar[r, "\cap^{\cX}"]
	&
	\Sigma K\cX(\cY \cap \cZ)
		\ar[dd, "f_{*}"]
		\\  
K(\partial^{\cY'}X')\times K\cX(\cZ)
	\ar[u, "f^{*}\times \id"']
	\ar[d, "\id\times f_{*}"] 
	&
	\\ 
	K(\partial^{\cY'}X')\times K\cX(\cZ')
	\ar[r, "\cap^{\cX}"]
	& 
		\Sigma K\cX(\cY' \cap \cZ') 
\ . 
\end{tikzcd}
\]
This is the special case of \eqref{ExtranaturalityDiagram} for $g = f' = \id$ and $h = f$.
\hB
\end{rem}

Let $X$ be a bornological coarse space and {let} $\cW,\cW'$ be two big families on $X$ such that $X=W\cup W'$ for some members $W$ of $\cW$ and $W'$ of $\cW'$.
We then have the following compatibility of the coarse corona pairing with the Mayer-Vietoris boundary map corresponding to this decomposition:

\begin{lem}
 \label{LemCoarsePairingMV}
The following diagram commutes:
	\begin{equation}
	\label{MVcomatibility}
	\begin{tikzcd}
K(\partial^{\cY} X) \times \Sigma^{-1}K\cX(\cZ)
	\ar[r, "\cap^\cX"]
	\ar[d, "\id \times \partial^{\mathrm{MV}}"']
	&
		K\cX(\cY \cap \cZ)
			\ar[d, "\partial^{\mathrm{MV}}"]
			\\
K(\partial^{\cY}X) \times K\cX(\cZ \cap \cW \cap \cW^\prime)
	\ar[r, "\cap^\cX"]
	&
		\Sigma K\cX(\cY\cap\cZ \cap \cW \cap \cW^\prime)
	\end{tikzcd}
\end{equation}
\end{lem}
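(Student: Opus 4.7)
The strategy is to exploit the factorization \eqref{kortphertherther} of the coarse corona pairing as $\cap^{\cX} = \partial \circ \nu_{*}$, where $\nu_{*}$ is the map induced in $K$-theory by the $C^{*}$-category morphism $\nu$ of \cref{ijgiowergrefreffw} (together with the identification \cref{wroekgpwergwrfgwrf}), and $\partial$ is the boundary map of the fibre sequence
\begin{equation*}
K\cX(\cY \cap \cZ) \to K\cX(\cZ) \to K\bigl(\bC(\cZ \subseteq X)/\bC(\cY \cap \cZ \subseteq X)\bigr) .
\end{equation*}
I will show that each of the two factors $\nu_{*}$ and $\partial$ is compatible with the Mayer-Vietoris boundary associated to the complementary pair $(\cW,\cW')$, which together yields the desired commutativity.

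First, I would verify that the functor $\cZ \mapsto K\cX(\cZ, \cY \cap \cZ) := K(\bC(\cZ \subseteq X)/\bC(\cY \cap \cZ \subseteq X))$ is Mayer-Vietoris excisive with respect to the decomposition $(\cW \cap \cZ,\cW' \cap \cZ)$ of $\cZ$. Since $X = W \cup W'$ for members $W \in \cW$ and $W' \in \cW'$, any $Z \in \cZ$ decomposes as $Z = (W \cap Z) \cup (W' \cap Z)$, so excision \cref{mkeoggwger9} produces Mayer-Vietoris push-out squares in $\Mod(KU)$ for both $K\cX(-)$ and $K\cX(\cY \cap -)$. Taking fibrewise cofibers in the stable category $\Mod(KU)$ then yields a Mayer-Vietoris push-out square for $K\cX(-, \cY \cap -)$, from which the corresponding MV boundary map is extracted.

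Next, the morphism $\nu$ of \cref{ijgiowergrefreffw} is evidently natural in its second slot: any inclusion $\cZ' \subseteq \cZ$ of big families on $X$ induces a commuting square of $C^{*}$-category morphisms relating $\nu_{\cZ'}$ and $\nu_{\cZ}$. Applied to the four inclusions $\cW \cap \cW' \cap \cZ \subseteq \cW \cap \cZ,\,\cW' \cap \cZ \subseteq \cZ$, this turns $\nu_{*}$ into a morphism between the two Mayer-Vietoris push-out squares just established, and so $\nu_{*}$ commutes with the corresponding MV boundary maps. Similarly, by the naturality of the boundary map of a fibre sequence under morphisms of fibre sequences, $\partial$ commutes with the MV boundaries on source and target. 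Composing the compatibilities for $\nu_{*}$ and $\partial$ gives the commutativity of \eqref{MVcomatibility}. The main technical obstacle will be confirming that the relative functor $K\cX(-, \cY \cap -)$ really does inherit an MV push-out: one must identify the push-out of cofibers with the cofiber of the induced map of push-outs (using stability of $\Mod(KU)$) and check that the decomposition descends to a complementary pair on each $Z \in \cZ$ so that excision applies. Both are routine manipulations, but the notational bookkeeping with the four interacting big families $\cY,\cZ,\cW,\cW'$ must be carried out carefully.
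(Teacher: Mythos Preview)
Your proposal is correct and follows the same underlying idea as the paper's proof: naturality of the pairing in the $\cZ$-variable gives a morphism between the two Mayer--Vietoris push-out squares, and a map of push-out squares induces a commuting square of boundary maps.

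The only difference is in the packaging. The paper does not decompose $\cap^{\cX}$ into $\nu_{*}$ followed by $\partial$; instead it invokes the extra-naturality of the full pairing $\cap^{\cX}$ (established in \cref{qewoifjqeiowdedwqdqdqer43r3r343}) directly. Concretely, it draws a single large diagram whose outer square is the $K(\partial^{\cY}X)\times K\cX(-)$ push-out and whose inner square is the $\Sigma K\cX(\cY\cap -)$ push-out, with the four connecting trapezoids given by $\cap^{\cX}$; these trapezoids commute by extra-naturality applied to the identity of $X$ with varying big families. Your route through the intermediate relative functor $K\cX(-,\cY\cap -)$ is perfectly valid but slightly longer, since it obliges you to check MV-excision for that relative functor (which, as you note, follows from stability). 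The paper's shortcut avoids this extra layer by treating $\cap^{\cX}$ as a black box.
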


\begin{proof}
	Consider the commutative diagram
	\[
	\begin{tikzcd}
K(\partial^\cY X) \times K\cX(\cW \cap \cW^\prime\cap \cZ)
	\ar[dr, "\cap^{\cX}"]
	\ar[rrr]
	\ar[ddd, shift right=10]
	&[-3.5cm]
		&
			&[-3.5cm]
	  			K(\partial^\cY X) \times K\cX(\cW^\prime\cap \cZ)
	  				\ar[dl, "\cap^{\cX}"']
					\ar[ddd, shift left=10]
\\
	&
	\Sigma K\cX(\cW \cap \cW^\prime\cap \cZ \cap \cY) 
	  	\ar[r]
	  	\ar[d]
	  	&
	  		\Sigma K\cX(\cW \cap \cZ \cap \cY) 
	  		\ar[d]
	  		&
	  		\\
	&
	\Sigma K\cX(\cW^\prime \cap \cZ \cap \cY) \ar[r]
	  	&
	  		\Sigma K\cX(\cZ \cap \cY)
	  		\\
K(\partial^\cY X) \times K\cX(\cW^\prime \cap \cZ)
	\ar[ur, "\cap^{\cX}"']
	\ar[rrr]
	&
		&
			&
				K(\partial^\cY X) \times K\cX(\cZ) \ ,
					\ar[ul, "\cap^{\cX}"]
	\end{tikzcd}
	\] 
where both the interior and the exterior square is a pushout square, each giving rise to a Mayer-Vietoris boundary map. 
The four trapezoids commute by \cref{qewoifjqeiowdedwqdqdqer43r3r343}, {which} in each case {is} applied to a map of triples that is the identity on $X$.
We therefore obtain a map of pushout diagrams, which yields the desired commutative diagram \eqref{MVcomatibility}.
\end{proof}

\subsection{The coarse symbol pairing}
\label{regijowergefrfwrfrf2}

We consider a  uniform bornological coarse space $X$ with   big families $\cY$  and $\cZ$  and 
 let $  \cO^{-}(\cY) $ and  $\cO^{\infty}_{\cZ}(X)$  be the big families in $\cO^{\infty}(X)$ defined in    \eqref{fewdqwedqewdqewd}.

\begin{lem}
\label{iogjweoigwefewrfwerfwf} 
We have a morphism of $C^{*}$-categories
\begin{equation}
\label{ewfewfdqedqedq}
\mu : {C_{u}(\cY)}\otimes \bC(  \cO_{\cZ}^{\infty}(X)\subseteq\cO^{\infty}(X))\to \frac{\bC(\cO_{\cZ}^{\infty}(\cY)\subseteq \cO^{\infty}(X))}{\bC(\cO^{-}(\cZ\cap \cY)\subseteq \cO^{\infty}(X))}
\end{equation} 
 given as follows:
 \begin{enumerate}
 \item objects: It  acts as identity on objects.
 \item morphisms: It sends the morphism $f\otimes A:(H,\chi)\to (H',\chi')$ to $[\chi'(\pr^{*}f)A]:(H,\chi)\to (H',\chi')$, 
  where $\pr:\R\times X\to X$ is the projection.
 \end{enumerate}
\end{lem}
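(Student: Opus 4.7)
The plan is to mirror the strategy of the proof of \cref{ijgiowergrefreffw}, now with the ambient bornological coarse space taken to be $\cO^\infty(X)$ and with the big family $\cO^-(X)$ playing the bookkeeping role previously played by $\cY$. As in that proof, it suffices to construct $\mu$ as a $\C$-linear $*$-functor on the algebraic tensor product in the domain; the universal property of the maximal tensor product together with the fact that the codomain is a $C^*$-category will then yield the extension to \eqref{ewfewfdqedqedq}.

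The key technical input is that pullback along $\pr:\R\times X\to X$ sends $C_u(\cY)$ into $\ell^\infty_{\cO^-(X)}(\cO^\infty(X))$: the vanishing variation condition follows from the uniform continuity of $f$ together with \cref{kophehehtre}.\ref{kogpwerokgweprfwerfrefwre}, which forces coarse entourages of $\cO^\infty(X)$ to become arbitrarily uniform as one moves toward $\R^+$. This is essentially the content of \cref{RemarkUniformOnCone}. Moreover, since every $f \in C_u(\cY)$ is approximated in norm by functions strictly supported on members of $\cY$, the pullback $\pr^* f$ is approximated by functions strictly supported on members of $\cO^\infty(\cY)$. Combined with $A\in \bC(\cO^\infty_\cZ(X)\subseteq \cO^\infty(X))$, this shows that $\chi'(\pr^* f)A$ lies in $\bC(\cO^\infty(\cY)\cap \cO^\infty_\cZ(X)\subseteq \cO^\infty(X)) = \bC(\cO^\infty_\cZ(\cY)\subseteq \cO^\infty(X))$ (exactly as in the proof of \cref{ijgiowergrefreffw}), so the formula $\mu(f\otimes A):=[\chi'(\pr^* f)A]$ lands in the claimed quotient.

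Functoriality of $\mu$ reduces to multiplicativity, since $\C$-linearity, involution and the identity axiom are immediate from the formula. For composable morphisms $f\otimes A : (H_1,\chi_1)\to (H_2,\chi_2)$ and $g\otimes B: (H_2,\chi_2)\to (H_3,\chi_3)$, the discrepancy between $\mu(g\otimes B)\circ \mu(f\otimes A)$ and $\mu((gf)\otimes (BA))$ is the operator $\chi_3(\pr^* g)\bigl(B\chi_2(\pr^* f)-\chi_3(\pr^* f)B\bigr)A$, whose middle factor is a generalized commutator. By \cref{gokpergergsgre} applied to the coarse space $\cO^\infty(X)$ with the big family $\cO^-(X)$, this middle factor lies in $\bC(\cO^-(X)\subseteq \cO^\infty(X))$. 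Intersecting with the $\cO^\infty_\cZ$-control of $A$ and with the support $\cO^\infty(\cY)$ imposed by the left multiplier $\chi_3(\pr^* g)$ yields membership in $\bC(\cO^-(\cY\cap\cZ)\subseteq \cO^\infty(X))$, which is precisely the ideal we quotient by. This uses the elementary identities $\cO^-(X)\cap \cO^\infty_\cZ(X) = \cO^-(\cZ)$ and $\cO^-(\cZ)\cap \cO^\infty(\cY) = \cO^-(\cY\cap \cZ)$ in $\cO^\infty(X)$.

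The main obstacle is the bookkeeping of these nested intersections of big families in $\cO^\infty(X)$ and checking that the combined cutoff produced by $\chi_3(\pr^* g)$, the commutator, and the $\cO^\infty_\cZ$-control of $A$ lands in the correct ideal rather than a larger one. Once the two intersection identities above are verified from the definitions, the argument proceeds essentially as in the proof of \cref{ijgiowergrefreffw}.
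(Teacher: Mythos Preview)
Your proposal is correct and follows essentially the same route as the paper's proof: establish that $\chi'(\pr^*f)A$ lands in $\bC(\cO^\infty_\cZ(\cY)\subseteq\cO^\infty(X))$, prove that $\pr^*f\in\ell^\infty_{\cO^-(X)}(\cO^\infty(X))$ via uniform continuity and the cone coarse structure, invoke \cref{gokpergergsgre} for the commutator, intersect big families to reach $\cO^-(\cY\cap\cZ)$, and extend from the algebraic tensor product to the maximal one. The only cosmetic difference is that you localize the full product using the outer factors $\chi_3(\pr^*g)$ and $A$, whereas the paper localizes the commutator itself via the inner data $f$ and $B$; both lead to the same intersection $\cO^-(X)\cap\cO^\infty(\cY)\cap\cO^\infty_\cZ(X)=\cO^-(\cY\cap\cZ)$.
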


\begin{proof}
Let $f$ be in ${C_{u}(\cY)}$ and  $A : (H,\chi)\to (H',\chi')$ be a morphism in $\bC(\cO_{\cZ}^{\infty}(X)\subseteq\cO^{\infty}(X))$.
We first show that $\chi^\prime(\pr^*f) A$ is a morphism of $\bC(\cO_{\cZ}^{\infty}(\cY)\subseteq \cO^{\infty}(X))$.
By assumption, $f$ can be approximated by functions which are supported on  members of $\cY$. 
This implies that $\pr^{*}f \in \ell^{\infty}(\cO^\infty(\cY))$.
 Similarly, $A$ can be approximated by morphisms supported on members of $\cO_{\cZ}^{\infty}(X)$.
 Consequently, the product $\chi'(\pr^{*}f)A$  can be approximated by morphisms supported on members of $\cO^\infty(\cY) \cap \cO^\infty_{\cZ}(X) = \cO^{\infty}_\cZ(\cY)$. 
 We conclude that
  $\chi^\prime(\pr^*f)A\in \bC(\cO_{{\cZ}}^{\infty}({\cY})\subseteq \cO^{\infty}(X))$ as desired.
 
We next show that $\mu$ is compatible with the composition.
 Given another morphism $B : (H^\prime, \chi^\prime) \to (H^{\prime\prime}, \chi^{\prime\prime})$ in $\bC(\cO^{\infty}_{{\cZ}}({\cY})\subseteq \cO^{\infty}(X))$ and a function $g$ in ${C_{u}(\cY)}$, we have
 \[
  \mu(g \otimes B) \mu(f \otimes A) - [\mu(gf \otimes BA)] = [\chi^{\prime\prime}(\pr^*g)(B \chi^\prime(\pr^{*}f) - \chi^{\prime\prime}(\pr^{*}f) B)A]
 \]
 We therefore have to show that $B \chi^\prime(\pr^{*}f) - \chi^{\prime\prime}(\pr^{*}f) B$ belongs to the ideal $\bC(\cO^{-}(\cZ \cap \cY)\subseteq \cO^{\infty}(X))$.
 
 To this end, we claim that the uniform continuity of $f$ implies that $\pr^*f\in \smash{\ell^\infty_{\cO^-(X)}(\cO^\infty(X))}$.
  Indeed, for each $\varepsilon$ in $(0,\infty)$, there exists a uniform entourage $U$ of $X$ such that $|f(x) - f(y)| \leq \varepsilon$ whenever $(x, y) \in U$. 
Let $V$ be a coarse entourage of $\cO^\infty(X)$. 
Then by \cref{jkigowergerwfwfwrefw}.\ref{kogpwerokgweprfwerfrefwre}, there exists $r$ in $\R$ such that $((x, t), (x^\prime, t^\prime))\in V$  and  $\min(t, t^\prime) \geq r$ imply $(x, x^\prime) \in U$.
Therefore  $\Var_{V}(\pr^{*}f, \cO^\infty(X) \setminus W) \leq \varepsilon$ for any member $W:=   (-\infty, s]\times X$ of $\cO^-(X)$ with $s \geq r$. 
This  shows that $\pr^{*}f\in \ell^{\infty}_{\cO^{-}(X)}(X)$, {as claimed}. 

In view of the claim \cref{gokpergergsgre} implies  $\chi''(\pr^{*}f)B-B\chi'(\pr^{*}f)\in \bC(\cO^{-}({X})\subseteq \cO^{\infty}(X))$. Furthermore, by the assumptions on $f$ and $B$ the individual terms  belong to $\bC(\cO^\infty(\cY) \cap \cO^\infty_{\cZ}(X)\subseteq \cO^{\infty}(X))$.
In total, $B \chi^\prime(\pr^{*}f) - \chi^{\prime\prime}(\pr^{*}f) B$ belongs to the ideal associated to the big family
\[
{\cO^-(X)} \cap \cO^\infty(\cY) \cap \cO^\infty_{\cZ}(X) = \cO^-(\cY \cap \cZ)\
\]
as desired.  
 
We thus get a functor $\mu$ defined on the algebraic tensor product, which then uniquely extends to the maximal tensor product.
 \end{proof}

We apply the $K$-theory functor to the $*$-homomorphism $\mu$ from \cref{iogjweoigwefewrfwerfwf}.
With a view on Definitions \ref{fwerferfervwerv1} and \ref{kogpwergrefrefwrfw} and \cref{wroekgpwergwrfgwrf}, the fact that $K$ is symmetric monoidal yields a pairing
\begin{equation}
\label{wfawfadsfdsf}
\mu: K(\cY)\times  K^{\cX}_{\cZ}(X)\to \Sigma^{-1}K\cX(\cO^{\infty}_{\cZ}(\cY),  \cO^{-}(\cY\cap \cZ)) \ .
 \end{equation}
Note that the members of $\cO^{-}(\cY\cap \cZ)$ are flasque and consequently, that
\[
  K^{\cX}_{\cZ}(\cY)\simeq \Sigma^{-1}K\cX(\cO^\infty_{\cZ}(\cY))\to   \Sigma^{-1} K\cX(\cO^{\infty}_{\cZ}(\cY),  \cO^{-}(\cY\cap \cZ))\]
 is an equivalence.
 
\begin{ddd} \label{grjweopgergweffefwef}
We define the coarse symbol pairing  
\[
-\cap^{\cX\sigma}-: K(\cY)\times K^{\cX}_{\cZ}(X)\to   K_{\cZ}^{\cX}(\cY)
\]
as the composition
 \begin{equation}  
 \label{gier90gpergfwersfvfv}
 K(\cY)\times K_{\cZ}^{\cX}(X)  \stackrel{\mu}{\to}  {\Sigma^{-1}}K\cX(\cO^{\infty}_{\cZ}(\cY),  \cO^{-}(\cY\cap \cZ)) \stackrel{\simeq}{\leftarrow}  K_{\cZ}^{\cX}(\cY)\ . 
\end{equation} 
\end{ddd}
%

  The name of this pairing will be justified in \cref{gkopwerfefrefwfref} where we show in \cref{wegokwpergerfwefrwefwf} that
  it sends  symbols of Dirac type operators to  symbols  (with support) of suitable Callias type operators.


\begin{rem}
\label{qewoifjqeiowdedwqdqdqe}
The categories $\UBC^{(3)}$ and $\UBC^{(3)}_{\mathrm{tw}}$ may be defined analogously to the categories $\BC^{(3)}$ and $\BC^{(3)}_{\mathrm{tw}}$ of \cref{RemarkExtranatural}, using spaces with compatible coarse and uniform structures instead of coarse spaces.
One may then say that the coarse symbol pairings $\cap^{\cX\sigma}$ are the components of an extra-natural transformation.
 
A particular case is the following analog of \cref{qewoifjqeiowdedwqdqdqer43r3r343}: For a morphism  $f:(X,\cY,\cZ)\to (X',\cY',\cZ')$ in $\UBC^{(3)}$ satisfying the additional condition that $f^{-1}(\cY') \subseteq \cY$, we have a commutative diagram 
\[
\begin{tikzcd}
K(\cY)\times K^\cX_\cZ(X)
	\ar[r, "\cap^{\cX{\sigma}}"]
	&
	\Sigma K^\cX_{\cZ}(\cY)
		\ar[dd, "f_{*}"]
		\\  
K(\partial^{\cY'}X')\times K^\cX_\cZ(X)
	\ar[u, "f^{*}\times \id"']
	\ar[d, "\id\times f_{*}"] 
	&
	\\ 
	K(\cY')\times K^\cX_{\cZ'}(X')
	\ar[r, "\cap^{\cX{\sigma}}"]
	& 
		\Sigma K^\cX_{\cZ'}(\cY') 
\ . 
\end{tikzcd}
\]
\hB
\end{rem}

       \begin{rem}
       \label{GeneralizationSymbolPairing}
       If $\cY'$ is a further  big family, then \eqref{ewfewfdqedqedq} can be generalized to a paring
      \begin{equation*}
      \label{fwefwefeqhhh}
      C_{u}(\cY)\otimes \bC(\cO^{\infty}_{\cZ}(\cY')\ \subseteq \cO^{\infty}(X))\to \frac{\bC(\cO_{\cZ}^{\infty}(\cY\cap \cY')\subseteq \cO^{\infty}(X))}{\bC(\cO^{-}(\cY\cap \cY'\cap \cZ)\subseteq \cO^{\infty}(X))}
\end{equation*} 
which yields
 \begin{equation}
 \label{fwefwefeq}
 -\cap^{\cX\sigma}-: K(\cY)\times K^{\cX}_{\cZ}(\cY')\to   K^{\cX}_{\cZ}(\cY\cap \cY')\ .\end{equation}  \hB
       \end{rem}

Let $X$ be in $\UBC$ and let $\cY$, $\cZ$ be  big families on $X$.
The following proposition describes the compatibility of the pairings introduced above with the index map \eqref{fqweqwfqewdq}. 
Note that we drop the forgetful functor $\iota:\UBC\to \BC$.

\begin{prop}
\label{erokgpwergrwefwerfwerfwer}
We have a commutative diagram
\begin{equation*}
\begin{tikzcd}
\Sigma K(\cY)\times K_{\cZ}^{\cX}(X)
	\ar[r, "\cap^{\cX\sigma}"]
	& 
		\Sigma K_{\cZ}^{\cX}(\cY)
		\ar[dd, "a_{\cY,\cZ}"]
		\\ 
K(\partial^{\cY}_u X)\times K^{\cX}_{\cZ}(X)
	\ar[u, "\partial\times \id"] 
	\ar[d, "c^* \times a_{X,\cZ}"]
	&
	\\ 
K(\partial^{\cY}X)\times K\cX(\cZ)
	\ar[r, "\cap^{\cX}"]
	&
	\Sigma K\cX(\cY\cap \cZ)
	\ .	
\end{tikzcd}
\end{equation*}
 \end{prop}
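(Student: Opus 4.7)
I would prove the commutativity by lifting the diagram to a commutative diagram of $C^*$-categories and applying $K$-theory. The boundary map $\partial$ is, by definition, the $K$-theoretic boundary of the exact sequence \eqref{gerwrgfrq}, and by \cref{IndexMapAsBoundary} the index maps $a_{X,\cZ}$ and $a_{\cY,\cZ}$ are boundary maps of the fibre sequence \eqref{gerwfwerfrewfwr}. The strategy is to construct a morphism of short exact sequences of $C^*$-categories whose $K$-theoretic boundary square realises the two compositions in the diagram, so that commutativity follows from naturality of the boundary.

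The central step is to extend the multiplication functor $\mu$ of \cref{iogjweoigwefewrfwerfwf} to a $*$-functor $\tilde\mu$ defined on $C_{u,\cY}(X)\otimes \bC(\cO^{\infty}_{\cZ}(X)\subseteq\cO^{\infty}(X))$ by $f\otimes A\mapsto [\chi'(\pr^{*}f)A]$. For $f\in C_{u,\cY}(X)$, uniform continuity (as exploited in \cref{RemarkUniformOnCone}) combined with vanishing variation of $f$ away from $\cY$ forces $\pr^{*}f\in \ell^{\infty}_{\cO^{-}(\cY)}(\cO^{\infty}(X))$; then by the commutator estimate \cref{vfjfvjpfvosdfvs}, the multiplicativity defect of $\tilde\mu$ lies in an appropriate flasque ideal, so $\tilde\mu$ is well-defined in the corresponding quotient. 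Its restriction to $C_u(\cY)\otimes \bC(\cO^{\infty}_{\cZ}(X)\subseteq\cO^{\infty}(X))$ recovers $\mu$, because for $f$ supported in $\cY$ the products $\chi'(\pr^{*}f)A$ additionally lie in $\bC(\cO^{\infty}_{\cZ}(\cY)\subseteq\cO^{\infty}(X))$; passing to the quotient $C_{u,\cY}(X)/C_u(\cY)=C(\partial^{\cY}_u X)$ yields an induced $*$-functor $\bar\nu$. Thus $\mu$, $\tilde\mu$, and $\bar\nu$ assemble into a morphism between short exact sequences of $C^*$-categories.

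Applying $K$-theory and invoking naturality of the boundary operator gives a commutative square. The left $K$-theoretic boundary, by construction of $\cap^{\cX\sigma}$ in \cref{grjweopgergweffefwef} and of $a_{\cY,\cZ}$, computes $a_{\cY,\cZ}\circ(\cap^{\cX\sigma})\circ(\partial\times \id)$. The main obstacle is to identify the right $K$-theoretic boundary with $(\cap^{\cX})\circ(c^{*}\times a_{X,\cZ})$: this requires using the Mayer--Vietoris decomposition $\cO^{\infty}_{\cZ}(X)=\{\R^-\}\times\cZ\cup\{\R^+\}\times X$ and the cone boundary $\partial^{\cone}$ to produce, at the $K$-theoretic level, an identification of the cofibre of $K\cX(\cO^{\infty}_{\cZ}(\cY))\to K\cX(\cO^{\infty}_{\cZ}(X))$ with a shift of the cofibre of $K\cX(\cY\cap \cZ)\to K\cX(\cZ)$ appearing in $\nu$ from \cref{ijgiowergrefreffw}; this identification is exactly the data defining $a_{X,\cZ}$ as in \cref{rtkohprethergetetg}. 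One must then verify that under this identification, $\bar\nu$ at the $K$-theoretic level corresponds to $\nu\circ(c^{*}\otimes \id)$. The desired commutativity then follows.
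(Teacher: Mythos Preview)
Your approach is essentially the same as the paper's: both construct the morphism of short exact sequences with left column $C_u(\cY)\to C_{u,\cY}(X)\to C(\partial^{\cY}_u X)$ tensored with $\bC(\cO^\infty_\cZ(X)\subseteq\cO^\infty(X))$, apply $K$-theory, and use naturality of the boundary. Your observation that $\pr^*f\in\ell^\infty_{\cO^-(\cY)}(\cO^\infty(X))$ for $f\in C_{u,\cY}(X)$ is correct and is exactly what makes $\tilde\mu$ well-defined with commutator defect in $\bC(\cO^-(\cY\cap\cZ)\subseteq\cO^\infty(X))$.

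However, your description of what the naturality square actually delivers is garbled. The square you obtain has the form
\[
\begin{tikzcd}
\Sigma K(\cY)\times K^\cX_\cZ(X) \ar[r,"\mu"] & \Sigma K\cX\bigl(\cO^\infty_\cZ(\cY),\cO^-(\cY\cap\cZ)\bigr)\\
K(\partial^{\cY}_u X)\times K^\cX_\cZ(X) \ar[u,"\partial\times\id"] \ar[r,"\bar\nu"] & K\cX\bigl(\cO^\infty_\cZ(X),\cO^\infty_\cZ(\cY)\bigr) \ar[u,"\partial"']
\end{tikzcd}
\]
Neither vertical arrow is $a_{\cY,\cZ}$ or $a_{X,\cZ}$: the right-hand $\partial$ is the pair boundary for $(\cO^\infty_\cZ(X),\cO^\infty_\cZ(\cY))$, not an index map. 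So your sentence ``the left $K$-theoretic boundary \dots\ computes $a_{\cY,\cZ}\circ(\cap^{\cX\sigma})\circ(\partial\times\id)$'' is not what the square says. To reach the proposition you must still \emph{compose} both sides with the index map $a_{\cY,\cZ}$ (a Mayer--Vietoris boundary on $\cO^\infty_\cZ(\cY)$) and then identify the resulting bottom composite with $(\cap^\cX)\circ(c^*\times a_{X,\cZ})$. The paper does this in three further steps: it recognises $\bar\nu$ as the coarse corona pairing $\cap^\cX$ on $\cO^\infty(X)$ pulled back along $c\circ\pr$, then applies \cref{LemCoarsePairingMV} (compatibility of $\cap^\cX$ with the Mayer--Vietoris boundary for the decomposition $\{\R^-\}\times X,\{\R^+\}\times X$), and finally uses extra-naturality for the inclusion $\texttt{i}:X\to\cO^\infty(X)$ (with $\pr\circ\texttt{i}=\id$). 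Your sketch of the ``main obstacle'' gestures at Mayer--Vietoris but does not isolate this compatibility lemma, which is the genuine content beyond the naturality square; without it, the passage from the cone-level pairing to $\cap^\cX$ on $X$ and the appearance of $a_{X,\cZ}$ remain unexplained.
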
 
 
 \begin{proof}
 	Consider the diagram of in $C^*$-categories with exact rows
 	\[
\begin{tikzcd}
0
	\ar[d]
	& 
	0
		\ar[d]
			\\
C_{u}(\cY)\otimes \bC(\cO^{\infty}_{\cZ}(X) \subseteq \cO^{\infty}(X))
	\ar[d]
	\ar[r, "\mu"]
	&
		\displaystyle\frac{\bC(\cO^{\infty}_{\cZ}(\cY)\subseteq \cO^{\infty}(X))}{\bC(\cO^{-}(\cY\cap \cZ)\subseteq \cO^{\infty}(X))}
		\ar[d]
			\\
 C_{u,\cY}(X)\otimes \bC(\cO^{\infty}_{\cZ}(X) \subseteq \cO^{\infty}(X)) 
 	\ar[d]
 	\ar[r]
 	&
 		\displaystyle\frac{\bC(\cO^{\infty}_{\cZ}(X) \subseteq \cO^{\infty}(X))}{\bC(\cO^{-}(\cY\cap \cZ)\subseteq \cO^{\infty}(X))}
 			\ar[d]
 				\\
 C(\partial_{u}^{\cY}X)\otimes \bC(\cO^{\infty}_{\cZ}(X) \subseteq \cO^{\infty}(X))
 	\ar[d]
	 \ar[r]
	 &
	 \displaystyle\frac{\bC(\cO^{\infty}_{\cZ}(X) \subseteq\cO^{\infty}(X))}{\bC(\cO^{\infty}_{\cZ}(\cY)\subseteq \cO^{\infty}(X))}
	 	\ar[d]
	 	&
	 		\\
	 		0
	 			&
	 			~~ 0 \ ,
\end{tikzcd}
\]
where the  horizontal maps are all given by the description $f \otimes A \mapsto [\chi(\pr^*f)A]$, thus making the diagram commutative.
We now apply the $K$-theory functor to obtain a map of fiber sequences of $KU$-modules. 
In particular, inserting \cref{grwgjrogwregwre}, we get the commutative square
\[
\begin{tikzcd}	
  \Sigma K(\cY) \times K\cX(\cO^\infty_{\cZ}(X)) 
  	\ar[r, "\mu"]
  	&
  		\Sigma K\cX(\cO^\infty_{\cZ}(\cY), \cO^-(\cY \cap \cZ)) 
  		\\
  K(\partial_u^{\cY} X) \times K\cX(\cO^\infty_{\cZ}(X))
  	\ar[r] 
  	\ar[u, "\partial \times \id"]
  	&
  		K\cX(\cO^\infty_{\cZ}(X), \cO^\infty_{\cZ}(\cY)) \ ,
  			\ar[u, "\partial"']
  \end{tikzcd}	
\]
involving the boundary maps for these fiber sequences.
Desuspending this diagram and using \cref{kogpwergrefrefwrfw}, we may expand the resulting diagram to involve the coarse symbol map
\begin{equation}
\label{FirstDiagram}
\begin{tikzcd}
  \Sigma K(\cY) \times K_{\cZ}^{\cX}(X)
  	\ar[r, "\mu"]
  	\ar[rr, bend left=15, "\cap^{\cX\sigma}"]
  	&
  		K\cX(\cO^\infty_{\cZ}(\cY), \cO^-(\cY \cap \cZ))
  		&
  			\Sigma K_{\cZ}^\cX(\cY)
  			\ar[l, "\simeq"']
  		\\
  K(\partial_u^{\cY} X) \times K_{\cZ}^\cX(X)
  	\ar[r]
  	\ar[u, "\partial \times \id"]
  	&
  		\Sigma^{-1} K\cX(\cO^\infty_{\cZ}(X), \cO^\infty_{\cZ}(\cY))
  		\ar[u, "\partial"']
  		\ar[r, "\partial"]
  		&
  			K\cX(\cO^\infty_{\cZ}(\cY))  \ .
  			\ar[u, equal] 
\end{tikzcd}
\end{equation}
Here the rightmost square commutes by compatibility of the boundary maps for the map of fiber sequences
\[
\begin{tikzcd}[column sep=0.5cm]
	K\cX(\cO^\infty_{\cZ}(\cY))
		\ar[r]
		\ar[d]
		&
			K\cX(\cO^\infty_{\cZ}(X))
			\ar[r]
			\ar[d]
			&
				K\cX(\cO^\infty_{\cZ}(X), \cO^\infty_{\cZ}(\cY))
				\ar[d, equal]
				\\
	K\cX(\cO^\infty_{\cZ}(\cY), \cO^-(\cY \cap \cZ))
		\ar[r]
		&
			K\cX(\cO^\infty_{\cZ}(X), \cO^-(\cY \cap \cZ))
			\ar[r]
			&
				K\cX(\cO^\infty_{\cZ}(X), \cO^\infty_{\cZ}(\cY)) \ .
\end{tikzcd}	
\]
The projection map $\pr: \cO^\infty(X) \to X$ is controlled and satisfies $\pr^{-1}(\cY) = \cO^\infty(\cY)$, hence extends to a map $\pr : \partial^{\cO^\infty(\cY)}\cO^\infty(X) \to \partial^{\cY} X$ between the corresponding coarse coronas (see \cref{RemFunctorialityCorona}).
This allows to write the second row of \eqref{FirstDiagram} as a coarse corona pairing over the space $\cO^\infty(X)$, for the big families $\cO^\infty_{\cZ}(X)$ and $\cO^\infty(\cY)$:
\begin{equation}
\label{SecondDiagram}
\begin{tikzcd}
K(\partial_u^{\cY} X) \times K_{\cZ}^\cX(X)
  	\ar[r]
  	\ar[d, "c^* \times \id"']
  	&[-1.8cm]
  		\Sigma^{-1} K\cX(\cO^\infty_{\cZ}(X), \cO^\infty_{\cZ}(\cY))
  		\ar[r, "\partial"]
  		&
  			K\cX(\cO_{\cZ}^\infty(\cY))
  			\\
K(\partial^{\cY} X) \times K_{\cZ}^\cX(X)
  	\ar[d, "\pr^* \times \id"']
  			\\
  K(\partial^{\cO^\infty(\cY)} \cO^\infty(X)) \times \Sigma^{-1}K\cX(\cO^\infty_{\cZ}(X))
  	\ar[ruu, bend right=10, "\nu"']
  	\ar[rruu, bend right=20, "\cap^{\cX}"']
\end{tikzcd}
\end{equation}
Here the top left triangle commutes by definition of the top left horizontal arrow, while the other triangle is the definition of the coarse corona pairing.
The bottom arrow of the diagram \eqref{SecondDiagram} may be extended to the following  diagram:
\begin{equation}
\label{ThirdDiagram}
	\begin{tikzcd}
K(\partial^{\cO^\infty(\cY)} \cO^\infty(X)) \times \Sigma^{-1}K\cX(\cO^\infty_{\cZ}(X))
	\ar[r, "\cap^\cX"]
	\ar[d, "\id \times \partial^{\mathrm{MV}}"']
	&
		K\cX(\cO_{\cZ}^\infty(\cY)) = \Sigma K_{\cZ}^{\cX}(\cY)
			\ar[d, "\partial^{\mathrm{MV}}"]
			\ar[ddd, shift left=10, bend left=60, "a_{\cY, \cZ}"]
			\\
K(\partial^{\cO^\infty(\cY)} \cO^\infty(X)) \times K\cX(\{0\} \otimes \cZ)
	\ar[r, "\cap^\cX"]
	&
		\Sigma K\cX(\{0\} \otimes (\cY\cap\cZ))
		\\
K(\partial^{\cO^\infty(\cY)} \cO^\infty(X)) \times K\cX(\cZ)
	\ar[d, "\texttt{i}^* \times \id"']
	\ar[u, "\id \times {\texttt{\texttt{i}}}_*", "\simeq"']
		\\
K(\partial^{\cY} X) \times K\cX(\cZ)
	\ar[r, "\cap^\cX"]
	&
		\Sigma K\cX(\cY\cap\cZ)
		\ar[uu, "\texttt{i}_*"', "\simeq"]
	\end{tikzcd}
\end{equation}
The upper square is commutative by compatibility of the coarse corona pairing with the Mayer-Vietoris boundary by applying    \cref{LemCoarsePairingMV} with  
$ \cO^\infty(X)$, $\{\R^-\} \times X$, $ \{\R^+\} \times X$,  $ \cO^\infty_{\cZ}(\cY)$, and $ \cO_{\cZ}^\infty(X)$ in place of 
$X $, $\cW  $, $\cW^\prime$, $\cY$ and $\cZ$. 
Commutativity of the bottom square is extra-naturality of the coarse corona pairing for the inclusion map ${\texttt{i}}: X \to \cO^\infty(X)$, see \cref{qewoifjqeiowdedwqdqdqer43r3r343}.
The triangle on the right is essentially the definition of the index map, see \cref{IndexMapAsBoundary}. 
Finally, since $\pr \circ \texttt{i} = \id$, it is clear that the diagram 
\begin{equation}
\label{FourthDiagram}
\begin{tikzcd}
K(\partial^{\cY} X) \times K^{\cX}_{\cZ}(X) 
		\ar[d, "\pr^* \times \id"']
		\ar[dddd, "\id \times a_{X, \cZ}"', bend right=60, shift right=20]
	\\		
K(\partial^{\cO^\infty(\cY)} \cO^\infty(X)) \times \Sigma^{-1}K\cX(\cO^\infty_{\cZ}(X))
	\ar[d, "\id \times \partial^{\mathrm{MV}}"']
	\\
K(\partial^{\cO^\infty(\cY)} \cO^\infty(X)) \times K\cX(\{0\} \otimes \cZ)
	\\	
K(\partial^{\cO^\infty(\cY)} \cO^\infty(X)) \times K\cX(\cZ)
	\ar[u, "\id \times \texttt{i}_*", "\simeq"']
	\ar[d, "\texttt{i}^* \times \id"']
	\\
K(\partial^{\cY} X) \times K\cX(\cZ)
\end{tikzcd}	
\end{equation}
commutes.
The assertion of the proposition follows from pasting the diagrams \eqref{FirstDiagram}--\eqref{FourthDiagram}.
 \end{proof}

\begin{rem}
In the case $X$ is a coarsifying proper metric space and  $\cY=\cB$ the commutative
diagram  \eqref{erokgpwergrwefwerfwerfwer} is essentially equivalent to the statement of \cite[Thm. 36]{zbMATH05025124}. \mbox{\hspace{1cm}}   
\hB
\end{rem}

 \begin{rem}
In this remark we explain why in the construction of the coarse index pairing we can not simply replace the local $K$-homology $K^{\cX}$ by its   locally finite version $K^{\cX,\lf}$ (see \cref{kogpewgwergergwergwerfw}) or the locally finite $K$-homology $K^{\an}$ introduced in \cref{kogpwegreffrefrfw}.
We fix $X$ in $\UBC$ with a big family $\cY  $. 
For $B$ in $\cB$ we have a restriction map
\[
K(\cY)\to K (\cY\cap (X\setminus B))\ .
\]
By naturality of the coarse symbol pairing we can produce a map of diagrams indexed by $B$ in $\cB^{\op}$
$$K (\cY)\times \Cofib(K^{\cX}(X\setminus B)\to K^{\cX}(X))\to     \Cofib(K^{\cX}( \cY\cap  (X \setminus B) )\to K^{\cX}(\cY))\ .$$ 
But note that taking the limit over $\cB^{\op}$ does \textbf{not} produce a map 
\begin{equation}\label{sfdvsdfvsdfvsdfv} K(\cY)\times  K^{\cX,\lf}(X)\to K^{\cX,\lf}(\cY)\ . 
\end{equation} 
The problem is that in the construction above we must first take the colimit over $\cY$ and and then the limit over $\cB^{\op}$, while for the target in \eqref{sfdvsdfvsdfvsdfv}  we would first  take the limit and then the colimit. 

Similarly it is not clear whether in general  the coarse index pairing has a factorization over the Paschke transformation $p_{X}:K^{\cX}(X)\to K^{\an}(X)$ from \eqref{frefwrfregwreg}, i.e., whether we can construct a  pairing
$$ K(
\cY)\times K^{\an}(X)\stackrel{-\cap^{\an \sigma}-}{\to} K^{\an}(\cY)$$
rendering the obvious comparison square commutative.
But note that if $\cY$ consists of bounded subsets, then such a pairing exists.
\hB
\end{rem}

 Let $X$ be  in $\UBC$ and let $\cY$, $\cY' $ and $\cZ$  be big families on $X$.  
 Recall the generalization of the coarse symbol pairing discussed in \cref{GeneralizationSymbolPairing}. 
 
 \begin{lem} 
 \label{koppwegfrgwgregwe}
 The following square commutes 
 \[
\begin{tikzcd}[column sep=2cm]
K(\cY) \times K(\cY') \times K^{\cX} _{\cZ}(X)  
	\ar[d, "\id\times \cap^{\cX\sigma}"']  
	\ar[r, "{\text{\eqref{WeDoNeedThis!}}}\times \id"]
	& 
		K( \cY\cap \cY') \times K\cX(X)
			\ar[d, "\cap^{\cX\sigma}"]
			\\ 
K(\cY)\times {K}^{\cX}_{\cZ}( \cY')   
	\ar[r, "\cap^{\cX\sigma}"] 
	& 
		{K}_{\cZ}^{\cX}(\cY\cap \cY')
\end{tikzcd}
\]
\end{lem}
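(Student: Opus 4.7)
The plan is to lift the square to a strictly commutative diagram of $*$-homomorphisms of $C^{*}$-categories and then apply the symmetric monoidal $K$-theory functor. Recall from \cref{iogjweoigwefewrfwerfwf} together with \cref{GeneralizationSymbolPairing} that, for big families $\cY$ and $\cY''$ on $X$, the coarse symbol pairing is induced on $K$-theory by the $*$-homomorphism
\[
\mu_{\cY,\cY''}:C_{u}(\cY)\otimes \bC(\cO^{\infty}_{\cZ}(\cY'')\subseteq \cO^{\infty}(X))\to \frac{\bC(\cO^{\infty}_{\cZ}(\cY\cap \cY'')\subseteq \cO^{\infty}(X))}{\bC(\cO^{-}(\cY\cap \cY''\cap \cZ)\subseteq \cO^{\infty}(X))}
\]
sending $f\otimes A$ to the class of $\chi'(\pr^{*}f)A$, combined with the fact that the denominator is $K$-theoretically trivial by flasqueness of its members.

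On an elementary tensor $f\otimes g\otimes A$ with $f\in C_{u}(\cY)$, $g\in C_{u}(\cY')$ and $A\in \bC(\cO^{\infty}_{\cZ}(X)\subseteq \cO^{\infty}(X))$, both routes around the square produce the class of the operator
\[
\chi'(\pr^{*}f)\chi'(\pr^{*}g)A=\chi'(\pr^{*}(fg))A
\]
in the quotient $\bC(\cO^{\infty}_{\cZ}(\cY\cap \cY')\subseteq \cO^{\infty}(X))/\bC(\cO^{-}(\cY\cap \cY'\cap \cZ)\subseteq \cO^{\infty}(X))$. The ``multiply first'' route yields this immediately via $\mu_{\cY\cap \cY',X}(fg\otimes A)$; the identity $\chi'(\pr^{*}f)\chi'(\pr^{*}g)=\chi'(\pr^{*}(fg))$ is simply the statement that $\chi'$ is a $*$-homomorphism and $\pr^{*}$ preserves products.

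For the ``pair first'' route the key check is that $\mu_{\cY,\cY'}$ factors through the quotient map $\bC(\cO^{\infty}_{\cZ}(\cY')\subseteq \cO^{\infty}(X))\to \bC(\cO^{\infty}_{\cZ}(\cY')\subseteq \cO^{\infty}(X))/\bC(\cO^{-}(\cY'\cap \cZ)\subseteq \cO^{\infty}(X))$ appearing in the target of $\mu_{\cY',X}$. This reduces to observing that, if $B\in \bC(\cO^{-}(\cY'\cap \cZ)\subseteq \cO^{\infty}(X))$, then since $f$ vanishes away from $\cY$ the function $\pr^{*}f$ vanishes away from $\cO^{\infty}(\cY)$, and hence $\chi'(\pr^{*}f)B$ is supported on $\cO^{\infty}(\cY)\cap \cO^{-}(\cY'\cap \cZ)\subseteq \cO^{-}(\cY\cap \cY'\cap \cZ)$, so it represents the zero class in the target. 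With this factorization in hand, both paths through the square give literally the same $*$-homomorphism, and the lemma follows by applying $K$-theory together with the identifications of \cref{wroekgpwergwrfgwrf}. The main obstacle is purely bookkeeping: tracking the various ideals and quotients carefully so that the intermediate $K$-theoretic identifications are compatible with composition; no further analytic input beyond \cref{iogjweoigwefewrfwerfwf} is needed.
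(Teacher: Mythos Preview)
Your proof is correct and follows essentially the same approach as the paper's own proof, which is the one-liner ``this follows from unfolding definitions, using \eqref{fwefwefeq} and the associativity of the symmetric monoidal structure of $K$.'' You have simply made explicit what the paper leaves to the reader: that the two composites of $*$-homomorphisms agree on elementary tensors (via $\chi'(\pr^{*}f)\chi'(\pr^{*}g)=\chi'(\pr^{*}(fg))$) and that the intermediate quotient by $\bC(\cO^{-}(\cY'\cap\cZ)\subseteq\cO^{\infty}(X))$ is harmless because multiplication by $\chi'(\pr^{*}f)$ sends this ideal into $\bC(\cO^{-}(\cY\cap\cY'\cap\cZ)\subseteq\cO^{\infty}(X))$.
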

\begin{proof}
This follows from unfolding definitions, using \eqref{fwefwefeq} and   the associativity of the symmetric  monoidal structure of $K$.
\end{proof}

\section{Applications to Dirac operators}

\subsection{The coarse index  of generalized Dirac operators}\label{rijeogegergwergf}

Let $M$ be a complete Riemannian manifold. 
Its underlying metric space represents a uniform bornological coarse space which we will also denote by $M$. 
We consider a generalized Dirac operator 
  $$\Dirac:C^{\infty}(M,E)\to C^{\infty}(M,E)$$ of degree $n$. Thus    $E$ is a hermitean vector bundle  of graded modules over the Clifford algebra $\Cl^{n}$ and $\Dirac$ is  an odd first order  formally selfadjoint differential operator commuting with the $\Cl^{n}$-action and such that
  \begin{equation}\label{wrgwerrwfrfw}\Dirac^{2}=\nabla^{*}\nabla
+R_{0}
\end{equation} for some hermitean connection $\nabla$ on $E$ and  $R_{0}$ in $C^{\infty}(M,\End(E))$.
We consider $\Dirac$ and its powers as symmetric operators on the Hilbert space $L^{2}(M,E)$ with
domain $C_{c}^{\infty}(M,E)$. 
By \cite{Chernoff_1973} the unbounded operator  $\Dirac$ and its powers are essentially selfadjoint and we will use the same symbols also to denote their unique selfadjoint extensions.

The Dirac operator gives rise to a coarse index class $\ind\cX(\Dirac)$ in $K\cX_{-n}(M )$ whose construction we will describe below in detail.  Under additional local positivity assumptions this index class is actually supported   on a suitable big family on $M$.

\begin{rem}
 That local positivity implies a restriction of the support of the coarse index has been observed in \cite{Roe:2012fk}. 
In the generality of the present paper (and even equivariantly) for  Callias type operators, this has been studied in \cite{Guo_2022}. \hB
\end{rem}

The norm  in $L^{2}(M,E)$ of a section $\phi$   will be denoted by $\|\phi\|$.
Let $Y$  be a   subset of $M$.  

\begin{ddd}
\label{kopherthergrge}
We say that  $\Dirac$ is  positive  away from   $Y$  if there exists {a number} $a$ in $(0,\infty)$ such that
$\|\Dirac \phi\|^{2}\ge a^{2} \|\phi\|^{2}$  for all $\phi$ in $C_{c}^{\infty}(M\setminus \bar Y,E)$.

We will say that $\Dirac$ is positive away from a big family $\cY$ if it is positive away from  {some}  member of $\cY$.
\end{ddd}

\begin{ex} We consider the summand $R_{0}$ from \eqref{wrgwerrwfrfw}. Let $\sigma(R_{0}(m))$ denote the spectrum of the selfadjoint endomorphism $R_{0}(m)$ of the fibre  of $E$ at $m$ in $M$.
If $\inf_{m\in M\setminus Y} \min(\sigma(R_{0}(m)))>0$, then $\Dirac$ is {positive} away from  $Y$. \hB
\end{ex}

We assume that $\Dirac$ is {positive} away from a big family $ \cY$.  One should expect that $\Dirac$ is close to being invertible outside $\cY$ and that the relevant index theoretic information is supported on this big family. Indeed, following insights of   \cite{Roe:2012fk}, one constructs   a  refined index class  $\ind \cX(\Dirac,\mathrm{on}\,\cY)$ in $K\cX_{-n}(\cY)$  such that 
 $$j_{*} \ind \cX(\Dirac,\mathrm{on}\,\cY)=\ind\cX(\Dirac)\ ,$$ where $j_{*}: K\cX(\cY)\to K\cX(M)$  is the  canonical morphism. As we need the details of the construction  in the subsequent sections 
 we recall  the precise {definition} of this index class from \cite{Bunke:2017aa} {in \cref{gjkregopregwergfwrefrfwrf}}.

%

  Let $X$ be a bornological coarse space. A graded $X$-controlled Hilbert space determined on points of degree $n$ is an $X$-controlled Hilbert space $(H,\chi)$ determined on points {(see \cref{gojkperfrefrwfwrefw})} such that $H$ is in addition  a graded module over $\Cl^{n}$, {with} the generators of $\Cl^{n}$ acting as anti-selfadjoint operators. 
  A bounded operator $A$ in $B(H)$ is called locally compact if
  $\chi(B)A$ and $A\chi(B)$ are compact for all bounded subsets $B$ of $X$. Note that if $(H,\chi)$ is locally finite,  then local compactness is  automatic. \begin{ddd} The Roe algebra $C^{*}(H,\chi)$ associated to $(H,\chi)$  is the $C^{*}$-subalgebra of $B(H)$ generated by $\Cl^{n}$-equivariant controlled and locally compact operators.\end{ddd}
  
  \begin{ex}
  If $(H,\chi)$ is an object of the Roe category $\bC(X)$ of $X$, then it is of degree $0$ and
  $\End_{\bC(X)}((H,\chi))=C^{*}(H,\chi)$. \hB
  \end{ex}

  We now come back to the Dirac operator $\Dirac$ on $E\to M$. 
  We consider the graded Hilbert space $L^{2}(M,E)$ with its induced $\Cl^{n}$-action, where the $L^{2}$-scalar product is defined using the hermitean structure of $E$ and the Riemannian volume measure. 
  In order to turn it into a graded $M$-controlled Hilbert space determined on points of degree $n$, we choose a   partition $(B_{i})_{i\in I}$ of $M$ by  {regular} Borel subsets with  {uniformly bounded diameter} and base points $b_{i}$ in $B_{i}$ for every $i$ in $I$. 
We then define the projection valued measure $$\chi:=\sum_{i\in I}\chi_{B_{i}}  \delta_{b_{i}}\ ,$$
where $  \chi_{B_{i}}$ is the multiplication  on $L^{2}(M,E)$ by the characteristic function of $B_{i}$. Thus for a subset $Z$ of $M$ we have 
   $\chi(Z)=\sum_{i\in I, b_{i}\in Z} \chi_{B_{i}}  $.     
   
   By construction, the graded $M$-controlled Hilbert space $(L^{2}(M,E),\chi)$
    is determined on points, but   it is  not locally finite if $M$ has positive dimension. 
    We let 
    \[
    C^{*}(M,E):={C^*}(L^{2}(M,E),\chi)
    \] 
    denote the associated Roe algebra. 
    Note that it is independent of the choice of the partition $(B_{i})_{i\in I}$. 
   We let furthermore  $C^{*}(\cY\subseteq M,E)$ be the subalgebra of $C^{*}(M,E)$ generated by operators of the form $\chi(Y) A\chi(Y)$ for $A$ in $C^{*}(M,E)$ and $Y$ a member of $\cY$.
   Note that $C^{*}(\cY\subseteq M,E)$ is an ideal and again does not depend on the choice of the partition.
   
   The following lemma is \cite[Lem.\ 2.3]{Roe:2012fk}.  
   
   \begin{lem}\label{werkogperfwfwrf}
 If   $\Dirac$  is {positive} away from  $\cY$, then there exists $a$ in $(0,\infty)$ such that for every $f$ in $C_{0}((-a,a))$ we have $f(\Dirac)\in C^{*}(\cY\subseteq M,E)$.
 \end{lem}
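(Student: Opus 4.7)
My plan is to follow the strategy of Roe \cite{Roe:2012fk}, combining finite propagation speed with a spectral-positivity clash. By hypothesis, fix a member $Y_{0}$ of $\cY$ and $a>0$ such that $\|\Dirac \phi\|\ge a\|\phi\|$ for all $\phi\in C_{c}^{\infty}(M\setminus \bar{Y}_{0},E)$. Since $C^{*}(\cY\subseteq M,E)$ is norm-closed and the functional calculus $f\mapsto f(\Dirac)$ is contractive for the sup-norm, it suffices to verify the claim on a dense subset. A convenient choice is the collection of $f\in C_{c}^{\infty}(\R)$ with $\mathrm{supp}\, f\subseteq [-(a-\delta),a-\delta]$ for some $\delta>0$ and with $\hat{f}$ compactly supported, say $\mathrm{supp}\,\hat{f}\subseteq [-R,R]$; such functions are dense in $C_{0}((-a,a))$ by standard Fourier-analytic arguments.

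For any such $f$, I would first check that $f(\Dirac)\in C^{*}(M,E)$. Propagation bounded by $R$ follows from the representation $f(\Dirac)=(2\pi)^{-1/2}\int \hat{f}(t)\,e^{it\Dirac}\,dt$ together with Chernoff's finite-propagation-speed theorem for $e^{it\Dirac}$ on complete Riemannian manifolds \cite{Chernoff_1973}; local compactness follows from elliptic regularity, since smoothness and compact support of $f$ make $\chi(B) f(\Dirac)$ a smoothing operator (in fact Hilbert--Schmidt) for any bounded set $B$. The crux is the localization on $\cY$. Choose an open member $Y'\in\cY$ containing $\bar{Y}_{0}$ and set $Y'':=U_{R+1}[Y']\in\cY$. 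For $\phi\in C_{c}^{\infty}(M\setminus Y'',E)$, finite propagation forces $\mathrm{supp}\, f(\Dirac)\phi\subseteq M\setminus U_{1}[Y']\subseteq M\setminus\bar{Y}_{0}$, while elliptic regularity ensures that $f(\Dirac)\phi$ is smooth with compact support, hence lies in $C_{c}^{\infty}(M\setminus\bar{Y}_{0},E)$. The positivity hypothesis then gives $\|\Dirac f(\Dirac)\phi\|\ge a\|f(\Dirac)\phi\|$, whereas the pointwise inequality $|\lambda f(\lambda)|^{2}\le (a-\delta)^{2}|f(\lambda)|^{2}$ together with the spectral theorem yields $\|\Dirac f(\Dirac)\phi\|\le (a-\delta)\|f(\Dirac)\phi\|$. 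These bounds are compatible only if $f(\Dirac)\phi=0$.

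Density of $C_{c}^{\infty}(M\setminus Y'',E)$ in the $L^{2}$-sections supported in $M\setminus Y''$, together with the analogous argument for $f(\Dirac)^{*}=\bar{f}(\Dirac)$, then gives $f(\Dirac)=\mathbf{1}_{Y''}f(\Dirac)\mathbf{1}_{Y''}$ where $\mathbf{1}_{Y''}$ is multiplication by the characteristic function of $Y''$. To conclude I would pass from $\mathbf{1}_{Y''}$ to the partition-based projection $\chi(Y''')$ for $Y''':=U_{\varepsilon}[Y'']\in\cY$, where $\varepsilon$ is the diameter of the partition $(B_{i},b_{i})$ defining $\chi$: a short check shows $\mathbf{1}_{Y''}\le\chi(Y''')$, which upgrades the identity to $f(\Dirac)=\chi(Y''')f(\Dirac)\chi(Y''')$, exhibiting $f(\Dirac)$ in the generating form of $C^{*}(\cY\subseteq M,E)$ with $A=f(\Dirac)\in C^{*}(M,E)$. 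The analytic heart of the argument (the spectral-versus-positivity contradiction) is short and clean; I expect the main obstacle to be the bookkeeping between the projection-valued measure $\chi$ and the multiplication operators $\mathbf{1}_{\cdot}$, and the verification that all the thickenings used remain in the big family $\cY$.
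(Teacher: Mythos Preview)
Your argument has a fatal gap at the very first reduction step. You claim that the set of $f\in C_{c}^{\infty}(\R)$ with $\mathrm{supp}\,f\subseteq[-(a-\delta),a-\delta]$ \emph{and} $\hat f$ compactly supported is dense in $C_{0}((-a,a))$. By the Paley--Wiener theorem, any $f$ with compactly supported Fourier transform extends to an entire function on $\C$; a nonzero entire function cannot vanish on any open interval, so such an $f$ can never be compactly supported (or even vanish outside $(-a,a)$). Your ``dense subset'' is therefore $\{0\}$, and the whole argument collapses. The incompatibility is structural: the finite-propagation step forces $\hat f$ compactly supported, while your spectral contradiction $\lvert\lambda f(\lambda)\rvert\le(a-\delta)\lvert f(\lambda)\rvert$ forces $\mathrm{supp}\,f\subseteq(-a,a)$. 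You cannot have both for a nonzero $f$.

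The paper's proof (following Roe) avoids this by never attempting exact vanishing $f(\Dirac)\phi=0$. Instead, for an \emph{arbitrary} $f$ with $\hat f$ supported in $[-r,r]$, it proves the quantitative estimate
\[
\|f(\Dirac)\chi(M\setminus U_{2r+b}[Y])\|\le 2\sup_{|s|\ge a}|f(s)|\,,
\]
using a comparison with an auxiliary operator: one takes the square root $A$ of the Friedrichs extension of $\Dirac^{2}$ on $L^{2}(M\setminus\bar Y,E)$, which satisfies $A\ge a$ by hypothesis, and uses finite propagation of the wave equation to identify $\cos(t\Dirac)\phi=\cos(tA)\phi$ for $|t|\le r$ and $\phi$ supported far from $Y$. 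This turns the positivity assumption into a genuine spectral gap for $A$, whence $\|f(A)\|\le\sup_{|s|\ge a}|f(s)|$. One then approximates a given $f\in C_{0}((-a,a))$ in sup-norm by a $g$ with $\hat g$ compactly supported; since $f$ vanishes outside $(-a,a)$, the right-hand side of the estimate for $g$ is small, and the approximation gives $f(\Dirac)\in C^{*}(\cY\subseteq M,E)$. The Friedrichs-extension comparison is precisely the device that decouples the two incompatible support conditions you tried to impose simultaneously.
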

 
 \begin{proof}
 We repeat the proof  \cite[Lem. 2.3]{Roe:2012fk} since this lemma is absolutely crucial for our purpose and {in the reference, the statement is only proven under the stronger assumptions that $\Dirac$ is associated to a Clifford bundle and the term $R_{0}$ in \eqref{wrgwerrwfrfw} is positive  on $M\setminus \bar Y$.
 }
 
  Let $Y$ be a member of $\cY$ such that $\Dirac$ is positive away from $Y$.      
  For $s$ in $[0,\infty)$ we let $U_{s}$  be the metric entourage of width $s$ as in \cref{epokpbbwerg}.

 For {an} even {function} $f$ in $\cS$  let $\hat f$ denote the Fourier transform (in general a distribution) such that 
 \begin{equation}
 \label{fojqw0fewfqdedqe}
 f(x)=\frac{1}{\pi} \int_{-\infty}^{\infty} \hat f(t) \cos(tx) dt\ ,
\end{equation} 
where the integral has to be interpreted appropriately.  We assume  $\supp(\hat f)\subseteq [-r,r]$ for some $r$ in $(0,\infty)$ and show that
 \begin{equation}
 \label{qewfwefwdqde}
 \| f(\Dirac)\chi(M\setminus U_{b+r}[Y])\|\le  \sup_{|s|\ge |a|} |f(s)|\ ,
\end{equation}
{where $b$ in $(0,\infty)$ is a bound on the diameter of the sets $B_i$ used in the construction of the projection valued measure on $L^{2}(M,E)$.}
(This is the analogue of \cite[Lem.\ 2.5]{Roe:2012fk}).
   
 The symmetric operator $\Dirac^{2}$ defines a  {hermitean} sesquilinear form $(\phi, \psi) \mapsto \langle \phi, \Dirac^2 \psi\rangle$   on $C^{\infty}_{c}(M\setminus \bar Y,E)$ which by assumption  is bounded below by $a^{2}$  for some $a$ in $(0,\infty)$.
 Let $A$ denote the square root of the Friedrichs extension of $\Dirac^{2}$ on $L^{2}(M\setminus \bar Y,E)$.
 By finite propagation speed  of the solutions of the  wave equation,   for any $h$ in $ L^{2}(M\setminus  \bar Y,E)$ with $\supp(h)\in M\setminus U_{r}[Y]$ we have
 $\cos(t\Dirac)h=\cos(tA)h$ for $t\in [-r,r]$.  Replacing $x$ by $\Dirac$ or $A$ in \eqref{fojqw0fewfqdedqe} and since $\supp(\hat f)\subseteq [-r,r]$ we get 
 $ f(\Dirac)h= f( A)h$.
  Since $\|f(A)\|\le \sup_{|s|\ge |a|} |f(s)|$ we conclude  \eqref{qewfwefwdqde}.

 One next derives the analogue of \cite[Lem.\ 2.6]{Roe:2012fk}
 by the same argument as in the reference: For {arbitrary (not necessarily even)} $f$ in $\cS$ with $\supp(f)\in [-r,r]$ we have
 \begin{equation}
 \label{vfdvrvvsdfvsfdv}
 \| f(\Dirac)\chi(M\setminus U_{2r+b}[Y])\|\le 2 \sup_{|s|\ge |a|} |f(s)|\ .
\end{equation} 
 To this end, we observe for odd $f$ that
 $$\| f(\Dirac)\chi(M\setminus U_{2r+b}[Y])\|^{2}\le \| |f|^{2}(\Dirac)\chi(M\setminus U_{2r+b}[Y])\|$$
 and that $\supp (\widehat{|f|^{2}})\subseteq [-2r,2r]$.
 {Then} $|f|^{2}$ is even so that we can apply the estimate already proven.
 A general function $f$ can be written as a sum of an even and an odd function which accounts for the factor $2$ in the right-hand side of \eqref{vfdvrvvsdfvsfdv}.
 
{Finally, we repeat the argument of \cite{Roe:2012fk} that \eqref{vfdvrvvsdfvsfdv} implies \cref{werkogperfwfwrf}.}
Assume that $f$ is in $C_{0}((-a,a))$.
{As functions with compactly supported Fourier transform are dense in $C_0(\R)$,} 
given $\epsilon$ in $(0,\infty)$, there exists $g$ in $\cS$ with $\hat g$ compactly supported and $\|f-g\|_{\infty}\le \epsilon$, where we consider $f$ as an element of $\cS$   {through} extension by zero.
Then $\|f(\Dirac)-g(\Dirac)\|\le \epsilon$, $g$ has bounded propagation, and {by \eqref{vfdvrvvsdfvsfdv}}, $\| g(\Dirac)\chi(M\setminus U_{r+b}[Y])\|\le \epsilon$ for $r$  {so large} that $\supp(\hat g)\in [-r,r]$. 
 We can conclude that $f(\Dirac)$ is in distance $2\epsilon$ from an element of $C^{*}(\cY\subseteq M,E)$. 
 Since $\epsilon$ is arbitrary we conclude that
 $f(\Dirac)\in C^{*}(\cY\subseteq M,E)$ as asserted.
    \end{proof}

It follows from \cref{werkogperfwfwrf} that if $\Dirac$ is positive away from $Y$, then  there exists $a$ in $(0,\infty)$ such that
  the homomorphism 
  \[
  \cS\to C^{*}(M,E)\ , \quad f\mapsto f(\Dirac)
  \]
   restricts to a homomorphism
   \[
   (\Dirac_{\cY})_{*}:C_{0}((-a,a))\to C^{*}(\cY\subseteq M,E)\ .
   \]
    As explained in \cref{okfpqfqewddedq}, it represents a class 
   \begin{equation}\label{bdfgwrtbgwggsdtg}
    [\Dirac_{\cY}] \quad  \mbox{in} \quad \gK_{0}(C^{*}(\cY\subseteq M,E))
     \end{equation}      such that $j_{*}[\Dirac_{\cY}]=[\Dirac]$.
    
    In order to identify this class with a class of $\K\cX_{-n}(M)$ we must incorporate the gradings and and degrees in the definition of Roe categories and compare the  graded $K$-theory of Roe algebra $C^{*}(\cY\subseteq M,E)$ with the graded $K$-theory of a corresponding graded Roe category.

 Generalizing the construction of the Roe category $\bC(X)$ in \cref{gojkperfrefrwfwrefw}, for every bornological coarse space $X$ we can consider the graded $C^{*}$-category  $\bC[n](X)$  of graded locally finite  $X$-controlled Hilbert spaces $(H,\chi)$ determined on points  of degree $n$ and  
  morphisms which are bounded    operators that can be approximated  in norm by controlled
   $\Cl^{n}$-equivariant operators. We get a functor
   $$\bC[n]:\BC\to \gCcat
   \ .$$
   We  define the 
   coarse $K$-homology functor of degree $n$ as the composition
   $$K\cX[n]:\BC\xrightarrow{\bC[n]}  \gCcat \xrightarrow{\gK}\Mod(KU)\ .$$
   As observed in \cite{Bunke:2017aa}  (or using \cref{ekohperhetrgertg}) we have a canonical  equivalence of functors 
   \begin{equation}\label{rewgeopgkpwerfwerfwerf}K\cX[n]\simeq \Sigma^{n}K\cX:\BC\to \Mod(KU)
\end{equation} 
   which first of all shows that $K\cX[n]$ is indeed a coarse homology theory, and that it  is equivalent to $K\cX$ up to a shift.

  In order to compare $\bC[n](M)$ and $C^{*}(M,E)$ we form the intermediate graded $C^{*}$-category $\tilde \bC[n](M)$ of degree $n$ whose objects are triples $(H,\chi,U)$ with $(H,\chi)$ in $\bC[n](M)$
  and $U:H\to L^{2}(M,E)$ a $\Cl^{n}$-equivariant controlled isometric embedding. The 
  morphisms $A:  (H,\chi,U)\to   (H',\chi',U')$  in $\tilde \bC[n](X)$ are the morphisms  $A:(H,\chi)\to (H',\chi')$  in $\bC[n](X)$.

 We now assume that $M$ is not zero-dimensional. 
 Then {the graded $M$-controlled Hilbert space $(L^{2}(M,E),\chi)$ is  ample}  which means that any graded locally finite $M$-controlled Hilbert space admits a controlled isometric embedding into it.  
 Consequently,  the forgetful functor
 \begin{equation}
 \label{vrewjvoievjiosvdfvdsfvs}
 \cF:\tilde \bC[n](M)\to \bC[n](M)\ , \quad (H,\chi,U)\mapsto (H,\chi)
\end{equation} 
 is surjective on objects. Since it is fully faithful by definition, it is therefore an equivalence of graded $C^{*}$-categories. 
  
     We consider the Roe algebra $C^{*}(M,E)$ as a  graded $C^{*}$-category with a single object.
     We then have a functor  \begin{equation}
 \label{gergwerfwrfweferfw}
 {\cI}:\tilde \bC[n](M)\to C^{*}(M,E)\ , \quad (A:(H,\chi,U)\to (H',\chi',U'))\mapsto U'AU^{*}\ .
\end{equation} 
  One can show that $C^{*}(M,E)$   is generated by the image of ${\cI}$ {and} it has been observed in \cite{Bunke:2017aa}
that      
\[
\gK({\cI}):\gK(\tilde \bC[n](M)) \to \gK(C^{*}(M,E))
\]
 is an equivalence.
The zig-zag of functors
\begin{equation}
\label{eqfewdqcdvadvad}
\bC[n](M) \stackrel{\cF}{\leftarrow} \tilde \bC[n](M)\stackrel{{\cI}}{\rightarrow} C^{*}(M,E)
\end{equation}
thus induces upon applying $K^{\gr}$ an equivalence 
  $$\kappa_{M}:\Sigma^{n}K\cX(M)\stackrel{\eqref{rewgeopgkpwerfwerfwerf}}{\simeq} K\cX[n](M)  \stackrel{\simeq }{\leftarrow} \gK(\tilde \bC[n](M)) \stackrel{\simeq }{\to}\gK(C^{*}(M,E))\ .$$ 
  The zig-zag \eqref{eqfewdqcdvadvad} restricts to a zig-zag of ideals 
  \begin{equation*}
  \bC[n](\cY\subseteq M) \stackrel{\cF}{\leftarrow} \tilde \bC[n](\cY\subseteq M)\stackrel{{\cI}}{\rightarrow} C^{*}(\cY\subseteq M,E)
\end{equation*}
which in turn induces an equivalence 
\begin{equation}\label{qedqefq2t5}\kappa_{\cY}:\Sigma^{n}K\cX(\cY)\simeq \gK(C^{*}(\cY\subseteq M,E))
\end{equation} such that
 we have   commutative square
 \begin{equation}\label{fqrfqwdewdqdefdeqfeqwfq}\xymatrix{\Sigma^{n}K\cX(\cY)\ar[d]\ar[r]^-{\simeq }_-{\kappa_{\cY}} &\gK(C^{*}(\cY\subseteq M,E)) \ar[d] \\ \Sigma^{n}K\cX(M) \ar[r]^-{\simeq}_-{\kappa_{M}} &\gK(C^{*}(M,E)) }\ , 
\end{equation}  
  where the vertical maps are induced by the inclusions.
  
Recall the definition of the class $[\Dirac_{\cY}]$ in \eqref{bdfgwrtbgwggsdtg}.
  \begin{ddd}\label{gjkregopregwergfwrefrfwrf} If $\Dirac$ is {positive} away from $\cY$, then 
  the coarse index class with support $$\ind\cX(\Dirac, \mathrm{on}\,\cY)\quad  \mbox{in} \quad K\cX_{-n}(\cY)$$ is defined uniquely by the condition $\kappa_{\cY}(\ind\cX(\Dirac, \mathrm{on}\,\cY))\simeq [\Dirac_{\cY}]$. 
  \end{ddd}
 
   We now formulate the basic  properties of the coarse index class.
  For $i=0,1$ let $\Dirac_{i}$ be a generalized Dirac operators of degree $k_{i}$ on complete Riemannian manifolds $M_{i}$. 
  If $\Dirac_{1}$ is positive away from {$\cY$}, then $\Dirac_{0}\stimes \id+\id \stimes \Dirac_{1}$ is positive away from $M_{0}\times \cY$.
 Using the explicit description of the cup product from \cref{wgklopergwrefwef}, one can check that the image under the symmetric monoidal structure $$K\cX(M_{0})\times K\cX(\cY)\to K\cX(M_{0}\otimes \cY)$$
  of $(\ind\cX(\Dirac_{0}),\ind\cX(\Dirac_{1},\mathrm{on}\,\cY))$ is the index $\ind\cX(\Dirac_{0}\stimes \id+\id \stimes \Dirac_{1},\mathrm{on}\,M_{0}\times \cY)$ of the product Dirac operator of degree $k_{0}+k_{1}$.
 
We take $M_{0}=\R$ and $\Dirac_{0}=\sigma \partial_{t}$ on the trivial bundle with fibre $\Cl^{1}$.  
Then $$\ind\cX(\sigma \partial_{t}) \in  \pi_{-1} K\cX(\R)\cong   \pi_{-2}K\cX(*)\cong KU_{-2}$$ is the inverse of the Bott element  $\beta$.
On the manifold $\R \times M$, there is a suspended Dirac operator $\Sigma D$, given by
  \begin{equation}
  \label{gwerpogjowkepferfwerfwrf} 
  \Sigma \Dirac := \id_{\pr^{*}E}\stimes \sigma  \partial_{t}+ \pr^{*} \Dirac\stimes \id_{\Cl^{1}} 
\end{equation} 
acting on sections of the bundle $\pr^{*}E\stimes \Cl^{1}$.
{
Here $\sigma$ is the odd generator of the tensor factor $\Cl^{1}$ (acting by left multiplication) with $\sigma^{*}=-\sigma$ and $\sigma^{2}=-1$.
A special case of the above statements on products is then the following.}

Recall that the equivalence $\delta^{MV}:{K}\cX({\R}\otimes \cY)\stackrel{\simeq}{\to} \Sigma K\cX(\cY)$ (see \eqref{fqwedwedewdqwdwd}) induced by Mayer-Vietoris boundary for the decomposition of ${\R}\otimes \cY$ into ${\R^{-}}\times \cY$ and ${\R^{+}}\times \cY$.
  
   \begin{prop}[Suspension theorem] 
   \label{SuspensionTheorem} 
We have the equality  
\[ 
   \beta \cdot \delta^{MV}\ind\cX(\Sigma \Dirac,\mathrm{on}\,{\R \times \cY})=\ind\cX(\Dirac,\mathrm{on}\,\cY)\ .
\]
\end{prop}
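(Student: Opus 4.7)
The plan is to deduce the theorem from the product formula for coarse indices stated in the paragraph preceding the theorem, combined with the compatibility of the Mayer--Vietoris boundary $\delta^{MV}$ with the lax symmetric monoidal refinement of $K\cX$ (\cref{gpkopwergwrfwferfwf}). Specializing the product formula to $M_{0}=\R$, $\Dirac_{0}=\sigma\partial_{t}$ and $\Dirac_{1}=\Dirac$ (which is positive away from $\cY$ by hypothesis) gives the identity
\[
\ind\cX(\Sigma\Dirac,\mathrm{on}\,\R\times\cY)\;=\;\ind\cX(\sigma\partial_{t})\cup\ind\cX(\Dirac,\mathrm{on}\,\cY)
\]
in $K\cX_{-n-1}(\R\otimes\cY)$, where $\cup$ denotes the pairing induced by the symmetric monoidal structure $K\cX(\R)\otimes K\cX(\cY)\to K\cX(\R\otimes\cY)$. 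Here the product operator $\sigma\partial_{t}\stimes\id+\id\stimes\Dirac$ is identified with $\Sigma\Dirac$ via the flip of graded tensor factors, which preserves the associated coarse index class.

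Next I would apply $\delta^{MV}$ to both sides and invoke the multiplicativity formula
\[
\delta^{MV}(a\cup b)\;=\;\delta^{MV}(a)\cdot b
\]
for $a\in K\cX(\R)$ and $b\in K\cX(\cY)$, where on the right we use the $KU$-module structure of $K\cX(\cY)$ together with the identification $\Sigma K\cX(\ast)\simeq\Sigma KU$. Specializing to $a=\ind\cX(\sigma\partial_{t})$ and $b=\ind\cX(\Dirac,\mathrm{on}\,\cY)$, and invoking the fact recalled just before the theorem that $\delta^{MV}\ind\cX(\sigma\partial_{t})=\beta^{-1}$ in $KU_{-2}$, yields
\[
\delta^{MV}\ind\cX(\Sigma\Dirac,\mathrm{on}\,\R\times\cY)\;=\;\beta^{-1}\cdot\ind\cX(\Dirac,\mathrm{on}\,\cY)\ .
\]
Multiplying through by $\beta$ gives the desired identity.

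The main obstacle will be the verification of the multiplicativity formula $\delta^{MV}(a\cup b)=\delta^{MV}(a)\cdot b$ at the spectrum level. This is essentially a naturality statement: the flasque decomposition $(\R^{-}\times\cY,\R^{+}\times\cY)$ of $\R\otimes\cY$ is the base change along $-\otimes\cY$ of the excisive decomposition $(\R^{-},\R^{+})$ of $\R$, and by lax symmetric monoidality of $K\cX$ the pairing $\cup$ respects this decomposition. Combined with vanishing of $K\cX$ on flasques, one obtains a commutative diagram of fiber sequences whose induced boundary maps give the claimed formula. This is a formal but nontrivial diagram chase, essentially the statement that the suspension equivalence $\Sigma E(-)\simeq E(\R\otimes -)$ of \eqref{fqwedwedewdqwdwd} is a map of $K\cX(\R)$-modules.
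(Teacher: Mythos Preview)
Your proposal is correct and follows the same approach as the paper. The paper does not give a formal proof but simply states that the proposition is ``a special case of the above statements on products'': namely the product formula $\ind\cX(\Dirac_{0}\stimes\id+\id\stimes\Dirac_{1},\mathrm{on}\,M_{0}\times\cY)=\ind\cX(\Dirac_{0})\cup\ind\cX(\Dirac_{1},\mathrm{on}\,\cY)$ together with the identification of $\ind\cX(\sigma\partial_{t})$ with $\beta^{-1}$ under $\pi_{-1}K\cX(\R)\cong KU_{-2}$. Your write-up unpacks exactly this, and correctly isolates the one nontrivial ingredient the paper leaves implicit, namely the compatibility $\delta^{MV}(a\cup b)=\delta^{MV}(a)\cdot b$, which follows from the lax symmetric monoidal structure of $K\cX$ (\cref{gpkopwergwrfwferfwf}) and the fact that the suspension equivalence \eqref{fqwedwedewdqwdwd} is obtained by tensoring the decomposition $(\R^{-},\R^{+})$ of $\R$ with the second factor.
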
   
  
{
Here on the left, we multiplied using the $KU$-module structure on the values of $ K\cX$.
} 
   
 For $i=0,1$ we consider generalized Dirac operators $\Dirac_{i}$ on complete Riemannian manifolds $M_{i}$ which are positive away from  big families families $\cY_{i}$.
  Assume there exist subsets $W_{i}$ in $M_{i}$  such that there exists  a Riemannian isometry  $e:M_{0}\setminus  W_{0}\to M_{1}\setminus W_{1}$ which is a morphism of bornological coarse spaces  and covered by an isomorphism of all bundle data. Note that $e$ is not necessarily an isomorphism of bornological coarse spaces, {as $e$ is not necessarily distance-preserving}.
  We let $\cW$ and $\cW'$ be the big families generated by $W$ and $W'$, respectively.  
 By excision we get a morphism
\begin{equation}
\label{vfpojopewgerwg}
e_{*}:K\cX(\cY_{0},\cY_{0}\cap \cW_{0}) \to K\cX(\cY_{1},\cY_{1}\cap \cW_{1})\ .
\end{equation} 
We let 
\[
\pi_{i}:K\cX(\cY_{i})\to K\cX(\cY_{i},\cY_{i}\cap \cW_{i}) 
\]
denote the projections. 
The following is \cite[Thm. 10.5]{Bunke:2017aa}.

\begin{prop}[The relative index theorem]
 \label{RelativeIndexTheorem}
We have the equality
\[
e_{* }\pi_{0}(\ind\cX(\Dirac_{0},\mathrm{on}\,\cY_{0}))=\pi_{1}(\ind\cX(\Dirac_{1} ,\mathrm{on}\,\cY_{1}))\ .
\]
\end{prop}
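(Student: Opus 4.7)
The plan is to transport the claim via the equivalences $\kappa_{\cY_i}$ and the compatibility square \eqref{fqrfqwdewdqdefdeqfeqwfq} to an equality of graded $K$-theory classes of Roe algebra quotients, and then to verify that equality by finite propagation speed. By \cref{werkogperfwfwrf}, first choose a common $a\in(0,\infty)$ small enough that both classes $\ind\cX(\Dirac_i,\mathrm{on}\,\cY_i)$ are, under $\kappa_{\cY_i}$, represented by the homomorphisms $(\Dirac_{i,\cY_i})_\ast : C_0((-a,a)) \to C^\ast(\cY_i\subseteq M_i,E_i)$ of \cref{okfpqfqewddedq}. Under these identifications the projection $\pi_i$ becomes composition with the quotient map by the ideal $C^\ast(\cY_i\cap\cW_i\subseteq M_i,E_i)$, so it suffices to compare the two resulting homomorphisms into these quotients under the isomorphism induced by $e$.

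Using $e$ together with the bundle isomorphism, I would build a $\Cl^n$-equivariant unitary $U_e : L^2(M_0\setminus W_0,E_0) \to L^2(M_1\setminus W_1,E_1)$ intertwining $\Dirac_0$ and $\Dirac_1$. Choosing the partitions entering into the projection-valued measures on $M_i$ to be compatible with the decomposition $M_i=W_i\sqcup(M_i\setminus W_i)$ and to be mapped to each other by $e$ outside $W_i$, conjugation by $U_e$ descends to a graded $\ast$-isomorphism
\[
\bar U_e : \frac{C^\ast(\cY_0\subseteq M_0,E_0)}{C^\ast(\cY_0\cap\cW_0\subseteq M_0,E_0)} \xrightarrow{\cong} \frac{C^\ast(\cY_1\subseteq M_1,E_1)}{C^\ast(\cY_1\cap\cW_1\subseteq M_1,E_1)}
\]
whose effect on graded $K$-theory is, via the zig-zag \eqref{eqfewdqcdvadvad} and \eqref{qedqefq2t5}, the excision map $e_\ast$ of \eqref{vfpojopewgerwg}. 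This is the most delicate part of the argument and is where the main technical obstacle lies: since $e$ is only defined on the complements of $W_i$ and is not required to be a morphism of bornological coarse spaces globally, some care is needed in engineering the controlled Hilbert-space data so that excision and $\bar U_e$ carry compatible meanings and the ideals match up.

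Granting this, it remains to prove the equality $\bar U_e\circ(\Dirac_{0,\cY_0})_\ast=(\Dirac_{1,\cY_1})_\ast$ of homomorphisms with domain $C_0((-a,a))$. For $f\in\cS$ whose Fourier transform $\hat f$ is compactly supported in some $[-r,r]$, finite propagation speed of the wave equation (applied exactly as in the proof of \cref{werkogperfwfwrf}, via the representation $f(\Dirac_i)=\frac{1}{\pi}\int\hat f(t)\cos(t\Dirac_i)\,dt$) implies that $f(\Dirac_i)$ propagates at distance at most $r$; combined with the intertwining property of $U_e$ on sections supported at distance $>r$ from $W_i$, this shows that $U_e f(\Dirac_0) U_e^\ast - f(\Dirac_1)$ is supported within the $r$-neighbourhood of $W_1$ and therefore lies in the ideal $C^\ast(\cW_1\subseteq M_1,E_1)$. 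The approximation of arbitrary $f\in C_0((-a,a))$ by such $f$, as in the final step of \cref{werkogperfwfwrf}, then extends the equality to the whole domain. Passing to graded $K$-theory and using the naturality of $\kappa_{\cY_i}$ with respect to the ideal inclusions yields the claimed identity in $K\cX(\cY_1,\cY_1\cap\cW_1)$.
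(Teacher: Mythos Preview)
The paper does not give its own proof of this proposition; it simply attributes the result to \cite[Thm.~10.5]{Bunke:2017aa}. Your sketch is therefore being compared against an external reference rather than anything in the text, but the strategy you outline---reduce via the equivalences $\kappa_{\cY_i}$ to an equality of homomorphisms $C_0((-a,a))\to C^*(\cY_i\subseteq M_i,E_i)/C^*(\cY_i\cap\cW_i\subseteq M_i,E_i)$, then use finite propagation speed to show that $f(\Dirac_0)$ and $f(\Dirac_1)$ agree modulo the $\cW$-supported ideal---is exactly the standard one and is what the cited reference does.

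Your honest flagging of the ``most delicate part'' is accurate: the step you label ``Granting this'' is where the real content lies. The issue is that $U_e$ is only a partial isometry between the full $L^2$-spaces (it is unitary only on the complements of $W_i$), so conjugation $A\mapsto U_eAU_e^*$ is not a $*$-homomorphism on $C^*(\cY_0\subseteq M_0,E_0)$; it only becomes one after passing to the quotient by the $\cW$-ideal, and one must verify both that it is well-defined there and that on $K$-theory it agrees with the abstract excision map $e_*$ from \eqref{vfpojopewgerwg}, which is defined in terms of the Roe categories $\bC(-)$ rather than the single Roe algebra. This comparison is genuine work (carried out in the cited paper) and cannot simply be granted. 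The finite-propagation argument in your final paragraph, by contrast, is clean and complete as stated.
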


  \subsection{Symbols with support}\label{gkopwerfefrefwfref}

{As before, let  $M$ be a complete Riemannian manifold.}     
       Classically, the symbol  of a generalized  Dirac operator on $M$  is the analytic  locally finite $K$-homology  class $\sigma^{\an}(\Dirac)$ in $K^{\an}(M)$. 
   In the present section, we propose  to consider a refined symbol class $\sigma(\Dirac)$ in $K^{\cX}(M)$ such that $p_{M}(\sigma(\Dirac))=\sigma^{\an}(\Dirac)$, see \eqref{r3rgrgsfegeg} for $p$. 
   If $\Dirac$ is positive away from a big family $\cY$, then we introduce a further refined symbol class $\sigma_{\cY}(\Dirac )$ in $K^{\cX}_{\cY}(M)$ which in addition captures the reason for the fact that the $a_{M,\cY}(\sigma(\Dirac))=\ind(\Dirac,\mathrm{on}\,\cY)$ is supported on $\cY$.

 Since by definition $K_{\cY}^{\cX}(M)\simeq \Sigma^{-1} K\cX (\cO_{\cY}^{\infty}(M))$ (see  \cref{fewdqwedqewdqewd}) and $K\cX$ receives indices of Dirac type operators it is natural to represent $\sigma_{\cY}(\Dirac)$ as a coarse index of a generalized Dirac operator $\smash{\TildeDirac}$ naturally derived from $\Dirac$  on the cone over $M$.
     
   \begin{construction}
   \label{kogpweerfwerfrwefw}
   \em
   We assume that $\Dirac$ has degree $n$. Then we 
  consider the manifold $\tilde M:=\R\times M$  with the warped product metric
  \[
  \tilde g=dt^{2}+{h(t)^2}g
  \]
 for some choice of a smooth function $h:\R\to(0,\infty)$  with $h(t)=1$ for $t\le 0$ and $h(t)\ge 1$ for $t\ge 0$. 
 We let $\smash{\TildeDirac}$ be the degree $n+1$ Dirac operator on the bundle $\tilde E:=\pr_{M}^{*}E\stimes \Cl^{1}\to \tilde M$ explicitly given  by the formula  (see \cref{RemarkAdaption} for the motivation)
\begin{equation}
\label{FormulaAdaptedOperator}
\TildeDirac =  \id_{\pr^{*}E}\stimes \sigma  \left(\partial_t + \frac{{n}}{2} \frac{\dot{h}(t)}{h(t)} \right) + \frac{1}{h(t)} \pr^{*} \Dirac\stimes \id_{\Cl^{1}},
\end{equation}
where $\sigma$ is the odd generator of the tensor factor $\Cl^{1}$.
\end{construction}

\begin{rem}
\label{RemarkAdaption}
Abstractly, $\smash{\TildeDirac}$ is the result obtained by adapting the suspension $\Sigma \Dirac$ of $\Dirac$ on $\R\times M$ given in \eqref{gwerpogjowkepferfwerfwrf}, to the metric $\tilde g$, using the adaptation procedure given in \cite[Sec.\ 4]{Bunke:2018aa}.
The process of adaptation is uniquely fixed by the following requirements: (1) It is a local construction.
(2) When applied to the spin Dirac operator on the product it produces the spin  Dirac operator on the warped product.
(3) Finally it is compatible with forming twisted Dirac operators and adding zero order terms.
\hB 
 \end{rem}

We consider the big family  \begin{equation}\label{qfwefdqeq423}
 \tilde M_{\cY} := {\R} \times \cY\cup \{\R^{+}\}\times M\ .
\end{equation}  
on $\tilde M$, see \cref{BigFamilyConstr} for notation.
 \begin{lem}
 \label{LemmaPositivityOnCone}
 If $\Dirac$ is positive away from $\cY$, then $\smash{\TildeDirac}$ is positive away from {$\tilde M_{\cY}$.}
\end{lem}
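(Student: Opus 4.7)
The plan is to exploit the fact that on the region $\{t\le 0\}$ the warping factor is trivial ($h\equiv 1$), so that $\smash{\TildeDirac}$ coincides there with the ordinary product (suspension) Dirac operator $\Sigma\Dirac$ of \eqref{gwerpogjowkepferfwerfwrf}. Since $\Dirac$ is assumed positive away from some member $Y$ of $\cY$ (with constant $a$, in the sense of \cref{kopherthergrge}), I would choose the member of the big family $\tilde M_\cY$ to be $\R\times Y\cup[0,\infty)\times M$, whose closure in $\tilde M$ is $\R\times \bar Y\cup[0,\infty)\times M$ with complement $(-\infty,0)\times(M\setminus\bar Y)$. It then suffices to establish the uniform lower bound $\|\TildeDirac\tilde\phi\|^{2}\ge a^{2}\|\tilde\phi\|^{2}$ for all $\tilde\phi\in C_{c}^{\infty}((-\infty,0)\times(M\setminus\bar Y),\tilde E)$.

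For such $\tilde\phi$, since $h(t)=1$ and $\dot h(t)=0$ for $t\le 0$, formula \eqref{FormulaAdaptedOperator} reduces to $\TildeDirac\tilde\phi=\id\stimes\sigma\,\partial_{t}\tilde\phi+\pr^{*}\Dirac\stimes\id\,\tilde\phi$, and moreover the warped $L^{2}$ inner product restricted to $\{t\le 0\}$ coincides with the product $L^{2}$ inner product on $\R\times M$. I would then compute
\[
\TildeDirac^{2}\tilde\phi=(-\partial_{t}^{2}+\pr^{*}\Dirac^{2})\tilde\phi,
\]
using $\sigma^{2}=-1$ and the fact that $\sigma$ anticommutes with $\pr^{*}\Dirac$ (both being odd), together with the commutation of $\partial_{t}$ with $\pr^{*}\Dirac$. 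Integration by parts in $t$ (with no boundary terms, since $\tilde\phi$ has compact support in the open slab $(-\infty,0)\times(M\setminus\bar Y)$) gives $\langle\tilde\phi,-\partial_{t}^{2}\tilde\phi\rangle=\|\partial_{t}\tilde\phi\|^{2}\ge 0$. By Fubini, $\langle\tilde\phi,\pr^{*}\Dirac^{2}\tilde\phi\rangle=\int_{-\infty}^{0}\langle\tilde\phi(t,\cdot),\Dirac^{2}\tilde\phi(t,\cdot)\rangle_{L^{2}(M,\tilde E_{t})}\,dt$, and each slice $\tilde\phi(t,\cdot)$ lies in $C_{c}^{\infty}(M\setminus\bar Y,E\otimes\Cl^{1})$, so the hypothesis of positivity of $\Dirac$ away from $Y$ applied componentwise yields $\langle\tilde\phi(t,\cdot),\Dirac^{2}\tilde\phi(t,\cdot)\rangle\ge a^{2}\|\tilde\phi(t,\cdot)\|^{2}$.

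Assembling these two estimates gives $\|\TildeDirac\tilde\phi\|^{2}=\langle\tilde\phi,\TildeDirac^{2}\tilde\phi\rangle\ge a^{2}\|\tilde\phi\|^{2}$, which is precisely the required positivity of $\smash{\TildeDirac}$ away from $\tilde M_\cY$.

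There is no serious obstacle here; the only points that require care are (i) verifying that $\R\times Y\cup[0,\infty)\times M$ is indeed a member of the big family $\tilde M_\cY$ (it is, since $\{\R^{+}\}$ contains $[0,\infty)\times M$ and $\R\times\cY$ contains $\R\times Y$), and (ii) checking the anticommutation identity $\sigma\cdot\pr^{*}\Dirac+\pr^{*}\Dirac\cdot\sigma=0$ which ensures the cross terms in $\TildeDirac^{2}$ vanish. The nontrivial behaviour of $\smash{\TildeDirac}$ for $t>0$, where the warping factor and the zero-order correction $\tfrac{n}{2}\dot h/h$ enter, plays no role in this statement because the relevant member of the big family already swallows the entire region $\{t\ge 0\}$.
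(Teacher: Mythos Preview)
Your proof is correct and follows essentially the same approach as the paper's: both exploit that $h\equiv 1$ on $\{t\le 0\}$ so that $\smash{\TildeDirac}$ reduces to the suspension operator there, compute $\smash{\TildeDirac}^{2}=-\partial_{t}^{2}+\pr^{*}\Dirac^{2}$ on this region via the graded anticommutation, and then combine integration by parts in $t$ with the slice-wise positivity of $\Dirac$ away from $Y$. Your additional remarks on verifying membership in the big family and the vanishing of the cross terms are the only elaborations beyond what the paper writes explicitly.
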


\begin{proof}

	Let $Y$ be a member of $\cY$ such that $\Dirac$ is positive on $M \setminus Y$ and choose $a$ in $(0,\infty)$ as in \cref{kopherthergrge}.
	We claim that $\smash{\TildeDirac}$ is positive away from 
	\[
	(\R \times Y) \cup (\R^+ \times M) = (\R \times M) \setminus ({(-\infty,0)}\times ( M \setminus Y)) \ .
	\]
	Indeed, let $\tilde{\phi}$ be in $ C^\infty_c({(-\infty,0)}\times (M \setminus Y), \tilde{E})$.
	Then since $h \equiv 1$ on the support of $\tilde{\phi}$, we have
	\[
	  \TildeDirac \tilde{\phi} =  (\id_{\pr^*E} \stimes \sigma \partial_t + \Dirac \stimes \id_{\Cl^1}) \tilde{\phi} \ .
	\]
	As the summands in the bracket commute (in the graded sense), we therefore get
	\[
	\TildeDirac{}^2 \tilde{\phi} 
	= \id_{\pr^*E} \stimes (\sigma \partial_t)^2 \tilde{\phi} + \Dirac^2 \stimes \id_{\Cl^1} \tilde{\phi},
	\]
	so
	\begin{equation*}
	\begin{aligned}
	  \|\TildeDirac \tilde{\phi}\|^2
	  = \langle \tilde{\phi}, \TildeDirac{}^2 \tilde{\phi}\rangle
	  = - \langle \tilde{\phi}, \partial_{t}^{2}{\tilde{\phi}}\rangle + \langle \tilde{\phi}, \Dirac^2 \tilde{\phi} \rangle = \|\partial_{t}{\phi}\|^{2} + \|\Dirac\tilde{\phi}\|^2 \geq a^2 \|\tilde{\phi}\|^2.
	\end{aligned}
	\end{equation*}
	Here we used that $\|\Dirac \tilde{\phi}(t)\|_{\{t\} \times M}^2 \geq a^2 \|\tilde{\phi}(t)\|^2$ for each $t$. 
\end{proof}

In view of \cref{LemmaPositivityOnCone} the \cref{gjkregopregwergfwrefrfwrf}   provides the coarse index class   $\ind\cX(\smash{\TildeDirac},\mathrm{on}\, \tilde M_{\cY})$ in $K\cX_{-n-1}( \tilde M_{\cY})$.

For the moment we write $\tilde M_{h}$ and $\TildeDirac_{h}$ for the  manifold and Dirac operator associated to the warping function $h$.
 The set of these functions is partially ordered and the constant function $h\equiv 1$
is the minimal element. 
If $h',h$ are two such functions such that 
\[
h'\le h \ ,
\] then the identity map of $\R\times M$
is a map of bornological coarse spaces $q:\tilde M_{h}\to \tilde M_{h'}$.
We assume that $\Dirac$ is positive away from $\cY$.
 We add a subscript $h$ or $h'$ to the notation of the big family $\tilde M_{\cY}$
indicating from which space its coarse structure is induced.
\begin{prop}[{\cite[Prop. 4.12]{Bunke:2018aa}}]\label{bjgbpdgdfbdg}
If $\Dirac$ is positive away from $\cY$, then
we have the equality \begin{equation}\label{wrgerferf}q_{*} \ind\cX(\TildeDirac_{h},\mathrm{on}\,\tilde M_{h,\cY})=\ind\cX(\TildeDirac_{h'},\mathrm{on}\,\tilde M_{h',\cY})
\end{equation} 
in $K\cX_{-n-1}(\tilde M_{h',\cY})$.
\end{prop}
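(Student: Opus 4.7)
My plan is to reduce the statement to an application of the relative index theorem (\cref{RelativeIndexTheorem}), exploiting the fact that on the half-cone $\R^{-}\times M$ both warping functions equal $1$, so that $\TildeDirac_{h}$ and $\TildeDirac_{h'}$ coincide there and the two cone metrics restrict to the same product metric $dt^{2}+g$. The identity of underlying sets therefore provides a Riemannian isometry $e:\R^{-}\times M\to \R^{-}\times M$ between the restrictions which is also a morphism of bornological coarse spaces (it need not be an isomorphism, since the coarse structures of $\tilde M_{h}$ and $\tilde M_{h'}$ may differ on $\R^{+}\times M$). By \cref{LemmaPositivityOnCone}, both operators $\TildeDirac_{h}$ and $\TildeDirac_{h'}$ are positive away from their respective big families $\tilde M_{h,\cY}$ and $\tilde M_{h',\cY}$.

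With this setup, I would take $W_{0}=W_{1}:=[0,\infty)\times M$ and apply \cref{RelativeIndexTheorem} to get
\[
e_{*}\pi_{0}\,\ind\cX(\TildeDirac_{h},\mathrm{on}\,\tilde M_{h,\cY})\;=\;\pi_{1}\,\ind\cX(\TildeDirac_{h'},\mathrm{on}\,\tilde M_{h',\cY})
\]
in $K\cX(\tilde M_{h',\cY},\tilde M_{h',\cY}\cap \cW_{1})$, where $\cW_{i}$ denotes the big family generated by $W_{i}$. Note that $e_{*}$ here is induced by the identity map, which under $q:\tilde M_{h}\to \tilde M_{h'}$ corresponds to $q_{*}$ on the complement of $\cW$. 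Thus the desired equality in $K\cX(\tilde M_{h',\cY})$ will follow once one promotes this identity modulo $\cW_{1}$ to an identity on the nose.

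The main obstacle, and the step that I expect to require the most work, is exactly this last promotion. The natural strategy is to show that the indices $\ind\cX(\TildeDirac_{h_{i}},\mathrm{on}\,\tilde M_{h_{i},\cY})$ are already represented by classes coming from the subfamily $\tilde M_{h_{i},\cY}\setminus \cW_{i}$, so that $\pi_{i}$ is an equivalence on them; more concretely, one needs to show that the class $[\TildeDirac_{h}]$ in $\gK_{0}(C^{*}(\tilde M_{h,\cY}\subseteq \tilde M_{h},\tilde E))$ can be represented by a homomorphism $\cS\to C^{*}(\tilde M_{h,\cY}\setminus \cW\subseteq \tilde M_{h},\tilde E)$. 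This in turn relies on a finite-propagation-speed argument for $f(\TildeDirac_{h})$ with $f\in C_{0}((-a,a))$, using that positivity away from $\tilde M_{h,\cY}$ forces the support of such operators to concentrate on (a suitable neighborhood of) $\tilde M_{h,\cY}\cap (\R\times \cY)$, which is contained in $\tilde M_{h,\cY}\setminus \cW$ up to coarsely bounded thickenings. An alternative route, which I would consider as a fallback if this locality fails in the generality needed, is to construct an interpolating Dirac operator on $[0,1]\times \R\times M$ with warping function $h_{s}:=(1-s)h'+sh$ and then apply a Mayer--Vietoris argument combined with the suspension theorem to obtain a direct path connecting the two index classes.
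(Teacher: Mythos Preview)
Your direct application of \cref{RelativeIndexTheorem} with $W_{0}=W_{1}=[0,\infty)\times M$ is fine and does yield the equality modulo $\cW_{1}$. The gap is in the promotion step. Your proposed strategy---showing that the index class comes from $\tilde M_{h,\cY}\setminus\cW$, i.e.\ essentially from $\R^{-}\times\cY$---cannot work: on $\R^{-}\times M$ both warping functions equal $1$, so the backward shift $(t,m)\mapsto(t-1,m)$ is controlled and witnesses flasqueness of every member of $\R^{-}\times\cY$. Hence $K\cX(\R^{-}\times\cY)=0$, and any class supported there would vanish. But $\ind\cX(\TildeDirac_{h},\mathrm{on}\,\tilde M_{h,\cY})$ is the class whose pushforward defines $\sigma_{\cY}(\Dirac)$, which is generically nonzero. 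The underlying error is the assertion that positivity forces $f(\TildeDirac_{h})$ to concentrate on $\R\times\cY$: by \cref{LemmaPositivityOnCone} the operator $\TildeDirac_{h}$ is positive only away from $\tilde M_{h,\cY}=\R\times\cY\cup\{\R^{+}\}\times M$, i.e.\ positive only on $(-\infty,0)\times(M\setminus Y)$; on the entire forward half-cone there is no positivity, and \cref{werkogperfwfwrf} gives nothing better than $f(\TildeDirac_{h})\in C^{*}(\tilde M_{h,\cY}\subseteq\tilde M_{h},\tilde E)$. Your claimed inclusion $\R\times\cY\subseteq\tilde M_{h,\cY}\setminus\cW$ is also false, since $[0,\infty)\times Y$ lies in both $\R\times\cY$ and $\cW$.

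Your fallback is the paper's actual argument. One introduces $\hat M=\R\times\R\times M$ with a metric interpolating between the $h'$- and $h$-warped metrics along the new $s$-coordinate, and the adapted operator $\hat\Dirac$. The relative index theorem is applied twice---comparing $\hat\Dirac$ with $\Sigma\TildeDirac_{h}$ on $\{s\ge 1\}$ and with $\Sigma\TildeDirac_{h'}$ on $\{s\le 0\}$---and the extra $\R$-direction is removed via the suspension theorem (\cref{SuspensionTheorem}) together with a Mayer--Vietoris bookkeeping. The suspension provides the canonical passage to an absolute group that your direct approach lacks: it circumvents any injectivity question for the projections $\pi_{i}$.
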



\begin{proof}
We emphasize that the all analysis of Dirac operators used in this proof is  hidden in the suspension theorem \cref{SuspensionTheorem} and the coarse relative index theorem, \cref{RelativeIndexTheorem}.
Besides these results, the argument below only uses formal properties of coarse homology theories and some simple coarse geometric insights.

 We consider the manifold
 $\hat  M:=\R\times \R\times M$ and let $(s,t)$ denote the coordinates on the factor $\R\times \R$. 
 We choose a real-valued smooth increasing function
 $\chi$ on $\R$ such that $\chi(s)=0$ for $s\le 0$ and $\chi(s)=1$ for $s\ge 1$. 
  We then consider the function on $\hat M$ given by 
 \[
 \tilde h(s,t,m):=(1-\chi(s)) h^{\prime}(t)+\chi(s) h(t)\ .
 \]
 \begin{center}
\begin{tikzpicture}
	\draw[color=white, name path=Y] (-7, -1.5) -- (0, -1.5);
	\draw[color=white, name path=X] (-7, 3) -- (0, 3);
	\draw[color=white, name path=W] (0, -1.5) -- (7, -1.5);
	\draw[color=white, name path=Z] (0, 3) -- (7, 3);
	\draw[thick, name path=A] (0, 0) -- (7, 0);
	\draw[thick] (0, 3) -- (0, -1.5);
	\draw[thick, name path=B] (0, 1.5) -- (7, 1.5);
	\tikzfillbetween[of=X and Y]{gray, opacity=0.05};
	\tikzfillbetween[of=W and A]{gray, opacity=0.1};
	\tikzfillbetween[of=Z and B]{gray, opacity=0.1};
\node at (3.5, 0.75) {transition region};
\node at (3.5, 2.25) {$[1, \infty) \times ([0,\infty)\times M)_h$};
\node at (3.5, -0.75) {$(-\infty, 0] \times ([0,\infty)\times M)_{h'}$};
\node at (-4, 0.75) {$\R \times (-\infty, 0] \times M$};
\node at (-0.7, 1.5) {{\color{gray}$s=1$}};
\node at (-0.7, 0) {{\color{gray}$s=0$}};
\end{tikzpicture}
 \end{center}

We view $\hat{M}$ as a bornological coarse space with the warped product metric
  $ds^{2}+dt^{2}+\tilde h g$.  
    We then get following morphisms of bornological coarse spaces
 \[
 \R \otimes \tilde M_{h}\stackrel{\hat q}{\longrightarrow}  \hat M \stackrel{\hat q^{\prime}}{\longrightarrow} \R \otimes \tilde M_{h^{\prime}} \ ,
 \]
 all induced by the identity of the underlying sets.

The families
\[
\hat{M}_{\cY} := \R \times \R \times \cY , \quad
\hat{M}^{\{\leq a\}}:= \{(-\infty,a] \}\times \R\times M \ ,  \quad 
\hat{M}^{\{\geq a\}}:= \{[a,\infty)\} \times \R\times M 
 \]
are all big families on $\hat{M} $ and so are
  \[  
\hat{M}^{\{\leq a\}}_{\cY} := \hat{M}^{\{\leq a\}} \cap \hat{M}_{\cY} \ ,\quad  \hat{M}^{\{\geq a\}}_{\cY} := \hat{M}^{\{\geq a\}}\cap \smash{\hat{M}_{\cY}} \ . 
\]
 We define $\smash{\hat \Dirac}$  by a two-dimensional version of \cref{kogpweerfwerfrwefw}.
Notice that $\hat{q}$ and $\hat{q}'$ induce local isomorphisms of the Dirac operators on the subsets $[1,\infty)\times \R\times M$ and $(-\infty,0]\times \R\times M$, respectively.
 
We consider the the projections
\begin{align*}
\pi_{\le 1} : K\cX (\R \otimes \tilde{M}_{h, \cY}) &\longrightarrow K\cX (\R \otimes \tilde{M}_{h, \cY}, \{(-\infty,1]\} \otimes \tilde{M}_{h, \cY}) \ ,
\\
\pi_{\ge 0}' : K\cX (\R \otimes \tilde{M}_{h', \cY}) &\longrightarrow K\cX (\R \otimes \tilde{M}_{h', \cY}, \{[0,\infty)\} \otimes \tilde{M}_{h', \cY}) \ ,
\\
\tilde{\pi}_{\le 1} : K\cX (\hat{M}_{  \cY}) &\longrightarrow K\cX (\hat{M}_{  \cY}, \hat{M}^{\{\le 1\}}_{ \cY})\ ,	\\
\tilde{\pi}_{\ge 0} : K\cX (\hat{M}_{  \cY}) &\longrightarrow K\cX (\hat{M}_{  \cY}, \hat{M}^{\{\ge 0\}}_{  \cY})	
\end{align*}
from the absolute to the relative coarse cohomology, and let 
 \begin{align*}\hat q^{\mathrm{Rel}}:& K\cX(\R\otimes \tilde M_{h,\cY},\{(-\infty,1]\}\otimes \tilde M_{h,\cY})\to K\cX(\hat M_{ \cY},\hat M^{\{\le 1\}}_{ \cY})\\
\hat q^{\prime,\mathrm{\mathrm{Rel}}}:&K\cX(\hat M_{ \cY},\hat M^{\{\ge 0\}}_{ \cY})\to  K\cX(\R\otimes \tilde M_{h',\cY},\{[0,\infty)\}\otimes \tilde M_{h,\cY})
\end{align*}
the maps in relative coarse $K$-homology.
{Then the relative index theorem,  \cref{RelativeIndexTheorem},} provides   the identities
\begin{align}
\label{ApplRelativeIndex1}
 \hat{q}^{\mathrm{Rel}}_*\pi_{\leq 1,*}\ind\cX(\Sigma \tilde\Dirac_h, \mathrm{on}\,\R \times \tilde{M}_{h, \cY})
  &=
  \tilde{\pi}_{\leq 1,*}\ind\cX(\hat\Dirac , \mathrm{on}\,\hat{M}_{ \cY}) \ ,
  \\\label{ApplRelativeIndex2}
  \pi_{\geq 0,*}'\ind\cX(\Sigma \TildeDirac_{h'}, \mathrm{on}\,\R \times \tilde{M}_{h', \cY})
  &=
\hat{q}^{\prime,\mathrm{Rel}}_*\tilde\pi_{\geq 0,*}\ind\cX(\hat \Dirac , \mathrm{on}\,\hat{M}_{  \cY})  \ .
\end{align}
  
In the calculations below we use the following relation beween Mayer-Vietoris boundaries and boundaries in pair sequences. 
Let $X$ be a bornological coarse space with a big family $\cY$ and let  $(W,W')$ be a coarsely excisive decomposition of $X$.
Then we   have a commutative diagram
 \begin{equation}\label{gweprkopwefklwpoekplml}
\begin{tikzcd}
K\cX(\cY) \ar[ddr, "\partial^{MV}"]
\ar[r, "\delta^{MV}"]
\ar[d, "\pi"']
&
\Sigma K\cX(\cY \cap W \cap W') \ar[dd, "\iota", "\simeq"'] 
\\
K\cX(\cY, \cY \cap \{W'\}) \\
K\cX(\cY \cap \{W\}, \cY \cap \{W\} \cap \{W'\}) \ar[r, "\partial"]
\ar[u, "j", "\simeq"']
&
\Sigma K\cX(\cY \cap \{W\} \cap \{W'\}) \ ,
\end{tikzcd}
\end{equation}
where $\pi$ is the projection from the absolute to the relative coarse cohomology, and $j$ and $\iota$ are induced by the canonical inclusions.
The morphism $j$ is an equivalence by excision and $\iota$ is an equivalence by coarse invariance and the assumption that $(W, W')$ is coarsely excisive.

Pasting two instances of these diagrams, we get the commutative diagram
\begin{equation}\label{gweiojgoewrgfewrfw}
  \begin{tikzcd}
K\cX(\R \otimes \tilde{M}_{h, \cY}) 
\ar[r, "-\delta^{MV}_{1}"]
\ar[d, "\pi_{\leq 1}"']
\ar[ddr, "-\partial^{MV}_{1}"]
&
\Sigma K\cX(\tilde{M}_{h, \cY}) 
\ar[dd, "{\iota_1}", "\simeq"'] 
\\
K\cX(\R \otimes \tilde{M}_{h, \cY}, \{(-\infty, 1]\} \otimes \tilde{M}_{h, \cY})
\ar[ddd, bend right=80, "{\hat{q}^{\mathrm{Rel}}}"'] 
\\
\qquad \qquad \qquad K\cX(\{[1, \infty)\} \otimes \tilde{M}_{h, \cY}, \{\{1\}\} \otimes \tilde{M}_{h, \cY})
\ar[r, "\partial"]
\ar[u, "\simeq"', "j_1"]
\ar[d, "{\hat{q}^{\mathrm{rel}}}"']
&
\Sigma K\cX(\{\{1\}\} \otimes \tilde{M}_{h, \cY})
\ar[d, "{\hat{q}^{0}}"']
\\
K\cX(\hat{M}^{\{\geq 1\}}_{  \cY}, \hat{M}^{\{\geq 1\}}_{  \cY} \cap \hat{M}^{\{\leq 1\}}_{  \cY})
\ar[r, "\partial"]
\ar[d, "\simeq", "\tilde{j}_1"']
&
\Sigma K\cX(\hat{M}^{\{\geq 1\}}_{  \cY} \cap \hat{M}^{\{\leq 1\}}_{  \cY})
\ar[dd, equal]
\\
K\cX(\hat{M}_{ \cY}, \hat{M}_{  \cY}^{\{\leq 1\}}) 
\\
K\cX(\hat{M}_{  \cY})
\ar[r, "-\tilde{\partial}^{MV}"]
\ar[u, "\tilde{\pi}_{\leq 1}"]
& 
\Sigma K\cX(\hat{M}^{\{\leq 1\}}_{  \cY} \cap \hat{M}_{  \cY}^{\{\geq 1\}})
\end{tikzcd} 
\end{equation}

Here $\delta_1^{MV}$ is the Mayer-Vietoris map for the coarsely excisive decomposition of $\R \otimes \tilde{M}_{h, \cY}$ into $ (-\infty, 1] \times \tilde{M}_{h}$ and $[1, \infty) \times \tilde{M}_{h})$, while $\tilde{\partial}^{MV}$ is the Mayer-Vietoris boundary for the pair of big families $(\hat{M}^{\{\leq 1\}} , \hat{M}^{\{\geq 1\}} )$ in $\hat{M} $.
As above, the maps $\hat q^{{?}}$ are all induced by $\hat q$.
The additional minus signs come from the fact that the order of the partitions is swapped in comparison to \eqref{gweprkopwefklwpoekplml}.
Consequently we have
\begin{equation}
\begin{aligned}
\label{ResultFirstDiagram}
-\hat{q}_*^{0} \iota_{1,*} \delta_{1,*}^{MV}\Ind\cX( \Sigma\TildeDirac_{h}, \text{on}~\R \times \tilde{M}_{h, \cY})
&\stackrel{\eqref{gweiojgoewrgfewrfw}}{=}
\partial_{*}\tilde{j}_{1,*}^{-1} \hat{q}^{\mathrm{Rel}}_* {\pi}_{\leq 1,*}\Ind\cX( \Sigma\TildeDirac_{h}, \text{on}~\R \times \tilde{M}_{h, \cY})
\\
&\stackrel{ \eqref{ApplRelativeIndex1}}{=} 
	\partial_{*}\tilde{j}_{1,*}^{-1} \tilde{\pi}_{\leq 1,*}\Ind\cX( \hat{\Dirac} , \text{on}~\hat{M}_{ \cY})
	\\
&\stackrel{\eqref{gweiojgoewrgfewrfw}}{=}
	-\tilde{\partial}^{MV}_{*} \Ind\cX( \hat{\Dirac} , \text{on}~\hat{M}_{ \cY}) \ .
\end{aligned} \end{equation} 
Similarly, we get the commutative diagram
\begin{equation}\label{gweiojgoewrgfewrfw1}
 \begin{tikzcd}
K\cX(\hat{M}_{  \cY}) 
\ar[r, "\tilde{\partial}^{MV}"]
\ar[d, "\tilde{\pi}_{\geq 0}"']
&
\Sigma K \cX(\hat{M}_{  \cY}^{\{\leq 0\}} \cap \hat{M}_{  \cY}^{{\{\geq 0\}}})
\ar[dd, equal]
\\
K\cX(\hat{M}_{  \cY}, \hat{M}_{  \cY}^{\{\geq 0\}})
\ar[ddd, bend right=80, shift right=3, "{\hat{q}^{\prime,\mathrm{Rel}}}"'] 
\\
K\cX(\hat{M}_{ \cY}^{\{\leq 0\}}, \hat{M}_{ \cY}^{\{\leq 0\}} \cap \hat{M}_{ \cY}^{\{\geq 0\}})
\ar[r, "\partial"]
\ar[u, "\tilde{j}_0", "\simeq"']
\ar[d, "{\hat{q}^{\prime,\mathrm{rel}}}"']
&
\Sigma K\cX (\hat{M}_{ \cY}^{\{\leq 0\}} \cap \hat{M}_{ \cY}^{\{\geq 0\}})
\ar[d, "{\hat{q}^{\prime,0}}"]
\\
\qquad K\cX(\{(-\infty, 0]\} \otimes \tilde{M}_{h', \cY}, \{\{0\}\} \otimes \tilde{M}_{h', \cY})
\ar[r, "\partial"]
\ar[d, "j_0'"', "\simeq"]
&
\Sigma K\cX(\{\{0\}\} \otimes \tilde{M}_{h', \cY})
\\
K\cX(\R \otimes \tilde{M}_{h', \cY}, \{[0, \infty)\} \otimes \tilde{M}_{h', \cY})
\\
K\cX(\R \otimes \tilde{M}_{h', \cY})
\ar[r, "\delta_0^{MV}"]
\ar[ruu, bend right=5, "\partial_0^{MV}"', near end]
\ar[u, "\pi_{\geq 0}'"]
& 
\Sigma K\cX(\tilde{M}_{h', \cY}) \ ,
\ar[uu, "{\iota'_{0}}"']
\end{tikzcd} 
\end{equation}
where $\delta_0^{MV}$ is the Mayer-Vietoris boundary for the coarsely excisive partition of $\R \otimes \tilde{M}_{h'}$ into $ (-\infty, 0] \times \tilde{M}_{h'}$ and $[0, \infty) \times\tilde{M}_{h'}$.
Notice also that $ \smash{\hat{M}_{  \cY}^{\{\leq 1\}} = \hat{M}_{  \cY}^{\{\leq 0\}}}$ and $\smash{\hat{M}_{  \cY}^{\{\geq 1\}} = \hat{M}_{  \cY}^{\{\geq 0\}}}$
so that the bottom arrow of \eqref{gweiojgoewrgfewrfw} coincides with to top arrow of \eqref{gweiojgoewrgfewrfw1} up to sign.
 
 We  therefore get
\begin{equation}
\begin{aligned}
\label{ResultSecondDiagram}
\hat{q}^{\prime,0}_*\tilde{\partial}^{MV}_{*}\Ind\cX(\hat \Dirac , \mathrm{on}\,\hat{M}_{\tilde{h}, \cY})
&\stackrel{\eqref{gweiojgoewrgfewrfw1}}{=}
\partial_{*}j_{0,*}^{\prime,-1}\hat{q}^{\prime,\mathrm{Rel}}_* \tilde{\pi}_{\geq 0,*}\Ind\cX(\hat \Dirac , \mathrm{on}\,\hat{M}_{\tilde{h}, \cY})
\\
	&\stackrel{ \eqref{ApplRelativeIndex2}}{=}
	\partial_{*}j_{0,*}^{\prime,-1}{\pi}'_{\geq 0,*}\Ind\cX(\Sigma \TildeDirac_{h'}, \mathrm{on}\,\R \times \tilde{M}_{h', \cY})
	\\
	& \stackrel{\eqref{gweiojgoewrgfewrfw1}}{=}
	\iota'_{0,*} \delta_{0,*}^{MV}\Ind\cX(\Sigma \TildeDirac_{h'}, \mathrm{on}\,\R \times \tilde{M}_{h', \cY}) \ .
\end{aligned}
\end{equation} 

Notice that the diagram
\[
\begin{tikzcd}
\tilde{M}_{h}
\ar[rr, "q"]
\ar[d, "{(t,m) \mapsto (1,t,m)}"']
	& &
	\tilde{M}_{h'}
	\ar[d, "{(t,m)\mapsto (0,t,m)}"]
	\\
\R \otimes \tilde{M}_{h}
\ar[r, "\hat{q}"]
&
\hat{M} 
\ar[r, "\hat{q}'"]
&
\R \otimes \tilde{M}_{h'}
\end{tikzcd}
\]
does not strictly commute, but the two compositions are close to each other. 
 We conclude that
\begin{equation}
	\label{CloseDiagram}
	\iota'_{0,*} q_* = \hat{q}^{\prime,0}_* \hat{q}^{0}_* \iota_{1,*}:K\cX(\tilde M_{h,\cY})  
\to K\cX(\{\{0\}\}\otimes \tilde M_{h',\cY})\ .	
\end{equation}

We now calculate
\begin{eqnarray*}
q_* \Ind(\TildeDirac_h, \mathrm{on}\,\tilde{M}_{h, \cY})
	&\stackrel{\text{\cref{SuspensionTheorem}}}{=}&
	\beta \cdot
 q_* \delta^{MV}_{1,*} (\Ind \cX(\Sigma \TildeDirac_h, \mathrm{on}\,\R \times \tilde{M}_{h, \cY}))
	\\
	&\stackrel{\eqref{CloseDiagram}}{=}&
	\beta \cdot
 \iota^{\prime,-1}_{0,*}  \hat{q}^{\prime,0}_*\hat{q}^{0}_*\iota_{1,*} \delta_{1,*}^{MV} \Ind\cX(\Sigma \TildeDirac_h, \mathrm{on}\,\R \times \tilde{M}_{h, \cY})
	\\
&\stackrel{\eqref{ResultFirstDiagram}}{=}&
	\beta \cdot
 \iota^{\prime,-1}_{0,*} \hat{q}^{\prime,0}_*\tilde{\partial}^{MV}_{*} \Ind \cX( \hat{\Dirac}_{ }, \mathrm{on}\,\hat{M}_{  \cY})
	\\
&\stackrel{\eqref{ResultSecondDiagram}}{=}&
	\beta \cdot
\delta_{0,*}^{MV} \Ind \cX(\Sigma \TildeDirac_{h'}, \mathrm{on}\,\R \times \tilde{M}_{h', \cY})
\\
&\stackrel{\text{\cref{SuspensionTheorem}}}{=}&
\Ind \cX(\TildeDirac_{h'}, \mathrm{on}\,\tilde{M}_{h', \cY}) \ ,
\end{eqnarray*}
as desired.
\end{proof}

We now assume that $\lim_{t\to \infty} h(t)=\infty$. 
Then the identity of underlying sets is a map
\begin{equation}\label{fwerferwffrfwfwrfw}\iota:\tilde M\to \cO^{\infty}(M)
\end{equation}
in $\BC$. It sends $\tilde M_{\cY}$ to $\cO^{\infty}_{\cY}(X)$.

We assume that $\Dirac$ is positive away from $\cY$.

\begin{ddd}
\label{lphegeetrhetrgtgteg} 
We define the symbol of $\Dirac$  with support in $\cY$ as the class  
\[
\sigma_{\cY}(\Dirac ):=\iota_{*}( \beta \cdot \ind\cX(\TildeDirac,\mathrm{on}\,\tilde M_{\cY} ))\quad \mbox{in} \quad
K_{\cY,-n}^{\cX}(M)\cong   K\cX_{-n+1}(\cO_{\cY}^{\infty}(M))\ .\]
\end{ddd}

For $\cY=\{M\}$ we write $\sigma(\Dirac):=\sigma_{\{M\}} (\Dirac)$ in $K^{\cX}_{-n}(M)$.
By \cref{bjgbpdgdfbdg} the class $\iota_{*}(  \ind\cX(\smash{\TildeDirac},\mathrm{on}\,\tilde M_{\cY}))$ is independent of the choice of the warping function $h$.
 Consequently the symbol class  $\sigma_{\cY} (\Dirac )$ is well-defined.

Recall the Paschke morphism \eqref{frefwrfregwreg}.

\begin{ddd}\label{jiorgwpegwerfrefrfw}
We define $\sigma^{\an}(\Dirac):=p_{M}(\sigma(\Dirac))$ in $K^{\an}_{-n}(M)$.
\end{ddd}

\begin{rem}\label{jiogpgweergerfwef}
In this remark we explain how $\ind\cX(\Dirac)$ can be recovered from $\sigma^{\an}(\Dirac)$.
Let $g$ be the Riemannian metric on $M$. 
Then we consider a Riemannian metric $g'$ with $g\le g'$, which is automatically complete, and let $M'$ be the uniform bornological coarse space presented by the new metric.
Then the identity map of underlying sets $i:M'\to M$ is a contraction and therefore a morphism in $\UBC$. Let $\Dirac'$ be the adaptation of $\Dirac$ to the new   metric $g'$ (see \cref{RemarkAdaption}).
Using   an analogous argument as in \cref{bjgbpdgdfbdg} based on $\hat M=\R\times M$ with warped product metric metric $dt^{2}+\chi(t) g'+(1-\chi(t))g$ one can show that $i_{*}\ind\cX(\Dirac')=\ind\cX(\Dirac)$.  
We get a commutative diagram
$$
\xymatrix{K^{\cX}(M)\ar[d]^{a_{M}}\ar[r]^{p_{M}}&\ar@{..>}[dl]^{a^{\lf}_{M'}}K^{\an}(M)&K^{\cX}(M')\ar[l]_{p_{M'}} \ar@/_1cm/[ll]^{i}\ar[d]^{a_{M'}}\\K\cX(M)&&K\cX(M')\ar[ll]_{i_{*}}}\ , \xymatrix{\sigma(\Dirac)\ar@{|->}[d]^{a_{M}}\ar@{|->}[r]^{p_{M}}&\sigma^{\an}(\Dirac)&\sigma(\Dirac')\ar@{|->}[l]_{p_{M'}} \ar@{|->}@/_1cm/[ll]^{i}\ar@{|->}[d]^{a_{M'}}\\ \ind\cX(\Dirac)&& \ind\cX(\Dirac')\ar@{|->}[ll]_{i_{*}}}\ .
$$
We now choose the metric $g'$ adapted to a locally finite triangulation of $M$ so that
$p_{M'}$ is an equivalence and get the dotted arrow (see \cref{gkgoerpgwergrefwrefw}).
We then have $\ind\cX(\Dirac)=a^{\lf}_{M'}(\sigma^{\an}(M))$. \hB
 \end{rem}

Let $a_{M,\cY}:K_{\cY}^{\cX}(M)\to K\cX(\cY)$ be the  index  map \eqref{fwerqfdwedqewdqewdq}.
\begin{lem}\label{jogergreffwfrf}
 We have the equality
$$a_{M,\cY}(\sigma_{\cY}(\Dirac )) =  \ind\cX(\Dirac,\mathrm{on}\,\cY)\quad \mbox{in} \quad K\cX_{-n}(\cY)\ .$$
\end{lem}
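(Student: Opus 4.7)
The plan is to unfold the definition of $\sigma_{\cY}(\Dirac)$ and of the index map $a_{M,\cY}$, and then chain together three previously established results: the warping-invariance of \cref{bjgbpdgdfbdg}, the naturality of the coarse index class under enlarging the support family, and the suspension theorem \cref{SuspensionTheorem}. By \cref{rtkohprethergetetg}, $a_{M,\cY}$ factors as
\[
\Sigma^{-1}K\cX(\cO^{\infty}_\cY(M)) \xrightarrow{\partial^{\mathrm{cone}}_{*}} \Sigma^{-1}K\cX(\{\R^{-}\}\otimes\cY\cup\{\R^{+}\}\otimes M) \xrightarrow{\partial^{MV}} K\cX(\cY),
\]
with $\partial^{\mathrm{cone}}_{*}$ induced by the identity of underlying sets $\cO^{\infty}(M)\to\R\otimes M$ (changing from the cone structure to the product structure) and $\partial^{MV}$ the Mayer--Vietoris boundary for the decomposition $(\{\R^{-}\}\otimes\cY,\{\R^{+}\}\otimes M)$.

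The first step is to observe that $\partial^{\mathrm{cone}}\circ\iota:\tilde M\to\R\otimes M$ is again the identity of underlying sets, sending the warped product structure to the product structure. Since $h\geq 1$, this is precisely the comparison map $q$ of \cref{bjgbpdgdfbdg} for the choice $h'\equiv 1$. Moreover, when $h\equiv 1$ the formula \eqref{FormulaAdaptedOperator} collapses exactly to the suspension $\Sigma\Dirac$ of \eqref{gwerpogjowkepferfwerfwrf}. Hence \cref{bjgbpdgdfbdg} yields
\[
(\partial^{\mathrm{cone}}\circ\iota)_{*}\ind\cX(\TildeDirac,\mathrm{on}\,\tilde M_{\cY}) = \ind\cX(\Sigma\Dirac,\mathrm{on}\,\{\R^{-}\}\times\cY\cup\{\R^{+}\}\times M),
\]
where the right-hand side is the coarse index of the big family $\{\R^{-}\}\times\cY\cup\{\R^{+}\}\times M$ viewed inside $\R\otimes M$.

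For the second step I would first note that by the same computation as in the proof of \cref{LemmaPositivityOnCone} (in the trivial case $h\equiv 1$), the operator $\Sigma\Dirac$ is positive away from $\R\times\cY$, so the class $\ind\cX(\Sigma\Dirac,\mathrm{on}\,\R\times\cY)\in K\cX(\R\times\cY)$ is defined. The construction of \cref{gjkregopregwergfwrefrfwrf} is natural under enlarging the support family (via the naturality square \eqref{fqrfqwdewdqdefdeqfeqwfq}, together with the fact that the class $[\Dirac_{\cY}]\in\gK_{0}(C^{*}(\cY\subseteq M,E))$ of \eqref{bdfgwrtbgwggsdtg} is sent to $[\Dirac_{\cY'}]$ under the inclusion of ideals $C^{*}(\cY\subseteq M,E)\hookrightarrow C^{*}(\cY'\subseteq M,E)$ whenever $\cY\subseteq\cY'$), so the inclusion of big families $j:\R\times\cY\hookrightarrow\{\R^{-}\}\times\cY\cup\{\R^{+}\}\times M$ carries $\ind\cX(\Sigma\Dirac,\mathrm{on}\,\R\times\cY)$ to the class produced in the previous step. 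Moreover, the two decompositions $(\{\R^{-}\}\times\cY,\{\R^{+}\}\times\cY)$ of $\R\times\cY$ and $(\{\R^{-}\}\times\cY,\{\R^{+}\}\times M)$ of $\{\R^{-}\}\times\cY\cup\{\R^{+}\}\times M$ share the same intersection $\{\{0\}\}\times\cY\simeq\cY$, and $j$ refines to an inclusion of complementary pairs, so naturality of the Mayer--Vietoris push-out square yields $\partial^{MV}\circ j_{*}=\delta^{MV}$, where $\delta^{MV}:K\cX(\R\times\cY)\to\Sigma K\cX(\cY)$ is the MV boundary of the first decomposition.

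Putting everything together, and using that $\beta$ acts $KU$-linearly and hence commutes with every map involved, we obtain
\[
a_{M,\cY}(\sigma_{\cY}(\Dirac))=\beta\cdot\partial^{MV} j_{*}\ind\cX(\Sigma\Dirac,\mathrm{on}\,\R\times\cY)=\beta\cdot\delta^{MV}\ind\cX(\Sigma\Dirac,\mathrm{on}\,\R\times\cY)=\ind\cX(\Dirac,\mathrm{on}\,\cY),
\]
the final equality being the suspension theorem \cref{SuspensionTheorem}. The core analytic input is \cref{bjgbpdgdfbdg}; the remaining steps are naturality bookkeeping, with the main subtlety being the verification of the Mayer--Vietoris comparison.
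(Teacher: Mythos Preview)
Your proof is correct and follows essentially the same approach as the paper: apply \cref{bjgbpdgdfbdg} to identify $\partial^{\mathrm{cone}}_*\circ\iota_*$ with the passage to the product metric, then use that $\Sigma\Dirac$ is positive away from the smaller family $\R\times\cY$ together with the suspension theorem. Your version spells out two points the paper leaves implicit---the naturality of the index class under enlarging the support family and the compatibility $\partial^{MV}\circ j_*=\delta^{MV}$ of the two Mayer--Vietoris boundaries---but the argument is the same.
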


\begin{proof}
  The first step in the construction of $a_{M,\cY}$ is the application of the cone boundary $\partial^{\cone}:\cO^{\infty}(M)\to \R\otimes M$ from \eqref{fewrfrwfrweffwrf}. 
  By \cref{bjgbpdgdfbdg}, it sends the class $\iota_{*}(\beta\cdot \ind\cX(\smash{\TildeDirac},\mathrm{on}\,\tilde \cY))$ to the class $\beta\cdot \ind\cX(\Sigma \Dirac,\mathrm{on}\,\tilde M_{\cY})$ in $K\cX_{-n+1}((\tilde M_{\cY})_{\R\otimes M})$, where the subscript inducates that $\tilde M_{\cY}$ has the bornological coarse structures induced from $\R\otimes M$ and $\iota$ is as in \eqref{fwerferwffrfwfwrfw}.
  
The second step is the application of the Mayer-Vietoris boundary
for the decomposition $(\{ \R^{-}\}\times \cY,\{\R^{+}\}\times M)$. 
 Using that
 $\Sigma \Dirac$ is actually positive away from
 $\R\times \cY$ and  the suspension theorem,  \cref{SuspensionTheorem},  the image of this class under the Mayer-Vietoris boundary is $\ind\cX(\Dirac,\mathrm{on}\,\cY)$ in $K\cX_{-n}(\cY)$.
  \end{proof}

 \begin{rem}
 Assume that $\Dirac$ is positive on $M$. Then we can take $\cY=\{\emptyset\}$. We get a class 
 $ \sigma_{\{\emptyset\}}(\Dirac)$ in $K^{\cX}_{\{\emptyset\},-n}(M)$ which captures the reason for
 $\ind\cX(\Dirac)=0$,  usually called the $\rho$-invariant, see \cref{kopgwergregfrfwf}. \hB
 \end{rem}

  We consider a  section
 $\Psi$ in $C^{\infty}(M, \End_{\Cl^{n}}(E)^{\odd})$.
 
 \begin{ddd}\label{8wrtlhkerhterhertrtgtrgetg}
 We say that $\Psi$ is a potential for $\Dirac$ if $\Psi$ takes values in selfadjoint endomorphisms and {the graded commutator} $[\Dirac,\Psi]$ is an operator of order zero.
 \end{ddd}

Note that $[\Dirac,\Psi]$ is a zero-order operator if $\Psi$
commutes in the graded sense with the symbol of $\Dirac$.

   If $\Psi$ is a potential {for $\Dirac$}, then  we can form the new Dirac type operator $\Dirac+\Psi$ of degree $n$.
  
  \begin{ddd}
  \label{kopththretg}
  We call $\Dirac+\Psi$ the Callias type operator associated to $\Dirac$ and $\Psi$.
  \end{ddd}

%
  
  We will consider the condition that  $\Dirac+\Psi$ is {positive} outside of a subset $Y$ of $M$. But we want the stronger condition  that this positivity is caused by $\Psi$ alone. To this end we 
 calculate \begin{equation}\label{fewfqwefqwedqewdewd}(\Dirac+\Psi)^{2}=\Dirac^{2} + [\Dirac,\Psi]+\Psi^{2 }\ .  
   \end{equation}
  
By $\sigma(A)$ we denote the spectrum of a selfadjoint operator $A$. So $\sigma(\Psi^{2}(m))$ below is the spectrum of the finite-dimensional operator $\Psi^{2}(m)$ on the fibre $E_{m}$ of the bundle $E$ at $m$ and $\min\:\sigma(\Psi^{2}(m))$ is its minimal element. 
\begin{ddd}\label{tlhpetrgggegrt}
We say that $\Psi$  is positive  away from $Y$ if
 \begin{equation}
 \label{fqqewdwedqfqqfefdqed} 
\inf_{m\in M\setminus Y} \Big({\min} \:\sigma(  \Psi^{2}(m))  - \|\nabla \Psi(m)\|  \Big) >0\ .
 \end{equation}  
If $\cY$ is a big family then we say that $\Psi$ is positive away from $\cY$ if it is positive away from a member of $\cY$.
\end{ddd}

  \begin{lem}\label{gkopwergwerfwerf}
    If $\Psi$ is positive  away from $Y$, then $\Dirac +\Psi$ is positive away from $Y$ {in the sense of \cref{kopherthergrge}}.
    \end{lem}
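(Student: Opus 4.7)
The plan is to start from the pointwise identity \eqref{fewfqwefqwedqewdewd}, namely
\[
(\Dirac+\Psi)^{2}=\Dirac^{2}+[\Dirac,\Psi]+\Psi^{2},
\]
(where the bracket is the graded commutator, i.e.\ the anticommutator of two odd operators) and use it to estimate $\|(\Dirac+\Psi)\phi\|^{2}$ from below by a positive multiple of $\|\phi\|^{2}$ for all $\phi$ in $C^{\infty}_{c}(M\setminus \bar Y,E)$. Pairing against such a $\phi$ and using self-adjointness, I obtain
\[
\|(\Dirac+\Psi)\phi\|^{2}
= \|\Dirac\phi\|^{2}+\langle\phi,[\Dirac,\Psi]\phi\rangle+\langle\phi,\Psi^{2}\phi\rangle.
\]
The first term is non-negative and can be discarded; the third satisfies $\langle\phi,\Psi^{2}\phi\rangle\ge \int_{M}\min\sigma(\Psi^{2}(m))\,|\phi(m)|^{2}\,dm$ by pointwise spectral calculus on the fibres of $E$.

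The key step is a pointwise bound on the zero-order operator $[\Dirac,\Psi]$ in terms of $\nabla\Psi$. In a local orthonormal frame, $\Dirac$ has the form $\sum_{i} c(e^{i})\nabla_{e_{i}}+R$ for a zero-order endomorphism $R$; since $R$ commutes with $\Psi$ and $\Psi\in\End_{\Cl^{n}}(E)^{\odd}$ anticommutes with the Clifford multiplications $c(e^{i})$, the $\Psi\nabla$-terms cancel and one is left with
\[
[\Dirac,\Psi](m) \;=\; c\bigl(\nabla\Psi(m)\bigr),
\]
whose operator norm is bounded by $\|\nabla\Psi(m)\|$ in the appropriate normalization of $\|\nabla\Psi\|$ used in \cref{tlhpetrgggegrt}. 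Consequently
\[
|\langle\phi,[\Dirac,\Psi]\phi\rangle|\;\le\;\int_{M}\|\nabla\Psi(m)\|\,|\phi(m)|^{2}\,dm.
\]

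Putting these three pointwise estimates together for $\phi$ supported in $M\setminus\bar Y$ gives
\[
\|(\Dirac+\Psi)\phi\|^{2}\;\ge\;\int_{M\setminus\bar Y}\Bigl(\min\sigma(\Psi^{2}(m))-\|\nabla\Psi(m)\|\Bigr)|\phi(m)|^{2}\,dm\;\ge\;a^{2}\|\phi\|^{2},
\]
where
\[
a^{2}:=\inf_{m\in M\setminus Y}\bigl(\min\sigma(\Psi^{2}(m))-\|\nabla\Psi(m)\|\bigr)\;>\;0
\]
by the assumption \eqref{fqqewdwedqfqqfefdqed}. This is exactly the condition in \cref{kopherthergrge}, completing the proof.

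The main (and only) conceptual obstacle is the identification $[\Dirac,\Psi]=c(\nabla\Psi)$, which depends crucially on the oddness of $\Psi$ with respect to the $\Cl^{n}$-grading; everything else is a formal rearrangement of the square and a pointwise spectral estimate. Once the normalization of $\|\nabla\Psi\|$ is fixed so that $\|c(v)\|\le |v|$, the estimate is immediate.
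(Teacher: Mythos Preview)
Your argument follows the same route as the paper's: expand $\|(\Dirac+\Psi)\phi\|^{2}$ via \eqref{fewfqwefqwedqewdewd}, discard the nonnegative $\|\Dirac\phi\|^{2}$, and control the remaining zero-order piece pointwise through the bound $\|[\Dirac,\Psi](m)\|\le\|\nabla\Psi(m)\|$. The paper simply asserts this last inequality; you go further and try to derive it via $[\Dirac,\Psi]=c(\nabla\Psi)$.

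Two remarks on that derivation. First, the anticommutation of $\Psi$ with the Clifford multiplications $c(e^{i})$ is not a consequence of $\Psi\in\End_{\Cl^{n}}(E)^{\odd}$: the $\Cl^{n}$ in the paper is the external degree structure acting from the right, whereas $c(e^{i})$ is the principal symbol of $\Dirac$. The anticommutation is instead equivalent to the standing hypothesis that $[\Dirac,\Psi]$ has order zero (Definition~\ref{8wrtlhkerhterhertrtgtrgetg}). Second, your claim that the zero-order part $R$ of $\Dirac$ commutes with $\Psi$ is not justified at this level of generality; for an arbitrary generalized Dirac operator the contribution $[R,\Psi]$ need not vanish, and then $[\Dirac,\Psi]=c(\nabla\Psi)+[R,\Psi]$. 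The paper does not address this either, and in the concrete situation of \S\ref{ewrokgpwergwerffwef} (where $\Dirac=\Dirac_{0}\stimes\id_{V}$ and $\Psi=\id_{E_{0}}\stimes\Psi_{0}$ act on complementary tensor factors) the extra term indeed vanishes.
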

    
    \begin{proof}
    For $\phi \in C^\infty_c(M \setminus \bar{Y}, E)$, we have
    \[
    \begin{aligned}
      	\|(\Dirac + \Psi)\phi\|^2 &= \|\Dirac\phi\|^2 + \langle \phi, ([\Dirac, \Psi] + \Psi^2) \phi\rangle \\
      	&\geq \inf_{m \in M \setminus Y} \Big(\min \sigma(\Psi^2(m)) - \|[\Dirac, \Psi]\|\Big) \|\phi\|^2\ .
    \end{aligned}
    \]
    As $\|[\Dirac, \Psi]\| \leq \|\nabla\Psi\|$ pointwise, the result follows from the assumption of \eqref{fqqewdwedqfqqfefdqed}.
  \end{proof}

{As a consequence of \cref{gkopwergwerfwerf},} if $\Psi$ is positive away from $Y$,   then  by \cref{gjkregopregwergfwrefrfwrf} 
 we get a class 
 $$\ind\cX(\Dirac+\Psi,\mathrm{on}\,\cY) \quad \mbox{in}\quad  K\cX_{-n}(\cY)\ .$$ 

%
    
    We abbreviate $\tilde \Psi:=\pr^{*}\Psi\otimes \id_{\Cl^{1}}$ in $C^{\infty}(\R\times M,\tilde E)$ and let $\smash{\TildeDirac}$ as above {be} the adaptation of $\Sigma \Dirac$ to the warped product metric on $\R\times M$ {(see \cref{kogpweerfwerfrwefw})}.
    Then  $\smash{\TildeDirac}+\tilde\Psi$ is the adaptation of $\Sigma(\Dirac+\Psi)$. 
    Therefore by  definition, with $\iota$ as in \eqref{fwerferwffrfwfwrfw}
    \[
    \iota_{*}(\beta \cdot \ind\cX(\TildeDirac+\tilde\Psi ))=\sigma(\Dirac+\Psi)\quad  \mbox{in} \quad K_{-n}^{\cX}(M)\ .
    \] 
    
     \begin{lem}
     \label{fofkpwedweqdewd}
      $\tilde \Psi$ is positive away from $\R\times Y$.
  \end{lem}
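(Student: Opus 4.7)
The plan is to verify the positivity condition of \cref{tlhpetrgggegrt} applied to $\tilde\Psi$ and the subset $\R\times Y$, where $Y$ is a member of $\cY$ witnessing the positivity of $\Psi$ away from $Y$. That is, I will show
\[
\inf_{(t,m)\in\R\times(M\setminus Y)} \Big(\min \sigma(\tilde\Psi^{2}(t,m)) - \|\tilde\nabla\tilde\Psi(t,m)\| \Big) > 0,
\]
where $\tilde\nabla$ is the connection on $\tilde E = \pr^{*}E\stimes \Cl^{1}$ (the pullback of $\nabla$ tensored with the trivial connection on $\Cl^{1}$), and all norms are taken with respect to the warped product metric $\tilde g = dt^{2}+h(t)^{2}g$ on $\tilde M$.

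The first step is trivial: since $\tilde\Psi(t,m) = \Psi(m)\stimes \id_{\Cl^{1}}$ fibrewise, one has $\tilde\Psi^{2}(t,m) = \Psi^{2}(m)\stimes \id_{\Cl^{1}}$, and therefore $\min \sigma(\tilde\Psi^{2}(t,m)) = \min\sigma(\Psi^{2}(m))$.

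The second step is to compute $\|\tilde\nabla\tilde\Psi\|$. Given a local $g$-orthonormal frame $(e_{1},\dots,e_{n})$ on $M$, the frame $(\partial_{t}, h(t)^{-1}e_{1},\dots,h(t)^{-1}e_{n})$ is $\tilde g$-orthonormal on $\tilde M$. Since $\tilde\Psi$ is independent of $t$, we have $\tilde\nabla_{\partial_{t}}\tilde\Psi=0$, while $\tilde\nabla_{h(t)^{-1}e_{i}}\tilde\Psi = h(t)^{-1}(\nabla_{e_{i}}\Psi)\stimes\id_{\Cl^{1}}$. Therefore
\[
\|\tilde\nabla \tilde\Psi\|^{2}(t,m) = h(t)^{-2}\sum_{i=1}^{n}\|\nabla_{e_{i}}\Psi(m)\|^{2} = h(t)^{-2}\|\nabla\Psi\|^{2}(m).
\]
By the choice of $h$ in \cref{kogpweerfwerfrwefw} we have $h(t)\ge 1$, hence $\|\tilde\nabla\tilde\Psi\|(t,m) \le \|\nabla\Psi\|(m)$.

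Combining the two steps yields, for every $(t,m)$ in $\R\times(M\setminus Y)$, the pointwise inequality
\[
\min\sigma(\tilde\Psi^{2}(t,m)) - \|\tilde\nabla\tilde\Psi\|(t,m) \ge \min\sigma(\Psi^{2}(m)) - \|\nabla\Psi\|(m),
\]
and taking the infimum the right-hand side is bounded below by the positive constant provided by the assumption of \cref{tlhpetrgggegrt} for $\Psi$ on $M\setminus Y$. This gives the required strict positivity. There is no genuine obstacle here; the only point deserving care is the bookkeeping of the warping factor in the dual metric, which works in our favour because $h\ge 1$ (so that the $M$-directional derivatives are suppressed rather than amplified on the $\R^{+}$ part of the cone).
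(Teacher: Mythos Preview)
Your proof is correct and follows essentially the same approach as the paper: both arguments reduce to the pointwise identity $\|\tilde\nabla\tilde\Psi\|^{2}(t,m)=h(t)^{-2}\|\nabla\Psi\|^{2}(m)$ together with $h\ge 1$, and then invoke the positivity assumption on $\Psi$. You supply a bit more detail (the orthonormal frame computation and the observation that $\min\sigma(\tilde\Psi^{2})=\min\sigma(\Psi^{2})$), but the structure is identical.
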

  
  \begin{proof}
 We note that 
 \begin{equation}
 \label{NablaPsiTilde}
 	\|  \nabla\tilde \Psi (t,m)\|^{2}= {h^{-2}(t)}\| \nabla\Psi(m)\|^{2} \ .
 \end{equation}
 Since $h^{-1}(t)\le 1$ for all $t$ in $\R$ we have 
 \[
 \min(\sigma( \tilde \Psi(t,m)^{2})-{\|\nabla\tilde \Psi (t,m)\|}
 \geq \min(\sigma ( \Psi(m)^{2})-{\|\nabla \Psi (m)\|}
 \]
 {for each $m $ in $M \setminus Y$ and each $t$ in $\R$.
 Taking the infimum over $\R \times (M \setminus Y)$, the result follows from the positivity of $\Psi$ away from $Y$.
 }
\end{proof}

  Assume that  $\Psi$ is positive away from $Y$. \begin{ddd}\label{kogpwergwerffwref}
  We define the symbol with support of the Callias type  operator $\Dirac+\Psi$  by 
  \[
  \sigma(\Dirac+\Psi,\mathrm{on}\,\cY):=\iota_{*}(\beta \cdot \ind\cX(\TildeDirac+\tilde\Psi ,\mathrm{on}\,\cY))\quad  \mbox{in} \quad  K^{\cX}_{-n}( \cY)\ .
  \]
  \end{ddd}
  
  The symbol is well-defined by \cref{fofkpwedweqdewd}, \cref{gkopwergwerfwerf} and \cref{gjkregopregwergfwrefrfwrf}.
   \cref{jogergreffwfrf} implies:
   \begin{kor}\label{wkpgwergwrefrfw} We have the equality
  $$a_{\cY}(\sigma(\Dirac+\Psi,\mathrm{on}\,\cY))=\ind\cX(\Dirac+\Psi,\mathrm{on}\,\cY)\quad \mbox{in} \quad K\cX_{-n}(\cY)\ .$$
  \end{kor}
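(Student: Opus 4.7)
The statement is a direct corollary of \cref{jogergreffwfrf} once one recognises that the Callias type operator $\Dirac+\Psi$ is itself a generalized Dirac operator of degree $n$ to which the symbol-with-support formalism of \cref{lphegeetrhetrgtgteg} applies verbatim. More precisely, my plan is to verify that the symbol class $\sigma(\Dirac+\Psi,\mathrm{on}\,\cY)$ of \cref{kogpwergwerffwref} coincides with $\sigma_{\cY}(\Dirac+\Psi)$ in the sense of \cref{lphegeetrhetrgtgteg}, and then invoke \cref{jogergreffwfrf} with $\Dirac$ replaced by $\Dirac+\Psi$.

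First I would observe that since $\Psi$ is an odd, self-adjoint, $\Cl^{n}$-linear section of $\End(E)$ with $[\Dirac,\Psi]$ of order zero (see \cref{8wrtlhkerhterhertrtgtrgetg}), the operator $\Dirac+\Psi$ satisfies the structural requirements on a generalized Dirac operator: it is an odd, formally selfadjoint, first-order differential operator commuting with the $\Cl^{n}$-action, and its square differs from $\Dirac^{2}$ by a zero-order term, so it still has a Weitzenboeck decomposition of the form \eqref{wrgwerrwfrfw} with a modified $R_{0}$. By \cref{gkopwergwerfwerf}, positivity of $\Psi$ away from $\cY$ implies positivity of $\Dirac+\Psi$ away from $\cY$ in the sense of \cref{kopherthergrge}, so that the index class $\ind\cX(\Dirac+\Psi,\mathrm{on}\,\cY)\in K\cX_{-n}(\cY)$ of \cref{gjkregopregwergfwrefrfwrf} is defined.

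Next I would check that forming the warped product, suspending, and adapting produce the operator $\TildeDirac+\tilde\Psi$ when applied to $\Dirac+\Psi$. This is built into the adaptation procedure recalled in \cref{RemarkAdaption}: adaptation is a local construction compatible with adding zero-order terms, so the adaptation of $\Sigma(\Dirac+\Psi)$ to the warped metric $dt^{2}+h(t)^{2}g$ equals $\TildeDirac+\tilde\Psi$, where $\TildeDirac$ is as in \eqref{FormulaAdaptedOperator} and $\tilde\Psi=\pr^{*}\Psi\otimes\id_{\Cl^{1}}$. By \cref{fofkpwedweqdewd}, $\tilde\Psi$ is positive away from $\R\times Y$, so $\TildeDirac+\tilde\Psi$ is positive away from the big family $\tilde M_{\cY}$ of \eqref{qfwefdqeq423} (arguing as in \cref{LemmaPositivityOnCone} with the zero-order positive term $\tilde\Psi^{2}$ dominating $[\TildeDirac,\tilde\Psi]$). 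Consequently the construction of \cref{lphegeetrhetrgtgteg} applies to $\Dirac+\Psi$ and yields
\[
\sigma_{\cY}(\Dirac+\Psi)=\iota_{*}(\beta\cdot\ind\cX(\TildeDirac+\tilde\Psi,\mathrm{on}\,\tilde M_{\cY}))=\sigma(\Dirac+\Psi,\mathrm{on}\,\cY),
\]
by the definition in \cref{kogpwergwerffwref}.

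Finally I would apply \cref{jogergreffwfrf} with $\Dirac+\Psi$ in place of $\Dirac$ to obtain
\[
a_{M,\cY}\bigl(\sigma_{\cY}(\Dirac+\Psi)\bigr)=\ind\cX(\Dirac+\Psi,\mathrm{on}\,\cY)\quad\text{in}\quad K\cX_{-n}(\cY),
\]
which is the desired identity. There is no real obstacle here; the only delicate point is the bookkeeping needed to make sure that the adaptation procedure of \cref{RemarkAdaption}, the positivity statement of \cref{fofkpwedweqdewd}, and the definition of the symbol with support are mutually compatible when $\Dirac$ is replaced by the Callias type operator $\Dirac+\Psi$.
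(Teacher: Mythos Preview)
Your approach is correct and is exactly what the paper intends: the corollary is stated as an immediate consequence of \cref{jogergreffwfrf}, applied with $\Dirac+\Psi$ in place of $\Dirac$, and your elaboration on why $\Dirac+\Psi$ is a generalized Dirac operator to which that lemma applies is just making the implicit step explicit. One small caveat: the classes $\sigma_{\cY}(\Dirac+\Psi)\in K^{\cX}_{\cY,-n}(M)$ and $\sigma(\Dirac+\Psi,\mathrm{on}\,\cY)\in K^{\cX}_{-n}(\cY)$ live in different groups (the big families $\tilde M_{\cY}$ and $\R\times\cY$ on $\tilde M$ differ), so they are not literally equal; rather, the latter maps to the former under the natural map $K^{\cX}(\cY)\to K^{\cX}_{\cY}(M)$, and the index maps $a_{\cY}$ and $a_{M,\cY}$ are compatible with this map by naturality of the index transformation, which is all that is needed.
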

  
    \begin{rem}\label{jifioprgergwregwergf}
  Assume that $\Dirac$ is {positive} away from  a big family $\cZ$. Without further assumptions on $\Psi$ the construction of the Callias type operator $\Dirac+\Psi$ destroys this positivity
  since we do not have control over the term $[\Dirac,\Psi]$ on $(X\setminus Z) \cap Y $. 
  So we can not conclude that $\Dirac+\Psi$ is {positive} away from $Y\cap Z$.
      \hB
  \end{rem}

In order to improve on the point made in \cref{jifioprgergwregwergf} we adopt the following definition.
Let $\Psi$ be a potential and $\cZ$ be a big family on $M$. We consider the big family  \begin{equation}
\label{wrogijoiwergregfrwefwref}\tilde \cY_{\cZ}:=\{\R^{-}\}\times (\cY\cap \cZ)\cup \{\R^{+}\} \times \cY 
 = \tilde{M}_{\cZ} \cap (\R \times \cY)
\end{equation}  
in $\tilde M$.


 \begin{ddd}
 \label{owkpgwgerferrgehghd}
We say $\Psi$ is asymptotically constant away from $\cZ$ if
\[
\lim_{Z\in \cZ} \|\nabla \Psi_{|M\setminus Z}\|_{\infty}=0\ .
\]
\end{ddd}

  \begin{lem}
  \label{ogjoiergwefrefwfw9} 
  We assume that $\Dirac$ is positive away from $\cZ$, and that  $\Psi$ is positive away from $\cY$ and asymptotically constant away from $\cZ$. 
  Then $\smash{\TildeDirac +{\tilde{\Psi}}}$ is positive away from $\tilde \cY_{\cZ}$.
  \end{lem}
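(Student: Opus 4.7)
The plan is to exhibit a single member $W$ of $\tilde{\cY}_{\cZ}$ together with a positive lower bound for the quadratic form of $\TildeDirac + \tilde{\Psi}$ on sections compactly supported off $W$, combining the positivity of $\Dirac$ off $\cZ$ with the positivity of $\Psi$ off $\cY$ via an IMS-type localization. I would first fix $Y_0 \in \cY$ and $\epsilon > 0$ with $\min \sigma(\Psi^2(m)) - \|\nabla \Psi(m)\| > \epsilon$ on $M \setminus Y_0$, and $Z_0 \in \cZ$ and $a > 0$ with $\|\Dirac \xi\|^2 \geq a^2 \|\xi\|^2$ for $\xi \in C^\infty_c(M \setminus \bar Z_0, E)$. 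Given a parameter $\delta > 0$, the asymptotic constancy hypothesis (\cref{owkpgwgerferrgehghd}) supplies $Z \in \cZ$ with $Z \supseteq Z_0$ and $\|\nabla \Psi\|_{M \setminus Z} < \delta$. Given a parameter $\eta > 0$, I would choose $Y \in \cY$ with $Y \supseteq Y_0$ large enough to admit a smooth partition of unity $\chi^2 + \psi^2 = 1$ on $M$ with $\chi \equiv 1$ on a neighbourhood of $Y_0$, $\chi \equiv 0$ off $Y$, and $\|d\chi\|_\infty + \|d\psi\|_\infty < \eta$ (obtained by spreading the transition across $Y \setminus Y_0$). Finally, I would set $W := \R^- \times (Y \cap Z) \cup \R^+ \times Y$, which is a member of $\tilde{\cY}_{\cZ}$ by \eqref{wrogijoiwergregfrwefwref}.

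For $\phi \in C^\infty_c(\tilde M \setminus \bar W, \tilde E)$, pulling $\chi, \psi$ back to $\tilde M$ along the projection, the key identity is the IMS localization
\[
\|(\TildeDirac + \tilde{\Psi})\phi\|^2 = \|(\TildeDirac + \tilde{\Psi})(\chi \phi)\|^2 + \|(\TildeDirac + \tilde{\Psi})(\psi\phi)\|^2 - \int_{\tilde M}(|d\chi|_{\tilde g}^2 + |d\psi|_{\tilde g}^2)\, \|\phi\|^2,
\]
whose error term comes from the double commutator $[\chi, [\chi, (\TildeDirac + \tilde{\Psi})^2]] = -2|d\chi|_{\tilde g}^2$; this follows because $\tilde{\Psi}$ commutes with scalar functions and Clifford multiplication by $d\chi$ (the principal symbol of $[\chi, \TildeDirac]$) squares to $-|d\chi|_{\tilde g}^2$. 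Since $h \geq 1$ one has $|d\chi|_{\tilde g} \leq |d\chi|_g$, so the error term is bounded by $2\eta^2 \|\phi\|^2$. It then suffices to estimate the two pieces. The section $\chi\phi$ vanishes on $\R^+$, since there $\phi$ is supported off $Y$ while $\chi \equiv 0$ off $Y$; hence $\chi\phi$ is supported in $(-\infty, 0) \times (Y \setminus Z) \subseteq \R^- \times (M \setminus Z)$. On $\R^- \times M$ the warping is trivial ($h \equiv 1$), so \eqref{FormulaAdaptedOperator} reduces $\TildeDirac$ to $\id \stimes \sigma \partial_t + \Dirac \stimes \id$, giving $\{\TildeDirac, \tilde{\Psi}\} = \{\Dirac, \Psi\} \stimes \id$ with pointwise norm $\leq \|\nabla \Psi\| \leq \delta$ (same bound as in the proof of \cref{gkopwergwerfwerf}); combined with the positivity of $\Dirac$ off $Z$ this yields $\|(\TildeDirac + \tilde{\Psi})(\chi\phi)\|^2 \geq (a^2 - \delta)\|\chi\phi\|^2$. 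On the other hand, $\psi\phi$ is supported in $\R \times (M \setminus Y_0)$, on which $\tilde{\Psi}$ is positive by \cref{fofkpwedweqdewd}; applying \cref{gkopwergwerfwerf} to the pair $(\TildeDirac, \tilde{\Psi})$ then gives $\|(\TildeDirac + \tilde{\Psi})(\psi\phi)\|^2 \geq \epsilon \|\psi\phi\|^2$.

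Assembling these estimates and using $\chi^2 + \psi^2 = 1$, I would obtain
\[
\|(\TildeDirac + \tilde{\Psi})\phi\|^2 \geq \bigl[\min(a^2 - \delta, \epsilon) - 2\eta^2\bigr]\, \|\phi\|^2,
\]
which is a strictly positive multiple of $\|\phi\|^2$ once $\delta$ and $\eta$ are chosen small enough, proving positivity of $\TildeDirac + \tilde{\Psi}$ away from $W$. The main obstacle I anticipate is verifying the IMS identity — specifically the value of the double commutator — carefully in the warped-product geometry; the redeeming feature is that $\chi\phi$ is already supported where $h \equiv 1$, so only the unwarped form of $\TildeDirac$ enters the delicate quadratic form estimate, while for the IMS error on $\R^+ \times M$ it suffices that $[\chi, \id \stimes \sigma \partial_t] = 0$ (as $\chi$ depends only on $m$) and that Clifford squares behave as stated, which one reads off directly from \eqref{FormulaAdaptedOperator}.
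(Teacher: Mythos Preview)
Your argument is correct. The overall strategy coincides with the paper's: split the complement of a member of $\tilde{\cY}_{\cZ}$ into a region $\R \times (M\setminus Y_0)$ where the positivity of $\tilde\Psi$ (via \cref{fofkpwedweqdewd} and \cref{gkopwergwerfwerf}) controls $(\TildeDirac+\tilde\Psi)^2$, and a region $(-\infty,0)\times (M\setminus Z)$ where positivity of $\TildeDirac$ (via \cref{LemmaPositivityOnCone}) together with asymptotic constancy of $\Psi$ does the job. The difference is in how the two estimates are glued. The paper decomposes a section supported in the union as a sum of two sections supported in the pieces and asserts the lower bound directly; your route instead uses an IMS localization $\chi^2+\psi^2=1$ with gradients made small by enlarging $Y$, which produces an explicit and manifestly controllable error term $|d\chi|_{\tilde g}^2+|d\psi|_{\tilde g}^2\le 2\eta^2$. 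This is arguably the more transparent way to carry out the combination step, since a bare sum decomposition does not by itself transfer quadratic-form lower bounds. One minor point: to ensure that $\chi\phi$ is genuinely supported in $M\setminus \bar Z_0$ (rather than just $M\setminus \overline{Y\cap Z}$), you should choose $Z$ to contain an open neighbourhood of $\bar Z_0$, which is available since $\cZ$ is a big family; this is implicit in your setup but worth stating.
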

  
\begin{proof}
By \cref{gkopwergwerfwerf} and \cref{fofkpwedweqdewd} we know that $\smash{\TildeDirac +{\tilde{\Psi}}}$ is  positive away from $\R\times \cY$.
 We can therefore find a member $Y$ of $\cY$ and number $b$ in $(0,\infty)$ such that
\[
\| (\TildeDirac+\tilde \Psi)\tilde \phi\|^{2}\ge  b\|\tilde \phi\|^{2}
\]
 for any compactly supported section $\tilde \phi$ of $\pr^{*}E$ with support on $\R\times (M\setminus \bar Y)$.

 By \eqref{fewfqwefqwedqewdewd}, 
for any compactly supported smooth section $\tilde{\phi}$ of $\pr^{*}E$ we have
\begin{equation}\label{jgwioergfrefrw}
  \|(\TildeDirac + \tilde{\Psi})\tilde{\phi}\|^2 \geq \|\TildeDirac\tilde{\phi}\|^2 - \|\tilde{\nabla} \Psi_{|\supp(\tilde \phi)}\|_{\infty}\|\tilde{\phi}\|^2 + \|\tilde{\Psi}\tilde{\phi}\|^2\ .
\end{equation}
 Assume that $Z$ is a member of $\cZ$ and $a$ is a number in $(0,\infty)$ such that the quadratic form  associated to 
$\smash{\Dirac^{2}}$ is  bounded below by $\smash{a^{2}}$ on $ M\setminus Z$. 
Then by (the proof of) \cref{LemmaPositivityOnCone}, the quadratic form associated to $\smash{\TildeDirac{}^{2}}$ is bounded below by $a^{2}$ on $\R^{-}\times (M\setminus Z)$.
 We now  enlarge the member $Z$ of $\cZ$ such that  in addition $ \|\nabla \Psi_{|M\setminus Z}\|_{\infty}\le a^{2}/2$.  
Then by \eqref{NablaPsiTilde} and the fact that $h$ is bounded below by $1$ we have $\|\smash{\tilde \nabla \tilde \Psi_{|\R\times (M\setminus Z)}}\|_{\infty}\le a^{2}/2$.
Consequently, if $\tilde{\phi}$ is supported on $(-\infty,0) \times M \setminus \bar Z$, then the right-hand side of \eqref{jgwioergfrefrw} is larger than $a^2/2 \cdot \|\tilde{\phi}\|^2$.

Since any compactly supported section $\tilde \phi$ of $\pr^{*}E$ supported on $\R\times (M\setminus \bar Y) \cup (-\infty,0)\times (M\setminus \bar Z)$ can be written as a sum {of two sections}, {one} compactly supported  on $\R\times (M\setminus \bar Y)$ and one on $(-\infty,0)\times ( M\setminus \bar Z)$,  we see that for such sections 
$$\|(\TildeDirac+\tilde \Psi)\tilde \phi\|^{2}\ge \min (b,\frac{a^{2}}{2}) \|\tilde \phi\|^{2}\ .$$

This shows that $\smash{\TildeDirac +{\tilde{\Psi}}}$ is positive away from $\tilde \cY_{\cZ}$, because $\R\times (M\setminus \bar Y) \cup \R^{-}\times (M\setminus \bar Z)$ contains the complement of the member $\R^{-}\times (U_{1}[Y]\cap U_{1}[Z])\cup \R^{+}\times U_{1}[Y]$ of $\tilde{\cY}_\cZ$, where $U_{1}$ is the metric entourage of width $1$
\end{proof}

\begin{ddd} 
\label{ktogpwegrefwf} 
Under the assumptions of \cref{ogjoiergwefrefwfw9}, we define
\[
\sigma_{\cZ}(\Dirac+\Psi,\mathrm{on}\, \cY):=\iota_{*}(\beta\cdot \ind\cX(\TildeDirac+\tilde \Psi, \mbox{on {$\tilde{\cY}_\cZ$}
} ))
\]
in $ K^{\cX}_{\cZ,-n}(\cY)$.
\end{ddd}

\begin{lem} 
Under the assumptions of \cref{ogjoiergwefrefwfw9}
we have 
\begin{equation}
\label{qefewdad}
a_{\cY,\cZ}(\sigma_{\cZ}(\Dirac+\Psi,\mathrm{on}\, \cY))=\ind\cX(\Dirac+\Psi,\mathrm{on}\,\cY\cap \cZ)  \end{equation} 
in $K\cX_{-n}(\cY\cap \cZ)$.
\end{lem}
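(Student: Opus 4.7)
The plan is to mirror the proof of \cref{jogergreffwfrf}, incorporating the additional support family $\cZ$ throughout. By \cref{ktogpwegrefwf} and the definition of the index map \eqref{bdgpbkdpgbdpgbdfg11}, the task is to compute
\[
a_{\cY,\cZ}(\sigma_\cZ(\Dirac+\Psi, \mathrm{on}\,\cY)) = \partial^{MV}\circ \partial^{\cone}\circ \iota_{*}\bigl(\beta\cdot \ind\cX(\TildeDirac + \tilde\Psi, \mathrm{on}\,\tilde\cY_\cZ)\bigr) \ .
\]

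First I would analyze the cone boundary. Since $\TildeDirac + \tilde\Psi$ is a generalized Dirac operator positive away from $\tilde\cY_\cZ$ by \cref{ogjoiergwefrefwfw9}, the proof of \cref{bjgbpdgdfbdg} applies verbatim — it only uses formal properties of coarse homology theories together with the suspension theorem \cref{SuspensionTheorem} and the relative index theorem \cref{RelativeIndexTheorem}. Hence $\partial^{\cone}$ identifies the above with $\beta\cdot \ind\cX(\Sigma(\Dirac+\Psi), \mathrm{on}\,\tilde\cY_\cZ)$ computed in the $\R\otimes M$ coarse structure.

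Next I would establish that $\Dirac+\Psi$ is itself positive away from $\cY\cap \cZ$, by an argument analogous to the proof of \cref{ogjoiergwefrefwfw9}: outside a suitable $Y \in \cY$ the potential $\Psi$ provides positivity of $\Dirac+\Psi$ (by \cref{gkopwergwerfwerf}), while outside a suitable $Z \in \cZ$ the positivity of $\Dirac$ together with the smallness of $\|\nabla\Psi\|$ (from asymptotic constancy, \cref{owkpgwgerferrgehghd}) gives positivity of $\Dirac+\Psi$ via \eqref{fewfqwefqwedqewdewd}; these contributions combine over the union $(M\setminus Y)\cup(M\setminus Z) = M\setminus(Y\cap Z)$ by a decomposition into sections supported on each piece, as in the final paragraph of the proof of \cref{ogjoiergwefrefwfw9}. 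Consequently $\Sigma(\Dirac+\Psi)$ is positive away from $\R\times(\cY\cap\cZ)$, yielding a class $\ind\cX(\Sigma(\Dirac+\Psi),\mathrm{on}\,\R\times(\cY\cap\cZ))$ whose pushforward under the inclusion $\R\times(\cY\cap \cZ)\subseteq \tilde\cY_\cZ$ agrees with the class obtained above.

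Finally, the Mayer-Vietoris decomposition $(\{\R^-\}\times(\cY\cap\cZ),\{\R^+\}\times\cY)$ of $\tilde\cY_\cZ$ has the same intersection family (up to coarse equivalence, namely $\{0\}\times(\cY\cap\cZ)$) as the decomposition $(\{\R^-\}\times(\cY\cap\cZ),\{\R^+\}\times(\cY\cap\cZ))$ of $\R\times(\cY\cap\cZ)$. By naturality of Mayer-Vietoris the two boundary maps carry the two compatible index classes to the same element of $K\cX(\cY\cap\cZ)$, and the suspension theorem \cref{SuspensionTheorem} applied to $\Dirac+\Psi$ identifies the result as $\ind\cX(\Dirac+\Psi,\mathrm{on}\,\cY\cap\cZ)$, completing the proof. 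The main obstacle will be the positivity check in Step 3, since combining the two distinct mechanisms of positivity across overlapping big families requires care with cross-terms in the partition-of-unity decomposition, essentially reusing the argument of \cref{ogjoiergwefrefwfw9}; once it is in place, the remaining identification is a formal consequence of naturality.
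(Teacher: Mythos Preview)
The proposal is correct and follows essentially the same route as the paper's proof: apply \cref{bjgbpdgdfbdg} to pass from $\TildeDirac+\tilde\Psi$ on $\tilde M$ to $\Sigma(\Dirac+\Psi)$ on $\R\otimes M$, observe that the resulting index is supported on the smaller family $\R\times(\cY\cap\cZ)$, and finish with the suspension theorem. The only difference is that where the paper simply asserts that the index ``actually comes from an index supported on the smaller family $\R\times(\cY\cap\cZ)$'', you explicitly supply the positivity argument showing that $\Dirac+\Psi$ is positive away from $\cY\cap\cZ$; this fills a step the paper glosses over.
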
 

\begin{proof}
We have a diagram
$$\xymatrix{\tilde M\ar[rd]^{\iota}\ar[rr]^{i}&&\R\times M\\&\cO^{\infty}(M)\ar[ur]^{\partial^{\cone}}&} \ . $$ in $\BC$ all induced by the identity maps of the underlying sets.
In view of the definition of $a_{\cY,\cZ}$ in terms of the cone boundary and the suspension equivalence
we get 
$$\kappa_{*}a_{\cY,\cZ}(\sigma_{\cZ}(\Dirac+\Psi,\mathrm{on}\, \cY)) = \partial^{MV} i_{*}(\beta\cdot \ind\cX(\TildeDirac+\tilde \Psi, \mbox{on {$\tilde{\cY}_\cZ$})}
)\ ,$$ where $\partial^{MV}$ is the Mayer-Vietoris boundary for the decomposition
of $\tilde{\cY}_\cZ$ into $\{\R^{-}\}\times (\cY\cap \cZ)$ and $\{\R^{+}\}\times \cY$
and $\kappa:\cY\cap \cZ = \{0\}\times (\cY\cap \cZ)\to  \{\R^{-}\}\times (\cY\cap \cZ)\cap\{\R^{+}\}\times \cY$
consist of coarse equivalences and induces an equivalence in coarse $K$-homology.
By \cref{bjgbpdgdfbdg}
we have
$$i_{*} \ind\cX(\TildeDirac+\tilde \Psi, \mbox{on {$\tilde{\cY}_\cZ$})}=\ind\cX(\Sigma(\Dirac+\Psi),\mathrm{on}\,\tilde{\cY}_\cZ)\ .$$
The right-hand side actually comes from an index supported on the smaller family $\R\times (\cY\cap \cZ)$.
  The equality \eqref{qefewdad} now follows from the suspension theorem \cref{SuspensionTheorem}, as the equivalence $\Sigma^{-1} K\cX(\R \otimes (\cY \cap \cZ)) \simeq K\cX(\cY \cap \cZ)$ is induced by the Mayer-Vietoris boundary.
\end{proof}

\subsection{Callias type operators and pairings}\label{ewrokgpwergwerffwef}

As in the previous section we consider a generalized Dirac operator $\Dirac$ of degree $n$ on a bundle  $E\to M$ which is positive away from a big family $\cZ$ (see \cref{kopherthergrge}) and a potential $\Psi$ in $C^{\infty}(M,\End_{\Cl^{n}}(E)^{\odd})$ which is very positive away from a second big family $\cY$ (see \cref{ekopthertheth} {below}) and asymptotically constant away from  $\cZ$ (see \cref{owkpgwgerferrgehghd}).    
In this section, we  express the coarse index $\ind\cX(\Dirac+\Psi,\mathrm{on}\,\cY\cap \cZ)$ in ${K}\cX(\cY\cap \cZ)$ and  the symbol $\sigma_{\cZ}(\Dirac+\Psi,\mathrm{on}\,\cY)$ in  ${K}^{\cX}_{\cZ}(\cY)$ of the Callias type operator $\Dirac+\Psi$ in terms of   the coarse corona pairing $\cap^{\cX}$ and the coarse  symbol pairing $\cap^{\cX\sigma}$ introduced in \cref{gojkwpergerwferfewrfwrefw} and  \cref{grjweopgergweffefwef}, respectively.
    
     We assume that $n=k+l$ for integers $k,l$ and $E\cong E_{0}\stimes V$.  Here $V$ is a finite-dimensional graded Hilbert space with a right action of $\Cl^{l}$ such that the generators of $\Cl^{l}$ act by anti-selfadjoint operators. Furthermore $E_{0}\to M$  is a  graded hermitean bundle
   of $\Cl^{k}$-modules carrying a Dirac operator $\Dirac_{0}$ of degree $k$. We assume that     $\Dirac=\Dirac_{0}\stimes \id_{V}$ and that $\Psi=\id_{E_{0}}\stimes \Psi_{0}$ for some map $\Psi_{0}:M\to \End_{\Cl^{l}}(V)^{\odd}$ with  selfadjoint  values.
   In particular, the graded Roe algebra  of $\Cl^{k+l}$-invariant operators associated to the bundle $E\to M$ is given by 
   \[
   C^*(M, E) \cong \End(V) \stimes C^*(M, E_0) \ ,
   \]
where $\End(V)$ denotes the graded algebra of $\Cl^{l}$-equivariant operators on $V$ and $C^{*}(M,E_{0})$ is the  graded Roe algebra of $\Cl^{k}$-equivariant operators on $E_{0}\to M$.
   We consider a big family $\cY$ on $M$. 
   Recall \cref{tlhpetrgggegrt} of the notion of  local positivity of $\Psi_{0}$.
   
   \begin{ddd}
   \label{ekopthertheth}
   We say that $\Psi_{0}$ is very positive away from $\cY$ if it is positive away from $\cY$ and  $ e^{-\Psi^{2}}$ belongs to $\End(V) \stimes C_{u}(\cY)$.
   \end{ddd}
   
  If $\Psi_{0}$ is very positive away from $\cY$, then it  generates a homomorphism 
  \[\Psi_{0,*}:\cS\to   \End(V) \stimes C_{u}(\cY)
  \] 
  {which represents a class
    $[\Psi_{0}]$ in $K^{\gr}_{0}(   \End(V) \stimes C_{u}(\cY) )\cong K^{l}(\cY)$, see   \cref{okfpqfqewddedq}}.
  
  Recall the coarse symbol pairing \cref{grjweopgergweffefwef}.
 Let $\cZ$ be a second big family on $M$.

  \begin{theorem}\label{wegokwpergerfwefrwefwf} 
  Assume: 
  \begin{enumerate}
  \item   \label{ogpwgwergwefrefw1} $\Dirac_{0}$ is {positive} away from $\cZ$.
  \item  \label{ogpwgwergwefrefw}
 $\Psi_{0}$  is asymptotically constant away from $\cZ$. 
  \item \label{fwefwdewdqded} $\Psi_{0}$ is very positive away from $\cY$. 
  \item  \label{fwefwdewdqded3333} $\|\nabla \Psi_{0}\|$ is uniformly bounded on $M$. 
\end{enumerate}
Then we
 have the equality
  \begin{equation}\label{erwvfvsdfvsfs}    \sigma_{\cZ}(\Dirac+\Psi,\mathrm{on}\,\cY)= [\Psi_{0}]\cap^{\cX\sigma} \sigma_{\cZ}(\Dirac_{0})    \end{equation} in $K^{\cX}_{\cZ,-n}(\cY)$.
  \end{theorem}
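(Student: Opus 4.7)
The plan is to unfold both sides to explicit $*$-homo\-morphisms $\cS \to B$ into an appropriate graded target $B$ of the form $\End(V) \stimes C^*(\cO^\infty_\cZ(\cY)\subseteq \cO^\infty(M), \pr^*E_0 \stimes \Cl^1)$ (modulo the flasque ideal supported on $\cO^-(\cY \cap \cZ)$) and to verify that these representing morphisms agree. After unfolding \cref{ktogpwegrefwf}, the left-hand side is represented, under the equivalence \eqref{qedqefq2t5}, by the homomorphism $f \mapsto f(\TildeDirac + \tilde\Psi)$. By \cref{grjweopgergweffefwef} and the explicit formula \cref{iogjweoigwefewrfwerfwf} for the $\mu$-map, together with the description of cup products from \cref{wgklopergwrefwef}, the right-hand side is represented by the composition
\[
  \cS \xrightarrow{\Delta} \cS \stimes \cS \xrightarrow{\Psi_{0,*} \stimes \TildeDirac_{0,*}} (\End(V)\stimes C_u(\cY)) \stimes C^*(\cO_{\cZ}^\infty(M)\subseteq\cO^\infty(M),\pr^*E_0 \stimes \Cl^1) \xrightarrow{\mu}  B ,
\]
where $\mu$ multiplies a function $g$ against a Roe operator $A$ as $\chi(\pr^* g)A$.

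The key algebraic observation is that in the tensor factorization $\tilde E \cong \pr^*E_0 \stimes V \stimes \Cl^1$, the operator $\TildeDirac + \tilde\Psi$ decomposes as
\[
  \TildeDirac + \tilde\Psi = (\TildeDirac_0 \stimes \id_V) + \id_{\pr^*E_0} \stimes (\pr^*\Psi_0 \stimes \id_{\Cl^1}) + \tfrac{l}{2} \tfrac{\dot h(t)}{h(t)}\, \id \stimes \id_{V} \stimes \sigma ,
\]
the correction term arising from the discrepancy between the adaptation factor $n/2$ used in \cref{FormulaAdaptedOperator} for the full operator and the factor $k/2$ appropriate for $\Dirac_0$. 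The two principal summands are odd selfadjoint multipliers acting on distinct tensor factors and therefore graded-commute. Hence, disregarding the correction for the moment, one may invoke the multiplicativity of functional calculus for graded-commuting odd selfadjoint multipliers $A,B$, namely $f(A+B) = (A_* \stimes B_*) \circ \Delta (f)$, which is the defining feature of the coproduct \eqref{Coproduct}. This identifies the homomorphism $f \mapsto f(\TildeDirac_0 \stimes \id_V + \id \stimes (\pr^*\Psi_0 \stimes \id_{\Cl^1}))$ precisely with the composition above representing the coarse symbol pairing, upon invoking the Morita identification $\gee(\End(V)) \simeq \Sigma^l \gee(\C)$ from \cref{ekohperhetrgertg} to absorb the Clifford factor and match degrees.

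The main obstacle is to dispose of the correction term $\tfrac{l}{2} (\dot h/h)\, \id \stimes \sigma$. By \cref{bjgbpdgdfbdg}, the index class $\ind\cX(\TildeDirac + \tilde\Psi, \mathrm{on}\, \tilde{\cY}_\cZ)$ on the left-hand side (and, parallelly, $\ind\cX(\TildeDirac_0, \mathrm{on}\,\tilde M_{0,\cZ})$ on the right-hand side) is independent of the choice of warping function $h$. One may therefore select $h$ so that $\|\dot h/h\|_\infty$ is arbitrarily small on $[0,\infty)$ (the term already vanishes on $\R^-$), which combined with the bounded perturbation invariance of the functional calculus for asymptotically vanishing zeroth-order perturbations (of the form used in the proof of \cref{ogjoiergwefrefwfw9}, using uniform boundedness of $\|\nabla\Psi_0\|$ from assumption \ref{fwefwdewdqded3333}) identifies the two homomorphisms in the appropriate quotient graded $C^*$-category.

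Finally one checks that the support conditions match: the factor $f(\pr^*\Psi_0)$ is bounded with vanishing variation towards $\cY$-infinity (this is precisely \cref{ekopthertheth} combined with \cref{RemarkUniformOnCone}), the factor $f(\TildeDirac_0)$ has support in $\cO^\infty_\cZ(M)$ by \cref{werkogperfwfwrf} applied to $\TildeDirac_0$ using \cref{LemmaPositivityOnCone}, and their product therefore sits in the ideal associated to $\cO^\infty_\cZ(\cY)$ modulo $\cO^-(\cY \cap \cZ)$ by \cref{kohpkertphokgpertge}. This is exactly the target appearing in \eqref{ewfewfdqedqedq}, and passing to $K$-theory via \eqref{gier90gpergfwersfvfv} yields the asserted identity \eqref{erwvfvsdfvsfs}.
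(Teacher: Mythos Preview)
Your overall structure is on the right track---both sides must be represented by explicit $*$-homomorphisms into a common target quotient, and the final diagram-chase passing between Roe-algebra and Roe-category models is correctly anticipated. However, the central analytic step is genuinely missing.

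The crucial gap is your assertion that ``the two principal summands are odd selfadjoint multipliers acting on distinct tensor factors and therefore graded-commute.'' This is false. The operator $\TildeDirac_0\stimes\id_V$ is a first-order differential operator on $\tilde M$, while $\pr^*\Psi_0$ is multiplication by an $\End(V)$-valued \emph{function on $\tilde M$}; they share the $L^2(\tilde M)$ factor, and their graded commutator is the zero-order operator involving $\nabla\Psi_0$, which is nonzero in general (assumption \ref{fwefwdewdqded3333} only bounds it). Consequently the identity $f(A+B)=(A_*\stimes B_*)\circ\Delta(f)$ for the coproduct \eqref{Coproduct} is not available, and your proposal contains no substitute for it.

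What \emph{is} true---and what the paper exploits---is that after projecting to the quotient by the ideal supported on $\{\R^-\}\times M$, the bounded operators $p(e^{-s\tilde\Psi^2})$ and $p(e^{-t\TildeDirac{}^2})$ commute, because $\pr^*e^{-t\Psi_0^2}\in\ell^\infty_{\{\R^-\}\times M}(\tilde M)$ and \cref{gokpergergsgre} applies. But commutation of the \emph{bounded images} $p(g(\tilde\Psi))$, $p(h(\TildeDirac))$ in the quotient does not by itself yield $p(f(\TildeDirac+\tilde\Psi))=p(g(\tilde\Psi))\,p(h(\TildeDirac))$, because $f(\TildeDirac+\tilde\Psi)$ is defined via spectral theory on the original Hilbert space, not in the quotient. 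Bridging this is precisely where the hard analysis sits: the paper proves the required semigroup identity $p(e^{-t\tilde\Psi^2})\,p(e^{-t\TildeDirac{}^2})=p(e^{-t(\TildeDirac+\tilde\Psi)^2})$ by differentiating the interpolation $u\mapsto p(e^{-tu\tilde\Psi^2})\,p(e^{-tu\TildeDirac{}^2})\,p(e^{-(1-u)t(\TildeDirac+\tilde\Psi)^2})$ and showing it is constant. That computation in turn rests on delicate domain considerations for the unbounded operators (that $\dom((\TildeDirac+\tilde\Psi)^2)=\dom(\TildeDirac{}^2)\cap\dom(\tilde\Psi^2)$ and related norm-continuity statements for $e^{-\epsilon B^2}$ on such domains), which is exactly where the boundedness of $\{\TildeDirac,\tilde\Psi\}$ from assumption \ref{fwefwdewdqded3333} is used. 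Your proposal only invokes \ref{fwefwdewdqded3333} in the correction-term discussion, not for this purpose.

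A secondary remark: your correction-term argument via ``choose $h$ with $\|\dot h/h\|_\infty$ arbitrarily small'' and ``bounded perturbation invariance of the functional calculus'' is heuristic; you cannot make $\dot h/h$ uniformly small while keeping $h(t)\to\infty$ (you can only make it small on compacta), and no limit argument is set up. In any case this term is of the same nature as the non-commutativity above (a bounded zero-order perturbation), and is subsumed by the same heat-kernel analysis rather than handled separately.
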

  
  \begin{rem}
  Note that the right-hand side of \eqref{erwvfvsdfvsfs} is defined without assuming \ref{ogpwgwergwefrefw}.    But this assumption is necessary in order to get the left-hand side well-defined.
Indeed, the conditions imply that \cref{ktogpwegrefwf} applies and provides the class
  \[
  \sigma_{\cZ}(\Dirac+\Psi,\mathrm{on}\,\cY)\quad \mbox{in}\quad  K^{\cX}_{\cZ,-n}(\cY)\ .
  \]
  Both sides of \eqref{erwvfvsdfvsfs} do not require the technical condition \ref{fwefwdewdqded3333} which is only needed in the proof of this equality.
  \hB
  \end{rem}

  \begin{proof}
{
We apply \cref{kogpweerfwerfrwefw} to obtain the operators $\smash{\TildeDirac}$ and  $\smash{\TildeDirac + \tilde{\Psi}}$ on $\tilde{M}$.
Since $\Dirac$ is positive away from $\cZ$, \cref{LemmaPositivityOnCone} implies that $\smash{\TildeDirac}$ is positive away from the big family $\tilde{M}_{\cZ} \subset \tilde{M}$ as defined in \eqref{qfwefdqeq423}.
Hence by \cref{werkogperfwfwrf}, there exists $a$ in $(0, \infty)$  
such that
  \[
  \TildeDirac_{0,*}:C_{0}((-a,a))\to C^{*}( \tilde M_{\cZ}\subseteq \tilde M, \tilde{E}_0)
  \]
is well-defined.
Similarly, as by \cref{gkopwergwerfwerf}, $D + \Psi$ is positive away from $\cY$, the operator $\smash{\TildeDirac + \tilde{\Psi}}$ on $\tilde{M}$ is positive away from the big family $\tilde{\cY}_{\cZ}$ defined in \eqref{wrogijoiwergregfrwefwref}, and we get a well-defined $*$-homomorphism
  \[
  (\TildeDirac+\tilde \Psi)_{*}:C_{0}((-a,a))\to C^{*}( \tilde \cY_{\cZ}\subseteq \tilde M, \tilde{E}) \ .
  \]
}
  We also have 
\[
\Psi_{0,*}:C_{0}((-a,a))\to \End(V)\stimes C_{u}(\cY)\ .
\]

%
%
%
%


Recall from \cref{gkpoegsergreg} that the coproduct $\Delta : \cS \to  \cS \stimes \cS$ is the $*$-homomorphism determined uniquely by
\[
  f \mapsto f(X \stimes 1 + 1 \stimes X)\ ,
\]
where $X$ is the unbounded multiplier of $\cS$ given by $(Xf)(x) = xf(x)$.
The multiplier $X$ restricts to a bounded multiplier on each of the subalgebras $C_0((-a, a))$. 
In particular we get a restriction 
\[
 \Delta_a : C_0((-a, a)) \to C_0((-a, a)) \stimes C_0((-a, a)) 
\] of the coproduct, {making the} 
 diagram
\begin{equation}\label{vojpofvsdfvsdfvs}
\begin{tikzcd}
C((-a,a))
\ar[r, "\epsilon"] 
\ar[d, "\Delta_{a}"'] 
&
\cS 
\ar[d, "\Delta"]
\\  
C((-a,a))  \stimes C((-a,a))  \ar[r, "\epsilon \otimes \epsilon"] 
&
\cS\stimes \cS 
\end{tikzcd} 
\end{equation}
commute,
where $\epsilon: C_0((-a, a)) \to \cS$ is the extension-by-zero map.

In view of the above and Remarks \ref{okfpqfqewddedq}, \ref{wgklopergwrefwef}, the composition 
   \[C_{0}((-a,a))\xrightarrow{\Delta_{a}}C_{0}((-a,a))  \stimes C_{0}((-a,a))  \xrightarrow{\Psi_{0,*} \stimes \TildeDirac_{0,*}  } \End(V)  \stimes C_{u}(\cY )\stimes  C^{*}( {\tilde{M}_{\cZ}}\subseteq \tilde M,\tilde E_{0})
   \] 
   represents the product  
   \[  [ \Psi_{0}]\cup [\TildeDirac_{0}] \quad   \mbox{in} \quad \gK_{0}( \End(V)\stimes C_{u}(\cY)  \stimes  C^{*}({\tilde{M}_{\cZ}}\subseteq \tilde M,\tilde E_{0}))  \ .
\]

In analogy to  \cref{iogjweoigwefewrfwerfwf} (with the same arguments and using the analogue of \cref{gokpergergsgre} for $A$ in $C^{*}(\tilde M,\tilde E)$),
{there is a well-defined pairing}
\begin{equation}
\label{qefddascdcwa}
\hat \mu:   \End(V)\stimes C_{u}(\cY) \stimes  C^{*}(\tilde M_{\cZ}\subseteq \tilde M,\tilde E_{0})
\to
  \frac{  C^{*}( \tilde \cY_{\cZ}\subseteq \tilde M,\tilde E)}{C^{*}(\{ \R^{-}\}\times (\cY\cap \cZ)\subseteq \tilde  M,\tilde E)}
\end{equation}
  defined {as the matrix extension of} $\hat \mu(f\stimes A):= [\chi(\pr^{*}f\stimes \id) (\id\stimes A)]$. 
  
  \begin{lem}
  \label{CrucialLemma}
  The diagram 
  \begin{equation}
\label{svffvrevdfvsdfv}
  \begin{tikzcd}[column sep=0.8cm]
  C((-a,a))
  \ar[r, "(\TildeDirac+\tilde \Psi)_{*}"] 
  \ar[d, "\Delta_{a}"']
  &
   C^{*}( \tilde \cY_{\cZ}\subseteq \tilde M, \tilde E)
  \ar[dd,"p_{0}"]
  \\
  C_{0}((-a,a))\stimes C((-a,a))
  \ar[d, "\Psi_{0, *} \stimes \TildeDirac_{0, *}"']
  \\
  \End(V) \stimes C_{u}(\cY) \stimes  C^{*}(\tilde M_{\cZ}\subseteq \tilde M,\tilde E_{0})
  \ar[r, "\hat \mu"]
  & 
  \displaystyle \frac{  C^{*}( \tilde \cY_{\cZ}\subseteq\tilde  M,\tilde E)}{C^{*}(\{ \R^{-}\}\times (\cY\cap \cZ)\subseteq \tilde  M,\tilde E)}
  \end{tikzcd}
\end{equation} 
commutes, where the right vertical  map $p_{0}$ is the quotient projection.
\end{lem}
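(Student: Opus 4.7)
The plan is to show that both routes in \eqref{svffvrevdfvsdfv} coincide with the $*$-homomorphism $f \mapsto [f(\TildeDirac + \tilde\Psi)] \in Q$, where $Q$ denotes the quotient $C^{*}$-algebra on the right.

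For the top-right route, this is immediate. By \cref{ogjoiergwefrefwfw9}, $\TildeDirac + \tilde\Psi$ is positive away from $\tilde \cY_{\cZ}$, so \cref{werkogperfwfwrf} applied to $\TildeDirac + \tilde\Psi$ implies that $(\TildeDirac + \tilde\Psi)_{*}$ takes values in $C^{*}(\tilde \cY_{\cZ}\subseteq \tilde M, \tilde E)$ for a suitable $a$, and post-composition with $p_{0}$ yields exactly the desired map.

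For the bottom-left route, I will exploit the universal property of $\cS$ as the free graded $C^{*}$-algebra on one odd self-adjoint unbounded multiplier. Under this interpretation, the coproduct $\Delta$ (and hence its restriction $\Delta_{a}$) sends $f$ to the functional-calculus expression $f(X\stimes 1+1\stimes X)\in \cS\stimes \cS$, where $X\stimes 1$ and $1\stimes X$ are anti-commuting odd self-adjoint multipliers of $\cS \stimes \cS$ (because they live on disjoint tensor factors). Since $\Psi_{0,*}\stimes \TildeDirac_{0,*}$ is a $*$-homomorphism that intertwines functional calculus, the image equals $f(\Psi_{0}\stimes 1+1\stimes \TildeDirac_{0})$, again interpreted as the functional calculus of a sum of two anti-commuting odd self-adjoint multipliers. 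Applying the $*$-homomorphism $\hat\mu$ into $Q$ from \eqref{qefddascdcwa} gives $f(\hat\mu(\Psi_{0}\stimes 1)+\hat\mu(1\stimes \TildeDirac_{0}))$. A direct calculation from the defining formula of $\hat\mu$ and the product identification $\tilde E\cong\tilde E_{0}\stimes V$ (inherited from $E=E_{0}\stimes V$) identifies these summands with $[\tilde\Psi]$ and $[\TildeDirac]$ respectively.

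To conclude, I must match $f([\tilde\Psi]+[\TildeDirac])$ in $Q$ with $[f(\TildeDirac+\tilde\Psi)]$. This reduces to checking that the quotient projection $p_{0}$ intertwines functional calculus of the sum $\TildeDirac+\tilde\Psi$, which in turn requires that $\TildeDirac+\tilde\Psi$ be a well-behaved odd self-adjoint unbounded multiplier. Here the product hypothesis $\Dirac=\Dirac_{0}\stimes \id_{V}$, $\Psi=\id_{E_{0}}\stimes \Psi_{0}$ is decisive: the Koszul sign obtained when swapping the odd factors $\Dirac_{0}$ and $\Psi_{0}$ gives the identity $\Dirac\Psi+\Psi\Dirac=0$, so $\Dirac$ and $\Psi$ anti-commute exactly, and this anti-commutation lifts to $\TildeDirac$ and $\tilde\Psi$ on the cone (since $\tilde\Psi$ does not depend on~$t$ and $\TildeDirac$ is obtained from $\Dirac$ by the local adaptation procedure of \cref{kogpweerfwerfrwefw}, preserving the tensor-factor structure). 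The main technical obstacle will be formalizing the extension of the $*$-homomorphisms $\Psi_{0,*}\stimes \TildeDirac_{0,*}$ and $\hat\mu$ to the relevant unbounded odd self-adjoint multipliers and the corresponding commutation of functional calculus; this is standard for odd self-adjoint multipliers with bounded resolvent but must be verified carefully in the quotient $Q$, where I will invoke the analogue of \cref{gokpergergsgre} on the cone to see that $[\pr^{*}\Psi_{0}, A]$ lies in the appropriate ideal for $A \in C^{*}(\tilde M_{\cZ}\subseteq \tilde M, \tilde E_{0})$.
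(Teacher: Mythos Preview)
Your central claim that $\Dirac\Psi+\Psi\Dirac=0$ is incorrect. The Koszul sign argument you invoke would be valid if $\Psi_{0}$ were a \emph{constant} element of $\End_{\Cl^{l}}(V)^{\odd}$, but $\Psi_{0}:M\to\End_{\Cl^{l}}(V)^{\odd}$ is a function on $M$, and the differential operator $\Dirac_{0}$ differentiates it. The graded commutator $[\Dirac,\Psi]$ is the zeroth-order term controlled by $\nabla\Psi$ (see \eqref{fewfqwefqwedqewdewd} and the proof of \cref{gkopwergwerfwerf}), which is bounded by Assumption~\ref{fwefwdewdqded3333} of \cref{wegokwpergerfwefrwefwf} but is \emph{not} zero. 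The same holds on the cone: $[\TildeDirac,\tilde\Psi]$ is essentially $h(t)^{-1}\pr^{*}[\Dirac,\Psi]$ and does not vanish. Consequently the universal-property argument for anticommuting multipliers does not apply, and your proposed identification $f([\tilde\Psi]+[\TildeDirac])=[f(\TildeDirac+\tilde\Psi)]$ in $Q$ has no justification.

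What is actually true (and the paper exploits it) is weaker: bounded functions of $\tilde\Psi$ and of $\TildeDirac$ commute \emph{in the quotient} by $C^{*}(\{\R^{-}\}\times M\subseteq\tilde M,\tilde E)$, because $e^{-t\tilde\Psi_{0}^{2}}$ lies in $\End(V)\stimes\ell^{\infty}_{\{\R^{-}\}\times M}(\tilde M)$ and \cref{gokpergergsgre} applies. But commutation of bounded functions does not by itself yield the semigroup identity $p(e^{-t\tilde\Psi^{2}})p(e^{-t\TildeDirac^{2}})=p(e^{-t(\TildeDirac+\tilde\Psi)^{2}})$: this still requires moving the \emph{unbounded} operators $\tilde\Psi^{2}$ and $\TildeDirac^{2}$ through factors of the form $e^{-t(\TildeDirac+\tilde\Psi)^{2}}$. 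The paper handles this by a Duhamel-type argument (differentiating a two-parameter family in $s$ and $t$) together with two technical lemmas (\cref{ijgowergwerfwefwef} and \cref{kpwergerwgw9}) controlling domains when the anticommutator $\{A,B\}$ is merely bounded. This analytical content is precisely what your sketch elides in its final paragraph; it cannot be replaced by an appeal to abstract functional calculus, because $\hat\mu$ and $p_{0}$ do not a priori extend to the relevant unbounded multipliers without those domain estimates.
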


\begin{proof}
We have a sequence of horizontal ideal inclusions and projections
\[
\begin{tikzcd}[column sep=0.6cm]
C^{*}(\tilde \cY_{\cZ}\subseteq \tilde M,\tilde E)
\ar[d,"p_{0}"]
\ar[r]
&
C^{*}(\tilde M,\tilde E)
\ar[d]
\ar[r]
&
\cM(C^{*}(\tilde M,\tilde E))
\ar[d, "p"]
\\
\displaystyle\frac{C^{*}(\tilde \cY_{\cZ}\subseteq \tilde M,\tilde E)}{C^{*}(\{ \R^{-}\}\times (\cY\cap \cZ)\subseteq \tilde  M,\tilde E)}
\ar[r, "!"]
&
\displaystyle\frac{C^{*}( \tilde M,\tilde E)}{C^{*}(\{ \R^{-}\}\times M\subseteq \tilde  M,\tilde E)}
\ar[r]
&
\displaystyle\frac{\cM(C^{*}( \tilde M,\tilde E))}{C^{*}(\{ \R^{-}\}\times M\subseteq \tilde  M,\tilde E)}	
\end{tikzcd}
\]
where {$\cM(\cdot)$ denotes taking the multiplier algebra}.
In order to see that the marked arrow is injective we  observe that
$$C^{*}(\{ \R^{-}\}\times (\cY\cap \cZ)\subseteq \tilde  M,\tilde E)=C^{*}(\tilde \cY_{\cZ}\subseteq \tilde M,\tilde E)\cap C^{*}(\{ \R^{-}\}\times M\subseteq \tilde  M,\tilde E)\ .$$

The diagram \eqref{svffvrevdfvsdfv} therefore maps injectively to the diagram 
	  \begin{equation}
\label{svffvrevdfvsdfv2}
  \begin{tikzcd}[column sep=0.8cm]
  \cS
  \ar[r, "(\TildeDirac+\tilde \Psi)_{*}"] 
  \ar[d, "\Delta"']
  &
   \cM(C^{*}(\tilde M, \tilde E))
  \ar[dd, "p"]
  \\
  \cS \stimes \cS
  \ar[d, "\Psi_{0, *} \stimes \TildeDirac_{0, *}"']
  \\
  \End(V) \stimes C_{u}(\cY) \stimes  C^{*}(\tilde M,\tilde E_{0})
  \ar[r, "\hat \mu"]
  & 
  \displaystyle \frac{  \cM(C^{*}(\tilde{M},\tilde E))}{C^{*}(\{ \R^{-}\}\times M\subseteq \tilde  M,\tilde E)} \ , 
  \end{tikzcd}
\end{equation} 
where $\hat{\mu}$ is the extension of \eqref{qefddascdcwa} defined by the same formula. 
We must check the commutativity of \eqref{svffvrevdfvsdfv2}. 	 
By continuity and a density argument  it suffices to check this commutativity on the elements $e^{-tx^{2}}$ and $xe^{-tx^{2}}$ of $\cS$ for all $t$ in $(0,\infty)$.
The coproducts of these elements are given by
\begin{equation}\label{sdfvsferwdfvsv}\Delta(e^{-tx^{2}})=e^{-tx^{2}}\stimes e^{-tx^{2}}\ , \quad  \Delta(xe^{-tx^{2}})=xe^{-tx^{2}}\stimes e^{-tx^{2}}+e^{-tx^{2}}\stimes xe^{-tx^{2}}\ .
\end{equation}

 The first relation in \eqref{sdfvsferwdfvsv} implies the first equality in
\[
\hat \mu((\Psi_{0,*}\stimes \TildeDirac_{0,*})(\Delta(e^{-tx^{2}})))= \hat \mu (e^{-t\Psi_{0}^{2}} \stimes e^{-t\TildeDirac{}_{0}^{2}}) = p(e^{-t\tilde \Psi^{2}})p(e^{-t\TildeDirac{}^{2}} ) \ ,
\]
and  the second is a reformuation of the definition $\hat \mu$ in terms of $p$.

By the  usual finite propagation speed argument (use  \cref{werkogperfwfwrf} with $\cY=\{\tilde M\}$) we have $e^{-t\TildeDirac{}^{2}}\in C^{*}(\tilde M,\tilde E)$.
In contrast, the operator $\smash{e^{-t \tilde{\Psi}^2}}$ is not contained in $C^*(\tilde{M}, \tilde{E})$, but it is  fortunately a multiplier for this algebra. 
For this reason we passed to multiplier algebras above.

The maps $t \mapsto e^{-tx^2}$ and $t \mapsto x e^{-t x^2}$ both are smooth functions $(0, \infty) \to \cS$.
Consequently,  postcomposing  these functions with any $*$-homomorphism $\cS \to A$ yields smooth functions $(0, \infty) \to A$.
In particular, 
\[
t\mapsto \tilde{\Psi}_{*}(e^{-tx^{2}})=\smash{e^{-t \tilde{\Psi}{}^2}} \qquad \text{and} \qquad t\mapsto \TildeDirac_{*}(e^{-tx^{2}})=\smash{e^{-t \TildeDirac{}^2}}
\]
 are smooth functions with values in the right upper corner of \eqref{svffvrevdfvsdfv2}. We will denote their derivatives suggestively by
$-\smash{\tilde{\Psi}{}^{2}e^{-t \tilde{\Psi}{}^2}}$ and $-\smash{\TildeDirac{}^{2}e^{-t \TildeDirac{}^2}}$, respectively.

Since $\Psi_0$ is very positive away from $\cY$ by Assumption \ref{fwefwdewdqded}, we have $e^{-t \Psi_0^2} \in \End(V) \stimes C_u(\cY)$. 
Using only boundedness and uniform continuity, the argument given in the proof of \cref{iogjweoigwefewrfwerfwf} shows that 
\[
\smash{e^{-t \tilde{\Psi}_0^2}} = \pr^* e^{-t \Psi_0^2} \in \End(V) \stimes \smash{\ell^{\infty}_{\{\R^{-}\}\times M}{(\tilde M)}}\ .
\]
By \cref{gokpergergsgre}, for all $s,t$ in $(0,\infty)$ the commutator of $\smash{e^{-s\tilde \Psi^{2}}}$ and $\smash{e^{-t\TildeDirac{}^{2}}}$ is contained in $C^*(\{\R^-\} \times M \subseteq \tilde{M}, \tilde{E})$.  
Consequently, $\smash{p(e^{-t\tilde \Psi^{2}})}$ and $\smash{p(e^{-s\TildeDirac{}^{2}})}$ commute, and so do their derivatives.
For example, $\smash{p(e^{-s\TildeDirac{}^{2}})}$ commutes with $\smash{p(\tilde \Psi^{2}e^{-t\tilde \Psi^{2}})}$.

We must show the equality of semigroups 
\begin{equation}
\label{mgrmglkrtgerl}
  p(e^{-t\tilde \Psi^{2}}) p(e^{-t\TildeDirac{}^{2}} )= p(e^{-t(\TildeDirac+\tilde \Psi)^{2}})\ .
\end{equation}

To this end we consider the smooth function
\[
 (0,\infty)\times (0,\infty)\ni (s,t) \mapsto   p(e^{-s\tilde \Psi^{2}}) p(e^{-s\TildeDirac{}^{2}} ) p(e^{-t(\TildeDirac+\tilde \Psi)^{2}})\ 
\in \frac{\cM(C^*(\tilde{M}, \tilde{E}))}{C^*(\{\R^- \}\times M \subseteq \tilde{M}, \tilde{E})} .
\]
  We calculate
\begin{eqnarray}
\lefteqn{-\frac{\partial}{\partial s} p(e^{-s\tilde \Psi^{2}}) p(e^{-s\TildeDirac{}^{2}})p( e^{-t(\TildeDirac+\tilde \Psi)^{2}})}&&\label{eokgpgwergwefwref}\\&=&
p(  \tilde \Psi^{2} e^{-s\tilde \Psi^{2}} ) p(e^{-s\TildeDirac{}^{2}} ) p(e^{-t(\TildeDirac+\tilde \Psi)^{2}})+
p( e^{-s\tilde \Psi^{2}} )p( \TildeDirac{}^{2} e^{-s\TildeDirac{}^{2}} ) p( e^{-t(\TildeDirac+\tilde \Psi)^{2}})\nonumber\\
&=&  p(e^{-s\TildeDirac{}^{2}} ) p( \tilde \Psi^{2} e^{-s\tilde \Psi^{2}}  )p(e^{-t(\TildeDirac+\tilde \Psi)^{2}})+
p( e^{-s\tilde \Psi^{2}} )p(  \TildeDirac{}^{2} e^{-s\TildeDirac{}^{2}} ) p( e^{-t(\TildeDirac+\tilde \Psi)^{2}})
\nonumber\\
&=&  p(e^{-s\TildeDirac{}^{2}} ) p( \tilde \Psi^{2} e^{-s\tilde \Psi^{2}}  \cdot e^{-t(\TildeDirac+\tilde \Psi)^{2}})+
p( e^{-s\tilde \Psi^{2}} )p(  \TildeDirac{}^{2} e^{-s\TildeDirac{}^{2}} \cdot e^{-t(\TildeDirac+\tilde \Psi)^{2}})
\nonumber\\
&\stackrel{!}{=}&  p(e^{-s\TildeDirac{}^{2}} ) p( e^{-s\tilde \Psi^{2}}  \cdot  \tilde \Psi^{2}  e^{-t(\TildeDirac+\tilde \Psi)^{2}})+
p( e^{-s\tilde \Psi^{2}} )p(   e^{-s\TildeDirac{}^{2}} \cdot  \TildeDirac{}^{2} e^{-t(\TildeDirac+\tilde \Psi)^{2}})
\nonumber\\
&=&p(e^{-s\TildeDirac{}^{2}} ) p( e^{-s\tilde \Psi^{2}} ) p(\tilde \Psi^{2}e^{-t(\TildeDirac+\tilde \Psi)^{2}})+
p( e^{-s\tilde \Psi^{2}} )p(  e^{-s\TildeDirac{}^{2}}) p(\TildeDirac{}^{2} e^{-t(\TildeDirac+\tilde \Psi)^{2}})\nonumber\\
&=& p( e^{-s\tilde \Psi^{2}} ) p(e^{-s\TildeDirac{}^{2}} ) p((\tilde \Psi^{2}+\TildeDirac{}^{2} )e^{-t(\TildeDirac+\tilde \Psi)^{2}}) \nonumber\\&=& -\frac{\partial}{\partial t} p(e^{-s\tilde \Psi^{2}}) p(e^{-s\TildeDirac{}^{2}})p( e^{-t(\TildeDirac+\tilde \Psi)^{2}})\nonumber
\end{eqnarray}
{In the marked equality, we used the identities
\begin{equation}
\label{IdentitiesToJustify}
\begin{aligned}
\tilde \Psi^{2} e^{-s\tilde \Psi^{2}}  \cdot e^{-t(\TildeDirac+\tilde \Psi)^{2}} &= e^{-s\tilde \Psi^{2}}  \cdot  \tilde \Psi^{2}  e^{-t(\TildeDirac+\tilde \Psi)^{2}}, 
\\
\TildeDirac{}^{2} e^{-s\TildeDirac{}^{2}} \cdot e^{-t(\TildeDirac+\tilde \Psi)^{2}}
&= 
e^{-s\TildeDirac{}^{2}} \cdot  \TildeDirac{}^{2} e^{-t(\TildeDirac+\tilde \Psi)^{2}}
\end{aligned}
\end{equation}
which require justification.
Recall here that we use $ \tilde \Psi^{2} e^{-s\tilde \Psi^{2}}$ as a symbol for the operator ${\smash{\tilde{\Psi}_*(x^2 e^{-sx^2}) = -\partial_{s}e^{-s\tilde \Psi^{2}}}}$, and it is not a priori clear that the right hand sides of \eqref{IdentitiesToJustify} are even defined.}
Let $\dom(\tilde \Psi^{2})$ denote the domain of the unbounded selfadjoint operator determined by $\smash{\tilde \Psi^{2}}$  equipped with the 
graph norm.
Then $\smash{e^{-s\tilde \Psi^{2}}:\dom(\tilde \Psi^{2})\to H}$ is a norm-differentiable family of operators with (negative) derivative
\[
  \dom(\tilde \Psi^{2})\xrightarrow{\tilde \Psi^{2}} H \xrightarrow{e^{-s\tilde \Psi^{2}}} H\ .
  \]
   This composition happens to have a continuous extension to a bounded operator on $H$ which we denoted by  $\tilde \Psi^{2} e^{-s\tilde \Psi^{2}}$ above.
We apply \cref{ijgowergwerfwefwef} {below} with $H=L^{2}(\tilde M, \tilde E)$, $\smash{A=t^{1/2}\TildeDirac}$, $\smash{B=t^{1/2}\tilde \Psi}$, $D=C_{c}^{\infty}(\tilde M,\tilde E)$. 
Note that $\smash{\{\TildeDirac,\tilde \Psi\}}$ is bounded by  Assumption \ref{fwefwdewdqded3333}.
Since $\smash{e^{-t(\TildeDirac+\tilde \Psi)^{2}}:H\to \dom((\TildeDirac+\tilde \Psi)^{2})}$ is continuous, 
   \cref{ijgowergwerfwefwef}.\ref{kotrhperthertgertgetg}
 implies that  $\smash{e^{-t(\TildeDirac+\tilde \Psi)^{2}}}$  corestricts to a bounded map from $H$ to $\smash{\dom(\tilde \Psi^{2})}$. 
Hence we can write
$\smash{\tilde \Psi^{2} e^{-s\tilde \Psi^{2}}  \cdot e^{-t(\TildeDirac+\tilde \Psi)^{2}}}$ as the composition of bounded operators
$$ H \xrightarrow{e^{-t(\TildeDirac+\tilde \Psi)^{2}}} \dom(\tilde \Psi^{2})\xrightarrow{\tilde \Psi^{2}} H \xrightarrow{e^{-s\tilde \Psi^{2}}} H\ .$$
This finishes the justification for the first equality in \eqref{IdentitiesToJustify} . 
For the second equality, we argue similarly using the claim that $e^{-t(\TildeDirac+\tilde \Psi)^{2}}$  restricts to a bounded map from $H$ to $\dom(\TildeDirac{}^{2})$.

By \eqref{eokgpgwergwefwref}, for every $t$ in $(0,\infty)$ the function 
\[
 (0,1)\ni u\mapsto p(e^{-tu\tilde \Psi^{2}}) p(e^{-tu\TildeDirac{}^{2}})p( e^{-(1-u)t(\TildeDirac+\tilde \Psi)^{2}})
 \] 
is constant. 
This together with
the following  two {norm} limits 
  \begin{equation}
  \label{vsfdvsdfvsdvvs}  \lim_{u\downarrow 0}   e^{-tu\TildeDirac{}^{2}} e^{-tu\tilde \Psi^{2}} e^{-(1-u)t(\TildeDirac+\tilde \Psi)^{2}}=
e^{-t(\TildeDirac+\tilde \Psi)^{2}}\end{equation} 
and  \begin{equation}\label{vsfdvsdfvsdvvs1}\lim_{u\uparrow 1} e^{-tu\TildeDirac{}^{2}}e^{-tu\tilde  \Psi^{2}  } e^{-(1-u)t(\TildeDirac+\tilde \Psi)^{2}} =e^{-t\tilde \Psi^{2}}e^{-t\TildeDirac{}^{2}}
\end{equation}   
 implies \eqref{mgrmglkrtgerl}.

We now justify the limits \eqref{vsfdvsdfvsdvvs} and \eqref{vsfdvsdfvsdvvs1}.
 {For the first,} we use the decomposition
 $$ e^{-tu\TildeDirac{}^{2}} e^{-tu\tilde \Psi^{2}}e^{-(1-u)t(\TildeDirac+\tilde \Psi)^{2}} = e^{-tu\TildeDirac{}^{2}} e^{-tu\tilde \Psi^{2}} e^{-t\frac{1}{2}(\TildeDirac+\tilde \Psi)^{2}} \cdot e^{-t(\frac{1}{2}-u)(\TildeDirac+\tilde \Psi)^{2}}  \ .$$
 The second {factor} {on the right hand side} converges to $e^{-t\frac{1}{2}(\TildeDirac+\tilde \Psi)^{2}} $ and we must show that
$$\lim_{u\downarrow 0}  e^{-tu\TildeDirac{}^{2}}e^{-tu\tilde \Psi^{2}}  e^{-t\frac{1}{2}(\TildeDirac+\tilde \Psi)^{2}} =
e^{-t\frac{1}{2}(\TildeDirac+\tilde \Psi)^{2}} \ .$$
We use the decomposition
\begin{equation}
\label{fqwefewdwdwedq}  
H
\xrightarrow{e^{-t\frac{1}{2}(\TildeDirac+\tilde \Psi)^{2}}}   
\dom(\TildeDirac{}^{2})\cap \dom(\tilde \Psi^{2}) 
\xrightarrow{e^{-tu\tilde \Psi^{2}}}
\dom(\TildeDirac)  
\xrightarrow{e^{-tu\TildeDirac{}^{2}}} 
H
\ .
 \end{equation} 
The first map is a bounded operator by  \cref{ijgowergwerfwefwef}.\ref{kotrhperthertgertgetg} (applied to 
 $\smash{A=t^{1/2}\TildeDirac}$, $\smash{B=t^{1/2}\tilde \Psi}$).
The last map is a continuous family of bounded operators which tends to the inclusion $\smash{\dom(\TildeDirac)}\to H$ as $u\downarrow 0$. 
For the  middle map  in \eqref{fqwefewdwdwedq}  we apply \cref{kpwergerwgw9}  with  $\smash{A=\TildeDirac}$ and $B=\tilde \Psi$, $H=L^{2}(\tilde M,\tilde E)$ and $D=C_{c}^{\infty}(\tilde M,\tilde E)$. The assumptions
on the functions of $B$ are easily verified. 
 \cref{kpwergerwgw9}.\ref{erwjigowergwefwref2} states that the {middle} map is a continuous family of bounded operators which tends to the inclusion if $u\downarrow 0$. This finishes the verification of \eqref{vsfdvsdfvsdvvs}.

For \eqref{vsfdvsdfvsdvvs1}  we consider the adjoint and use the decomposition
\begin{equation}\label{fwefqwedqwedqwdq}  H\xrightarrow{e^{-tu \TildeDirac^{2}} } \dom(\TildeDirac)\xrightarrow{e^{-t u \tilde \Psi^{2} }} 
\dom(\TildeDirac)\cap \dom(\tilde \Psi)
\stackrel{\text{\cref{ijgowergwerfwefwef}}.\ref{kotrhperthertgertgetg}}{=}\dom(\TildeDirac+\tilde \Psi) \xrightarrow{e^{-(1-u)t(\TildeDirac+\tilde \Psi)^{2}}} H \ .\end{equation} 
  As $u\uparrow 1$ the first map converges in norm to 
  $e^{-t \TildeDirac^{2}}: H \to  \dom(\TildeDirac)$.  
  The second map is continuous by  \cref{kpwergerwgw9}.\ref{erwjigowergwefwref1} and converges to
  $\smash{e^{-t\tilde \Psi^{2}}:  \dom(\TildeDirac)\to \dom(\TildeDirac)\cap \dom(\tilde \Psi)}$ as $u\uparrow 1$.
  Finally the last map converges in norm to
the inclusion $\dom(\TildeDirac+\tilde \Psi)\to H$.
This gives  \eqref{vsfdvsdfvsdvvs1}.

For the odd part we have  
\[
\hat \mu((\tilde \Psi_{0,*}\stimes \TildeDirac_{0,*})(\Delta(xe^{-tx^{2}})))=p(\tilde \Psi e^{-t\tilde \Psi^{2}})p(e^{-t\TildeDirac{}^{2}} )+ p( e^{-t\tilde \Psi^{2}})p(\TildeDirac e^{-t\TildeDirac{}^{2}} )
\]
and therefore must show that
\begin{equation}\label{ihfioefvsfdv}p(\tilde \Psi e^{-t\tilde \Psi^{2}})p(e^{-t\TildeDirac{}^{2}} )+  p(\TildeDirac e^{-t\TildeDirac{}^{2}} )p( e^{-t\tilde \Psi^{2}}) =p((\TildeDirac+\tilde \Psi)e^{-t(\TildeDirac+\tilde \Psi)^{2}} )\ .
\end{equation} 
We show 
\begin{equation}\label{gwergewfwref}p(\tilde \Psi e^{-t\tilde \Psi^{2}})p(e^{-t\TildeDirac{}^{2}})=p(\tilde \Psi e^{-t(\TildeDirac +\tilde \Psi)^{2}})\ ,
\end{equation}  where the right-hand side is defined by  \cref{ijgowergwerfwefwef}.\ref{kotrhperthertgertgetg0}. 
{For $\epsilon$ in $(0,\infty)$, we observe using} the even case that
 \begin{eqnarray*}
p(e^{-\epsilon \tilde \Psi^{2}} \tilde \Psi e^{-t\tilde \Psi^{2}})p(e^{-t\TildeDirac^{2}})&=&
p(\tilde \Psi e^{-\epsilon \tilde \Psi^{2}} ) p(e^{-t\tilde \Psi^{2}})p(e^{-t\TildeDirac^{2}})\\&=&
p(\tilde \Psi e^{-\epsilon \tilde \Psi^{2}})  p(e^{-t(\TildeDirac +\tilde \Psi)^{2}})\\&=&
p( e^{-\epsilon \tilde \Psi^{2}}\tilde \Psi e^{-t(\TildeDirac +\tilde \Psi)^{2}})  \end{eqnarray*}
Since  $\tilde \Psi e^{-t(\TildeDirac +\tilde \Psi)^{2}}$ and $ \tilde \Psi e^{-t\tilde \Psi^{2}}$ corestrict to bounded maps from $H$ to $\dom(\tilde \Psi)$ (we use   \cref{ijgowergwerfwefwef}.\ref{kotrhperthertgertgetg0}
 for the first case)  we can take the limit $\epsilon\downarrow 0$ and get \eqref{gwergewfwref}. 
 {The idenity} 
\begin{equation}\label{gwergewfwref1}p(\TildeDirac e^{-t\TildeDirac^{2}})p(e^{-t\tilde \Psi^{2}})=p(\TildeDirac e^{-t(\TildeDirac +\tilde \Psi)^{2}})\ 
\end{equation}
{can be shown} by an analogous argument.
\end{proof}

\begin{kor}
\label{CorollaryCupHom}
We have the equality 
\[
\hat  \mu_*([\Psi_0] \cup [\smash{\TildeDirac_0}]) = p_{0,*}([\TildeDirac + \tilde{\Psi}])\quad \mbox{in}\quad K^{\gr}_{0}\left( \frac{  C^{*}( \tilde \cY_{\cZ}\subseteq \tilde M,\tilde E)}{C^{*}(\{ \R^{-}\}\times (\cY\cap \cZ)\subseteq \tilde  M,\tilde E)}\right)\ .\]
\end{kor}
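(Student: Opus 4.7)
The plan is to read the corollary off \cref{CrucialLemma} by applying $\K^{\gr}_0$ to both paths of the diagram \eqref{svffvrevdfvsdfv} and identifying the resulting $K$-theory classes with the two sides of the equality. All three classes $[\Psi_0]$, $[\TildeDirac_0]$, $[\TildeDirac + \tilde{\Psi}]$ are constructed via \cref{okfpqfqewddedq} from the $\ast$-homomorphisms $\Psi_{0,\ast}$, $\TildeDirac_{0,\ast}$, $(\TildeDirac + \tilde{\Psi})_\ast$ defined on $C_0((-a,a))$; by the universal property of the graded $K$-theory functor, each such $\ast$-homomorphism $C_0((-a,a)) \to A$ represents a unique class in $\gK_0(A)$, so the two sides of the claimed equality correspond to the two composite $\ast$-homomorphisms $C_0((-a,a)) \to \frac{C^\ast(\tilde\cY_\cZ \subseteq \tilde M, \tilde E)}{C^\ast(\{\R^-\} \times (\cY \cap \cZ) \subseteq \tilde M, \tilde E)}$ obtained by going around \eqref{svffvrevdfvsdfv}.

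First I would identify the upper-right composition. By construction $p_0 \circ (\TildeDirac + \tilde{\Psi})_\ast$ represents $p_{0,\ast}([\TildeDirac + \tilde{\Psi}])$ because $K^{\gr}$ is functorial in $\ast$-homomorphisms. Next I would identify the lower-left composition. By \cref{wgklopergwrefwef}, the cup product $[\Psi_0] \cup [\TildeDirac_0]$ is represented in the cofree comodule picture by the composition $\cS \xrightarrow{\Delta} \cS \stimes \cS \xrightarrow{} A \stimes B$ built out of the defining homomorphisms. Transferred to the $C_0((-a,a))$-version using the identification $\kappa^\ast : \cS \to C_0((-a,a))$ from \cref{okfpqfqewddedq} together with the commuting square \eqref{vojpofvsdfvsdfvs} (which says that $\Delta_a$ is the restriction of $\Delta$ along $\epsilon$), this representative becomes $(\Psi_{0,\ast} \stimes \TildeDirac_{0,\ast}) \circ \Delta_a$; this is exactly the content of the sentence immediately preceding the statement of \cref{CrucialLemma}. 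Post-composing with $\hat\mu$ then produces a representative of $\hat\mu_\ast([\Psi_0] \cup [\TildeDirac_0])$.

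At this point the two sides of the corollary are represented by the two composite $\ast$-homomorphisms $C_0((-a,a)) \to $ (quotient) sitting in \eqref{svffvrevdfvsdfv}. The commutativity established in \cref{CrucialLemma} is precisely the equality of these two $\ast$-homomorphisms, and applying $\gK_0$ yields the equality of the two classes.

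The main conceptual obstacle, namely showing that $p_0 \circ (\TildeDirac + \tilde{\Psi})_\ast$ actually coincides with $\hat\mu \circ (\Psi_{0,\ast} \stimes \TildeDirac_{0,\ast}) \circ \Delta_a$ as a $\ast$-homomorphism, has already been absorbed into \cref{CrucialLemma}; once that lemma is in hand, the present corollary is a purely formal consequence of the definition of cup product on $K^{\gr}$ classes represented by $\ast$-homomorphisms out of $C_0((-a,a))$.
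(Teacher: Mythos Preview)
Your proposal is correct and matches the paper's approach exactly: the paper states \cref{CorollaryCupHom} as an immediate corollary of \cref{CrucialLemma} without a separate proof, and your argument---that the two paths around the diagram \eqref{svffvrevdfvsdfv} represent the two sides of the claimed equality via the cup product formula of \cref{wgklopergwrefwef} and \cref{okfpqfqewddedq}---is precisely the intended reasoning.
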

  
  Recall that  on the one hand $\sigma_{\cZ}(\Dirac+\Psi,\mathrm{on}\,\cY)$ is obtained   in \cref{gkopwerfefrefwfref} from the class represented by $(\smash{\TildeDirac}+\tilde \Psi)_{*}$ on $\tilde \cY_{\cZ}$ in $K^{\gr}(C^{*}(\tilde \cY_{\cZ}\subseteq \tilde M,E))$ via an identification of   the $K$-theory  of this Roe algebra with the $K$-theory of the $C^{*}$-category  going into the definition of the the coarse $K$-homology functor  $K(\bC(\tilde \cY_{\cZ}\subseteq \tilde M))\simeq K\cX(\tilde \cY_{\cZ})$.   On the other hand the coarse symbol pairing $-\cap^{\cX\sigma}-$ is defined in \cref{grjweopgergweffefwef} in terms of 
 the latter  $C^{*}$-categories directly. The remaining  work in this proof  of \cref{wegokwpergerfwefrwefwf} consists of  transporting the construction of  $-\cap^{\cX\sigma}-$
  through these identifications until it can directly be compared with the class represented by $\hat \mu(\Psi_{0,*}\stimes  \smash{\TildeDirac}_{0,*}) \Delta_{a}   $.

  The formula \eqref{ewfewfdqedqedq} for $\mu$  also defines  pairings
  
  \begin{equation*}
\label{qefddascdcwa33}
{
\hat \mu_{0}:   C_{u}(\cY) \stimes  C^{*}(\tilde M_{\cZ}\subseteq \tilde M,\tilde E_{0})
\to
  \frac{  C^{*}( \tilde \cY_{\cZ}\subseteq \tilde M,\tilde E_{0})}{C^{*}(\{ \R^{-}\}\times (\cY\cap \cZ)\subseteq \tilde  M,\tilde E_{0})}
}\ ,
\end{equation*}
  \[
  \tilde \mu':  C_{u}(\cY ) \otimes  \tilde \bC[n](\tilde M_{\cZ}\subseteq\tilde M)
  \to 
  \frac{ \tilde \bC[n]( \tilde \cY_{\cZ}\subseteq \tilde M) }{\tilde \bC[n](\{ \R^{-}\}\times (\cY\cap \cZ)\subseteq \tilde M) }
  \]
  and  
  \[ \mu':    C_{u}(\cY) \otimes   \bC[n](\tilde M_{\cZ}\subseteq \tilde M)
  \to 
  \frac{   \bC[n](\tilde  \cY_{\cZ}\subseteq \tilde  M) }{  \bC[n](\{\R^{-}\}\times (\cY\cap \cZ)\subseteq \tilde  M) }\ ,
  \]
  where the notation $\bC[n](-)$ and $\tilde \bC[n](-)$ {was} introduced in \cref{rijeogegergwergf}.

Recall the functor $\cI$ from \eqref{gergwerfwrfweferfw}.

   \begin{lem}
 The following square commutes:
 \[
 \begin{tikzcd}
 C_{u}(\cY) \otimes  \tilde \bC[n](\tilde M_{\cZ}\subseteq \tilde M)
 \ar[r, "\tilde \mu'"]
 \ar[d, "1 \otimes \cI"']
 & \displaystyle
 \frac{ \tilde \bC[n]( \tilde \cY_{\cZ}\subseteq \tilde  M) }{\tilde \bC[n](\{ \R^{-}\}\times (\cY\cap \cZ)\subseteq  \tilde M) }\ar[d, "\cI"]  
 \\   
  C_{u}(\cY) \stimes  C^{*}(\tilde M_{\cZ}\subseteq \tilde M,\tilde E_{0})
  \ar[r, "\hat \mu_{0}"] 
  &  \displaystyle
  \frac{  C^{*}( \tilde \cY_{\cZ}\subseteq  \tilde  M,\tilde E_0)}{C^{*}(\{ \R^{-}\}\times (\cY\cap \cZ)\subseteq  \tilde  M,\tilde E_0)} \ .
  \end{tikzcd} 
  \]
\end{lem}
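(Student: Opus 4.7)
The plan is to verify commutativity of the square on elementary tensors $f \otimes A$ with $f \in C_u(\cY)$ and $A: (H, \chi, U) \to (H', \chi', U')$ a morphism of $\tilde \bC[n](\tilde M_\cZ \subseteq \tilde M)$. Unwinding the definitions of $\tilde \mu'$, $\hat \mu_0$ and $\cI$, the two composites of the square yield the operators
\[
U' \chi'(\pr^*f) A U^{*} \qquad \text{and} \qquad \chi_{L^2}(\pr^*f) U' A U^{*}
\]
on $L^2(M, E_0)$, so the claim is that their difference $(U' \chi'(\pr^*f) - \chi_{L^2}(\pr^*f) U') A U^{*}$ vanishes modulo the ideal $C^*(\{\R^-\} \times (\cY \cap \cZ) \subseteq \tilde M, \tilde E_0)$.

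The crucial input is the commutator estimate. By uniform continuity of $f$, the pullback $\pr^{*}f$ belongs to $\ell^\infty_{\cO^-(\tilde M)}(\tilde M)$; this is the same observation already used in the proof of \cref{iogjweoigwefewrfwerfwf}. Since $U'$ is a controlled $\Cl^n$-equivariant isometric embedding between two $\tilde M$-controlled Hilbert spaces, and thus in particular approximable by controlled operators, \cref{gokpergergsgre} (applied in the two-space form of \cref{kohpkertphokgpertge}) shows that the commutator $U' \chi'(\pr^*f) - \chi_{L^2}(\pr^*f) U'$ is approximately supported on members of $\cO^-(\tilde M)$.

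Combining this with the supports of the remaining factors: $A$ is approximable by morphisms supported on members of $\tilde M_\cZ$, and $\chi'(\pr^*f)$ is approximable by multiplication operators supported on $\cY$ because $f$ vanishes away from $\cY$. Hence the entire product is approximately supported on
\[
\cO^-(\tilde M) \cap \tilde M_\cZ \cap (\R \times \cY) = \{\R^-\} \times (\cY \cap \cZ),
\]
which is precisely the big family defining the ideal in the denominator. This establishes commutativity on elementary tensors, after which the passage first to the algebraic and then to the maximal tensor product is automatic by continuity and the universal property, exactly as in \cref{iogjweoigwefewrfwerfwf}. I expect the main obstacle to be the clean bookkeeping of supports through two different Hilbert spaces and the precise formulation of the commutator estimate for operators between distinct $\tilde M$-controlled Hilbert spaces; once these technical points are in place the argument becomes formal.
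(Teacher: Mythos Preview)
Your proposal is correct and follows essentially the same approach as the paper. Both arguments reduce to checking on elementary tensors, write out the two composites as $[U'\chi'(\pr^*f)AU^*]$ and $[\tilde\chi(\pr^*f)U'AU^*]$, factor the difference as $(U'\chi'(\pr^*f)-\tilde\chi(\pr^*f)U')AU^*$, and then use that $\pr^*f\in\ell^\infty_{\{\R^-\}\times M}(\tilde M)$ (from uniform continuity of $f$) together with the controlled isometric embedding $U'$ and \cref{kohpkertphokgpertge} to land the commutator in the ideal; you are simply slightly more explicit than the paper in spelling out how the additional $\cY$- and $\cZ$-support constraints (coming from $f$ and $A$ respectively) cut the $\{\R^-\}\times M$ support down to $\{\R^-\}\times(\cY\cap\cZ)$.
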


  \begin{proof}
  Let $A:(H,\chi,U)\to (H',\chi',U')$ be a morphism in $ \tilde \bC[n](\tilde M_{\cZ}\subseteq \tilde M)$ and $f$ be in ${C_{u}(\cY)}$. 
    Let $\tilde \chi$ denote the projection-valued measure on $L^{2}(\tilde M,\tilde E_{0})$.
  Then we have 
  \[
  \begin{aligned}
  (1 \otimes \cI)\circ \tilde{\mu}'(f \otimes A) &=  \cI([\chi'(\pr^*f) A])
  = [U'\chi'(\pr^*f) A U^*] 
  \\
  \hat \mu_{0} \circ (1 \otimes \cI) (f \otimes A) &= \hat  \mu_{0} (f \otimes U'AU^*)
  = [\tilde{\chi}(\pr^*f) U'A U^*] 
  \end{aligned}
  \]
The difference {of the two representatives} is given by 
\[U'\chi'(\pr^{*}f) AU^{*}-\tilde \chi( \pr^{*}f) {U'}AU^{*}=(U'\chi'(\pr^{*}f)-\tilde \chi(\pr^{*}f)U')AU^{*}\ .
\]
Since $f$ is {uniformly continuous} we have $\pr^{*}f\in \ell^{\infty}_{\{\R^{-}\}\times M}(\tilde M)$ {(see \cref{RemarkUniformOnCone}).}
Since $U'$ is controlled for 
the {coarse} structure on $\smash{\tilde M}$, \cref{kohpkertphokgpertge} implies that the above difference is in $C^{*}(\{\R^{-}\}\times (\cY\cap \cZ) \subseteq\tilde  M,\tilde E)$.
\end{proof}

 The morphism $\cF$ from  \eqref{vrewjvoievjiosvdfvdsfvs} and the map $\iota$ from \eqref{fwerferwffrfwfwrfw} induce the respective vertical arrows in the following diagram
\[
\begin{tikzcd} 
 C_{u}(\cY) \stimes  \tilde \bC[n](\tilde M_{\cZ}\subseteq \tilde M)
 \ar[r, "{\tilde{\mu}'}"]
 \ar[d, "\id \stimes \cF"] 
 & \displaystyle
 \frac{ \tilde \bC[n]( \tilde \cY_{\cZ}\subseteq \tilde  M) }{\tilde \bC[n](\{ \R^{-}\}\times (\cY\cap \cZ)\subseteq\tilde M) }
 \ar[d, "\cF"]  
 \\   
 C_{u}(\cY) \stimes   \bC[n](\tilde M_{\cZ}\subseteq \tilde M)
 \ar[r, "{\mu'}"] 
 \ar[d,"\id\stimes \iota"]
 & \displaystyle
 \frac{  \bC[n]( \tilde \cY_{\cZ}\subseteq \tilde M)}{\bC[n](\{ \R^{-}\}\times (\cY\cap \cZ)\subseteq \tilde  M)}
 \ar[d,"\iota"] 
 \\
  C_{u}(\cY) \stimes   \bC[n](\cO_{\cZ}^{\infty}(\cY)\subseteq \cO^{\infty}(M))
  \ar[r, "\mu"]  
  & \displaystyle
  \frac{  \bC[n](\cO_{\cZ}^{\infty}(\cY)\subseteq \cO^{\infty}(M))}{\bC[n](\cO^{-}(\cY\cap \cZ)\subseteq \cO^{\infty}(M))}
  \end{tikzcd}
  \]
  which obviously commutes.
  We tensor with $\End(V)$, apply $\Sigma^{-l}\gK$ and immediately  use the equivalence $\Sigma^{-l}\gK(\End(V)\stimes -)\simeq \gK(-)$ in order to save space. 
  We get a commutative diagram
   \[ 
   \small
  \begin{tikzcd}[column sep=-2cm] 
  \gK(C_{u}(\cY)){\times} \gK(\bC[n](\cO^{\infty}_{\cZ}(M)))
  \ar[dr,"\hat \boxtimes"]
  & 
  \\
  &
  \gK(C_{u}(\cY)\stimes \bC[n]( \cO^{\infty}_{\cZ}(M))
  \ar[dr, "{\mu}", near end]
  \\
    \gK(C_{u}(\cY)){\times} \gK(\tilde \bC[n](\tilde M_{\cZ}\subseteq \tilde M))
  \ar[dr] 
  \ar[dd, "\simeq", "\id \times \cI"']
  \ar[uu, "\id \times (\iota\circ \cF)"] 
  &
  &
  \gK( \frac{\bC[n](\cO_{\cZ}^{\infty}(\cY)\subseteq \cO^{\infty}({M}))}{\bC[n](\cO^{-}(\cY\cap \cZ)\subseteq \cO^{\infty}({M}))})   
  \\
  &
   \gK(C_{u}(\cY)\stimes  \tilde \bC[n](\tilde M_{\cZ}\subseteq \tilde M))
   \ar[dd, "\id \stimes \cI"', "\simeq"]
   \ar[dr, "{\tilde \mu'}", near end]
   \ar[uu, "\id\stimes \iota\circ \cF"]
   &
   \\
  \gK(C_{u}(\cY)){\times} \gK( C^{*}(\tilde M_{\cZ}\subseteq \tilde M,\tilde E_{0}))
  \ar[dr]
   &
   &
   \gK(\frac{ \tilde \bC[n]( \tilde \cY_{\cZ}\subseteq \tilde M) }{\tilde \bC[n](\{ \R^{-}\}\times (\cY\cap \cZ)\subseteq \tilde  M) })
   \ar[dd, "\simeq"', "\cI"]
   \ar[uu, "\iota\circ \cF"']
  \\ 
  & 
  {\Sigma^{-l}} \gK(\End(V)\stimes C_{u}(\cY)\stimes   C^{*}(\tilde M_{\cZ}\subseteq \tilde M,\tilde E_{0}))
  \ar[dr, "{\hat \mu}", near end]
  &
  \\
  &
  &   
  {\Sigma^{-l}}\gK(\frac{  C^{*}(\tilde \cY_{\cZ}\subseteq \tilde M,\tilde E)}{C^{*}(\{ \R^{-}\}\times (\cY\cap \cZ)\subseteq  \tilde  M,\tilde E)})
  \ .
  \end{tikzcd}
  \]
  We consider the class $([\Psi_{0}],[\smash{\TildeDirac_{0}}])$ in the lower left corner.
  Its image under the up--right composition is by definition $\mu_{*}([\Psi_0]\hat \boxtimes \beta^{-1}  \sigma_{\cZ}(\Dirac_0))$ while its image under the right-up composition is $\iota_{*} \cF_{*}\hat \mu_{*}([\Psi_{0}]\cup [\smash{\TildeDirac_{0}}])$. 
  This gives the first equality in the following display, while \cref{CorollaryCupHom}
   implies the second:
  \[
  \beta^{-1}\mu([\Psi_0] \hat \boxtimes  \sigma_{\cZ}(\Dirac_0))=\iota_{*} \cF_{*}\cI^{-1}_{*}\hat \mu_{*} ([\Psi_{0}]\cup [\TildeDirac_{0}])
  = \iota_*  \cF_* \cI^{-1}_{*} ([\TildeDirac + \tilde{\Psi}]) \ .
  \]
 In view of the \cref{grjweopgergweffefwef} of the coarse  symbol pairing  and   \cref{gjkregopregwergfwrefrfwrf}  of the coarse index,  \cref{lphegeetrhetrgtgteg} of the symbol,   and  \eqref{eqfewdqcdvadvad} we can conclude  that
  $$ [\Psi_{0}]\cap^{\cX\sigma} \sigma_{\cZ}(\Dirac_{0})  = \beta \iota_{*}\ind\cX(\TildeDirac+\tilde \Psi,\mathrm{on}\,\tilde \cY_{\cZ})=\sigma_{\cZ}(\Dirac+\Psi, \mathrm{on}\, \cY)$$
  as asserted in \eqref{erwvfvsdfvsfs}.  \end{proof}

\subsection{Technical Results}


{In this section, we provide some technical results that were used in the proof of the crucial \cref{CrucialLemma}.}

Let $H$ be a Hilbert space and $A,B$ be two unbounded selfadjoint
operators on $H$. 


\begin{ass}
\label{AssumptionsAB}
We assume that there is a linear subspace $D$  of  $\dom(A)\cap \dom(B)$ {preserved by both $A$ and $B$} such that  $A^{k}_{|D},B^{k}_{|D}$ and $(A_{|D} +B_{|D})^{k}$  are  essentially selfadjoint for $k=1,2$.  
We let $A+B$ denote the unique selfadjoint extension of  $A_{|D}+B_{|D}$.
\end{ass}
  
If $C$ is a selfadjoint operator  on $H$ with $D\subseteq \dom(C)$ such that $C_{|D}$ is essentially selfadjoint,  then
$\dom(C)$ is the closure of $D$ with respect to the graph norm $\|x\|_{C}:=\|Cx\|+\|x\|$. 
For $k,l$ in $\nat$  we equip $\dom(A^{k})\cap \dom(B^{l})$ with the norm 
\[
\|x\|_{A^{k},B^{l}}:=\|x\|_{A^{k}}+\|x\|_{B^{l}} \ .
\]

\begin{lem}  \label{ijgowergwerfwefwef}
{Let $A$ and $B$ satisfy \cref{AssumptionsAB} and assume in addition}
 that  
  $$\{A,B\}:=AB+BA:D\to H$$
  extends to a bounded operator {on $H$}.
  \begin{enumerate} 
  \item \label{kotrhperthertgertgetg0} We have an isomorphism 
  $\dom(A+B) =\dom(A)\cap \dom(B)$.
  \item \label{kotrhperthertgertgetg} We have an isomorphism 
  $\dom((A+B)^{2}) =\dom(A^{2})\cap \dom(B^{2})$.
  \item\label{kotrhperthertgertgetg1} The operator $A:D\to D$ extends to  a bounded operator
  $$A:\dom(A^{2})\cap \dom(B^{2}) \to  \dom(A)\cap \dom(B)\ .$$
  \end{enumerate}
  \end{lem}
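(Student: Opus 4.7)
The plan is to derive all three statements from the two identities
\[
 \|(A+B)x\|^{2} = \|Ax\|^{2} + \|Bx\|^{2} + \langle x, \{A,B\}x\rangle, \qquad (A+B)^{2}|_{D} = A^{2}|_{D} + B^{2}|_{D} + \{A,B\}|_{D}
\]
valid on $D$, combined with the boundedness of $\{A,B\}$. In each case the strategy is to first establish equivalence of appropriate graph norms on $D$, and then to pass to closures using that $D$ is a core for $A+B$ and $(A+B)^{2}$ respectively.

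For \cref{kotrhperthertgertgetg0} the first identity yields
\[
 \bigl|\|(A+B)x\|^{2} - (\|Ax\|^{2} + \|Bx\|^{2})\bigr| \le \|\{A,B\}\|\,\|x\|^{2}\ ,
\]
so the graph norm $\|\cdot\|_{A+B}$ and the combined graph norm $\|\cdot\|_{A,B} := (\|\cdot\|^{2} + \|A\cdot\|^{2} + \|B\cdot\|^{2})^{1/2}$ are equivalent on $D$. Since $D$ is a core for $A+B$, the $\|\cdot\|_{A+B}$-closure of $D$ is $\dom(A+B)$; by the equivalence and closedness of $A$ and $B$, any Cauchy $D$-sequence in $\|\cdot\|_{A+B}$ converges in the graph norms of $A$ and $B$ separately, giving $\dom(A+B) \subseteq \dom(A)\cap \dom(B)$. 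Conversely, the operator $T := (A+B)|_{\dom(A)\cap\dom(B)}$ is a symmetric extension of the essentially selfadjoint operator $(A+B)|_{D}$, so $T \subseteq A+B$, yielding $\dom(A)\cap \dom(B) \subseteq \dom(A+B)$.

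For \cref{kotrhperthertgertgetg} the easy direction $\|(A+B)^{2}x\| \le \|A^{2}x\| + \|B^{2}x\| + \|\{A,B\}\|\,\|x\|$ on $D$ follows from the second identity. The main obstacle is the reverse estimate $\|A^{2}x\|^{2} + \|B^{2}x\|^{2} \le C(\|(A+B)^{2}x\|^{2} + \|x\|^{2})$. To prove it, I would first use part \cref{kotrhperthertgertgetg0} and $D \subseteq \dom((A+B)^{2})$ to show that for $x \in D$ we have $Ax, Bx \in \dom(A)\cap \dom(B)$, so that $ABx$ and $BAx$ are well-defined in $H$. Expanding $\|A^{2}x + B^{2}x\|^{2}$ and rewriting the cross term using $AB + BA = \{A,B\}$ and the symmetry of $A$ and $B$ on $D$ produces the lower bound
\[
 \|A^{2}x + B^{2}x\|^{2} \ge \|A^{2}x\|^{2} + \|B^{2}x\|^{2} + 2\|ABx\|^{2} - 2\|\{A,B\}\|(\|Ax\|\,\|Bx\| + \|x\|\,\|ABx\|)\ .
\]
The cross products are then absorbed into a fraction of $\|A^{2}x\|^{2} + \|B^{2}x\|^{2} + \|ABx\|^{2}$ plus a multiple of $\|x\|^{2}$ via the interpolation inequality $\|Ax\|^{2} \le \|A^{2}x\|\,\|x\|$ (valid for any selfadjoint $A$) and an $\varepsilon$-Young argument, giving the desired reverse estimate. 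Closing $D$ in the now-equivalent norms yields $\dom((A+B)^{2}) = \dom(A^{2})\cap \dom(B^{2})$.

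For \cref{kotrhperthertgertgetg1} the same calculation has a useful byproduct: the estimate above (and its symmetric counterpart with $A$ and $B$ exchanged) gives $\|ABx\|^{2} + \|BAx\|^{2} \le C'(\|A^{2}x\|^{2} + \|B^{2}x\|^{2} + \|x\|^{2})$ on $D$. Combined with $\|Ax\|^{2} \le \|A^{2}x\|\,\|x\|$ this shows that $A|_{D}$, viewed as a map from $(D, \|\cdot\|_{A^{2},B^{2}})$ to $(D, \|\cdot\|_{A,B})$, is bounded. Since $D$ is dense in $\dom(A^{2})\cap \dom(B^{2})$ in the $\|\cdot\|_{A^{2},B^{2}}$-norm by part \cref{kotrhperthertgertgetg}, and since $B$ is closed, this extends uniquely to the asserted bounded operator $\dom(A^{2})\cap \dom(B^{2}) \to \dom(A)\cap \dom(B)$.
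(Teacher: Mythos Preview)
Your strategy is the same as the paper's: establish two-sided norm estimates on $D$ using boundedness of $\{A,B\}$, then pass to closures via essential selfadjointness. The computations in Part~\ref{kotrhperthertgertgetg} differ cosmetically --- you expand $\|A^{2}x+B^{2}x\|^{2}$ and isolate $2\|ABx\|^{2}$, whereas the paper expands $\langle x,(A+B)^{4}x\rangle$ and uses the identity $(AB)^{*}AB+(BA)^{*}BA=A^{2}B^{2}+B^{2}A^{2}+\{A,B\}^{2}-A\{A,B\}B-B\{A,B\}A$ --- but both routes reach the same reverse estimate by the same $\varepsilon$-Young absorption. Your derivation of the $\|BAx\|$ bound in Part~\ref{kotrhperthertgertgetg1} as a byproduct of the Part~\ref{kotrhperthertgertgetg} computation is a mild streamlining over the paper, which redoes that estimate from scratch.

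One point needs tightening. In Part~\ref{kotrhperthertgertgetg} the phrase ``closing $D$ in the now-equivalent norms yields $\dom((A+B)^{2})=\dom(A^{2})\cap\dom(B^{2})$'' only gives the inclusion $\dom((A+B)^{2})\subseteq\dom(A^{2})\cap\dom(B^{2})$: since $D$ is a core for $(A+B)^{2}$, its $\|\cdot\|_{(A+B)^{2}}$-closure is $\dom((A+B)^{2})$, and by norm equivalence and closedness of $A^{2},B^{2}$ this lands in $\dom(A^{2})\cap\dom(B^{2})$. But $D$ is not a priori $\|\cdot\|_{A^{2},B^{2}}$-dense in $\dom(A^{2})\cap\dom(B^{2})$, so the reverse inclusion does not follow from the norm equivalence alone. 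You need to repeat the symmetric-extension argument you gave in Part~\ref{kotrhperthertgertgetg0}: the operator $A^{2}+B^{2}+\{A,B\}$ on $\dom(A^{2})\cap\dom(B^{2})$ is a symmetric extension of the essentially selfadjoint $(A+B)^{2}|_{D}$, hence contained in $(A+B)^{2}$. The paper makes exactly this point (using the adjoint formulation) and flags its non-triviality. Also, your preliminary remark ``use Part~\ref{kotrhperthertgertgetg0} \ldots\ to show $Ax,Bx\in\dom(A)\cap\dom(B)$'' is unnecessary: $D$ is preserved by $A$ and $B$ by assumption, so $Ax,Bx\in D$ already.
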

\begin{proof}
For two norms $\|-\|, \|-\|'$ on the same domain we write $ \|x\|\lesssim \|x\|'$
for the statement that there exists a constant $C$ in $(0,\infty)$ such that $  \|x\|\le C\|x\|'$ for all $x$ in the domain.

By the triangle inequality  we have \begin{equation}\label{heporigpertge}\|- \|_{A+B} \lesssim \|-\|_{A,B} 
\end{equation}on $D$. 
{Using that} the restrictions of 
$A$, $B$ and $A+B$ are essentially selfadjoint on $D$  we can further conclude that $$\dom(A)\cap \dom(B)\subseteq \dom(A+B)\ .$$
This is a non-trivial consequence of our assumptions and does not follow directly
from \eqref{heporigpertge} which would only imply that the closure of $D$ in the norm $\|-\|_{A,B}$
is contained in $\dom(A+B)$. 
Namely, if $\phi$ is in $\dom(A)\cap \dom(B)$, then by the symmetry of $A$ and $B$
the linear functionals $D\ni \psi \mapsto \langle A\psi,\phi\rangle$ and 
 $D\ni \psi \mapsto \langle B\psi,\phi\rangle$ are bounded. Then also 
 $D\ni \psi \mapsto \langle (A+B)\psi,\phi\rangle$ is bounded which implies $\phi\in \dom(A+B)^{*}$
 and hence  $\phi\in \dom(A+B)$ since $(A+B)_{|D}$ is essentially selfadjoint.

{
For the reverse inclusion
\[
 \dom(A+B)\subseteq \dom(A)\cap \dom(B)\ ,
 \]
it suffices to show
\begin{equation}
\label{fqwedqdqwed}
\|-\|_{A,B}  \lesssim \|- \|_{A+B} \ , \end{equation} 
on $D$, as $\dom(A+B)$ is the closure of $D$ with respect to $\|-\|_{A+B}$. }
We calculate for $x$ in $D$
\begin{eqnarray*}
\|(A+B)x\|^{2}&=&\langle (A+B)x,(A+B)x \rangle\\&=&\langle  x,(A+B)^{2}x \rangle\\
&=&\langle  x,(A^{2}+B^{2}+ \{A,B\})x \rangle\\&\ge&
\|Ax\|^{2}+\|Bx\|^{2} -\|\{A,B\}\| \|x\|^{2}
\end{eqnarray*}
which implies \eqref{fqwedqdqwed}.
This finishes the argument for Assertion \ref{kotrhperthertgertgetg0}.

The argument for  Assertion \ref{kotrhperthertgertgetg} is a more complicated version of the argument above.
It is clear from $(A+B)^{2}=A^{2}+B^{2}+\{A,B\}$ and the boundedness of $\{A,B\}$  that
\begin{equation}\label{gwergwrfwrefw}\|x\|_{(A+B)^{2}}\lesssim \|x\|_{A^{2},B^{2}}
\end{equation} on $D$.
 {Using that} the restrictions of $A^{2},B^{2}$ and $(A+B)^{2}$ to $D$  are essentially selfadjoint,
we can further conclude
$$\dom(A^{2})\cap \dom(B^{2})\subseteq  \dom((A+B)^{2})\ .$$ 
{To get the converse inclusion,} it suffices to show the reverse estimate \begin{equation}\label{gwerpojopfwefwerfwerf}  \|x \|_{A^{2},B^{2}}\lesssim \|x\|_{(A+B)^{2}}  
\end{equation} 
on $D$. 
 For $x$ in $D$
we have
\[
\begin{aligned}
  \|(A+B)^{2}x\|^{2}
 &= \langle x, (A+B)^4 x\rangle
 \\
 &= \langle x, (A^2 + B^2 + \{A, B\})^2 x\rangle
 \\
 &= \langle x, (A^4 + B^4 + A^2B^2 + B^2 A^2 + R) x\rangle
 \\
 &= \|A^2 x\|^2 + \|B^2x\|^2 + \langle x, (A^2B^2 + B^2 A^2 ) x\rangle + \langle x, R x\rangle \ ,
\end{aligned}
\]
where
\[
R := A^2 \{A, B\} +  \{A, B\} A^2 + B^2 \{A, B\} +  \{A, B\} B^2 +\{A, B\}^2 \ .
\]
%
Because of
\[
(AB)^*AB + (BA)^*BA  = A^2B^2 + B^2 A^2 + \{A, B\}^2 - A\{A, B\} B - B\{A, B\} A \ ,
\]
we get
\[
\langle x, (A^2B^2 + B^2 A^2 ) x\rangle \geq -\langle x, (\{A, B\}^2 - A\{A, B\} B - B\{A, B\} A) x\rangle \ .
\]
Therefore 
 $$ \|(A+B)^{2}x\|^{2}\ge \|A^2 x\|^2 + \|B^2x\|^2 - \langle x, (\{A, B\}^2 - A\{A, B\} B - B\{A, B\} A) x\rangle +  \langle x, R x\rangle\ .$$
 Using the Cauchy-Schwarz inequality, the boundedness of $\{A,B\}$ and estimates of the form
 $|uv|\le \epsilon u^{2} +\frac{4}{\epsilon} v^{2}$ for every $\epsilon$ in $(0,\infty)$ several times
 we obtain   the desired estimate \eqref{gwerpojopfwefwerfwerf}.
%
This finishes the verification of Assertion \ref{kotrhperthertgertgetg}.

For  Assertion \ref{kotrhperthertgertgetg1} we use the non-trivial consequence of the proof of
Assertion \ref{kotrhperthertgertgetg} that
$\dom(A^{2})\cap \dom(B^{2})$ is the closure of $D$ with respect to the norm $\|-\|_{A^{2},B^{2}}$.
It is clear that $A$ restricts to a bounded operator  $A:\dom(A^{2})\cap \dom(B^{2}) \to  \dom(A)$.
It suffices to show that   \begin{equation}\label{fdbojosdvsdfvsfdv}\|BAx\| \lesssim  \|x\|_{A^{2},B^{2}}\end{equation}
on $D$.
We calculate
 \begin{eqnarray*}
\langle BA x,BA x\rangle &=&-\langle BA x,AB x\rangle + \langle BA x,\{A,B\} x\rangle\\&=&
-\langle ABA x,B x\rangle + \langle BA x,\{A,B\} x\rangle\\&=&
\langle BA^{2} x,B x\rangle -\langle \{A,B\}A x,B x\rangle + \langle BA x,\{A,B\} x\rangle\\&=&
\langle A^{2} x,B^{2} x\rangle -\langle \{A,B\}A x,B x\rangle + \langle BA x,\{A,B\} x\rangle \ .
\end{eqnarray*}
We get the estimate
$$\|BAx\|^{2}\le\|A^{2}x\|\|B^{2}x\| +\|\{A,B\}\| \|Ax\|\|Bx\|+\|BAx\|\|\{A,B\}\|\|x\|$$
which implies \eqref{fdbojosdvsdfvsfdv}.
\end{proof}

Note that the assumptions for \cref{ijgowergwerfwefwef} are invariant under exchanging  $A$ and $B$. 
In the following lemma we break this symmetry.

  \begin{lem}\label{kpwergerwgw9} 
  {We keep \cref{AssumptionsAB} and}  assume in addition that   
  \[
  [0,\infty)\ni \epsilon\mapsto e^{-\epsilon B^{2}}x
  \]
   is a differentiable $\dom(A)$-valued function for all $x$ in $D$.
\begin{enumerate}  
\item \label{erwjigowergwefwref} 
The family of  commutators
 $([A,e^{-\epsilon B^{2}}]:D\to H)_{\epsilon\in (0,\infty)}$ extends to a norm-continuous family $$[0,\infty)\ni \epsilon \mapsto [A,e^{-\epsilon B^{2}}]\in B(H)$$  vanishing at $\epsilon=0$.   
 \item  \label{erwjigowergwefwref1} The family of bounded operators 
  $(e^{-\epsilon B^{2}})_{\epsilon\in (0,\infty)}$ on $H$  restricts to a norm continuous family of bounded operators
  $$(0,\infty)\ni \epsilon\mapsto e^{-\epsilon B^{2}}:\dom(A)\to \dom(A)\cap \dom(B)\ .$$
  \item \label{erwjigowergwefwref2}
 The family of bounded operators 
  $(e^{-\epsilon B^{2}})_{\epsilon\in [0,\infty)}$ on $H$ restricts to a continuous family of  bounded  operators $$[0,\infty)\ni \epsilon\mapsto e^{-\epsilon B^{2}} :\dom(A^{2})\cap \dom(B^{2})\to \dom(A)\ .$$
   \end{enumerate}
     \end{lem}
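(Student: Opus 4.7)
The plan is to prove the three assertions in order, with the commutator estimate in Part~\ref{erwjigowergwefwref} providing the key input for Parts~\ref{erwjigowergwefwref1} and \ref{erwjigowergwefwref2}.

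For Part~\ref{erwjigowergwefwref}, the strategy is to establish the Duhamel-type identity
\[
[A,e^{-\epsilon B^{2}}]x \;=\; -\int_{0}^{\epsilon} e^{-(\epsilon-s)B^{2}}\,\bigl(\{A,B\}B-B\{A,B\}\bigr)\,e^{-sB^{2}}x\,ds\ ,\qquad x\in D,
\]
which arises from integrating $\tfrac{d}{ds}\bigl(e^{-(\epsilon-s)B^{2}}A e^{-sB^{2}}x\bigr)$ from $0$ to $\epsilon$ together with the algebraic identity $[A,B^{2}]=\{A,B\}B-B\{A,B\}$, valid on $D$. The differentiability assumption of the lemma is exactly what legitimizes differentiating under the inner semigroup factor. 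Since $Be^{-tB^{2}}$ (and its adjoint $e^{-tB^{2}}B$) extends to a bounded operator on $H$ of norm at most $(2et)^{-1/2}$ by the functional calculus, the integrand is bounded in operator norm by $C\|\{A,B\}\|\bigl(s^{-1/2}+(\epsilon-s)^{-1/2}\bigr)$, whose integral over $[0,\epsilon]$ is $O(\sqrt{\epsilon})$. This yields $\|[A,e^{-\epsilon B^{2}}]\|\leq C'\|\{A,B\}\|\sqrt{\epsilon}$, giving both the bounded extension to $H$ vanishing at $\epsilon=0$ and norm-continuity on $[0,\infty)$ (by applying the same estimate to the difference of integrals).

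For Part~\ref{erwjigowergwefwref1}, I would use Part~\ref{erwjigowergwefwref} to extend $e^{-\epsilon B^{2}}$ to a bounded operator $\dom(A)\to\dom(A)$ via the identity $Ae^{-\epsilon B^{2}}x=e^{-\epsilon B^{2}}Ax+[A,e^{-\epsilon B^{2}}]x$, valid initially on the dense subspace $D\subseteq\dom(A)$. Combined with $\|Be^{-\epsilon B^{2}}\|\leq(2e\epsilon)^{-1/2}$, this produces a bounded operator $\dom(A)\to\dom(A)\cap\dom(B)$. Norm continuity on $(0,\infty)$ then combines three facts: norm continuity of $\epsilon\mapsto e^{-\epsilon B^{2}}$ in $B(H)$ on $(0,\infty)$ (holomorphic semigroup), norm continuity of $\epsilon\mapsto Be^{-\epsilon B^{2}}=f_{\epsilon}(B)$ with $f_{\epsilon}(t)=te^{-\epsilon t^{2}}$ via functional calculus, and norm continuity of $\epsilon\mapsto[A,e^{-\epsilon B^{2}}]$ established in Part~\ref{erwjigowergwefwref}.

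For Part~\ref{erwjigowergwefwref2}, the new feature is continuity at $\epsilon=0$. For $x\in D$ one has
\[
A(e^{-\epsilon B^{2}}-I)x \;=\; (e^{-\epsilon B^{2}}-I)Ax + [A,e^{-\epsilon B^{2}}]x\ .
\]
By Lemma~\ref{ijgowergwerfwefwef}.\ref{kotrhperthertgertgetg1}, the operator $A$ is bounded from $\dom(A^{2})\cap\dom(B^{2})$ into $\dom(A)\cap\dom(B)$, so $Ax\in\dom(B)$ with $\|BAx\|\lesssim\|x\|_{A^{2},B^{2}}$. Combining the elementary Duhamel estimate $\|(e^{-\epsilon B^{2}}-I)y\|\leq C\sqrt{\epsilon}\|By\|$ for $y\in\dom(B)$ with the bound $\|[A,e^{-\epsilon B^{2}}]\|\leq C'\sqrt{\epsilon}$ from Part~\ref{erwjigowergwefwref} then yields $\|A(e^{-\epsilon B^{2}}-I)x\|\leq C''\sqrt{\epsilon}\|x\|_{A^{2},B^{2}}$. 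Since $D$ is dense in $\dom(A^{2})\cap\dom(B^{2})$ in the $\|\cdot\|_{A^{2},B^{2}}$-norm (from the proof of Lemma~\ref{ijgowergwerfwefwef}.\ref{kotrhperthertgertgetg}), this estimate extends to the whole domain, and continuity at any $\epsilon_{0}>0$ reduces to Part~\ref{erwjigowergwefwref1}. The main obstacle throughout is the rigorous handling of the Duhamel formula in Part~\ref{erwjigowergwefwref}: one must verify differentiability of $s\mapsto e^{-(\epsilon-s)B^{2}}Ae^{-sB^{2}}x$ on $(0,\epsilon)$ together with absolute integrability of its derivative near the endpoints, and recognize that the a priori ill-defined expression $[A,B^{2}]=\{A,B\}B-B\{A,B\}$ becomes operator-theoretically meaningful inside the integral precisely because $Be^{-sB^{2}}$ and $e^{-(\epsilon-s)B^{2}}B$ are bounded with the stated integrable singularities.
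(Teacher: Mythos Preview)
Your proposal is correct and follows essentially the same approach as the paper. The Duhamel identity you write for Part~\ref{erwjigowergwefwref} is the paper's integral formula $[A,e^{-\epsilon B^{2}}]=-\epsilon\int_{0}^{1}e^{-r\epsilon B^{2}}[A,B^{2}]e^{-(1-r)\epsilon B^{2}}\,dr$ after the substitution $s=r\epsilon$, with the same $O(\sqrt{\epsilon})$ bound coming from $\|Be^{-tB^{2}}\|\lesssim t^{-1/2}$; Parts~\ref{erwjigowergwefwref1} and~\ref{erwjigowergwefwref2} then proceed via the identity $Ae^{-\epsilon B^{2}}=e^{-\epsilon B^{2}}A+[A,e^{-\epsilon B^{2}}]$ together with Lemma~\ref{ijgowergwerfwefwef}.\ref{kotrhperthertgertgetg1} exactly as in the paper, your treatment of continuity at $\epsilon=0$ in Part~\ref{erwjigowergwefwref2} via the explicit estimate $\|(e^{-\epsilon B^{2}}-I)y\|\lesssim\sqrt{\epsilon}\,\|By\|$ being just a spelled-out version of the paper's appeal to norm continuity of $e^{-\epsilon B^{2}}:\dom(B)\to H$.
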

     
  \begin{proof}
 On $D$ we have the relation
  $[A,B^{2}]=(\{A,B\}B-B\{A,B\})$ and therefore 
  $$e^{-r\epsilon B^{2}} [A,B^{2}] e^{-(1-r)\epsilon B^{2}} = e^{-r\epsilon B^{2}}(\{A,B\}B-B\{A,B\})e^{-(1-r)\epsilon B^{2}}\ .$$  
 Decomposing the right-hand side as 
 $$ e^{-r\epsilon B^{2}}\cdot \{A,B\}\cdot Be^{-(1-r)\epsilon B^{2}} -  e^{-r\epsilon B^{2}}   B\cdot \{A,B\} \cdot e^{-(1-r)\epsilon B^{2}}$$ we see that  it extends to  a norm-continuous family
  \[
  (0,\infty)\times (0,1)\ni (\epsilon,r)\mapsto \epsilon\: e^{-r\epsilon B^{2}} [A,B^{2}] e^{-(1-r)\epsilon B^{2}} \in B(H)\ .
  \]      
  We claim the integral of this family over $r$ in $[0,1]$ exists and defines a norm-continuous family  
  $$ [0,\infty) \ni \epsilon \mapsto  \int_{0}^{1}   \epsilon \  e^{-r\epsilon B^{2}} [A,B^{2}] e^{-(1-r)\epsilon B^{2}} dr \in B(H)$$
   vanishing at $\epsilon=0$.
    In order to estimate the norm of the intregrand
  we  split the interval of integration into the halfs $[0,1/2]$ and $[1/2,1]$. We further fix $c$ in $(0,\infty)$.
  We see that  there exists a constant $C$ in $(0,\infty)$ such that for all $r$ in $[0,1/2]$ and $\epsilon$ in $(0,c)$      $$ \epsilon \| e^{-r\epsilon  B^{2}} \{A,B\} B e^{-(1-r)\epsilon  B^{2}}\|\le  \epsilon^{1/2} \| e^{-r\epsilon  B^{2}}\|  \| \{A,B\}\| \| \epsilon^{1/2}  B e^{-(1-r)\epsilon  B^{2}}\|  \le   C\epsilon^{1/2}  $$ 
 and 
 $$ \epsilon \| e^{-r\epsilon  B^{2}} B\{A,B\}  e^{-(1-r)\epsilon  B^{2}}\|\le \epsilon^{1/2} r^{-1/2}  \| \epsilon^{1/2}r^{1/2}B e^{-r\epsilon  B^{2}}\|  \| \{A,B\}\| \| e^{-(1-r)\epsilon  B^{2}}\| \le C \epsilon^{1/2} r^{-1/2}\ .$$
   These estimates imply the claim for the integral over $[0,1/2]$. The other half of the integral is discussed similarly.

  For all $x,y$ in $D$ and $\epsilon$ in $(0,\infty)$ we have by our assumption that
  $$-\partial_{r} \langle y, [A,e^{-r \epsilon B^{2}}] x\rangle= \langle y,\epsilon\:   e^{-rB^{2}} [A,B^{2}] e^{-(1-r)\epsilon B^{2}}x\rangle\ .$$
Integrating and using that $D$ is dense in $H$ we get  the equality
  $$[A,e^{- \epsilon B^{2}}] =-\epsilon  \int_{0}^{1} e^{-r\epsilon B^{2}} [A,B^{2}] e^{-(1-r)\epsilon B^{2}} dr$$
 on $D$.  By continuous extension we get a norm-continuous family
  $$[0,\infty)\ni \epsilon \to   [A,e^{-\epsilon B^{2}}] \in B(H)$$  
  vanishing at $\epsilon=0$. This finishes the proof of Assertion \ref{erwjigowergwefwref}.

We now show Assertion \ref{erwjigowergwefwref1}.
We know that $$(0,\infty)\ni \epsilon \mapsto e^{-\epsilon B^{2}}:H\to \dom(B)$$ is a norm-continuous family of bounded operators. We must show that 
$$(0,\infty)\ni \epsilon \mapsto e^{-\epsilon B^{2}}:\dom(A)\to \dom(A)$$  is a norm-continuous family of bounded operators. To this end we must 
see that
$$(0,\infty)\ni \epsilon \mapsto A  e^{-\epsilon B^{2}}:\dom(A)\to H$$ is a norm-continuous family of bounded operators.
This follows from
\begin{equation}\label{qwfewffdq}A  e^{-\epsilon B^{2}}=[A,  e^{-\epsilon B^{2}}]+e^{-\epsilon B^{2}}A\ ,
\end{equation}  
and Assertion \ref{erwjigowergwefwref}.

We now show Assertion \ref{erwjigowergwefwref2}.
We know that $$[0,\infty)\ni \epsilon \mapsto e^{-\epsilon B^{2}}:\dom(B^{2})\to H$$ is a continuous family of bounded operators.
We must show that $$[0,\infty)\ni \epsilon \mapsto  A e^{-\epsilon B^{2}}:\dom(A^{2})\cap \dom(B^{2})\to H$$ is a continuous family of bounded operators.
This follows from \eqref{qwfewffdq} and the decomposition
$$\dom(A^{2})\cap \dom(B^{2})\stackrel{A}{\to} \dom(B) \stackrel{e^{-\epsilon B^{2}}}{\to} H\ ,$$
where the first map is continuous by \cref{ijgowergwerfwefwef}.\ref{kotrhperthertgertgetg1}
 \end{proof}

\subsection{The Dirac-goes-to-Dirac principle}\label{kgpwergrewfwwrfwref}

We consider a complete Riemannian manifold $M$ with a uniformly continuous and controlled smooth function $f:M\to \R$ such that $0$ is a regular value.  Then $N:=f^{-1}(\{0\})$ is an embedded smooth manifold which decomposes
$M$ into closed subspaces $M_{\pm}:=f^{-1}( \R^{\pm})$.
The pair of subsets $(M_{+},M_{-})$ is a coarsely and uniformly excisive decomposition of $M$.
If $\cY$, $\cZ$ are a big families on $M$, then by \cref{keopgegwerferfw}
this decomposition gives rise to a Mayer-Vietoris fibre sequence 
\begin{equation*}
K_{\cZ}^{\cX}(M_{-}\cap \cY)\oplus K_{\cZ}^{\cX}(M_{+}\cap \cY)\to K_{\cZ}^{\cX}(\cY)\xrightarrow{\delta^{MV}} \Sigma K_{\cZ}^{\cX}(N\cap \cY)\ . 
\end{equation*} 
 We assume that $\Dirac$ is a Dirac operator of degree $n$ which is {positive} away from a big family $\cZ$ (see \cref{kopherthergrge}), and that $\Psi$ is a potential which is very positive away from $\cY$ (see \cref{ekopthertheth}) and asymptotically constant away from $\cZ$ (see \cref{owkpgwgerferrgehghd}). 
 We then   consider the  Callias-type operator 
$\Dirac+\Psi$. We shall assume that $N$ has a   tubular neighbourhood on which all data has a  product structure.
This means that near $N$   the operator has the form
$ \Sigma(\Dirac + \Psi)_{|N}$  for some Callias type operator $(\Dirac+ \Psi)_{|N}$ of degree $n-1$ on $N$,  see \eqref{gwerpogjowkepferfwerfwrf} for notation and \cref{ojgopewrgwegfrefefw} {below} for more details.

\begin{rem}
\label{ojgopewrgwegfrefefw}
One can reconstruct $(\Dirac + \Psi)_{|N}$ as follows.
Assume that $\Dirac+\Psi$ acts on the bundle $E\to M$ of graded {right} $\Cl^{n}$-modules.
Let $\sigma$ in $\End_{\Cl^{n}}(E_{|N})$ be the Clifford multiplication by the normal vector pointing in direction $M_{+}$.
We let $e_{n}$ be the $n$th generator of $\Cl^{n}$ 
and $z$ be the grading of $E$.
Then $iz\sigma e_{n}$ is an even selfadjoint involution. 
We let $E_{0}$ be the $1$-eigensubbundle of $E_{|N}$  for $iz\sigma e_{n}$. 
The grading $z$ induces by restriction a grading $z_{|N}$  of $E_{0}$.   
The right action of the subalgebra $\Cl^{n-1}$ generated by  remaining generators $e_{1},\dots,e_{n-1}$ of $\Cl^{n}$ preserves the subbundle and therefore induces a right action on $E_{0}$. The left Clifford multiplication  by tangent vectors along $ N$ also preserves $E_{0}$ and induces a Clifford bundle structure on $E_{0}\to N$. 
Since $\sigma$ and hence $iz\sigma e_{n}$ are parallel by the  product structure assumption, the connection of $E$ induces a connection $\nabla_{0}$ on $E_{0}$. We let $\Dirac_{|N}$ denote the Dirac operator defined by this Dirac bundle structure on $E_{0}$.
The restriction of $\Psi$ to $N$ commutes with $iz\sigma e_{n}$ (it anticommutes with $z$ and $\sigma$ and commutes with $e_n$) and therefore restricts to an endomorphism $\Psi_{|N}$ on $E_{0}$.  
In order to see that $\Dirac+\Psi$ is isomorphic to $\Sigma(\Dirac_{|N}+\Psi_{|N})$ near $N$
we define  an isomorphism
\begin{equation}
\label{IsoE0EN}
E_{0}\stimes {\Cl^{1}}\to E_{|N}\ , \quad v\otimes (a+be_{n})\mapsto  av+bve_{n} \ ,	
\end{equation}
where we consider $\Cl^{1}$ as generated by $e_{n}$ and $a,b$  are in  $\C$. 
 This map is compatible with the gradings, the right $\Cl^{n}$-actions, the left Clifford multiplication tangent vectors of $M$ and the restriction of the connection along $N$. 
 We define the Clifford multiplication by $\sigma$ on $E_{0}\stimes {\Cl^{1}}$ by $ \sigma (v\stimes c)=\pm  v\stimes e_{n}c $, where $v$ is homogenous of parity $\pm$.
 Since we assume a product structure of all data near $N$ it is then clear that
$\Dirac+\Psi$ is isomorphic to $ \Sigma(\Dirac\oplus \Psi)_{|N}$ in a neighbourhood of $N$. \hB
\end{rem}

The assumptions on $\Psi$ imply that   $\Psi_{|N}$ is very positive away from $N\cap \cY$ and is asymptotically constant away from $N\cap\cZ$. 
Below in \cref{tkopherthtere9}  we will need the condition that   $\Dirac_{|N}$ is positive away from $N\cap \cZ$.

\begin{rem}
 In general we can not deduce the local positivity of $\Dirac_{|N}$ from the local positivity of $\Dirac$.
For example, consider the spin Dirac operator on the Riemannian product $M:=\R\times S^{1}$, where  $S^{1}$ has the bounding  spin structure.
We further consider  the decomposition along $N:=\R\times S^{0}$. Then $\Dirac$ is positive everywhere, i.e, away from $\emptyset$, but $\Dirac_{|N}$ is not.

On the other hand, if the positivity of $\Dirac$ away from $\cZ$ is caused by the zero order term in the Weizenboeck formula \eqref{wrgwerrwfrfw}, then because of the assumption of a product structure, $\Dirac_{|N}$ is positive away from $N\cap \cZ$. 
See \cref{khoperthgtrgetg} below for a further example.
 \hB
 \end{rem}

  Let $U$ be {a} tubular neighbourhood of $N$ on which all data has a product structure.
  Assume that $\Dirac$ is positive away from the big family $\cZ$.
  
 \begin{ddd} 
 We say that the width of $U$ goes to $\infty$ away from $\cZ$ if for every $R$ in $(0,\infty)$ there exists  a member $Z$ of $\cZ$ and an isometric embedding  $(-R,R)\times (N\setminus Z)\to U\setminus Z$ compatible with the product structure of the bundle data.
   \end{ddd}
   
   \begin{equation*}
\begin{tikzpicture}
\draw[gray] (-2.8, 2.8) --(2.8,2.8);
\draw[gray] (-2.8, 2.8) .. controls (-3.5, 2.8) and (-4,1) ..(-5,1);
\draw[gray] (2.8,2.8) .. controls (2.9,2.7) ..(3,2.5);
\draw[gray] (-1.8, 1.4) --(1.8,1.4);
\draw[gray] (-1.8, 1.4) .. controls (-3, 1.4) and (-3.5,-0.4) ..(-5,-0.5);
\draw[gray] (1.8,1.4) .. controls (2.1,1.4) ..(3,0.8);
\draw[gray] (-1.5, 0) --(1.5,0);
\draw[gray] (-1.5, 0) .. controls (-2.3, 0) and (-3,-2) ..(-5,-2);
\draw[gray] (1.5,0) .. controls (2,-0.1) and (2.3,-0.6) ..(3,-0.8);
\draw[gray] (-1.8, -1.4) --(1.8,-1.4);
\draw[gray] (-1.8, -1.4) .. controls (-2.3, -1.4) and (-3,-2.5) ..(-4,-3);
\draw[gray] (1.8,-1.4) .. controls (2, -1.3) and (2.4,-2) ..(3,-2.2);
\draw[gray] (-2.8, -2.8) --(2.8,-2.8);
\draw[gray] (-2.8, -2.8) .. controls (-2.9,-2.8) .. (-3.3,-3);
\draw[gray] (2.8, -2.8) .. controls (2.9,-2.8) .. (3,-2.9);
\draw[ultra thick] (0, 3) --(0,-3);
\node at (-0.4,-2.1) {$N$};
\draw[dashed] (-3, 3) .. controls (-1, 1) and (-1, -1) .. (-3, -3);
\draw[dashed] (3, 3) .. controls (1, 1) and (1, -1) .. (3, -3);
\draw[dotted, name path=A1] (-2.5, 3) --(-2.5,2.45) -- (2.5,2.45) -- (2.5, 3);
\draw[dotted, name path=AA1, white] (-2.5, 3)  -- (2.5, 3);
\tikzfillbetween[of=A1 and AA1]{blue, opacity=0.1};
\draw[dotted, name path=A2] (-2.5, -3) --(-2.5,-2.45) -- (2.5,-2.45) -- (2.5, -3);
\draw[dotted, name path=AA2, white] (-2.5, -3) -- (2.5, -3);
\tikzfillbetween[of=A2 and AA2]{blue, opacity=0.1};
\draw[dotted, name path=B1] (-2, 3) --(-2,1.8) -- (2,1.8) -- (2, 3);
\draw[dotted, name path=BB1, white] (-2, 3)  -- (2, 3);
\tikzfillbetween[of=B1 and BB1]{blue, opacity=0.1};
\draw[dotted, name path=B2] (-2, -3) --(-2,-1.8) -- (2,-1.8) -- (2, -3);
\draw[dotted, name path=BB2, white] (-2, -3)  -- (2, -3);
\tikzfillbetween[of=B2 and BB2]{blue, opacity=0.1};
\draw[dotted, name path=C1] (-1.6, 3) --(-1.6,0.88) -- (1.6,0.88) -- (1.6, 3);
\draw[dotted, name path=CC1, white] (-1.7, 3)  -- (1.7, 3);
\tikzfillbetween[of=C1 and CC1]{blue, opacity=0.1};
\draw[dotted, name path=C2] (-1.6, -3) --(-1.6,-0.88) -- (1.6,-0.88) -- (1.6, -3);
\draw (0.15, 0.88) -- (0.25, 0.88) -- (0.25, -0.88) -- (0.15, -0.88);
\draw[dotted, name path=CC2, white] (-1.7, -3)  -- (1.7, -3);
\tikzfillbetween[of=C2 and CC2]{blue, opacity=0.1};
\draw[dotted, name path=D1] (-1.6+7, 3) --(-1.6+7,0.88) -- (1.6+7,0.88) -- (1.6+7, 3);
\draw[dotted, name path=DD1, white] (-1.7+7, 3)  -- (1.7+7, 3);
\tikzfillbetween[of=D1 and DD1]{blue, opacity=0.1};
\draw[dotted, name path=D2] (-1.6+7, -3) --(-1.6+7,-0.88) -- (1.6+7,-0.88) -- (1.6+7, -3);
\draw[dotted, name path=DD2, white] (-1.7+7, -3)  -- (1.7+7, -3);
\tikzfillbetween[of=D2 and DD2]{blue, opacity=0.1};
\draw[ultra thick] (7, 3) --(7,0.88);
\draw[ultra thick] (7, -3) --(7,-0.88);
\node at (0+7, -3.5) {$(-R, R)\times (N \setminus Z)$};
\node at (-2, -3.5) {$M$};
\node at (-0.75+7,-2.1) {$N \setminus Z$};
\node at (0.5,0.45) {$Z$};
\draw[ thick, ->, red] (5, 2) --(3.3,2);
\draw[ thick, ->, red] (5, -2) --(3.3,-2);
\end{tikzpicture}	
\end{equation*}
   
   \begin{lem}\label{khoperthgtrgetg}
   If the width of $U$ goes to $\infty$ away from $\cZ$ and $\Dirac$ is positive away from $\cZ$, then $\Dirac_{|N}$ is  positive away from $N\cap \cZ$.
   \end{lem}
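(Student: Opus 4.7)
The strategy is to transfer the positivity of $\Dirac$ on $M$ to $\Dirac_{|N}$ on $N$ via test sections concentrated on a long cylinder $(-R,R) \times (N\setminus Z) \hookrightarrow U$ in the product neighbourhood. The main point is the standard one-dimensional trick: the suspension $\Sigma \Dirac_{|N}$ has square $-\partial_t^2 + \Dirac_{|N}^2$, and the cost of the $\partial_t^2$ term can be made arbitrarily small by spreading the $t$-profile over a sufficiently long interval.

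First I fix $Z_0 \in \cZ$ and $a \in (0,\infty)$ such that $\|\Dirac\phi\|^2 \ge a^2\|\phi\|^2$ for all $\phi \in C^\infty_c(M\setminus \bar Z_0, E)$. Next I pick, once and for all, a function $\chi_0 \in C^\infty_c((-1,1))$ with $\|\chi_0\|_{L^2}=1$ and set $C:=\|\chi_0'\|_{L^2}^2$. Given $R$ in $(0,\infty)$, the rescaled function $\chi_R(t):=R^{-1/2}\chi_0(t/R)$ satisfies $\|\chi_R\|_{L^2}=1$ and $\|\chi_R'\|_{L^2}^2=C/R^2$. I choose $R$ so large that $C/R^2 \le a^2/2$. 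By the width assumption together with the fact that big families are closed under finite unions, I can find $Z \in \cZ$ with $Z_0 \subseteq Z$ admitting an isometric embedding $\iota:(-R,R)\times (N\setminus Z)\to U\setminus Z$ compatible with all bundle data.

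Now let $\psi \in C^\infty_c(N\setminus \bar Z, E_0)$. Using the product identification \eqref{IsoE0EN}, I form the test section $\tilde\phi:=\chi_R(t)\,\psi(x)\stimes 1$ supported in the image of $\iota$, and in particular in $M\setminus \bar Z$. On the image of $\iota$ the operator $\Dirac$ coincides with the suspension $\Sigma \Dirac_{|N}$ of \eqref{gwerpogjowkepferfwerfwrf}, whose square is $-\partial_t^2 + \Dirac_{|N}^2$ on sections of this product type (the cross-term vanishes by the graded commutation used already in the proof of \cref{LemmaPositivityOnCone}). Integrating by parts in $t$ and using $\|\chi_R\|_{L^2}=1$ gives
\[
\|\Dirac \tilde\phi\|^2 = \|\chi_R'\|_{L^2}^2\,\|\psi\|^2 + \|\Dirac_{|N}\psi\|^2.
\]
On the other hand $\|\tilde\phi\|^2 = \|\psi\|^2$, so the positivity of $\Dirac$ away from $Z$ yields
\[
\|\Dirac_{|N}\psi\|^2 \ge \Big(a^2 - \|\chi_R'\|_{L^2}^2\Big)\|\psi\|^2 \ge \frac{a^2}{2}\|\psi\|^2.
\]
As $\psi \in C^\infty_c(N\setminus \bar Z, E_0)$ was arbitrary, $\Dirac_{|N}$ is positive away from the member $N\cap Z$ of $N\cap \cZ$, in the sense of \cref{kopherthergrge}.

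The only subtle point is the matching of domains: one needs the image of $\iota$ to be contained in the complement of some member of $\cZ$ on which $\Dirac$ is positive, which is why I enlarged $Z$ to contain $Z_0$. All the remaining computations are routine once the product identification and the rescaled bump $\chi_R$ are in place.
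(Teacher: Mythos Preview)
Your proof is correct and follows essentially the same approach as the paper: both use product-form test sections $\chi_R(t)\,\psi(x)$ on a long cylinder, compute $\|\Dirac\phi\|^2 = \|\chi_R'\|^2\|\psi\|^2 + \|\chi_R\|^2\|\Dirac_{|N}\psi\|^2$, and choose $R$ large enough that the derivative cost $\|\chi_R'\|^2$ is at most $a^2/2$. Your explicit rescaling $\chi_R(t)=R^{-1/2}\chi_0(t/R)$ is a clean way to produce the required bump; the paper simply asserts the existence of $f$ with $\|f\|^2=1$ and $\|f'\|^2\le 2/R^2$, and your handling of the enlargement $Z\supseteq Z_0$ matches the paper's ``enlarge the member $Z$'' step.
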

   
\begin{proof}
Since $\Dirac$ is positive away from $\cZ$, according to \cref{kopherthergrge} there exists $a$ in $(0,\infty)$ and a member $Z$ of $\cZ$ such that $\|\Dirac\phi\|^2 \geq a^2\|\phi\|^2$ for each $\phi$ in $C^\infty_c(M \setminus \bar Z, E)$.

Using the identification \eqref{IsoE0EN}, we may consider sections $\phi$ in $C^{\infty}(M,E)$ supported on the image of $(-R,R)\times (N\setminus \bar Z) $ in product form $\phi = f\stimes \psi$ with $f$ in $C_{c}^{\infty}((-R,R))\otimes   \Cl^{1}$ and $\psi$ in $C_{c}^{\infty}(N\setminus \bar Z, E_0)$.
For such a section $\phi$, we have
\begin{equation}
\label{SplitUp}
	\begin{aligned}
\|\Dirac \phi\|^2_{L^2(M, E)}
	&= \langle \Dirac^2 \phi, \phi \rangle_{L^2(M, E)} 
	= \langle -f^{\prime\prime} \otimes \psi + f \otimes \Dirac_{|N}^2 \psi, f \otimes \psi\rangle_{L^2(M, E)} \\
	&= \|f'\|^2_{L^2((-R, R), \Cl^1)} \|\psi\|^2_{L^2(N, E_0)} + \|f\|^2_{L^2((-R, R), \Cl^1)}
	\|\Dirac_{|N}\psi\|^2_{L^2(N, E_0)}.
	\end{aligned}
\end{equation}
For every $R$ in $(0, \infty)$, we can find $f$ in $C_{c}^{\infty}((-R,R))\otimes \Cl^{1}$ with $\|f\|^2_{L^2} = 1$ and $\|f'\|^2_{L^2} \leq \frac{2}{R^2}$. 
Choose $R$ such that $\frac{2}{R^2} \leq \frac{a^2}{2}$ and enlarge the member $Z$ so that there exists an isometric embedding $(-R, R) \times (N\setminus Z) \to U \setminus Z$ and choose a function $f$ as above corresponding to this $R$.
Then by \eqref{SplitUp}, for all $\psi$ in $C^\infty_c(N \setminus \bar Z, E_0)$, we have
\[
\begin{aligned}
  \|\Dirac_{|N}\psi\|^2_{L^2(N, E_0)} 
  &= \|\Dirac \phi\|^2_{L^2(M, E)} - \|f^\prime\|^2_{L^2((-R, R), \Cl^1)} \|\psi\|_{L^2(N, E_0)}^2 \\
  &\geq a^2 \|\phi\|^2_{L^2(M, E)} - \frac{2}{R^2} \|\psi\|_{L^2(N, E_0)}^2
  \\
  &\geq a^2 \|f\|^2_{L^2((-R, R), \Cl^1)}\|\psi\|^2_{L^2(N, E_0)} - \frac{a^2}{2} \|\psi\|_{L^2(N, E_0)}^2\\
  &= \frac{a^2}{2} \|\psi\|_{L^2(N, E_0)}^2
\end{aligned}
\ .
\]
Hence $\Dirac_{|N}$ is positive away from $N \cap \cZ$.
\end{proof}

 The following theorem is called the Dirac-goes-to-Dirac principle and has been observed in various variations, a first instance 
goes back to  \cite{Baum_1989}.

\begin{theorem}\label{tkopherthtere9} We assume that $\Dirac$ is positive away from $\cZ$,  that $\Dirac_{|N}$ is positive away from $N\cap \cZ$,  and that $\Psi$ is very positive away from $\cY$.
If $N$ has a  tubular neighbourhood with product structure of uniform width, then 
\[
\delta^{MV}(\sigma_{\cZ}(\Dirac+\Psi,\mathrm{on}\,\cY))=\beta^{-1}\sigma_{N\cap\cZ}((\Dirac+\Psi)_{|N},\mathrm{on}\,N\cap \cY)
\]
 in  $K_{\cZ,-n-1}^{\cX}(N\cap\cY)$, 
\end{theorem}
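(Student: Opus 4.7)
The plan is to unfold $\sigma_{\cZ}(\Dirac+\Psi,\mathrm{on}\,\cY)$ via \cref{ktogpwegrefwf} as a coarse index on the warped cone $\tilde M$, transport the Mayer--Vietoris boundary through this description, and then compute the resulting boundary class by combining the coarse relative index theorem \cref{RelativeIndexTheorem} with the suspension theorem \cref{SuspensionTheorem}. By \cref{ktogpwegrefwf} both sides of the theorem are $\iota_{*}(\beta\cdot{-})$ applied to cone indices on $\tilde M$, respectively on $\tilde N:=\R\times N$, where $\iota$ is as in \eqref{fwerferwffrfwfwrfw} (and its analogue on $\tilde N$). The coarsely and uniformly excisive decomposition $(M_+,M_-)$ of $M$ lifts, by \cite[Lem.\ 9.26]{equicoarse}, to coarsely excisive decompositions $(\tilde M_+,\tilde M_-):=(\R\times M_+,\R\times M_-)$ of $\tilde M$ and of $\cO^{\infty}(M)$, with $\iota$ intertwining them and $\tilde M_+\cap\tilde M_-=\tilde N$. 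Naturality of $\delta^{MV}$ (\cref{keopgegwerferfw}) together with the identity $\tilde N\cap\tilde\cY_\cZ=\widetilde{(N\cap\cY)}_{N\cap\cZ}$ then reduces the theorem to the coarse-$K$-homology statement
\[
\delta^{MV}_{\tilde M}\ind\cX(\TildeDirac+\tilde\Psi,\mathrm{on}\,\tilde\cY_\cZ)=\beta^{-1}\cdot\ind\cX\bigl(\widetilde{(\Dirac+\Psi)_{|N}},\mathrm{on}\,\widetilde{(N\cap\cY)}_{N\cap\cZ}\bigr)
\]
in $\Sigma K\cX(\widetilde{(N\cap\cY)}_{N\cap\cZ})$, where $\delta^{MV}_{\tilde M}$ is the boundary for $(\tilde M_+,\tilde M_-)$.

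To prove this equality I would introduce the product model $\hat M:=\R_{t}\times\R_{s}\times N$ equipped with the warped metric in the $t$-direction and the adapted operator $\widehat D$ associated to $\Sigma_{s}((\Dirac+\Psi)_{|N})$ on $\R_{s}\times N$. By the product-structure assumption, $\widehat D$ coincides with $\TildeDirac+\tilde\Psi$ on the open subset $\R_{t}\times U$ common to $\hat M$ and $\tilde M$; hence the coarse relative index theorem \cref{RelativeIndexTheorem} identifies the two coarse indices in the relative coarse $K$-homology through which $\delta^{MV}_{\tilde M}$ factors. By \cref{bjgbpdgdfbdg} I may then deform the warping function on $\hat M$ to $h\equiv 1$ without changing the model index, at which point $\widehat D$ is literally the $s$-suspension $\Sigma_{s}\widetilde{(\Dirac+\Psi)_{|N}}$ of the $t$-cone-adapted operator on $\tilde N$, and the model big family equals $\R_{s}\times\widetilde{(N\cap\cY)}_{N\cap\cZ}$. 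Invoking \cref{SuspensionTheorem} in the $s$-direction for the operator $\widetilde{(\Dirac+\Psi)_{|N}}$ on $\tilde N$ then produces the required factor $\beta^{-1}$ and the right-hand side index class, completing the reduction.

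The hard step is the reduction to the model: one must simultaneously track the big families $\tilde\cY_\cZ$ and its analogue on $\hat M$ through \cref{RelativeIndexTheorem} and the warping deformation, and verify that the positivity conditions needed on $\widehat D$ and $\widetilde{(\Dirac+\Psi)_{|N}}$ all follow from the hypotheses of the theorem (using \cref{gkopwergwerfwerf} applied to $\Psi_{|N}$, and \cref{khoperthgtrgetg}, which depends on the uniform width of the tubular neighbourhood, applied to $\Dirac_{|N}$). Once this bookkeeping is in place, the argument follows the same general pattern as the proof of \cref{bjgbpdgdfbdg}, but now applied within the context of Callias type operators, with the additional $s$-direction playing the role of the Mayer--Vietoris direction.
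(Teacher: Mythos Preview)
Your overall strategy—unfold the symbol as a cone index, compare with a product model via the relative index theorem, then apply the warping deformation and the suspension theorem—is exactly the paper's approach, and your $\hat M=\R_t\times\R_s\times N$ is precisely the paper's comparison space $M_1=(\R\otimes N)^{\sim}$. There is, however, a genuine gap in the step where you claim that the relative index theorem ``identifies the two coarse indices in the relative coarse $K$-homology through which $\delta^{MV}_{\tilde M}$ factors.''

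The relative index theorem only yields an equality in $K\cX(\tilde\cY_\cZ,\tilde\cY_\cZ\cap\cW_0)$, where $\cW_0$ is generated by the complement of the tubular region. But the Mayer--Vietoris boundary $\delta^{MV}_{\tilde M}$ factors through the \emph{different} relative group attached to $\{\tilde M_{\pm}\}$, and the big families $\cW_0$ and $\{\tilde M_{\pm}\}$ are not comparable. What one can do is apply $\delta^{MV}$ naturally to the relative equality, landing in a relative group at the $\tilde N$-level modulo $\tilde N\cap\tilde\cY_\cZ\cap\cW_0$. Eliminating this relative part to get the absolute equality is not bookkeeping: one must show that $K\cX$ of this family maps trivially into $K\cX(\tilde N\cap\tilde\cY_\cZ)$. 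The paper carries this out only \emph{after} pushing forward along $\iota$ to $\cO^{\infty}(M)$, where the cone coarse structure forces members of $\cO^{\infty}_{\cZ}(N\cap\cY)\cap\cW_0$ to lie inside the flasque family $\cO^{-}(N\cap\cY\cap\cZ)$, so that the projection $\pi_0$ becomes a split injection and can be cancelled. Your reduction to an absolute equality already at the $\tilde M$ level sidesteps this point without justification.

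A minor remark: positivity of $\Dirac_{|N}$ away from $N\cap\cZ$ is a hypothesis of the theorem, so \cref{khoperthgtrgetg} (which needs the tubular width to go to infinity, not merely be uniform) is not used here.
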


\begin{proof}
  We  apply the relative index theorem (\cref{RelativeIndexTheorem}) to the following data:
  We consider 
  \begin{equation}\label{gojwekrpofewfw}
M_{0}:=\tilde M\  , \quad \cY_{0}:=\tilde \cY_{\cZ} \ , \quad \Dirac_{0}:=\TildeDirac+\tilde \Psi\ ,
\end{equation}
  see  \eqref{wrogijoiwergregfrwefwref}. 
  Furthermore,  we let 
  \[
  M_{1}:=(\R\otimes N)^\sim\ , \quad 
  \cY_{1}:= (\R \times (N \cap \cY))^\sim_{\R\times (N \cap \cZ)}
  \quad \Dirac_{1}:=(\Sigma(\Dirac+\Psi)_{|N})^\sim\ ,
  \]
where $(\cdots)^{\sim}$ refers to applying the \cref{kogpweerfwerfrwefw}.
Let $U \cong I \times N$ be the tubular neighborhood of $N$ with uniform width.
By assumption, we have an isomorphism of all data over $\R^+ \times U \cong \R^+ \times I \times N$.
Let $W_0 \subset M_0$ and $W_1 \subset M_1$ be the complements of these sets and let $\cW_i := \{W_i\}$ be the big family generated by $W_i$. 
One checks that the identification  $e: M_{0}\setminus W_{0}\to M_{1}\setminus W_{0}$
is a morphism of bornological coarse spaces.
  
Then by \cref{RelativeIndexTheorem}, we have
\begin{equation}
\label{Prop 4.9 applied}
e_{*}\pi_{0}(\ind\cX(\Dirac_{0},\mathrm{on}\,\cY_{0}))=\pi_{1}(\ind\cX(\Dirac_{1} ,\mathrm{on}\,\cY_{1}))\ ,
\end{equation}
in $K\cX(\cY_{1},\cY_{1}\cap \cW_{1})$, where $\pi_{i}:K\cX(\cY_{i})\to K\cX(\cY_{i},\cY_{i}\cap \cW_{i})$ are the projections.

We  consider the Mayer-Vietoris  boundary for the coarsely excisive compositions of $M_{i}$ given by
$(  M_{0,-},  M_{0,+})$ with 
\[
 M_{0,\pm}=\R\times M_{\pm}\ ,
\] and $(  M_{1,-}, M_{1,+})$ with 
\[
 M_{1,\pm}=\R\times  \R^{\pm }\times N\ .
\]
 By naturality of the Mayer-Vietoris boundaries we get the commutative diagram
\begin{equation*} 
\begin{tikzcd}
K\cX(\cY_{0},\cY_{0}\cap \cW_{0})
\ar[r, "\delta_{0}^{MV}"]
\ar[d, "e_{*}
"
]
&
\Sigma K\cX(\tilde{N}\cap \cY_{0}, \tilde{N}\cap {\cY}_{0}\cap \cW_{0})
\ar[d, "\simeq"]
\\
K\cX(\cY_{1},\cY_{1}\cap \cW_{1})
\ar[r, "\delta_{1}^{MV}"]
&
\Sigma K\cX(\tilde{N}\cap \cY_{1}, \tilde{N}\cap \cY_{1}\cap \cW_{1}) \ ,
\end{tikzcd}
\end{equation*} 

so by
\eqref{Prop 4.9 applied}, we get
\begin{equation}
\label{Step111}
	\delta_0^{MV}(\pi_{0}(\ind\cX(\Dirac_{0},\mathrm{on}\,\cY_{0}
)
	)) = \delta_1^{MV}(\pi_{1}(\ind\cX(\Dirac_{1} ,\mathrm{on}\,\cY_{1})))
\end{equation}
in $K\cX_{-n-1}(\tilde{N}\cap \cY_{0}, \tilde{N}\cap \cY_{0}\cap \cW_{0}) \cong K\cX_{-n-1}(\tilde{N}\cap \cY_1, \tilde{N}\cap \cY_1\cap \cW_1)$.

\begin{lem} 
\label{tkopghwtgregrfrfw}
We have
\begin{equation}
\label{wlkegjerogergefwerffefw}
\delta_{1}^{MV} \ind\cX  (\Dirac_{1} ,\mathrm{on}\,  \cY_{1})=
\beta^{-1}\ind\cX( ((\Dirac+\Psi)_{|N})^\sim,\mathrm{on}\, \tilde N\cap \cY_{1})\ .
\end{equation}
in {$K\cX_{-n-1}(\tilde{N}\cap \cY_1)$}. 
\end{lem}

\begin{proof}
The identity of underlying sets induces a map of bornological coarse spaces
${(\R\otimes N)^\sim} \to \R\otimes \tilde N$. By \cref{bjgbpdgdfbdg} the induced map in coarse $K$-homology sends
$\ind\cX(\Dirac_{1},\text{on }\cY_{1})$ to $\ind\cX(\Sigma (( \Dirac +  \Psi)_{|N})^\sim, \text{on }\cY_{1})$. 
By naturality of the Mayer-Vietoris boundaries  we get the first equality in
\begin{align*}\delta_{1}^{MV}(\ind\cX(\Dirac_{1} ,\mathrm{on}\, \cY_{1}))&=\delta_{1}^{MV}(\ind\cX(\Sigma (( \Dirac+  \Psi)_{|N})^\sim,\mathrm{on}\,\cY_{1}))\\&=\beta^{-1}\ind\cX( ((\Dirac+\Psi)_{|N})^\sim,\mathrm{on}\,\tilde{N}\cap \cY_{1})\ ,
\end{align*}
{while the second one follows from the suspension theorem (\cref{SuspensionTheorem}).}
\end{proof}

Combining \eqref{Step111} with \eqref{wlkegjerogergefwerffefw} and using the definition of $ \Dirac_0$ and $\cY_{0}$ in \eqref{gojwekrpofewfw}, we get
\begin{equation*}
	  \pi_0(\delta_0^{MV}(\ind\cX(\TildeDirac+\tilde \Psi,\mathrm{on}\,\tilde{\cY}_{\cZ}))) = \beta^{-1} \pi_1(\ind\cX( ((\Dirac+\Psi)_{|N})^\sim,\mathrm{on}\,\tilde{N}\cap \cY_{1}))
\end{equation*} 
Here we implicitly employed the fact that $\pi_i$ intertwines the relative with the absolute Mayer-Vietoris boundary. 
Applying the map of coarse spaces $\iota : \tilde{M} \to \cO^\infty(M)$ to this equation and using naturality of the Mayer-Vietoris boundary, \cref{ktogpwegrefwf} yields the following result.

\begin{kor}
We have
\begin{equation}
\label{wlkegjerogergefwerffefw1}
\pi_0 (\delta^{MV} ( \sigma_{ \cZ} (\Dirac+\Psi ,\mathrm{on}\, \cY)))
=
 \pi_0 (\beta^{-1} \sigma_{N\cap \cZ} ( (\Dirac+\Psi)_{|N},\mathrm{on}\,N\cap \cY))\ .
\end{equation}
in $K\cX_{-n -1}( \cO_{\cZ}^{\infty}(N\cap \cY),\cO^{\infty}_{\cZ}(N\cap \cY)\cap \cW_{0})$. 
\end{kor}

We now observe that every member of $\cO^{\infty}_{\cZ}(N \cap \cY)\cap \cW_{0}$ is contained in a member of $\cO^{-}(N \cap \cY\cap \cZ)$. Since these subsets are flasque  the map 
\[
K\cX(\cO^{\infty}_{\cZ}(N \cap \cY)\cap \cW_{0})\to K\cX(\cO_{\cZ}^{\infty}(N\cap \cY))
\]
 vanishes and
the projection $\pi_0$ is a split injection.
We can therefore omit $\pi_0$ in \eqref{wlkegjerogergefwerffefw1} and get the desired equality.
 \end{proof}
 
 \begin{rem}
 Even if we do not know that  $\Dirac_{|N}$ is positive away from $N\cap \cZ$
 we have a class 
 $\delta^{MV} \sigma_{\cZ}(\Dirac+\Psi,\mathrm{on}\, \cY)$ in $K_{\cZ}^{\cX}(N\cap \cY)$
 whose image under $K_{\cZ}^{\cX}(N\cap \cY)\to K^{\cX}(N\cap \cY)$ is
 $\beta^{-1}\sigma((\Dirac+\Psi)_{|N}, \mathrm{on}\, N\cap  \cY)$.  \hB
 \end{rem}

\subsection{Decompositions and Mayer-Vietoris}\label{koregperegfrefrfrefw}

We consider a bornological coarse space $M$ (we use the symbol $M$ since it will later be the bornological coarse space associated to a complete Riemannian manifold).  
Let $\tilde f:M\to \R$ be a controlled function. 
Then we define the big families
\[
\cM_{\pm}:= \tilde f^{-1}(\{\R^{\pm}\})
\]
 on $M$ and set $\cY:=\cM_{-}\cap \cM_{+}$.
We consider the function 
\begin{equation}\label{bjbkjsldkbfjlsdfbvsdfvsvs}f:=\frac{\tilde f}{\sqrt{1+\tilde f^{2}}}\quad \mbox{in} \quad  \ell^{\infty}_{\cY}(M) 
\end{equation} 
 and observe that   
$$(\frac{f+1}{2})^{2}- \frac{f+1}{2} \in \ell^{\infty}(\cY)\ ,$$  see \cref{gjsoepgergseffs} for notation. 
The class of $\frac{f+1}{2}$ in $C(\partial^{\cY}X) = \smash{\frac{\ell^{\infty}_{\cY}(M)}{ \ell^{\infty}(\cY)}}$  is a projection
 and represents a $K$-theory class
 \begin{equation}
 \label{pclass}
 p:=\left[\frac{f+1}{2}\right]
 \end{equation}
   in $K_0(C (\partial^{\cY} X))$, see \cref{jiogowergwefrfwrf}.

We have the pairing \eqref{fwerfewrfewrfwerf}   $$p\cap^{\cX}-:K\cX(M)\to \Sigma K\cX(\cY) \ .$$ On the other hand,  the decomposition of $M$ into
$(\cM_{-},\cM_{+})$ induces in view of  \eqref{gwerojopewferferwf} a Mayer-Vietoris fibre sequence
$$K\cX(\cM_{-})\oplus K\cX(\cM_{+})\to K\cX(M)\xrightarrow{\partial^{MV}} \Sigma K\cX(\cY)\ .$$

\begin{lem}\label{gojsekorpgergesrfrefs}
We have an equivalence  $p\cap^{\cX}-\simeq \partial^{MV}:  K\cX(M)\to  \Sigma K\cX(\cY)$. 
 \end{lem}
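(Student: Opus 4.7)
The plan is to replace the projection $p=[\tfrac{f+1}{2}]$ by the sharp characteristic function of $M_+$ in the corona algebra, and then to identify the resulting cut-down operation with the Mayer--Vietoris boundary via excision at the level of $C^*$-categories. First I would verify that $\tfrac{f+1}{2}$ and $\chi_{M_+}$ represent the same class in $C(\partial^{\cY}M)$. Since $\tilde f$ is controlled, any $U$-thickening of the zero set $\tilde f^{-1}(\{0\})\subseteq M_-\cap M_+$ lies in $\cY=\cM_-\cap \cM_+$, so $\chi_{M_+}$ has vanishing variation away from $\cY$ and belongs to $\ell^{\infty}_{\cY}(M)$. Moreover $|\tfrac{f+1}{2}-\chi_{M_+}|$ becomes small wherever $|\tilde f|$ is large, and the set $\{|\tilde f|\le R\}$ lies in $\cY$ for each $R>0$ (implicit in $f\in \ell^{\infty}_{\cY}(M)$), so $\tfrac{f+1}{2}-\chi_{M_+}\in \ell^{\infty}(\cY)$. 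Consequently the multiplication operators $\chi(\tfrac{f+1}{2})$ and $\chi(M_+)$ differ by an element of the ideal $\bC(\cY\subseteq M)$.

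Second I would analyze the factorization of $\nu(p,-)$ at the $C^*$-category level. The sharp projection $\chi(M_+)$ is a $0$-controlled endomorphism, and for any morphism $A$ in $\bC(M)$ the product $\chi(M_+)A$ has target supported on $M_+$, so it lies in the ideal $\bC(\cM_+\subseteq M)$. By naturality of the boundary map for the inclusion of fibre sequences
$\bC(\cY\subseteq M)\to \bC(\cM_+\subseteq M)\to \bC(\cM_+\subseteq M)/\bC(\cY\subseteq M)$
into
$\bC(\cY\subseteq M)\to \bC(M)\to \bC(M)/\bC(\cY\subseteq M)$,
Step~1 implies that $p\cap^{\cX}-$ agrees with the composition
\[
K(\bC(M))\xrightarrow{\chi(M_+)\cdot}  K(\bC(\cM_+\subseteq M))\to K\Big(\frac{\bC(\cM_+\subseteq M)}{\bC(\cY\subseteq M)}\Big)\xrightarrow{\partial_+}\Sigma K(\bC(\cY\subseteq M)),
\]
where $\partial_+$ is the boundary of the first fibre sequence and we have used the identification $K\cX(\cZ)\simeq K(\bC(\cZ\subseteq M))$ from \cref{wroekgpwergwrfgwrf}.

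Third I would match this with $\partial^{MV}$. The coarse excisiveness of $(\cM_-,\cM_+)$ implies $\bC(\cM_+\subseteq M)+\bC(\cM_-\subseteq M)=\bC(M)$ and $\bC(\cM_+\subseteq M)\cap \bC(\cM_-\subseteq M)=\bC(\cY\subseteq M)$, yielding an excision equivalence
$K(\bC(\cM_+\subseteq M)/\bC(\cY\subseteq M))\xrightarrow{\simeq} K(\bC(M)/\bC(\cM_-\subseteq M))$
under which $\partial^{MV}$ factors through $\partial_+$. Since $A-\chi(M_+)A=\chi(M_-')A\in \bC(\cM_-\subseteq M)$ for $M_-':=M\setminus M_+$, this excision equivalence sends $[\chi(M_+)A]$ to $[A]$, matching the factorization from Step~2 and yielding $p\cap^{\cX}-\simeq \partial^{MV}$. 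The main obstacle will be the careful identification in Step~3, namely verifying that the coarse Mayer--Vietoris boundary at the $C^*$-category level coincides with the excision-based connecting map and is compatible with the concrete $\chi(M_+)$-multiplication; the technicalities of Step~1 are minor and follow from controllness of $\tilde f$ together with the compatibility of $\cY$ with coarse thickenings of the zero set.
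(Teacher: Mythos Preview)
Your approach is correct and is essentially the same as the paper's: both factor the pairing through $K\bigl(\bC(\cM_+\subseteq M)/\bC(\cY\subseteq M)\bigr)$, use the excision equivalence with $K\bigl(\bC(M)/\bC(\cM_-\subseteq M)\bigr)$, and identify the result with the definition of $\partial^{MV}$ via the boundary of the pair sequence. Your Step~1 replacement of $\tfrac{f+1}{2}$ by the sharp characteristic function $\chi_{M_+}$ is a clean way to make the factorization explicit; the paper leaves this implicit in asserting commutativity of its comparison square.

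One notational point to fix: in Step~2 you write the composition as passing through $K(\bC(\cM_+\subseteq M))$ via ``$\chi(M_+)\cdot$''. But $A\mapsto \chi'(M_+)A$ is \emph{not} a functor $\bC(M)\to \bC(\cM_+\subseteq M)$, since $\chi'(M_+)AB\neq \chi'(M_+)A\,\chi'(M_+)B$ in general. It only becomes a functor after passing to the quotient by $\bC(\cY\subseteq M)$ (this is exactly what the commutator estimate, \cref{gokpergergsgre}, buys you). So the correct factorization is directly
\[
K(\bC(M))\longrightarrow K\Bigl(\tfrac{\bC(\cM_+\subseteq M)}{\bC(\cY\subseteq M)}\Bigr)\xrightarrow{\ \partial_+\ }\Sigma K(\bC(\cY\subseteq M)),
\]
with the first arrow induced by the genuine $*$-functor $A\mapsto [\chi'(M_+)A]$. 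This is precisely what you use in Step~3, so the slip is purely notational and does not affect the argument.
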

 \begin{proof}
 The origin of the Mayer-Vietoris boundary map for coarse $K$-homology $K\cX$ is the excisiveness of the  $\Mod(KU)$-valued functor $K\cX$ on  $\BC$. It sends the square \begin{equation}\label{wergojopwefwerfr}\xymatrix{\cY\ar[r]\ar[d] &\cM_{+} \ar[d] \\ \cM_{-}\ar[r] &M } 
\end{equation} 
of big families or objects in $\BC$ to 
{the} left square in  \begin{equation}\label{ogjermf432r}\xymatrix{K\cX(\cY)\ar[r]\ar[d] &K\cX(\cM_{+}) \ar[d]\ar@{..>}[r]&K\cX(\cM_{+},\cY)\ar[d]^{\simeq} \\ K\cX(\cM_{-})\ar[r] &K\cX(M) \ar@{..>}[r]&K\cX(M,\cM_{-})} 
\end{equation}in $\Mod(KU)$ which has the property of being cocartesian. It extends to a map of fibre sequences as indicated so that the right vertical map is an equivalence.
The Mayer-Vietoris boundary map is by definition
\begin{equation}\label{fqwfqweewdqed}\partial^{MV}:K\cX(M)\to K\cX(M,\cM_{-}) \stackrel{\simeq}{\leftarrow} K\cX(\cM_{+},\cY)\stackrel{\partial}{\to} \Sigma K\cX(\cY)\ ,\end{equation} 
where $\partial$ is the boundary map associated to the upper fibre sequence.
Since we want to compare this map with a construction of $p\cap^{\cX}- $ defined on the level of $C^{*}$-categories
we must also express the $\partial^{MV}$ in terms of $C^{*}$-categorical constructions.

 The square \eqref{wergojopwefwerfr}
 yields an excisive square of $C^{*}$-categories \cite[Lemma 6.10]{coarsek}
 \begin{equation}\label{fpoerjfkopwfewdqewd}\xymatrix{\bC(\cY\subseteq M)\ar[r]\ar[d] &\bC(\cM_{+} \subseteq M)\ar[d] \\ \bC(\cM_{-}\subseteq M)\ar[r] &\bC(M) } \ .
\end{equation} 
 Applying the $K$-theory functor for $C^{*}$-categories we get a square
 $$\xymatrix{K(\bC(\cY\subseteq M))\ar[r]\ar[d] &K(\bC(\cM_{+} \subseteq M))\ar[d] \\K( \bC(\cM_{-}\subseteq M))\ar[r] &K(\bC(M)) }$$ 
 which corresponds to the left push-out square in \eqref{ogjermf432r}. 
 Unfolding the definition of $\nu$ in \eqref{iogjweoigwefewrfwerfwf11} and \eqref{kortphertherther} the map 
 $p\cap^{\cX}-$ is given  by the following composition
 \begin{align*}
 K\cX(M)&\simeq K(\bC(M)) \xrightarrow{p\otimes \id}
  K\left(\frac{\ell^{\infty}_{\cY}(M) }{ \ell^{\infty}(\cY)}\otimes \bC(M)\right) \\&\stackrel{\nu}{\to} 
  K\left(\frac{\bC(M)}{\bC(\cY\subseteq M)}\right) \xrightarrow{\partial} \Sigma K(\bC(\cY\subseteq M))\simeq \Sigma K\cX(\cY)
 \end{align*}
We now argue with the commutative diagram
\[
\begin{tikzcd}[column sep=-0.5cm]
K(\bC(M)) 
	\ar[rr]
	\ar[d, "\nu \circ (p \otimes \id)"]
	&& 
		K\Big(\frac{\bC(M)}{\bC(\cM_{-}\subseteq M)}\Big)
		\\ 
K\Big(\frac{\bC(M)}{\bC(\cY\subseteq M)}\Big)
	\ar[dr, "\partial"']
	&&
	K\Big(\frac{\bC(\cM_{+}\subseteq M)}{\bC(\cY\subseteq M)}\Big)
	\ar[dl, "\partial"]
	\ar[ll] 
	\ar[u, "\simeq"']
	\\ 
	&
	{\Sigma} K(\bC(\cY\subseteq M))  & \ 
\end{tikzcd}
\]
where the horizontal arrows are induced by the obvious projections and inclusions. 
The lower triangle commutes by the naturality of the boundary operator for $C^{*}$-algebra $K$-theory  for maps of exact sequences of $C^{*}$-categories. The upper right vertical morphism is the left vertical equivalence in \eqref{ogjermf432r}. The 
 composition along the left part reflects the definition of $p\cap^{\cX}-$, and the other composition  is the definition \eqref{fqwfqweewdqed}  of $\partial^{MV}$.
  \end{proof}

We now assume that $M$ is a uniform bornological coarse space and that $\tilde f$ is in addition uniformly continuous.
 We define the closed subspaces
$M_{\pm}:=f^{-1}( \R^{\pm})$ and set $N:=M_{+}\cap M_{+}$.

 Then we have  $p\in K_{0}(C(\partial^{\cY}_{u}M))$ and 
 $\partial p\in K^{1}(\cY)$, where $\partial$ is the boundary map in $K$-theory associated to the exact sequence of $C^{*}$-algebras \eqref{gerwrgfrq} and we use  the grading convention   $\pi_{-*}K(C_{u}(\cY))\simeq K_{}^{*}(\cY)$.

Let $\cZ$ be a second big family.
 We have the coarse symbol pairing
$$\partial p\cap^{\cX\sigma}-:K^{\cX}_{\cZ}(M)\to \Sigma 
K^{\cX}_{\cZ}(\cY)$$
introduced in \cref{grjweopgergweffefwef}.
By \cref{keopgegwerferfw}.\ref{werkjgokwegergwerg9} the  decomposition $(M_{+},M_{-})$ (which is coarsely and uniformly excisive) of $M$ induces a Mayer-Vietoris fibre sequence
$$K^{\cX}_{\cZ}(M_{+})\oplus K^{\cX}_{\cZ}(M_{-})\to K^{\cX}_{\cZ}(M)\xrightarrow{\delta^{MV}} \Sigma K_{\cZ}^{\cX}(N)\ .$$
We let \begin{equation}\label{gwegerf3ferwfgreg}i^{\cX}:  K_{\cZ}^{\cX}(N)\to   K_{\cZ}^{\cX}(\cY)\quad \mbox{and} \quad i:K\cX(N\cap \cZ) \to K\cX(\cY\cap \cZ) 
\end{equation} denote the maps induced by the obvious inclusions. 
 The square  \begin{equation}\label{vfdsvervfdvsdfvsdvdf}\xymatrix{ K_{\cZ}^{\cX}(N)\ar[r]^{i^{\cX}}\ar[d]^{a_{N,N\cap \cZ}} &  K_{\cZ}^{\cX}(\cY) \ar[d]^{a_{\cY,\cY\cap \cZ}} \\ K\cX(N\cap \cZ)\ar[r]_{i} 	  &K\cX(\cY\cap \cZ) }
\end{equation}
 commutes by the naturality of the  index map \cref{rtkohprethergetetg}.
 
\begin{lem}\label{kogpqgregergwrgrgw}
We have an equivalence
$$\partial p\cap^{\cX\sigma}- \simeq i^{\cX}\circ \delta^{MV}: K^{\cX}_{\cZ}(M)\to \Sigma K^{\cX}_{\cZ}(\cY)\ .$$
\end{lem}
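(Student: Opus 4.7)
The plan is to adapt the $C^*$-categorical proof of \cref{gojsekorpgergesrfrefs} to the coarse symbol pairing, using the lift $\tilde p := (f+1)/2 \in C_{u,\cY}(M)$ as the bridge between the two boundary maps. The essential insight is that multiplication by $\chi(\pr^* \tilde p)$ on the cone implements the Mayer--Vietoris restriction to $M_+$ modulo the ideal of operators supported on $\cO^\infty_\cZ(\cY)$.

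The first step is to extend the morphism $\mu$ of \cref{iogjweoigwefewrfwerfwf} from $C_u(\cY)$ to the whole algebra $C_{u,\cY}(M)$. The same proof yields a morphism of $C^*$-categories
\[
  \mu_M : C_{u,\cY}(M) \otimes \bC(\cO^\infty_\cZ(M) \subseteq \cO^\infty(M)) \to \bC(\cO^\infty_\cZ(M) \subseteq \cO^\infty(M))/\bC(\cO^-(\cZ) \subseteq \cO^\infty(M))
\]
sending $f \otimes A$ to $[\chi(\pr^* f)A]$: the pullback $\pr^* f$ lies in $\ell^{\infty}_{\cO^-(M)}(\cO^\infty(M))$ by uniform continuity (cf.\ \cref{RemarkUniformOnCone}), so commutators $[\chi(\pr^* f), A]$ land in $\bC(\cO^-(\cZ) \subseteq \cO^\infty(M))$ via \cref{kohpkertphokgpertge}. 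The restriction of $\mu_M$ to the ideal $C_u(\cY) \subseteq C_{u,\cY}(M)$ recovers the original $\mu$, landing in the subquotient $\bC(\cO^\infty_\cZ(\cY) \subseteq)/\bC(\cO^-(\cY \cap \cZ) \subseteq)$. Unfolding the construction of $\partial p \cap^{\cX\sigma}$ exactly as at the end of the proof of \cref{gojsekorpgergesrfrefs}, this pairing equals the $K$-theoretic boundary map of the induced exact sequence of subquotients, applied to the class $[\mu_M(\tilde p \otimes -)]$ obtained by pairing with the projection-lift $\tilde p$ via $\mu_M$.

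The map $i^{\cX} \circ \delta^{MV}$, on the other hand, is by \cref{keopgegwerferfw} and \cite[Lem.\ 6.10]{coarsek} the $K$-theoretic boundary map associated to the excisive pushout square of $C^*$-categories
\[
  \begin{tikzcd}
    \bC(\cO^\infty_\cZ(N) \subseteq \cO^\infty(M)) \ar[r] \ar[d] & \bC(\cO^\infty_\cZ(M_+) \subseteq \cO^\infty(M)) \ar[d] \\
    \bC(\cO^\infty_\cZ(M_-) \subseteq \cO^\infty(M)) \ar[r] & \bC(\cO^\infty_\cZ(M) \subseteq \cO^\infty(M))
  \end{tikzcd}
\]
induced by the coarsely and uniformly excisive decomposition $(M_+, M_-)$, post-composed with the inclusion of ideals $\bC(\cO^\infty_\cZ(N) \subseteq) \hookrightarrow \bC(\cO^\infty_\cZ(\cY) \subseteq)/\bC(\cO^-(\cY \cap \cZ) \subseteq)$.

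The bridge between the two boundary maps is the observation that $\chi(\pr^* \tilde p)$ realizes the Mayer--Vietoris restriction modulo the relevant ideals. Indeed, the function $\tilde p - \mathbf{1}_{M_+}$ vanishes asymptotically away from $\cY$ (equal to $\tilde p$ on $M_-$ and to $\tilde p - 1$ on $M_+$, both tending to $0$ outside any sufficiently large thickening of $N$), so its pullback lies in $\ell^{\infty}(\cO^\infty(\cY))$. Consequently $\chi(\pr^* \tilde p) A - \chi(\cO^\infty(M_+)) A \in \bC(\cO^\infty_\cZ(\cY) \subseteq \cO^\infty(M))$ for every $A \in \bC(\cO^\infty_\cZ(M) \subseteq \cO^\infty(M))$. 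This yields a morphism from the Mayer--Vietoris excisive square into the exact sequence realizing $\partial p \cap^{\cX\sigma}$, matching the two ideal inclusions and thereby inducing $i^{\cX}$ on the boundary; naturality of the $K$-theoretic boundary map then gives the claimed equivalence $\partial p \cap^{\cX\sigma} - \simeq i^{\cX} \circ \delta^{MV}$. The main technical obstacle is verifying that all commutator corrections and support differences lie in the announced ideals modulo $\bC(\cO^-(\cZ) \subseteq \cO^\infty(M))$, which requires combining the cone-direction variation argument of \cref{iogjweoigwefewrfwerfwf} with a careful analysis of $\tilde p$ on $\cM_{\pm}$.
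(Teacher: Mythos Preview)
Your approach is correct and follows essentially the same strategy as the paper's proof: both extend $\mu$ from $C_u(\cY)$ to $C_{u,\cY}(M)$ to build a map of short exact sequences of $C^*$-categories, then use naturality of the $K$-theoretic boundary to identify $\partial p\cap^{\cX\sigma}-$ with a Mayer--Vietoris type boundary on the cone. The organizational difference is in the final identification. The paper does not compare $\chi(\pr^*\tilde p)$ with $\chi(\cO^\infty(M_+))$ directly; instead it recognizes the bottom row of the exact-sequence map as the coarse corona pairing $(c\circ\pr)^*p\cap^{\cX}-$ on the bornological coarse space $\cO^\infty(M)$ (via the map $\partial^{\cO^\infty(\cY)}\cO^\infty(M)\to\partial^{\cY}M\to\partial_u^{\cY}M$), and then invokes the already-established \cref{gojsekorpgergesrfrefs} to identify this with the Mayer--Vietoris boundary $\tilde\partial^{MV}$ for the big-family decomposition $(\cO^\infty_\cZ(\cM_+),\cO^\infty_\cZ(\cM_-))$; a short map-of-squares argument then gives $i^{\cX}\circ\delta^{MV}\simeq\tilde\partial^{MV}$. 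Your inline comparison via $\tilde p-\mathbf 1_{M_+}\in\ell^\infty(\cY)$ is effectively re-proving \cref{gojsekorpgergesrfrefs} in situ, which is fine but less modular; the paper's route cleanly separates the ``boundary-commutes-with-pairing'' step from the ``corona-pairing-equals-MV'' step and reuses prior work. A minor point: your target for $\mu_M$ quotients by $\bC(\cO^-(\cZ)\subseteq\cO^\infty(M))$ whereas the paper quotients by the smaller $\bC(\cO^-(\cY\cap\cZ)\subseteq\cO^\infty(M))$; both ideals have vanishing $K$-theory (their members are flasque), so this is immaterial.
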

\begin{proof}
We consider the following morphism of exact sequences of $C^{*}$-categories
\[
\begin{tikzcd}
0
\ar[d]
	&
		0
		\ar[d]
		&
			0
			\ar[d]
			\\
C_{u}(\cY)\otimes \bC(\cO_{\cZ}^{\infty}(M))
\ar[r]
\ar[d]
\ar[r, "\mu"]
	&
		\frac{\bC(\cO_{\cZ}^{\infty}(\cY)\subseteq \cO^{\infty}(M))}{\bC(\cO^{-}(\cY\cap \cZ)\subseteq \cO^{\infty}(M))}
		\ar[d]
		&
			\bC(\cO_{\cZ}^{\infty}(\cY)\subseteq \cO^{\infty}(M))
				\ar[l]
				\ar[d]
				\\
C_{u,\cY}(X)\otimes \bC(\cO_{\cZ}^{\infty}(M)) 
\ar[d]
\ar[r]
&
		\frac{\bC(\cO_{\cZ}^{\infty}(M)\subseteq \cO^{\infty}(M))}{\bC(\cO^{-}(\cY\cap \cZ)\subseteq \cO^{\infty}(M))}
		\ar[d]
		&
			\bC(\cO_{\cZ}^{\infty}(M)\subseteq \cO^{\infty}(M))
			\ar[l]
			\ar[d]
			\\
C_{u}(\partial_{u}^{\cY}M)\otimes \bC(\cO_{\cZ}^{\infty}(M))
\ar[r, "!"]
\ar[d]
&
	\frac{\bC(\cO_{\cZ}^{\infty}(M)\subseteq \cO^{\infty}(M))}{\bC(\cO_{\cZ}^{\infty}(\cY)\subseteq \cO^{\infty}(M))}
	\ar[d]
	&
		\frac{\bC(\cO_{\cZ}^{\infty}(M)\subseteq \cO^{\infty}(M))}{\bC(\cO_{\cZ}^{\infty}(\cY)\subseteq \cO^{\infty}(M))}
		\ar[l, equal]
		\ar[d]
		\\
0
	&
	0
		&
		0	 
\end{tikzcd}
\]
where the left vertical sequence is obtained by tensoring \eqref{gerwrgfrq} with $\bC(\cO_{\cZ}^{\infty}(M))$. 
{
To explain the arrow marked ``!'', notice that we have a map
\[
  \partial^{\cO^\infty(\cY)}\cO^\infty(M) \stackrel{\pr}{\longrightarrow} \partial^{\cY}M \stackrel{c}{\longrightarrow} \partial^{\cY}_u M   \ ,
\]
where the first map is the extension of the controlled map $\pr:\cO^{\infty}(M)\to M$ to a continuous map between the coronas (see \cref{RemFunctorialityCorona}) and the second map is the comparison map \eqref{ComparisonMap}.
The arrow marked ``!'' is then the composition $\nu \circ ((c \circ \pr)^* \otimes \id)$, where $\nu$ is the appropriate version of \eqref{iogjweoigwefewrfwerfwf11}. 
}

Applying $K$-theory and using the naturality of the boundary map in $K$-theory for morphisms of exact sequences of $C^{*}$-categories we get the two   lower right commutative squares in 
\[
\small
\begin{tikzcd}[column sep=-2cm]
K({\partial^{\cO^\infty(\cY)}\cO^\infty(M)})\times K_{\cZ}^{\cX}(M) 
\ar[dr]
\\
	&
	K\left({C(\partial^{\cO^\infty(\cY)}\cO^\infty(M))}  \otimes \bC(\cO_{\cZ}^{\infty}(M))\right)
	\ar[dr, "K(\nu)"] 
	\\
K(\partial_{u}^{\cY}M)\times K^{\cX}_{\cZ}(M)
\ar[uu, "{K((c \circ \pr)^{*}) \times \id}"]
\ar[dd, "\partial\times \id"']
\ar[dr]
	&
	&
	K\left(\frac{\bC(\cO_{\cZ}^{\infty}(M))}{\bC(\cO^{\infty}_{\cZ}(\cY)\subseteq \cO^{\infty}(M))}\right)
	\ar[dd, equal] 
		&\\
	&
	K(C(\partial_{u}^{\cY}M)\otimes \bC(\cO_{\cZ}^{\infty}(M)))
	\ar[dr, "K(!)", near end]
	\ar[uu, "K({(c \circ \pr)^*} \times \id)", near end]
	\ar[dd, "\partial"']
		&
		\\
\Sigma K(\cY)\times  K_{\cZ}^{\cX}(M)
\ar[dr]
&
	&
		K\left(\frac{\bC(\cO_{\cZ}^{\infty}(M)\subseteq \cO^{\infty}(M))}{\bC(\cO^{\infty}_{\cZ}(\cY)\subseteq \cO^{\infty}(M))}\right)
		\ar[dd, "\partial"]
		\ar[dr, equal]
			&
			\\
&
	\Sigma K(C_{u}(\cY)\otimes \bC(\cO_{\cZ}^{\infty}(M)))
	\ar[dr]	
	&
		&
			K\left(\frac{\bC(\cO_{\cZ}^{\infty}(M)\subseteq \cO^{\infty}(M))}{\cO^{\infty}_{\cZ}(\cY)\subseteq \cO^{\infty}(M)}\right)
			\ar[dd, "\partial"]
			\\
&
	&
		\Sigma K\left(\frac{\bC(\cO_{\cZ}^{\infty}(\cY)\subseteq \cO^{\infty}(M))}{\bC(\cO^{-}(\cY\cap \cZ)\subseteq \cO^{\infty}(M))}\right)
			&
			\\
&
	&
		&
			\Sigma K(\bC(\cO^{\infty}_{\cZ}(\cY)\subseteq \cO^{\infty}(M)))
			\ar[ul, "\simeq"'] 	
\end{tikzcd}
\]
  The left horizontal maps are given by the symmetric monoidal structure of the $K$-theory functor for $C^{*}$-categories and the {left squares commute by bi-exactness of this structure}. 
{
Finally, the top right square commutes by functoriality of $K$, by definition of the arrow marked ``!''.
}
 
    If we fix $p$ in $K_{0}(\partial_{u}^{\cY}M) $, then the  {up}-right-down composition is 
    \[
    (c \circ \pr)^{*}p\cap^{\cX} -:K^{\cX}_{\cZ}(M)
    \to \Sigma K^{\cX}_{\cZ}(\cY)\stackrel{\mathrm{def}}{=}\Sigma K(\bC(\cO^{\infty}_{\cZ}(\cY)\subseteq \cO^{\infty}(M))) 
    \ .
    \] 
    By \cref{gojsekorpgergesrfrefs}  we   get
    \[
    {(c \circ \pr)^{*}}p\cap^{\cX}-\simeq \tilde  \partial^{MV}:K_{\cZ}^{\cX}(M)\to \Sigma K_{\cZ}^{\cX}(\cY)\ ,
    \]
     where $\tilde \partial_{MV}$ is the Mayer-Vietoris boundary for
 the decomposition of $\cO_{\cZ}^{\infty}(M)$ into $\cO_{\cZ}^{\infty}(\cM_{+})$ and $\cO_{\cZ}^{\infty}(\cM_{-})$.
 The down-right composition is 
 \[
 \partial p\cap^{\cX\sigma}-:K^{\cX}_{\cZ}(M)\to \Sigma K\cX(\cO_{\cZ}^{\infty}(M))\to \Sigma K_{\cZ}^{\cX}(\cY)\ .
 \]
   We have a map of squares
\[
 \begin{tikzcd}
\cO^{\infty}_{\cZ}(N)\ar[r]\ar[d]&\cO^{\infty}_{\cZ}(M_{+})\ar[d]\\\cO^{\infty}_{\cZ}(M_{-})\ar[r]&\cO^{\infty}_{\cZ}(M)	
 \end{tikzcd}
 \quad \to \quad
 \begin{tikzcd}
\cO^{\infty}_{\cZ}(\cY)\ar[r]\ar[d]&\cO^{\infty}_{\cZ}(\cM_{+})\ar[d]\\\cO^{\infty}_{\cZ}(\cM_{-})\ar[r]&\cO^{\infty}_{\cZ}(M)
\end{tikzcd}
 \]
 which implies that
 $i^{\cX}\circ \delta^{MV}\simeq \tilde \partial^{MV}$.
 Combining these equivalences we get the assertion of the lemma.
\end{proof}

  We now assume that $M$ is represented by a complete Riemannian manifold, that $\tilde f:M\to \R$ is smooth, and that $0$ is a regular value. Then
  $N$  is an embedded codimension-one submanifold of $M$  with trivial normal bundle which separates $M$ into the two components $M_{-}$ and $M_{+}$.  
  We equip $N$ with the induced Riemannian metric. Then  the inclusion $i:N\to M$
  is a morphism  of bornological coarse spaces.
  We assume that $N$ has a product tubular neighbourhod of uniform width.

  If $\Dirac_{0}$ is a generalized Dirac operator on $M$  which is {positive} away from a big family $\cZ$ with symbol $\sigma_{\cZ}(\Dirac_{0})$ in $K^{\cX}_{\cZ}(M)$, then by \cref{tkopherthtere9} we know that
 \[
 \delta^{MV}(\sigma_{\cZ}(\Dirac_{0}))=\beta^{-1}\sigma_{N\cap\cZ}({\Dirac_{0|N}})
 \]
   in $K^{\cX}_{\cZ}(N)$ provided that $\Dirac_{0}$ has a product structure on the  product tubular neighbourhod of uniform width and
   $\Dirac_{0|N}$ is also positive away from $N\cap \cZ$.    We can conclude  a coarse version of Roe's partitioned manifold index theorem \cite{zbMATH04109704}.

\begin{kor}
\label{gwergregwerferfw} 
Under the assumption made above, we have an  equality
\[
i(\ind\cX(\Dirac_{0|N},\mathrm{on}\,N\cap \cZ))\simeq \beta \cdot \partial^{MV} \ind\cX(\Dirac_{0},\mathrm{on}\,\cZ)
\]
 in $K\cX(\cY \cap \cZ)$.
\end{kor}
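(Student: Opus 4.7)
The plan is to apply the index map to both sides of the symbol-level Dirac-goes-to-Dirac identity recalled just above the statement, and then to identify each side with the quantities in the corollary via Proposition~\ref{erokgpwergrwefwerfwerfwer} together with Lemmas~\ref{kogpqgregergwrgrgw}, \ref{gojsekorpgergesrfrefs} and~\ref{jogergreffwfrf}.

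Specifically, applying $i^{\cX}:K^{\cX}_{\cZ}(N)\to K^{\cX}_{\cZ}(\cY)$ to the equality
\[
\delta^{MV}(\sigma_{\cZ}(\Dirac_{0}))=\beta^{-1}\sigma_{N\cap\cZ}(\Dirac_{0|N})
\]
in $K^{\cX}_{\cZ}(N)$ (obtained from Theorem~\ref{tkopherthtere9}) and then the index map $a_{\cY,\cY\cap\cZ}:K^{\cX}_{\cZ}(\cY)\to K\cX(\cY\cap\cZ)$, the commutative square~\eqref{vfdsvervfdvsdfvsdvdf} combined with Lemma~\ref{jogergreffwfrf} identifies the image of the right-hand side:
\[
\beta^{-1}\,a_{\cY,\cY\cap\cZ}\bigl(i^{\cX}(\sigma_{N\cap\cZ}(\Dirac_{0|N}))\bigr)=\beta^{-1}\,i\bigl(a_{N,N\cap\cZ}(\sigma_{N\cap\cZ}(\Dirac_{0|N}))\bigr)=\beta^{-1}\,i\bigl(\ind\cX(\Dirac_{0|N},\mathrm{on}\,N\cap\cZ)\bigr).
\]

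For the image of the left-hand side, I would invoke Lemma~\ref{kogpqgregergwrgrgw} to rewrite $i^{\cX}\circ\delta^{MV}$ as $\partial p\cap^{\cX\sigma}-$, where $p\in K_{0}(C(\partial^{\cY}_{u}M))$ is the corona projection of~\eqref{pclass}; uniform continuity of $\tilde f$ (and hence of $f=\tilde f/\sqrt{1+\tilde f^{2}}$) ensures that $p$ really lies in the uniform corona, so that Proposition~\ref{erokgpwergrwefwerfwerfwer} applies and yields
\[
a_{\cY,\cY\cap\cZ}\bigl(\partial p\cap^{\cX\sigma}\sigma_{\cZ}(\Dirac_{0})\bigr)=c^{*}p\cap^{\cX}a_{M,\cZ}\bigl(\sigma_{\cZ}(\Dirac_{0})\bigr).
\]
Lemma~\ref{gojsekorpgergesrfrefs} identifies $c^{*}p\cap^{\cX}-$ with the Mayer--Vietoris boundary $\partial^{MV}:K\cX(\cZ)\to\Sigma K\cX(\cY\cap\cZ)$ (for the induced decomposition of $\cZ$), and a final application of Lemma~\ref{jogergreffwfrf} gives $a_{M,\cZ}(\sigma_{\cZ}(\Dirac_{0}))=\ind\cX(\Dirac_{0},\mathrm{on}\,\cZ)$. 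Equating the two expressions for $a_{\cY,\cY\cap\cZ}(i^{\cX}\delta^{MV}\sigma_{\cZ}(\Dirac_{0}))$ and multiplying by $\beta$ yields the corollary.

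The argument is a diagram chase assembling results already established in the paper, so the principal challenge is bookkeeping rather than analysis: all analytic input has been packaged into Theorem~\ref{tkopherthtere9} and the symbol-to-index identity in Lemma~\ref{jogergreffwfrf}. The only mildly subtle point is verifying that Proposition~\ref{erokgpwergrwefwerfwerfwer} is applicable, i.e.\ that $p$ belongs to $K_{0}(C(\partial^{\cY}_{u}M))$; this is exactly what the uniform-continuity hypothesis on $\tilde f$ provides, via \cref{ComparisonMap}.
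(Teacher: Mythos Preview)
Your argument is correct, but it takes a noticeably longer route than the paper's. The paper's proof proceeds in four direct lines: apply Lemma~\ref{jogergreffwfrf} to write $i(\ind\cX(\Dirac_{0|N},\mathrm{on}\,N\cap\cZ))=i\,a_{N,\cZ}(\sigma_{N\cap\cZ}(\Dirac_{0|N}))$, invoke Theorem~\ref{tkopherthtere9} to replace the symbol by $\beta\,\delta^{MV}\sigma_{\cZ}(\Dirac_0)$, and then simply observe that the index map $a$ is a natural transformation between excisive $\Mod(KU)$-valued functors and therefore commutes with Mayer--Vietoris boundaries, giving $i\,a_{N,\cZ}\,\delta^{MV}=\partial^{MV}a_{M,\cZ}$. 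A final application of Lemma~\ref{jogergreffwfrf} finishes.

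Your route replaces the single ``index map commutes with $\partial^{MV}$'' step by the detour $i^{\cX}\delta^{MV}\simeq\partial p\cap^{\cX\sigma}-$ (Lemma~\ref{kogpqgregergwrgrgw}), then Proposition~\ref{erokgpwergrwefwerfwerfwer}, then $c^{*}p\cap^{\cX}-\simeq\partial^{MV}$ (Lemma~\ref{gojsekorpgergesrfrefs}). This is valid, and it has the virtue of illustrating how the corona/symbol pairings package the compatibility of $a$ with Mayer--Vietoris; but it is precisely that compatibility which the paper invokes directly as a general fact about natural transformations of excisive functors. One small wrinkle in your version: Lemma~\ref{gojsekorpgergesrfrefs} is stated for $K\cX(M)\to\Sigma K\cX(\cY)$, whereas you apply it on $K\cX(\cZ)\to\Sigma K\cX(\cY\cap\cZ)$; this is harmless (the same proof works verbatim with $\cZ$ in place of $\{M\}$, and the paper itself uses this extension later), but you should flag it.
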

 \begin{proof}
 Since the index map \cref{rtkohprethergetetg} is a natural transformation between $\Mod(KU)$-valued excisive functors it 
 commutes with Mayer-Vietoris maps. This is used in the marked equality below.
 We calculate
 \begin{eqnarray*}
 i(\ind\cX(\Dirac_{0|N},\mathrm{on}\,N\cap \cZ))
 &
 \stackrel{\text{\cref{jogergreffwfrf}}}{=}&
 i (a_{N,\cZ}(\sigma_{N\cap\cZ}({\Dirac_{0|N}})))\\
 &
 \stackrel{\text{\cref{tkopherthtere9}}}{=}&
 \beta \cdot i( a_{N,\cZ} (   \delta^{MV}\sigma_{\cZ}(\Dirac_{0}) ))\\
 &\stackrel{!}{=}&
 \beta   \cdot \partial^{MV} a_{M,\cZ}(\sigma_{\cZ}(\Dirac_{0}))
 \\
 &\stackrel{\text{\cref{jogergreffwfrf}}}{=}&
\beta  \cdot \partial^{MV} \ind\cX(\Dirac_{0},\mathrm{on}\,\cZ)
%
\ .
\end{eqnarray*}
 \end{proof}

 \begin{rem}
 \cref{gwergregwerferfw} leads to the usual application to
 obstructions against positive scalar curvature in the coarse equivalence class of $M$. 
  We assume that $i:N\to M$ is a coarse embedding with a uniform tubular neighbourhood along $N$.  
  Then $i:K\cX(N)\to K\cX(\cY)$ is an equivalence.
  Assume that {$M$ is spin and let} $\Dirac_{0}$ {be} the spin Dirac operator. 
Then $\Dirac_{0|N}$ is again the spin Dirac operator on $N$.   
If $\ind\cX(\Dirac_{0|N})\not=0$, then by \cref{gwergregwerferfw} and since $i$ is an equivalence also $\ind\cX(\Dirac_{0})\not=0$.

We call two complete Riemannian  metrics $g,g'$  on $M$ coarsely equivalent if $(M,g)$ and $(M,g')$ represent the same bornological coarse space.
If $g'$ is coarsely equivalent to $g$ and $\Dirac'_{0}$ is the associated Dirac operator, then $\ind\cX(\Dirac_{0})=\ind\cX(\Dirac'_{0})$.
We use here that two coarsely equivalent metrics induce the same index classes  for the spin Dirac operators.
This can again be shown similarly as \cref{bjgbpdgdfbdg} using the suspension theorem and the relative index theorem. 

{If $\ind\cX(\Dirac_{0|N})\not=0$,} we can conclude that $M$ can not have a metric $g'$  with uniformly positive scalar curvature in the coarse equivalence class of $M$.
It is important to note here that it is not required that $g'$ has a uniform tube at $N$, or even a product structure at $N$ at all. 
Otherwise the conclusion would be trivial since then the metric on $N$ induced by $g'$ has uniformly positive scalar curvature and therefore $\ind\cX(\Dirac_{0|N})=0$.

Using the supports we obtain a new type of obstruction. We say that a  class in $K\cX(N)$  comes from the big family $\cY$ in $N$ if it belongs to the image of
$K\cX(\cY)\to K\cX(N)$.

Let $\cZ$ be a big family in $M$.

\begin{kor}
If $\ind\cX(\Dirac_{0|N})$ does not come from $N\cap \cZ$, then there is no complete metric $g'$ in the coarse equivalence class  of $g$ of uniform positive scalar curvature on $M\setminus Z$  for any member $Z$ of $\cZ$. \hB
\end{kor}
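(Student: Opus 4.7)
The plan is to prove the contrapositive: assuming there exists a complete metric $g'$ on $M$ coarsely equivalent to $g$ together with a member $Z$ of $\cZ$ such that $g'$ has uniformly positive scalar curvature on $M\setminus Z$, I will deduce that $\ind\cX(\Dirac_{0|N})$ lies in the image of $K\cX(N\cap \cZ)\to K\cX(N)$, contradicting the hypothesis.

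The first step is to invoke the Lichnerowicz formula $\Dirac_{0}'{}^{2}=\nabla^{*}\nabla+s'/4$ for the spin Dirac operator $\Dirac_{0}'$ of $g'$: the assumed uniform lower bound on $s'$ away from $Z$ shows that $\Dirac_{0}'$ is positive away from $\cZ$ in the sense of \cref{kopherthergrge}. By \cref{gjkregopregwergfwrefrfwrf} this produces a refined index class $\ind\cX(\Dirac_{0}',\mathrm{on}\,\cZ)\in K\cX(\cZ)$ whose image under $K\cX(\cZ)\to K\cX(M)$ equals $\ind\cX(\Dirac_{0}')$. Using the fact recalled just above the corollary that coarsely equivalent complete metrics yield the same spin Dirac coarse index, $\ind\cX(\Dirac_{0})=\ind\cX(\Dirac_{0}')$, so $\ind\cX(\Dirac_{0})$ lies in the image of $K\cX(\cZ)\to K\cX(M)$.

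The second step is to combine the naturality of the Mayer-Vietoris boundary with the partitioned manifold theorem. Applied to the morphism $\cZ\to M$, naturality of $\partial^{MV}$ for the coarsely excisive decomposition $(M_{-},M_{+})$ shows that $\partial^{MV}\ind\cX(\Dirac_{0})$ lies in the image of $\Sigma K\cX(\cY\cap \cZ)\to \Sigma K\cX(\cY)$. I would then apply \cref{gwergregwerferfw} in the case of trivial support family $\cZ = \{M\}$, where the positivity hypotheses on $\Dirac_0$ and $\Dirac_{0|N}$ become vacuous and the statement reduces to Roe's classical partitioned manifold identity
\[
i(\ind\cX(\Dirac_{0|N}))=\beta\cdot \partial^{MV}\ind\cX(\Dirac_{0}) \quad \text{in } K\cX(\cY).
\]
Combined with the previous observation, this places $i(\ind\cX(\Dirac_{0|N}))$ in the image of $K\cX(\cY\cap \cZ)\to K\cX(\cY)$.

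Finally, I would use the assumption that $N\hookrightarrow M$ is a coarse embedding with uniform tubular neighborhood. As recalled in the preceding discussion, this produces an equivalence $i:K\cX(N)\to K\cX(\cY)$, and the same argument restricted to members of $\cZ$ yields an equivalence $i:K\cX(N\cap \cZ)\to K\cX(\cY\cap \cZ)$. Transporting the conclusion of the previous step along these equivalences gives that $\ind\cX(\Dirac_{0|N})$ lies in the image of $K\cX(N\cap \cZ)\to K\cX(N)$, which is the desired contradiction. The main delicate point is the naturality of $\partial^{MV}$ at the level of big families, but this is a consequence of the excisiveness of $K\cX$, which makes the MV boundary the boundary of a natural cofibre sequence in $\Mod(KU)$.
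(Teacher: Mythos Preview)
Your argument is correct and is exactly the refinement of the preceding discussion in the remark that the paper leaves implicit: pass to the contrapositive, use Lichnerowicz plus the invariance of the spin Dirac index under coarsely equivalent metrics to see that $\ind\cX(\Dirac_{0})$ comes from $\cZ$, push this through the Mayer--Vietoris boundary and the partitioned manifold identity (\cref{gwergregwerferfw} with trivial support), and then transport back along the equivalences $i$. The one point you might make more explicit is why $i:K\cX(N\cap\cZ)\to K\cX(\cY\cap\cZ)$ is also an equivalence: the tubular neighbourhood retraction moves points a bounded distance, so it sends members of $\cZ$ into members of $\cZ$, and hence the coarse-equivalence argument for $i:K\cX(N)\to K\cX(\cY)$ indeed restricts.
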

\end{rem}

 Assume that $\Dirac_{0}$ is a generalized Dirac operator of  degree $k$ on a bundle $E_{0}\to M$ which is {positive} away from a big family $\cZ$ and that $\Dirac_{0|N}$ is  positive away from  $N\cap \cZ$. We further assume that  $\smash{\tilde f}$ is asymptotically constant away from $\cZ$. 
 We define $\Psi_{0}:M\to \End_{\Cl^{1}}(\Cl^{1})$ by
 $\Psi_{0}(m):=\tilde f(m)i\sigma$, where $\sigma$ in $\Cl^{1}$ is the odd, anti-selfadjoint generator with $\sigma^{2}=-1$ 
 Then $\Psi_{0}$ is very positive away from $\cY$ and asymptotically constant away from $\cZ$. 
 We define the bundle $E:=E_{0}\stimes \Cl^{1}$ of $\Cl^{k+1}$-modules, $\Dirac:=\Dirac_{0}\stimes \id_{\Cl^{1}}$ and $\Psi:=\id_{E_{0}}\stimes \Psi_{0}$.
 
The function $\Psi_{0}$ defines a class $[\Psi_{0}]$ in $K^{1}(\cY)$. By \cref{wegokwpergerfwefrwefwf} we have
$$\sigma_{\cZ}(\Dirac+\Psi,\mathrm{on}\,\cY)=[\Psi_{0}] \cap^{\cX\sigma} \sigma_{\cZ}(\Dirac_{0})\ .$$ 
Combining  \eqref{gwegrwerfwerfwef}, \eqref{jgiowjeroijfoweferfwef} and  \eqref{bjbkjsldkbfjlsdfbvsdfvsvs} we have the well-known formula for the $K$-theoretic boundary map 
 \begin{equation}
 \label{rgwergeggwgre}
 [\Psi_{0}]=\partial p\quad \mbox{in}\quad K^{1}(\cY) \ . 
 \end{equation}

\begin{prop} In $K\cX(\cY\cap \cZ)$ 
we have  $$\ind\cX(\Dirac+\Psi,\mathrm{on}\,\cY\cap \cZ)=\beta^{-1} i( \ind\cX(\Dirac_{0|N},\mathrm{on}\,N\cap \cZ))$$
\end{prop}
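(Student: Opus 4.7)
The plan is to chain together Theorem \ref{wegokwpergerfwefrwefwf}, Proposition \ref{erokgpwergrwefwerfwerfwer}, Lemma \ref{gojsekorpgergesrfrefs} and Corollary \ref{gwergregwerferfw}, using the identification $[\Psi_0] = \partial p$ from \eqref{rgwergeggwgre}. First I would verify that the hypotheses of Theorem \ref{wegokwpergerfwefrwefwf} are in force: $\Dirac_0$ is positive away from $\cZ$ by assumption; $\Psi_0(m) = \tilde f(m) i\sigma$ is very positive away from $\cY$ because $\Psi_0^2 = \tilde f^2 \cdot 1$ and $e^{-\tilde f^2}$ lies in $C_u(\cY)$ (since $f = \tilde f/\sqrt{1+\tilde f^2}$ is uniformly continuous and vanishes at $\infty$ along $\cY$); $\Psi_0$ is asymptotically constant away from $\cZ$ because $\tilde f$ is so by hypothesis; and $\|\nabla \Psi_0\| = \|d\tilde f\|$ is uniformly bounded since $\tilde f$ is uniformly continuous and controlled on the Riemannian manifold $M$. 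Therefore Theorem \ref{wegokwpergerfwefrwefwf} applies and gives
\[
  \sigma_{\cZ}(\Dirac + \Psi, \mathrm{on}\,\cY) = [\Psi_0] \cap^{\cX\sigma} \sigma_{\cZ}(\Dirac_0) = \partial p \cap^{\cX\sigma} \sigma_{\cZ}(\Dirac_0).
\]

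Next I would apply the index map $a_{\cY, \cZ}$ to both sides. On the left, by \cref{wkpgwergwrefrfw} (the support-refined version of \cref{jogergreffwfrf}), this yields $\ind\cX(\Dirac+\Psi, \mathrm{on}\,\cY\cap\cZ)$. On the right, Proposition \ref{erokgpwergrwefwerfwerfwer} gives
\[
  a_{\cY,\cZ}\bigl(\partial p \cap^{\cX\sigma} \sigma_{\cZ}(\Dirac_0)\bigr)
  = c^{*}p \cap^{\cX} a_{M,\cZ}\bigl(\sigma_{\cZ}(\Dirac_0)\bigr)
  = c^{*}p \cap^{\cX} \ind\cX(\Dirac_0, \mathrm{on}\,\cZ),
\]
where in the last step I use \cref{jogergreffwfrf} applied to $\Dirac_0$. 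Note that the class $c^*p$ in $K_0(C(\partial^{\cY}M))$ is precisely the projection class \eqref{pclass} associated to the same bounded function $f$, since the comparison map \eqref{ComparisonMap} is induced by the inclusion $C_{u,\cY}(M) \hookrightarrow \ell^\infty_\cY(M)$.

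At this point Lemma \ref{gojsekorpgergesrfrefs} identifies $p \cap^{\cX} -$ with the Mayer–Vietoris boundary $\partial^{MV}$ for the decomposition $(\cM_-, \cM_+)$; the lemma is stated for $K\cX(M) \to \Sigma K\cX(\cY)$, but the same proof (which only uses the excisive square \eqref{fpoerjfkopwfewdqewd} and the definition of $\nu$) yields the version
\[
  p \cap^{\cX} - \;\simeq\; \partial^{MV} \colon K\cX(\cZ) \longrightarrow \Sigma K\cX(\cY \cap \cZ),
\]
obtained by intersecting the big families defining the square with $\cZ$; this uses that $p$ is a class of the coarse corona of all of $M$ and that the coarse symbol pairing is extra-natural in the sense of \cref{RemarkExtranatural}. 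Applying this gives
\[
  \ind\cX(\Dirac + \Psi, \mathrm{on}\,\cY\cap\cZ) = \partial^{MV}\bigl(\ind\cX(\Dirac_0, \mathrm{on}\,\cZ)\bigr).
\]

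Finally, the coarse partitioned manifold theorem \cref{gwergregwerferfw} (which applies thanks to the product tubular neighbourhood of uniform width assumption, the positivity of $\Dirac_0$ away from $\cZ$ and of $\Dirac_{0|N}$ away from $N\cap\cZ$) gives
\[
  i\bigl(\ind\cX(\Dirac_{0|N}, \mathrm{on}\,N\cap\cZ)\bigr) = \beta \cdot \partial^{MV}\bigl(\ind\cX(\Dirac_0, \mathrm{on}\,\cZ)\bigr),
\]
and dividing by $\beta$ yields the claimed formula. The main technical point that needs care is the support-refined version of Lemma \ref{gojsekorpgergesrfrefs}; everything else is essentially a diagram chase through already established naturality statements. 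I expect this step, and the careful bookkeeping of which big families appear in each pairing, to be the only nontrivial obstacle.
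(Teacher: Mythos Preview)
Your proof is correct and follows essentially the same chain of equalities as the paper's own argument: apply \eqref{qefewdad}, then Theorem~\ref{wegokwpergerfwefrwefwf} with \eqref{rgwergeggwgre}, then Proposition~\ref{erokgpwergrwefwerfwerfwer}, then Lemma~\ref{gojsekorpgergesrfrefs}, then Corollary~\ref{gwergregwerferfw}. Two small remarks: the reference you want for the first step is \eqref{qefewdad} rather than Corollary~\ref{wkpgwergwrefrfw} (the latter does not carry the $\cZ$-support); and your observation that Lemma~\ref{gojsekorpgergesrfrefs} is literally stated only for $\cZ=\{M\}$ and needs the evident support-refined version is well taken---the paper invokes the lemma without comment at this point, but the extension is indeed immediate from the same argument (or from the extra-naturality in Remark~\ref{RemarkExtranatural}), exactly as you indicate.
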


\begin{proof} 
We have the following chain of equalities:
\begin{eqnarray*}
\ind\cX(\Dirac+\Psi,\mathrm{on}\,\cY\cap \cZ)
&\stackrel{\eqref{qefewdad}}{=}&
a_{\cY,\cZ }(\sigma_{\cZ}(\Dirac+\Psi,\mathrm{on}\,\cY))
\\
&\stackrel{\text{\cref{wegokwpergerfwefrwefwf}}, \eqref{rgwergeggwgre}}{=}& 
a_{\cY,\cZ }(\partial p\cap^{\cX\sigma}  \sigma_{\cZ}(\Dirac_{0}))
\\
&\stackrel{\text{\cref{erokgpwergrwefwerfwerfwer}}}{=}& 
p\cap^{{\cX}}a_{M,\cZ}(  \sigma_{\cZ}(\Dirac_{0})))
\\
&\stackrel{\text{\cref{jogergreffwfrf}}}{=}& 
p\cap^{\cX} \ind\cX(\Dirac_{0},\mathrm{on}\,\cZ)
\\
&\stackrel{\text{\cref{gojsekorpgergesrfrefs}}  }{=}&  
\partial^{MV}\ind\cX(\Dirac_{0},\mathrm{on}\,\cZ)
\\
&\stackrel{\text{\cref{gwergregwerferfw}}}{=}&
\beta^{-1} i( \ind\cX(\Dirac_{0|N},\mathrm{on}\,N\cap \cZ))\ .
\end{eqnarray*}
\end{proof}

\begin{rem}
If $N$ is compact, then $K(\cY)\simeq K\cX(*)\simeq KU$ and $K\cX(N)\simeq KU$ and under these indentifications 
$\ind\cX(\Dirac+\Psi,\mathrm{on}\,\cY)$ and 
$\ind\cX(D_{0|N})$ are the usual Fredholm indices.  If $k+1$ is even, then the equality of indices
$\ind(\Dirac+\Psi )=  \ind(\Dirac_{0|N})$ in $KU_{-k-1}$ is a special case of  Roe's partitioned manifold index theorem \cite{zbMATH04109704}, and also of the index theorem for Callias type operators shown in \cite{MR1348799}. 
  \hB
 \end{rem}
 
 \subsection{The multi-partitioned index theorem}\label{ewgkorpegrefwrf}

Let $M$ be a uniform bornological coarse space with a big family $\cZ$.
Assume that we have a finite family $\tilde f:=(\tilde f_{i})_{i=1,\dots,n}$ of uniformly continuous and controlled functions
$\tilde f_{i}:M\to \R$. Then we can iterate the constructions from \cref{koregperegfrefrfrefw}.  To $\tilde f_{i}$ we associate the big families $\cM_{i,\pm}$, as in \eqref{pclass},
$\cY_{i}$, and the class and $p_{i}$ in $K^{0}(\partial^{\cY_{i}}_{u}X)$.  For a subset $I$ of the poset  $ \langle n\rangle := \{1, \dots, n\}$, we further define 
\[
{
\cY_I :=\bigcap_{i\in I} \cY_{i} \ .
}
\]
If  $i \notin I$, then the decomposition {$(\cY_{I}\cap \cM_{i,-},\cY_{I}\cap \cM_{i,+})$ of $\cY_{I}$}
induces a Mayer-Vietoris boundary
\[
{
\partial^{MV}_{i}:\Sigma^{|I|}K_{\cZ}^{\cX}(\cY_{I})\to \Sigma^{|I|+1}K_{\cZ}^{\cX}(\cY_{I \cup \{i\}})\ .
}
\]
We avoid to use the embeddings \eqref{gwegerf3ferwfgreg} by considering excision with respect to the decomposition into two big families instead to the closed subspaces themselves, see e.g. the right triangle in \eqref{gweprkopwefklwpoekplml}. 
We have $\partial p_{i}\in K^{1}(\cY_{i})$ and 
 can form the product
\[
{
\partial p_{i_k}\cup \dots \cup \partial p_{i_1}\quad \mbox{in}\quad  K^{|I|}(\cY_{I} )\ , \qquad \text{for} \qquad I = \{i_1 < \dots < i_k\} \ .
}
  \]

By an iterated application of \cref{koppwegfrgwgregwe}
and the version of \cref{kogpqgregergwrgrgw} for $M$ replaced by a big family
we get:
\begin{kor}
For {$I = \{i_1 < \dots < i_k\} \subseteq \langle n\rangle$},
we have an   equivalence of functors
\[
(\partial p_{i_k}\cup \dots \cup \partial p_{i_1}) \cap^{\cX\sigma}-\simeq {\partial}_{i_k}^{MV}\circ \dots \circ  {\partial}^{MV}_{i_1}:K_{\cZ}^{\cX}(X)\to \Sigma^{|I|}K^{\cX}_{\cZ}(\cY_{I}) \ .
\]
 \end{kor}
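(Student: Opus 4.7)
The plan is to argue by induction on $k = |I|$, using \cref{koppwegfrgwgregwe} to peel off one cup factor at a time and the version of \cref{kogpqgregergwrgrgw} for a big family to identify each factor with the corresponding Mayer-Vietoris boundary.

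For the base case $k=1$, the claim $\partial p_{i_1} \cap^{\cX\sigma} - \simeq \partial^{MV}_{i_1}$ is essentially \cref{kogpqgregergwrgrgw}. The only minor difference is that here the Mayer-Vietoris boundary is taken for the decomposition of $X$ into the pair of \emph{big families} $(\cM_{i_1,-}, \cM_{i_1,+})$, so that it lands directly in $\Sigma K^{\cX}_{\cZ}(\cY_{i_1})$ without the auxiliary inclusion map $i^{\cX}$. This version corresponds to the right triangle in \eqref{gweprkopwefklwpoekplml} and is obtained by the same argument as \cref{kogpqgregergwrgrgw} with the role of the closed subspace $N$ replaced by the intersection big family $\cY_{i_1} = \cM_{i_1,-} \cap \cM_{i_1,+}$.

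For the inductive step, assume the result for $I' := \{i_1 < \dots < i_{k-1}\}$ and set $q := \partial p_{i_{k-1}} \cup \cdots \cup \partial p_{i_1}$ in $K^{k-1}(\cY_{I'})$. Applying \cref{koppwegfrgwgregwe} to the big families $\cY_{i_k}$ and $\cY_{I'}$ (whose intersection equals $\cY_I$) gives the factorization
\[
(\partial p_{i_k} \cup q) \cap^{\cX\sigma} - \;\simeq\; \partial p_{i_k} \cap^{\cX\sigma} \bigl( q \cap^{\cX\sigma} - \bigr).
\]
By the inductive hypothesis, the inner pairing is equivalent to $\partial^{MV}_{i_{k-1}} \circ \cdots \circ \partial^{MV}_{i_1}$, so it remains to identify
\[
\partial p_{i_k} \cap^{\cX\sigma} - \;\simeq\; \partial^{MV}_{i_k} \colon \Sigma^{k-1} K^{\cX}_{\cZ}(\cY_{I'}) \to \Sigma^{k} K^{\cX}_{\cZ}(\cY_I).
\]
This is the version of \cref{kogpqgregergwrgrgw} where the ambient space $M$ has been replaced by the big family $\cY_{I'}$; the argument of \emph{loc.\ cit.} applies verbatim, working with the ideal $\bC(\cO^{\infty}_{\cZ}(\cY_{I'}) \subseteq \cO^{\infty}(X))$ in place of $\bC(\cO^{\infty}_{\cZ}(X) \subseteq \cO^{\infty}(X))$ and the decomposition of $\cY_{I'}$ into the two big families $\cY_{I'} \cap \cM_{i_k, \pm}$.

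The only point requiring genuine care is the verification that the $C^*$-categorical diagram underlying the proof of \cref{kogpqgregergwrgrgw} remains exact and commutative after this replacement. This is straightforward: the exact sequence \eqref{gerwrgfrq} for the function $\tilde f_{i_k}$ on $X$ restricts compatibly to the subalgebra of functions supported on $\cY_{I'}$, and the commutator estimate of \cref{kohpkertphokgpertge} applies uniformly to the ideal associated with the big family $\cO^\infty_{\cZ}(\cY_{I'})$. Granting this, the inductive step closes and the corollary follows.
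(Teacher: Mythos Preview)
Your proposal is correct and follows essentially the same approach as the paper: the paper's proof is the single sentence ``By an iterated application of \cref{koppwegfrgwgregwe} and the version of \cref{kogpqgregergwrgrgw} for $M$ replaced by a big family we get,'' and you have simply unfolded what this iteration means as an explicit induction on $|I|$. Your remark about working directly with the big-family decomposition (so that the Mayer-Vietoris boundary lands in $\Sigma K^{\cX}_{\cZ}(\cY_{i_1})$ without the auxiliary inclusion $i^{\cX}$) matches the paper's own comment just before the corollary about avoiding the embeddings \eqref{gwegerf3ferwfgreg} via the right triangle in \eqref{gweprkopwefklwpoekplml}.
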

 
Let ${{\cW}}$ be  a  big family on $M$ with the property
 $\tilde f({\cW})\subseteq \cB_{\R^{n}}$ so that $\tilde f$ becomes a morphism
 $\tilde f : (M,{\cW})\to (\R^{n},\cB_{\R^{n}})$ in $\BC^{(2)}$.
     We then  get an induced map 
     \[
     	\partial \tilde f : \partial^{{\cW}}M \to  \partial_h \R^n \to S^{n-1}\ .
     \]
If we perform the constructions above with the coordinate functions $(x_{i})_{i=1,\dots,n}$ of $\R^{n}$ instead of $(f_{i})_{i=1,\dots,n}$, then we get big families $\tilde \cY_{i}$ on $\R^{n}$ and classes $\partial \tilde p_{i}\in  K^{1}(\tilde \cY_{i} )$
such that $  ( \partial \tilde p_{n}\cup \dots \cup \partial \tilde p_{1})\in K^{n}(\tilde \cY_{{\langle n \rangle}})\cong  K_{c}^{n}(\R^{n})\cong \Z$ is a generator.
In view of the well-understood  long exact sequence in $K$-theory 
\[
K^{*}_{c}(\R^{n})\to K^{*}(\R^{n})\to K^{*}(S^{n-1})\stackrel{\partial}{\to} K^{*+1}_{c}(\R^{n})
\]
 there exists a unique class $\tilde u$ in $K^{n-1}(S^{n-1})$ such that $\tilde u_{|*}=0$ (i.e., $\tilde u$ is in the reduced summand) and $\partial \tilde  u=( \partial \tilde p_{{n}}\cup \dots \cup \partial \tilde p_{1})$. We set $u:= (\partial \tilde f)^{*}
\tilde u$  in $K^{n}(\cY_{{\langle n\rangle }})$ and conclude that $\partial u=( \partial p_{n}\cup \dots \cup \partial p_{1})$.

\begin{kor}\label{wrekopgpergrefwrefw}
The following square commutes:
\[
\xymatrix{K_{\cZ}^{\cX}({\cW}) \ar[d]^{a_{{\cW},\cZ}} \ar[rr]^-{{\partial}_{n}^{MV}\circ \dots \circ {\partial}^{MV}_{1}}&&\Sigma^{n}K_{\cZ}^{\cX}({\cW}\cap \cY_{\langle n\rangle}) \ar[d]^{a_{{\cW}\cap\cY_{{\langle n\rangle}},\cZ}} \\ K\cX({\cW}\cap \cZ)\ar[rr]^{u\cap^{\cX}} &&\Sigma^{n}K\cX({\cW}\cap \cY_{{\langle n\rangle}}\cap \cZ) }\ . 
\]
 \end{kor}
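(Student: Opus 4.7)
My strategy is to combine the preceding Corollary, which identifies the iterated Mayer-Vietoris boundary with the coarse symbol pairing
\[
\partial^{MV}_n \circ \cdots \circ \partial^{MV}_1 \simeq (\partial p_n \cup \cdots \cup \partial p_1) \cap^{\cX\sigma} -\ ,
\]
with \cref{erokgpwergrwefwerfwerfwer}, which is the compatibility of the index map $a$ with the two kinds of pairings and the corona boundary map. The construction of $u$ is engineered precisely so that the relevant cup product class is of the form $\partial u$, so that \cref{erokgpwergrwefwerfwerfwer} applies.

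First, I would apply the preceding Corollary with $\cW$ in place of $X$ (equipped with the induced uniform bornological coarse structure from $M$; each Mayer-Vietoris boundary $\partial^{MV}_i$ restricts to such a decomposition of $\cW$ since $\cW \cap \cM_{i,\pm}$ is a decomposition coming from the same function $\tilde f_i$). This rewrites the top-right composition of the target square as
\[
a_{\cW \cap \cY_{\langle n\rangle}, \cZ} \circ \bigl((\partial p_n \cup \cdots \cup \partial p_1) \cap^{\cX\sigma} -\bigr) \ .
\]
Next, by the construction of $u$ we have the identity $\partial u = \partial p_n \cup \cdots \cup \partial p_1$, so this cup product class is a boundary. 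Applying \cref{erokgpwergrwefwerfwerfwer} to the class $u$ (with big families $\cY_{\langle n\rangle}$ and $\cZ$ in place of $\cY$ and $\cZ$) produces the identity
\[
a_{\cW \cap \cY_{\langle n\rangle}, \cZ} \circ \bigl((\partial u) \cap^{\cX\sigma} -\bigr) \simeq (c^{*} u \cap^{\cX} -) \circ a_{\cW, \cZ}\ ,
\]
which (after absorbing $c^{*}$ into the notation as is done in the statement) is exactly the bottom-left composition of the square. Combining the two steps gives the required commutativity.

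The point requiring the most care is the set-up of the class $u$. Since $u$ is defined as $(\partial \tilde f)^{*} \tilde u$, where $\tilde u \in K^{n-1}(S^{n-1})$ and $\partial \tilde f: \partial^{\cW} M \to \partial_h\R^n \to S^{n-1}$ is induced by the morphism $\tilde f: \cW \to \R^n$ in $\BC$ (which exists by the hypothesis $\tilde f(\cW) \subseteq \cB_{\R^n}$), the identity $\partial u = \partial p_n \cup \cdots \cup \partial p_1$ is itself obtained by pulling back the corresponding identity $\partial \tilde u = \partial \tilde p_n \cup \cdots \cup \partial \tilde p_1$ on $\R^n$. This pullback is compatible with both the corona boundary and the coarse corona pairing by the extranaturality stated in \cref{RemarkExtranatural} and its analogue for the symbol pairing in \cref{qewoifjqeiowdedwqdqdqe}.

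The main obstacle I expect is purely bookkeeping: rigorously tracking the supports (coarse vs. uniform coronas, the big families $\cY_{\langle n\rangle}$, $\cW$, and $\cZ$, as well as the distinctions between $K$ of $\cY_I$ and $K$ of $\partial^{\cY_I} M$) through the chain of identifications, so that each invocation of \cref{erokgpwergrwefwerfwerfwer} and of the preceding Corollary is legitimate. No new analytic input is needed beyond the naturality and compatibility statements already established.
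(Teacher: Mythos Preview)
Your proposal is correct and matches the paper's intended argument. The paper states this as a corollary without proof, and the implied proof is precisely what you outline: combine the preceding corollary (identifying the iterated Mayer--Vietoris boundary with the symbol pairing against $\partial p_n\cup\cdots\cup\partial p_1$), the construction of $u$ so that $\partial u=\partial p_n\cup\cdots\cup\partial p_1$, and \cref{erokgpwergrwefwerfwerfwer} to descend along the index map.
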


 We assume that $M$ comes from a complete Riemannian manifold.
 We consider a Callias type generalized Dirac {operator}  $\Dirac+\Psi$ such that $\Psi$  is very positive away from the big  family ${\cW}$ and asymptotically constant  away from the big family $\cZ$, and $\Dirac$ is {positive} away from $\cZ$.
We assume that $\tilde f$ is smooth and has $0$ in $\R^{n}$ as a regular value, and we set $N:=\tilde f^{-1}(\{0\})$.
 
 \begin{kor}
 \label{kopegwegergefw} 
 If $N$ has a tubular neighbourhood of uniform width with product structures, then in $K\cX({\cW}\cap \cY_{{\langle n\rangle}}\cap \cZ)$ we have the equality
 \[
 u\cap^{\cX}\ind\cX(\Dirac+\Psi,\mathrm{on}\,{\cW}\cap \cZ)\simeq \beta^{-n} i(\ind\cX((\Dirac+\Psi)_{|N},\mbox{on $N\cap {\cW}\cap \cZ$ }))\ .
 \]
 \end{kor}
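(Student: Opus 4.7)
The plan is to iterate the proof strategy of the single-partitioned case, Corollary \ref{gwergregwerferfw}, by combining Corollary \ref{wrekopgpergrefwrefw} with an $n$-fold application of the Dirac-goes-to-Dirac principle from Theorem \ref{tkopherthtere9}. Starting from the identity $\ind\cX(\Dirac+\Psi,\mathrm{on}\,\cW\cap\cZ) = a_{\cW,\cZ}(\sigma_\cZ(\Dirac+\Psi,\mathrm{on}\,\cW))$ of Corollary \ref{wkpgwergwrefrfw}, the commutative square of Corollary \ref{wrekopgpergrefwrefw} applied to the symbol class $\sigma_\cZ(\Dirac+\Psi,\mathrm{on}\,\cW)\in K^{\cX}_\cZ(\cW)$ yields
\[
u\cap^{\cX}\ind\cX(\Dirac+\Psi,\mathrm{on}\,\cW\cap\cZ) = a_{\cW\cap\cY_{\langle n\rangle},\cZ}\bigl(\partial^{MV}_n\circ\cdots\circ\partial^{MV}_1\,\sigma_\cZ(\Dirac+\Psi,\mathrm{on}\,\cW)\bigr).
\]

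The central step is to prove by induction on $k\in\{1,\dots,n\}$ that
\[
\partial^{MV}_k\circ\cdots\circ\partial^{MV}_1\,\sigma_\cZ(\Dirac+\Psi,\mathrm{on}\,\cW) = \beta^{-k}\,j_k^{\cX}\bigl(\sigma_\cZ((\Dirac+\Psi)_{|N_k},\mathrm{on}\,N_k\cap\cW)\bigr),
\]
where $N_k := \bigcap_{i=1}^{k}\tilde f_i^{-1}(0)$ and $j_k^{\cX}$ denotes the natural inclusion $K^{\cX}_\cZ(N_k\cap\cW)\to K^{\cX}_\cZ(\cW\cap\cY_{\{1,\dots,k\}})$. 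The base case follows from Theorem \ref{tkopherthtere9} applied to $\Dirac+\Psi$ and $\tilde f_1$, after identifying the Mayer-Vietoris boundary $\partial^{MV}_1$ for the big-family decomposition $(\cM_{1,-},\cM_{1,+})$ of $\cW$ with the post-composition by $j_1^{\cX}$ of the Mayer-Vietoris boundary $\delta^{MV}$ of Theorem \ref{tkopherthtere9}; this identification is already implicit in the proof of Corollary \ref{gwergregwerferfw}. For the inductive step, the naturality of coarse homology under the inclusion $N_{k-1}\hookrightarrow M$ intertwines the composition $\partial^{MV}_k\circ j_{k-1}^{\cX}$ with the composition of $j_k^{\cX}$ and the Mayer-Vietoris boundary for the decomposition of $N_{k-1}\cap\cW$ by $(\tilde f_k|_{N_{k-1}})^{-1}(\R^\pm)$. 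One then applies Theorem \ref{tkopherthtere9} to the Callias-type operator $(\Dirac+\Psi)_{|N_{k-1}}=\Dirac_{|N_{k-1}}+\Psi_{|N_{k-1}}$ on $N_{k-1}$ with respect to $\tilde f_k|_{N_{k-1}}$, producing the additional factor of $\beta^{-1}$ and replacing $N_{k-1}$ by $N_k$.

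Setting $k=n$ gives $N_n=N$. Applying $a_{\cW\cap\cY_{\langle n\rangle},\cZ}$, naturality of the index map under inclusions, as in diagram \eqref{vfdsvervfdvsdfvsdvdf}, commutes it past $j_n^{\cX}$ to yield the map $i$ on the target side, and a final use of Corollary \ref{wkpgwergwrefrfw} identifies $a_{N\cap\cW,\cZ}(\sigma_\cZ((\Dirac+\Psi)_{|N},\mathrm{on}\,N\cap\cW))$ with $\ind\cX((\Dirac+\Psi)_{|N},\mathrm{on}\,N\cap\cW\cap\cZ)$. Assembling these equalities produces the desired formula.

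The main obstacle will be justifying the inductive step, where Theorem \ref{tkopherthtere9} is applied to the iteratively restricted operators. Its hypotheses require positivity of $\Dirac_{|N_{k-1}}$ away from $N_{k-1}\cap\cZ$ and of $\Dirac_{|N_k}$ away from $N_k\cap\cZ$, both obtained by repeated application of Lemma \ref{khoperthgtrgetg}; very positivity of $\Psi_{|N_{k-1}}$ away from $N_{k-1}\cap\cW$ and its asymptotic constancy away from $N_{k-1}\cap\cZ$, which follow by restriction from the corresponding properties of $\Psi$; and, most crucially, the existence of a uniform-width product tubular neighborhood of $N_k$ in $N_{k-1}$ over which all operator data has product form. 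This last condition follows from the hypothesis that $N$ admits a product tubular neighborhood of uniform width in $M$, interpreted as a uniform isomorphism of a neighborhood of $N$ with $V\times N$ for some open $V\subseteq\R^n$ together with product decompositions of all data, by slicing this isomorphism along the coordinate hyperplanes of $\R^n$.
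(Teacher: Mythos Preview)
Your proposal is correct and follows essentially the same route as the paper: rewrite the index as the index map applied to the symbol via \eqref{qefewdad}, invoke \cref{wrekopgpergrefwrefw} to trade the corona pairing for the iterated Mayer--Vietoris boundary on symbols, iterate \cref{tkopherthtere9} to reduce to the symbol of $(\Dirac+\Psi)_{|N}$, and then push the index map past the inclusion using \eqref{vfdsvervfdvsdfvsdvdf}. The paper compresses all of this into a single chain of equalities without spelling out the induction over $k$; your version is simply a more explicit unfolding of the same argument, including the identification of the big-family Mayer--Vietoris boundaries $\partial^{MV}_k$ with $i^{\cX}\circ\delta^{MV}_k$ that the paper records as an unmarked equality.

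One small caution: you invoke \cref{khoperthgtrgetg} to deduce positivity of the restricted operators $\Dirac_{|N_k}$ away from $N_k\cap\cZ$, but that lemma requires the tubular width to go to infinity away from $\cZ$, which is strictly stronger than the uniform-width hypothesis stated in the corollary. The paper's own proof does not verify this positivity either and simply cites \cref{tkopherthtere9}, so the intended reading is that positivity of each $\Dirac_{|N_k}$ away from $N_k\cap\cZ$ is part of the standing assumptions (or follows from a product-structure hypothesis strong enough to propagate positivity of the zero-order term). This is a point about hypotheses rather than the structure of the argument; your overall strategy matches the paper's.
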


\begin{proof}
We have
\begin{eqnarray*}
u\cap^{\cX}\ind\cX(\Dirac+\Psi,\mathrm{on}\,{\cW}\cap \cZ)
&\stackrel{\eqref{qefewdad}}{=}&
u\cap^{\cX} a_{{\cW},\cZ}(\sigma_{\cZ}(\Dirac+\Psi,\mathrm{on}\,{\cW}))
\\
&\stackrel{\text{\cref{wrekopgpergrefwrefw}}}{=}&
a_{{\cW}\cap {\cY_{{\langle n\rangle}}}, \cZ}( {\partial}_{n}^{MV}\circ \dots \circ {\partial}^{MV}_{1}   \sigma_{\cZ} (\Dirac+\Psi,\mathrm{on}\,{\cW}))
\\&=&a_{{\cW}\cap {\cY_{{\langle n\rangle}}}, \cZ}( {i^{\cX}\delta}_{n}^{MV}\circ \dots \circ {\delta}^{MV}_{1}   \sigma_{\cZ} (\Dirac+\Psi,\mathrm{on}\,{\cW}))
\\
&\stackrel{\text{\cref{tkopherthtere9}}}{=}&
 \beta^{-n} a_{{\cW}\cap \cY_{1,\dots,n}, \cZ}( i^{\cX}\sigma_{N\cap \cZ} ((\Dirac+\Psi)_{|N},\mathrm{on}\,N\cap {\cW}\cap \cZ))
 \\
 &\stackrel{\eqref{vfdsvervfdvsdfvsdvdf}}{=}& 
 \beta^{-n} i a_{N\cap {\cW}, \cZ}  (\sigma_{N\cap \cZ} ((\Dirac+\Psi)_{|N},\mathrm{on}\,N\cap {\cW}\cap \cZ))
 \\
 &\stackrel{\eqref{qefewdad}}{=}&
 \beta^{-n} i(\ind\cX((\Dirac+\Psi)_{|N},\mathrm{on}\,N\cap {\cW}\cap \cZ))\ .
\end{eqnarray*}
\end{proof}
   
 \begin{ex} 
We assume that ${\cW}\cap \cY_{{\langle n\rangle}}\cap \cZ\subseteq \cB$. Then
for every  member $ 
  W$ in ${\cW}\cap \cY_{{\langle n\rangle}}\cap \cZ$ the restriction $\tilde f_{|W}:W\to \R^{n}$ is proper. 
   Consequently we get a map 
   \begin{equation}\label{dfvdsfvwervsvsdfvsf} t_{{\tilde f}}:K\cX({\cW}\cap \cY_{{\langle n\rangle}}\cap \cZ)\stackrel{{\tilde f}_{*}}{\longrightarrow} K\cX(\R^{n}) \stackrel{ \tilde u\cap^{\cX}- }{\simeq } \Sigma^{n}K\cX(\cB_{\R^{n}})\simeq \Sigma^{n}KU
\end{equation} 
    
 If $\Dirac+\Psi$ is a  Callias type  operator of degree $k$ as above, then 
 following \cite[Def. 1.3]{MR3825192} we define the multipartitioned index by 
  $$
  t_{{\tilde f}}(u\cap^{\cX}\ind\cX(\Dirac+\Psi,\mathrm{on}\,{\cW}\cap  \cZ))\in \pi_{-n-k}KU\ .$$
 We furthermore have 
 $N\cap {\cW}\cap \cZ\subseteq N\cap \cB$ and therefore
 $$ {\tilde f}_{|N,*} \ind \cX((\Dirac+\Psi)_{|N}, \mathrm{on}\,N\cap {\cW}\cap \cZ )\in  K\cX_{n-k}(\cB _{\R^{n}})\simeq \pi_{n-k}KU\ .$$
 The following consequence of \cref{kopegwegergefw} is a generalization of the multi-partition index theorem \cite[Thm. 2.7]{MR3704253}, \cite[Thm. 1.4]{MR3825192}.

 \begin{kor}
 Under the assumptions of \cref{kopegwegergefw} we have 
 $$
 t_{{\tilde f}}(u\cap^{\cX}\ind\cX(\Dirac+\Psi,\mathrm{on}\,{\cW} \cap \cZ))= \beta^{-n} {\tilde f}_{|N,*} \ind \cX((\Dirac+\Psi)_{|N}, \mathrm{on}\,N\cap {\cW}\cap \cZ ) \ .$$
 \end{kor}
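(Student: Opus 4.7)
The plan is to combine \cref{kopegwegergefw} with the functoriality of the pushforward $\tilde f_{*}$, and then use naturality properties of the coarse corona pairing to make the final identification. By \cref{kopegwegergefw} we have
\[
u \cap^{\cX} \ind\cX(\Dirac+\Psi,\mathrm{on}\,\cW \cap \cZ) = \beta^{-n}\, i\bigl(\ind\cX((\Dirac+\Psi)_{|N},\mathrm{on}\,N \cap \cW \cap \cZ)\bigr)
\]
in (the appropriate $\Sigma^{n}$-shift of) $K\cX(\cW \cap \cY_{\langle n\rangle} \cap \cZ)$, where $i$ is the map induced by the inclusion $N \cap \cW \cap \cZ \hookrightarrow \cW \cap \cY_{\langle n\rangle} \cap \cZ$ of big families in $M$. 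Applying $t_{\tilde f}$ to both sides by linearity reduces the claim to proving $t_{\tilde f} \circ i = \tilde f_{|N,*}$ as maps $K\cX(N \cap \cW \cap \cZ) \to \Sigma^{n} K\cX(\cB_{\R^{n}})$.

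The key geometric observation for this identification is that $\tilde f \circ i = \tilde f_{|N}$ is the constant zero map, since $N = \tilde f^{-1}(\{0\})$. Functoriality of the pushforward then gives $\tilde f_{*} \circ i = j_{*} \circ \tilde f_{|N,*}$, where $j_{*}$ is induced by the inclusion $\cB_{\R^{n}} \hookrightarrow \{\R^{n}\}$. Unfolding the definition $t_{\tilde f} = (\tilde u \cap^{\cX}) \circ \tilde f_{*}$, the claimed identity reduces to the assertion that $(\tilde u \cap^{\cX}) \circ j_{*} : K\cX(\cB_{\R^{n}}) \to \Sigma^{n} K\cX(\cB_{\R^{n}})$ is the canonical equivalence built into the definition of $t_{\tilde f}$.

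This last fact would be established most cleanly via the extra-naturality of $\cap^{\cX}$ (\cref{RemarkExtranatural}), applied to the morphism $\tilde f : (M, \cY_{\langle n\rangle}, \cW \cap \cZ) \to (\R^{n}, \cB_{\R^{n}}, \cB_{\R^{n}})$. The hypotheses of extra-naturality hold because $\cY_{\langle n\rangle}$ is precisely the big family of subsets on which all the coordinate functions $\tilde f_{i}$ are bounded, so $\tilde f^{-1}(\cB_{\R^{n}}) \subseteq \cY_{\langle n\rangle}$, while $\tilde f(\cW) \subseteq \cB_{\R^{n}}$ by standing assumption. Combined with $u = (\partial \tilde f)^{*}\tilde u$ from the construction of $u$, extra-naturality yields $\tilde f_{*}(u \cap^{\cX} -) = \tilde u \cap^{\cX} \tilde f_{*}(-)$, which packaged with the above pushforward identity yields $t_{\tilde f}(i(\alpha)) = \tilde f_{|N,*}(\alpha)$ for every $\alpha \in K\cX(N \cap \cW \cap \cZ)$.

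The main technical obstacle will be navigating the implicit identifications $\Sigma^{n} K\cX(\cB_{\R^{n}}) \simeq \Sigma^{n} KU$ and the various degree shifts introduced by the cap products: one must verify that $(\tilde u \cap^{\cX}) \circ j_{*}$ recovers the canonical equivalence on the nose and does not introduce a spurious Bott factor, which would otherwise disturb the matching of the factor $\beta^{-n}$ on the two sides of the desired equality.
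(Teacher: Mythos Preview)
Your overall strategy—apply \cref{kopegwegergefw} and then $t_{\tilde f}$—is exactly what the paper intends; the paper states this corollary without proof as an immediate consequence of \cref{kopegwegergefw}. Your reduction to the identity $t_{\tilde f}\circ i=\tilde f_{|N,*}$ is also the natural one.

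However, the argument breaks at the step where you assert that $(\tilde u\cap^{\cX}-)\circ j_*$ is ``the canonical equivalence''. Your factorization $\tilde f_*\circ i=j_*\circ\tilde f_{|N,*}$ is correct, but the map $j_*\colon K\cX(\cB_{\R^n})\to K\cX(\R^n)$ is the \emph{zero} map for every $n\ge1$: the inclusion of any bounded subset $B\hookrightarrow\R^n$ factors through a flasque half-space such as $[-R,\infty)\times\R^{n-1}$, and $K\cX$ vanishes on flasques. Consequently $(\tilde u\cap^{\cX}-)\circ j_*=0$, not an equivalence. The obstruction you anticipate in your last paragraph is therefore not a stray Bott factor but a complete collapse; your computation would give $t_{\tilde f}(i(\alpha))=0$ rather than $\tilde f_{|N,*}(\alpha)$. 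The extra-naturality argument you sketch in your third paragraph does not rescue this: the instance of \cref{qewoifjqeiowdedwqdqdqer43r3r343} you invoke lands in $K\cX(\cB_{\R^n})$ (with $\cY'=\cZ'=\cB_{\R^n}$), not in $K\cX(\R^n)$, so it does not compute the composite $(\tilde u\cap^{\cX}-)\circ\tilde f_*$ appearing in the definition \eqref{dfvdsfvwervsvsdfvsf} of $t_{\tilde f}$, and in any case it requires $\tilde f\colon M\to\R^n$ to be a morphism in $\BC$ (hence proper), which is not among the stated hypotheses.

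In fact your own factorization exposes a subtlety in the paper's definition of $t_{\tilde f}$: under the standing assumption $\tilde f(\cW)\subseteq\cB_{\R^n}$, the first arrow $\tilde f_*$ in \eqref{dfvdsfvwervsvsdfvsf} already factors through $j_*=0$, so that $t_{\tilde f}$ as literally written is identically zero. The intended (non-trivial) meaning is presumably that $t_{\tilde f}$ records the image of $\tilde f_*$ in $K\cX(\cB_{\R^n})\simeq KU$ \emph{directly}, with the equivalence $\tilde u\cap^{\cX}-$ only serving to fix the normalization; under that reading, the identity $t_{\tilde f}\circ i=\tilde f_{|N,*}$ is immediate from functoriality, and your first two paragraphs already constitute the full argument.
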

\end{ex}

\subsection{Slant products}\label{tlkhgwteggfregwgwergwregwregw}

In this section we interpret the main construction from \cite{Engel_2022}.
Let $X, Y$ be two uniform bornological coarse spaces.
Let $\cB_{Y}$ 
be the bornology of $Y$ and consider the big family $X\times \cB_{Y}$ on
$X\otimes Y$. Pull-back of functions along the projection $\pr:X\times Y\to Y$ induces a map of instances of the exact sequence of $C^{*}$-algebras \eqref{gerwrgfrq} 
$$
\xymatrix{
0\ar[r]&C_{u}(\cB_{Y})\ar[d]\ar[r]&C_{u,\cB_Y}(Y)\ar[r]\ar[d]&C(\partial^{\cB_{Y}}_{u}Y)\ar[r]\ar[d]&0\\
0\ar[r]&C_{u}(X\otimes \cB_{Y})\ar[r]&C_{u,X\otimes \cB_{Y}}(X\otimes Y)\ar[r]&C(\partial_{u}^{X\otimes \cB_{Y}}(X\otimes Y))\ar[r]&0}\ .
$$

This gives a commutative square $$ \xymatrix{K(\partial^{\cB_{Y}}_{u}Y)\ar[r]^{\partial}\ar[d]^{\partial_{h}\pr^{*}} &\Sigma K(\cB_{Y}) \ar[d]^{\pr^{*}} \\ K(\partial^{X\otimes \cB_{Y}}_{u}(X\otimes Y))\ar[r]^{\partial} & \Sigma K(X\otimes \cB_{Y})}\ . $$
If $\theta$ is a class in $K_{k}(\partial^{\cB_{Y}}_{h}Y)$, then by \cref{erokgpwergrwefwerfwerfwer}
we get a commutative  diagram
$$\xymatrix{K^{\cX}(X\otimes Y) \ar[rr]^-{\pr^{*}\partial \theta\cap^{\cX\sigma}-}\ar[d]^{a_{X\otimes Y}} && \Sigma^{k+1}K^{\cX}(X\otimes \cB_{Y})\ar[d]^{a_{X\otimes B_{\cY}}} \ar[r]^{\kappa^{\sigma}}&\Sigma^{k+1}K^{\cX}(X)\ar[d]^{a_{X}}\\ K\cX(X\otimes Y)\ar[rr]^-{(\partial_{h}\pr)^{*}\theta\cap^{\cX}-} && \Sigma^{k+1} K\cX(X\otimes\cB_{Y}) \ar[r]^{\kappa}&\Sigma^{k+1}K\cX(X)} \ .
$$
The maps $\kappa$ and $\kappa^{\sigma}$ are well-defined since the restriction of $\pr$ to any member of $X\otimes\cB_{Y}$ is proper and hence a morphism in $\UBC$ and $\BC$. The horizontal compositions are the slant products first defined in \cite{Engel_2022} and denoted there by $/\theta$.
The fact that the outer square commutes yields a map of fibre sequences
   $$\xymatrix{
\Fib(a_{X\otimes Y})\ar@{..>}[d]^{/\theta}\ar[r]&K^{\cX}(X\otimes Y)\ar[r]^{a_{X\otimes Y}}\ar[d]^{/\theta}&K\cX(X\otimes Y) \ar[d]^{/\theta} \\
\Sigma^{k+1}\Fib(a_{X}) \ar[r]&\Sigma^{k+1}K^{\cX}(X)\ar[r]^{a_{X}}&\Sigma^{k+1}K\cX(X)}\ ,
$$
where the dotted arrow is determined by the universal property of the fibre. The commutativity 
 expresses the compatibility \cite[Thm. 4.10]{Engel_2022} of the slant product with the Higson-Roe sequence, see \cref{kopgwergregfrfwf}.
Most of  the formal properties of the slant product stated in \cite{Engel_2022}    
 are  consequences of the formal properties of the pairings stated in \cref{regijowergefrfwrfrf} and \cref{regijowergefrfwrfrf2}.

 \bibliographystyle{alpha}
\bibliography{forschung2021}

\end{document}